\documentclass[11pt,reqno]{amsart}
\usepackage{graphicx} 
\usepackage{hyperref}
\usepackage{fourier}
\usepackage{fullpage}
\usepackage{amsmath,amssymb,amsthm}
\usepackage[shortlabels]{enumitem}
\usepackage[mathscr]{euscript}
\usepackage{dutchcal}
\usepackage{upgreek}
\usepackage{comment}
\usepackage{csquotes}

\makeatletter
\def\resetMathstrut@{%
  \setbox\z@\hbox{%
    \mathchardef\@tempa\mathcode`\(\relax
    \def\@tempb##1"##2##3{\the\textfont"##3\char"}%
    \expandafter\@tempb\meaning\@tempa \relax
  }%
  \ht\Mathstrutbox@1.2\ht\z@ \dp\Mathstrutbox@1.2\dp\z@
}
\makeatother

\newtheorem{theorem}{Theorem}
\newtheorem{lemma}[theorem]{Lemma}
\newtheorem{prop}[theorem]{Proposition}
\newtheorem{claim}[theorem]{Claim}

\newtheorem{corollary}[theorem]{Corollary}

\newtheorem{definition}[theorem]{Definition}
\newtheorem{terminology}[theorem]{Terminology}
\newtheorem{construction}[theorem]{Construction}
\newtheorem{observation}[theorem]{Observation}

\newtheorem{fact}[theorem]{Fact}

\newtheorem{remark}[theorem]{Remark}

\newtheorem{question}[theorem]{Question}

\newcommand{\cF}{\mathcal{F}}

\newcommand{\cS}{\mathcal{S}}

\newcommand{\R}{\mathbb{R}}
\newcommand{\Z}{\mathbb{Z}}
\newcommand{\N}{\mathbb{N}}
\newcommand{\E}{\mathbb{E}}

\DeclareMathOperator{\Lip}{Lip}

\renewcommand{\le}{\leqslant}
\renewcommand{\ge}{\geqslant}

\renewcommand{\setminus}{\smallsetminus}
\renewcommand{\subset}{\subseteq}
\newcommand{\cc}{\mathsf{c}}
\newcommand{\ee}{\mathsf{e}}
\renewcommand{\ae}{\mathsf{ae}}

\newcommand{\eqdef}{\stackrel{\mathrm{def}}{=}}
\newcommand{\cZ}{\mathcal{Z}}
\newcommand{\cA}{\mathcal{A}}
\newcommand{\cB}{\mathcal{B}}

\newcommand{\vol}{\mathrm{vol}}
\newcommand{\ck}{\mathcal{k}}
\newcommand{\fp}{\mathfrak{p}}

\newcommand{\NN}{\mathcal{N}}
\newcommand{\MM}{\mathcal{M}}
\newcommand{\TT}{\mathscr{T}}

\renewcommand{\SS}{\mathscr{S}}
\newcommand{\sub}{\mathscr{C}}
\newcommand{\bX}{\mathbf{X}}
\newcommand{\bH}{\mathbf{H}}
\newcommand{\bY}{\mathbf{Y}}
\newcommand{\bZ}{\mathbf{Z}}
\DeclareMathOperator{\prob}{\mathbb{P}}
\newcommand{\e}{\varepsilon}
\newcommand{\f}{\varphi}
\renewcommand{\d}{\delta}
\newcommand{\ud}[0]{\,\mathrm{d}}
\newcommand{\sfG}{\mathsf{G}}

\newcommand{\sfU}{\mathsf{U}}
\newcommand{\sfH}{\mathsf{H}}

\newcommand{\sfF}{\mathsf{F}}
\newcommand{\sfZ}{\mathsf{z}}
\newcommand{\sfX}{\mathsf{X}}
\newcommand{\sfR}{\mathsf{R}}
\newcommand{\sfA}{\mathsf{A}}
\newcommand{\sfB}{\mathsf{B}}
\newcommand{\M}{\mathsf{M}}
\newcommand{\sfW}{\mathsf{W}}
\newcommand{\n}{\{1,\ldots,n\}}
\newcommand{\sfC}{\mathsf{C}}
\newcommand{\sfD}{\mathsf{D}}

\newcommand{\SDP}{\mathrm{SDP}}
\newcommand{\GL}{\mathsf{GL}}
\newcommand{\MMV}{\mathsf{MMV}}

\newcommand{\bbQ}{\mathbb{Q}}
\newcommand{\bbP}{\mathbb{P}}
\newcommand{\1}{\mathbf{1}}
\newcommand{\diam}{\mathrm{diam}}
\newcommand{\st}{\mathsf{t}}

\newcommand{\si}{\mathsf{i}}
\renewcommand{\nu}{\upnu}
\newcommand{\UU}{\mathcal{U}}
\newcommand{\sV}{\mathscr{V}}
\newcommand{\fd}{\mathfrak{d}}
\newcommand{\bd}{\boldsymbol{\delta}}

\title{Random zero sets with local growth guarantees}
\date{}

\author{Alan Chang}
\address{Department of Mathematics, Washington University in St. Louis, St. Louis MO 63130-4899}
\email{alanchang@math.wustl.edu}

\author{Assaf Naor}
\address{Department of Mathematics, Princeton NJ 08544-1000}
\email{naor@math.princeton.edu}

\author{Kevin Ren}
\address{Department of Mathematics, Princeton NJ 08544-1000}
\email{kr5621@princeton.edu}

\thanks{A.~C. was supported by NSF grant DMS-2247233. A.~N. was supported by NSF grant DMS-2054875, BSF grant 2018223, and the Simons Investigator award. K.~R. was supported by an NSF GRFP fellowship.\smallskip \\  \hangindent=0.4cm An extended abstract announcing some of the algorithmic aspects of this work, and titled ``Optimal rounding for Sparsest Cut,'' will appear in the proceedings of the 57th Annual ACM Symposium on Theory of Computing (STOC 2025).}

\usepackage{xcolor}
\definecolor{maroon}{HTML}{AF3235}

\begin{document}

\maketitle

\begin{abstract} We prove that if $(\MM,d)$ is an  $n$-point metric space that embeds quasisymmetrically  into a Hilbert space, then for every $\tau>0$ there is a random subset  $\cZ$ of $\MM$ such that for any pair of points $x,y\in \MM$ with $d(x,y)\ge \tau$, the  probability  that both $x\in \cZ$ and  $d(y,\cZ)\ge \beta\tau/\sqrt{1+\log (|B(y,\kappa \beta \tau)|/|B(y,\beta \tau)|)}$ is $\Omega(1)$, where  $\kappa>1$ is a universal constant and $\beta>0$ depends only on the modulus of the quasisymmetric embedding. The proof relies on a  refinement of the Arora--Rao--Vazirani rounding technique. Among the applications of this result is that the largest possible Euclidean distortion of an $n$-point subset of $\ell_1$ is  $\Theta(\sqrt{\log n})$, and the integrality gap of the Goemans--Linial semidefinite program for the Sparsest Cut problem  on inputs of size $n$  is  $\Theta(\sqrt{\log n})$. Multiple further applications are given. 
\end{abstract}

\setcounter{secnumdepth}{3}
\setcounter{tocdepth}{4}

\tableofcontents

\section{Introduction}

The field of {\em metric embeddings} is a mathematical discipline with interest in its own right, but it is also notable for its rich and multifaceted interactions with a broad swath of pure and applied mathematics, ranging from its origins in geometry and functional analysis, through its profound connections to group theory, topology, harmonic analysis, extension of functions, probability, combinatorics, statistics,  approximation algorithms, computational complexity, and  data structures. The main contribution of the present work resolves a longstanding question in metric embeddings, which has implications for several of the aforementioned directions. Given that the natural readership has varied backgrounds, the results that are obtained herein are presented below in a self-contained manner that introduces all of the relevant background, and explains  in context the motivations and the meanings of what is accomplished. This inevitably leads to a longer and more gradual Introduction. We will therefore open with a  brief  section (Section~\ref{sec:nontechnical}) that quickly covers a selection of the main results assuming familiarity with some history, background, notation, and terminology, while postponing the presentation of precise technical details   and the discussion of the history to subsequent sections. After this initial overview, we will restart the Introduction in Section~\ref{sec:detailed} while presenting our results more formally and with relevant background, and we will also later describe (mainly in Section~\ref{sec:history}) the history and key ideas of our proofs. 

\subsection{Informal overview}\label{sec:nontechnical} Bourgain's  embedding theorem~\cite{Bou85} asserts that any $n$-point metric space embeds into a Hilbert space with (bi-Lipschitz) distortion $O(\log n)$. This is optimal up to  the value of the implicit universal constant, as discovered independently by Linial-- London--Rabinovich~\cite{LLR95} and Aumann--Rabani~\cite{AR98}; the ``culprits'' which exhibit this optimality are $n$-vertex expander graphs. 

When a metric space $\MM$ does not admit a bi-Lipschitz embedding into a Hilbert space (which is typically the case), one can quantify  the extent to which $\MM$ is non-Euclidean by studying its Euclidean distortion growth sequence $\{\cc_2^n(\MM)\}_{n=1}^\infty$, which is defined by letting $\cc_2^n(\MM)$ be  the supremum of the Euclidean distortion   $\cc_2(\sub)$ over all   $n$-point subsets $\sub$ of $\MM$.  The aforementioned embedding theorem of Bourgain says that $\cc_2^n(\MM)\lesssim \log n$ for every metric space $\MM$. As any $n$-point metric space is isometric to a subset of $\ell_\infty^{n}$, the aforementioned nonembeddability result of  Linial-- London--Rabinovich and Aumann--Rabani says that $\cc_2^n(\ell_\infty)\asymp \log n$. Of course, one also obviously has $\cc_2^n(\ell_2)=1$. Prior to the present work and despite major efforts over the past decades,  the growth rate of $\cc_2^n(\bX)$ as $n\to \infty$ was not known up to positive constant factors for {\em any} infinite dimensional normed space $\bX$ other than the above trivial cases of $\ell_2$ and $\ell_\infty$, and obvious variants thereof (namely, $\cc_2^n(\bX)=O(1)$ if (and only if)  $\bX$ is $O(1)$-isomorphic to a Hilbert space, and for $\cc_2^n(\bX)\asymp \log n$ to hold it suffices that $\{\ell_\infty^n\}_{n=1}^\infty$ embed into $\bX$ with distortion $O(1)$, which is equivalent to $\bX$ not having finite cotype by a theorem of Maurey--Pisier~\cite{MP73}). 

Here we will prove that $\cc_2^n(\ell_1)\asymp \sqrt{\log n}$. This resolves a famous folklore question in metric geometry and functional analysis that goes back to Enflo's seminal work~\cite{Enf69}, though to the best of our knowledge it was first asked in print by Goemans~\cite{Goe97}. The new result herein is the positive embedding statement $\cc_2^n(\ell_1)\lesssim \sqrt{\log n}$, while multiple  $n$-point subsets of $\ell_1$ have long been known to have Euclidean distortion at least a positive constant multiple of $\sqrt{\log n}$, including the Hamming cubes~\cite{Enf69} and the Laakso and diamond graphs~\cite{Laa00,LP01,NR03}; the fact that our estimate $\cc_2^n(\ell_1)\lesssim \sqrt{\log n}$ is saturated by substantially different $n$-point subsets of $\ell_1$ indicates the difficulty of proving such an embedding result, which, as we will soon explain, holds in much greater generality and thus provides a positive answer within a large class of metric spaces to an old and influential question of Johnson--Lindenstrauss~\cite{JL82} (reiterated in~\cite[page~47]{Bou85}) on the validity of a natural nonlinear version of  John's theorem~\cite{Joh48}. 

   Given $0<s,\e<1$, say that a metric space $(\MM,d_\MM)$ is  $(s,\e)$-quasisymmetrically Hilbertian if there is a one-to-one function $f$ from $\MM$ to a Hilbert space $(\bH,\|\cdot\|_\bH)$ such that $\|f(x)-f(y)\|_\bH\le (1-\e)\|f(x)-f(z)\|_\bH$ for every $x,y,z\in \MM$ satisfying $d_\MM(x,y)\le s d_\MM(x,z)$.   Call $\MM$  quasisymetrically Hilbertian if there are $0<s,\e<1$ for which it is $(s,\e)$-quasisymmetrically Hilbertian.  The above requirement is satisfied trivially for every metric space $\MM$ when $\e=0$ and any $s>0$ (by  assigning points in $\MM$ to an orthonormal system). Thus, we are asking here for an embedding into a Hilbert space that is better---however slightly---than this trivial embedding, a requirement that holds when $\MM$ is quasisymmetrically equivalent to a subset of a Hilbert space in the sense of Ahlfors--Beurling~\cite{BA56} and Tukia--V\"ais\"al\"a~\cite{TK80}, which is a classical and extensively studied notion. The class of quasisymmetrically Hilbertian metric spaces encompasses disparate geometries, though not all metric spaces (e.g.,  any space that contains arbitrarily large expander graphs is not quasisymmetrically Hilbertian). One of our main embedding results (which includes the above statement on $\cc_2^n(\ell_1)$ as a special case), is that if  $\MM$ is  $(s,\e)$-quasisymmetrically Hilbertian, then $\cc_2^n(\MM)\lesssim_{s,\e} \sqrt{\log n}$. This  more general setup is geometrically  valuable in its own right, but also it includes all metric spaces of negative type, which is crucial for the following  application to computer science.

   The Sparsest Cut problem is a central and extensively studied topic in approximation algorithms due to its intrinsic interest, its  deep connections to pure mathematics, and its varied use as a subroutine for many other algorithmic tasks~\cite{Shm96}. It takes as its input an integer  $n$ (the ``universe size''), and two collections of symmetric nonnegative pairwise weights $\{C_{ij}\}_{(i,j)\in \{1,\ldots,n\}\times \{1,\ldots,n\}}$ and $\{D_{ij}\}_{(i,j)\in \{1,\ldots,n\}\times \{1,\ldots,n\}}$ (called ``capacities'' and ``demands,'' respectively), and its goal is to compute (or approximate) in polynomial time the minimum of $(\sum_{i\in S}\sum_{j\in \{1,\ldots,n\}\smallsetminus S} C_{ij})/\sum_{i\in S}\sum_{j\in \{1,\ldots,n\}\smallsetminus S} D_{ij}$ over all possible nonempty proper subsets $S$ of $\{1,\ldots ,n\}$ (this ratio should be thought of as the total capacity that crosses the boundary of the bipartition  $(S,\{1,\ldots,n\}\smallsetminus S)$ of $\{1,\ldots,n\}$, divided by the total demand that crosses its boundary).  In the mid-1990s, Goemans and Linial (independently) proposed an algorithm for Sparsest Cut, called the Goemans--Linial semidefinite program, but despite intensive efforts  prior to the present work the asymptotic growth rate as $n\to \infty$ (up to positive universal constant factors) of the performance of this famous algorithm remained unknown  (no other algorithm is currently conjectured to have asymptotically better performance than  the Goemans--Linial semidefinite program). Letting $\alpha_\mathsf{GL}(n)$ denote the smallest $\alpha\ge 1$ such that the Goemans--Linial semidefinite program outputs a number that is at most $\alpha$ times the aforementioned minimum of the ratio of total capacity to total demand across the boundary of a bipartition, we deduce herein that $\alpha_\mathsf{GL}(n)$ is bounded from above and from below by positive universal constant multiples of $\sqrt{\log n}$. Our new contribution is the positive algorithmic statement $\alpha_\mathsf{GL}(n)\lesssim \sqrt{\log n}$, as the matching impossibility result was already obtained (quite laboriously) in~\cite{naor2018vertical}.

Prior to passing to an overview of our main geometric structural result, which answers a question from~\cite{ALN05-STOC}, we will mention one more of its consequences; more applications will be described later in the Introduction.  Gromov classically associated a quantity called the observable diameter to a metric probability space $(\MM,d_\MM,\mu)$. Briefly and informally (the precise definition is recalled  in Section~\ref{sec:math applications}), the observable diameter quantifies the extent to which one could measure the size of the metric space  using sufficiently smooth real-valued functions as observations, while accounting for possible observational errors by discarding at most a fixed fraction (with respect to the given probability  measure $\mu$) of the observations. L\'evy's spherical isoperimetric theorem~\cite{Lev51} implies that the ratio between the observable diameter of the Euclidean $n$-sphere  to its actual (metric) diameter is of order $1/\sqrt{n}$. Here we prove that the Euclidean $n$-sphere is extremal in this regard among all nondegenerate $n$-dimensional metric probability spaces that are quasisymmetrically Hilbertian (both ``nondegenerate'' and ``$n$-dimensional'' need to be suitably defined here; see Section~\ref{sec:math applications} for the precise formulation), namely, up to dimension-independent positive constant factors, the Euclidean $n$-sphere has the smallest possible observable diameter among all such  spaces (this is new even for arbitrary Borel probability measures on $\R^n$).

\subsubsection{Random zero sets} Consider the unit integer grid in $\R^2$, colored black and white as a checker board. By randomly rotating and shifting this grid (say, rotating by a uniformly random angle and shifting by a vector chosen uniformly from a large bounded region), and then considering the union of all the black squares, one obtains a random subset of $\R^2$ which is an archetypical example of  what is called (following the terminology of~\cite{ALN05-STOC}) a random zero set (at unit scale; by dilating the grid one obtains the corresponding object at any given scale). There are multiple ways to perform such constructions in  $\R^n$ for any $n\in \N$ (e.g., flip an independent coin for each  integer cell and consider the union of all of those cells whose outcome is ``tails''). Another example of a useful random zero set for the Euclidean space $\R^n$ is as follows. Choose a direction $v\in S^{n-1}$ uniformly at random (i.e., $v$ is distributed according to the normalized surface area measure on the unit Euclidean sphere in $\R^n$), and consider the slabs $\big\{\{x\in \R^n:\ k\le \langle x,v\rangle<k+1\}:\ k\in \Z\big\}$. After coloring  those slabs back and white in an alternating fashion, shifting them randomly in the direction of $v$ and considering the union of the white slabs, one obtains another archetypical example of a random zero set. Constructions of this type are commonly used for organizing Euclidean point sets in harmonic analysis, geometry, algorithms and data structures. 

The above procedures rely on properties of $\R^n$ that are not purely metric (coordinate structure, random rotations). We wish to obtain meaningful analogues of such random zero sets in an arbitrary quasisymmetrically Hilbertian metric measure space $(\MM,d_\MM,\mu)$. We will consider only finite $\MM$  to circumvent  measurability issues, and only $\mu$ of full support (e.g., the counting measure on $\MM$). The most involved and innovative part of the present work is a construction when $\MM$ is $(s,\e)$-quasisymmetrically Hilbertian for any scale $\tau>0$ a random subset $\cZ_\tau$ of $\MM$ such that for {\em every} pair of points $x,y\in \MM$ with $d_\MM(x,y)\ge \tau$ and any $0<\lambda\le 1/2$, the probability that both  $x$ belongs to $\cZ_\tau$ and the distance of $y$ to $\cZ_\tau$ is at least $\lambda\tau$ is greater than a positive universal constant multiple of  $(\mu(B(y,19\beta\tau))/\mu(B(y,\beta\tau)))^{-\lambda^2}$, where $\beta=\beta(s,\e)$ depends only on $s,\e$ and $B(y,r)$ denotes the closed ball in $\MM$ of radius $r>0$ centered at $z\in \MM$. 

A crucial feature of the above random zero set is that the stated super-Gaussian probability lower bound depends (optimally) on the local growth of the measure $\mu$ near $y$ at scale $\tau$: If the measure of the  ball $B(y,19\beta\tau)$ is not much larger than the measure of $B(y,\beta\tau)$, then the bound improves. This  is critical for our applications as it leads to a  cancellation that yields estimates that are sharp up to constant factors, while previously known bounds included a redundant lower order factor that tends to $\infty$ as $|\MM|\to \infty$. 

Random zero sets as above were previously available with the local growth $\mu(B(y,19\beta\tau))/\mu(B(y,\beta\tau))$ replaced by the larger quantity  $\mu(\MM)/\min_{z\in \MM}\mu(\{z\})$ by the deep work~\cite{ARV04} of Arora--Rao--Vazirani in combination with the clever use of duality by Chawla--Gupta--R\"acke~\cite{CGR05}; see~\cite{ALN05-STOC}. The utility to metric embeddings of possibly improving this to random zero sets with the above local growth guarantee  was realized long ago, due to its compatibility with the measured descent embedding method~\cite{KLMN}. Indeed, the question whether  one could construct such random zero sets  was posed in~\cite{ALN05-STOC} while expressing skepticism that this  is possible  due to the nonlocal nature of the Arora--Rao--Vazirani reasoning. The fact that they do exist is an unexpected turn of events which leads to a conceptual   change of perspective on a major line  work in the literature on metric embeddings, as will be clarified in subsequent sections. Our proof introduces novel enhancements of the Arora--Rao--Vazirani  approach to rounding semidefinite programs (via the perspective that has been subsequently developed by Rothvoss~\cite{rothvoss2016lecture}), and we expect that more applications of this will be found beyond those that are obtained herein.

\subsection{Detailed technical  statements and background}\label{sec:detailed}

\noindent Unless stated otherwise, all the metric spaces that are discussed herein will be tacitly assumed to contain at least two points.  Our main result is the following geometric structural theorem for metric spaces:

\begin{theorem}\label{thm:random zero} There is a universal constant $\kappa>1$ with the following property. Given $\eta:[0,\infty)\to [0,\infty)$ and a finite metric space $(\MM,d)$ that has a modulus-$\eta$ quasisymmetric embedding into a Hilbert space, there is $\beta=\beta(\eta)>0$  that depends only on $\eta$ such that for every nondegenerate measure $\mu$ on $\MM$ and for every $\tau>0$ there exists a probability measure $\prob=\prob^{\tau,\MM}$ on the nonempty subsets $2^\MM\setminus\{\emptyset\}$ of $\MM$ that satisfies\footnote{We will use throughout the ensuing text the following (standard) conventions for asymptotic notation, in addition to  the usual $O(\cdot),o(\cdot),\Omega(\cdot), \Theta(\cdot)$ notation. Given $a,b>0$, by writing
$a\lesssim b$ or $b\gtrsim a$ we mean that $a\le \kappa b$ for some
universal constant $\kappa>0$, and $a\asymp b$
stands for $(a\lesssim b) \wedge  (b\lesssim a)$. Thus, we are asserting in~\eqref{eq:in main thm} that the probability is at least some positive universal constant. When we will need to allow for dependence on parameters, we will indicate it by subscripts. For example, in the presence of auxiliary objects $q,U,\phi$, the notation $a\lesssim_{q,U,\phi} b$ means that $a\le \kappa(q,U,\phi)b$, where $\kappa(q,U,\phi)>0$ may depend only on $q,U,\phi$, and similarly for the notations $a\gtrsim_{q,U,\phi} b$ and $a\asymp_{q,U,\phi} b$. Thus, in Theorem~\ref{thm:random zero} we can also write $\beta\asymp_\eta 1$.}
\begin{equation}\label{eq:in main thm}
\forall x,y\in \MM, \qquad d(x,y)\ge \tau \implies \prob\Bigg[ \emptyset\neq  \cZ\subset \MM:\  d(y,\cZ) \ge \frac{\beta \tau}{
\sqrt{1+\log\frac{\mu(B(y,\kappa\beta\tau))}{\mu(B(y,\beta\tau))}}}\quad  \mathrm{and}\quad      x\in \cZ\Bigg]\gtrsim 1.
\end{equation}
\end{theorem}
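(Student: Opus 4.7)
The plan is to refine the Arora--Rao--Vazirani rounding machinery (in the Rothvoss formulation) so that the separation guarantee for each fixed pair $(x,y)$ depends only on how concentrated $\mu$ is near $y$, rather than on the global cardinality of $\MM$. As a setup, rescale $d$ so that $\tau=1$, apply the modulus-$\eta$ quasisymmetric embedding $f\colon\MM\to\bH$, and use quasisymmetry to normalize so that $\|f(x)-f(y)\|_\bH\asymp_\eta 1$ whenever $d(x,y)\asymp 1$ and $\|f(x)-f(y)\|_\bH\lesssim_\eta d(x,y)$ throughout. A plain Gaussian random half-space already produces a subset $\cZ_0\subset\MM$ for which the desired event has probability $\gtrsim 1$ whenever the target separation $\sigma$ is itself at most a universal constant; the nontrivial content of \eqref{eq:in main thm} is therefore the regime in which many points of $(\MM,\mu)$ cluster near $y$, where $\sigma$ must be allowed to decay like $1/\sqrt{\log(\text{local growth})}$.

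The core new step is a $\mu$-weighted, local enhancement of the ARV matching lemma. Classically, if $S\subset\bH$ is a finite set whose pairwise distances are bounded below on the relevant scale, then after one Gaussian projection the ARV structure theorem produces a large matching with rounding separation $\Omega(1/\sqrt{\log|S|})$. What is needed instead is, for a distinguished base point $y$, a matching partner whose rounding separation improves to $\Omega(1/\sqrt{\log N_y})$, where $N_y$ counts only the ``effective'' points of $S$ inside a Hilbert-metric neighborhood of $f(y)$. Quasisymmetry transports the $\mu$-mass ratio $\mu(B(y,\kappa\beta))/\mu(B(y,\beta))$ into precisely such an effective local count, for a universal $\kappa$ and $\beta=\beta(\eta)$. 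A Gaussian chaining argument, executed only inside this local cluster of $\mu$-mass but still respecting the global marginal $\prob[x\in\cZ]\gtrsim 1$, should yield the claimed logarithmic improvement. A Chawla--Gupta--R\"acke-style LP duality, cast in a $\mu$-weighted form, then converts the deterministic matching bound into the probabilistic statement $\E[d(y,\cZ)\1_{\{x\in\cZ\}}]\gtrsim (1+\log(\mu(B(y,\kappa\beta))/\mu(B(y,\beta))))^{-1/2}$, which recovers \eqref{eq:in main thm} after rescaling back.

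The main obstacle is the local ARV step. Classical ARV is intrinsically global: the $\sqrt{\log n}$ bound arises from a chaining that exhausts the whole geometry of $S$, and the skepticism expressed in earlier work about the non-local character of ARV rounding is precisely that the chain cannot naively be truncated at a small ball around $y$ without spoiling the marginal on $x$. The whole game is to engineer the chain so that its length \emph{around} $y$ is controlled only by the logarithm of the local $\mu$-mass ratio, uniformly in $y$, while the marginal $\prob[x\in\cZ]\gtrsim 1$ is preserved for the chosen $x$; balancing these two requirements is what I expect to be the technical heart of the proof.
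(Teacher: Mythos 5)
Your plan correctly identifies the outer architecture — quasisymmetric embedding into Hilbert space, an ARV-style rounding in Rothvoss's formulation, a Gaussian-process/chaining step, and a Chawla--Gupta--R\"acke duality to pass from a matching bound to the probabilistic statement. But you explicitly leave the technical heart open: you write that ``engineering the chain so that its length around $y$ is controlled only by the logarithm of the local $\mu$-mass ratio \ldots is what I expect to be the technical heart of the proof.'' That is precisely the step that must actually be supplied, and as written your proposal does not contain an idea that would accomplish it.

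The paper's resolution is a \emph{universally compatible compression} (its Construction~\ref{construction:compression} and Proposition~\ref{thm:universally compatible compression}), which you do not anticipate. One introduces $\rho(x)=1+\tfrac{\zeta}{C}\sqrt{\log\frac{\mu(B(x,19\tau))}{\mu(B(x,\tau))}}$, builds hierarchically nested $(2\tau)$-nets $\NN_{\xi_1}\subset\NN_{\xi_2}\subset\cdots$ inside the sublevel sets of $\rho$ (ordered by $\rho$-value), and defines a rounding map $q:\MM\to\MM$ sending each $w$ to a nearby net point whose level is chosen by minimizing $\rho$ over $B(w,5\tau)$. The crucial and purely metric-measure fact (Lemma~\ref{lem:h inverse}) is that $|q(B(y,2\tau))|\le\min_{w\in B(y,3\tau)}\mu(B(w,19\tau))/\mu(B(w,\tau))$, so the expected Gaussian maximum over the \emph{compressed} image $\f\circ q(B(y,2\tau))$ is controlled by the local-growth logarithm (Corollary~\ref{cor:process}). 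This replaces the global $\sqrt{\log|\MM|}$ and is what makes the chaining ``local.'' Your sketch attributes this reduction to quasisymmetry (``quasisymmetry transports the $\mu$-mass ratio \ldots into precisely such an effective local count''), but that is not how it works: the local count reduction comes from $q$ alone. Quasisymmetry instead serves a different purpose (Lemma~\ref{lem:iterate quasisymmetry}, Proposition~\ref{prop:use quasi to get good graph}): one iterates the contraction~\eqref{eq:def quasi Hilbert} along short discrete paths \emph{within each connected component of the local-growth proximity graph}, to make the compressed oscillations $\sigma$ much smaller than the scale function $\Lambda$, which is what the separation step in Proposition~\ref{prob:graphical prob multiscale} requires.

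A second idea you do not supply is the generalized compatibility notion (Definition~\ref{def:compatibility}) in which the sparsification threshold $\sigma$ is an edge-dependent function rather than a global constant, and in which the graph itself has $y$-dependent radii governed by $\rho$. The ARV chaining of Theorem~\ref{thm:matching from compatibility} is then run at this level of generality; a constant-threshold chaining as in classical ARV or as implicitly in your sketch would not keep track of the local scales and would collapse back to a $\sqrt{\log|\MM|}$ bound. Until the compression and the nonconstant-threshold compatibility notion (or a substitute mechanism of comparable strength) are in place, the proposal remains a plausible plan rather than a proof, and the specific obstacle you yourself flag is unresolved.
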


The statement of Theorem~\ref{thm:random zero} uses the following (standard) conventions for notation and terminology, which we will also maintain throughout what follows. Balls in a metric space $(\MM,d_\MM)$  will always be closed balls, namely $B(x,r)= \{y\in \MM:\ d_\MM(x,y)\le r\}$ for $x\in \MM$ and $r\ge 0$.  The distance of a point $y\in \MM$ from a nonempty subset $\cZ$ of $\MM$ is  $d(y,\cZ)=\inf_{z\in \cZ} d_\MM(y,z)$. A Borel measure on $\MM$ is nondegenerate if $0<\mu(B(x,r))<\infty$ for every $x\in \MM$ and every $r>0$. Finally (and mainly), one says that $(\MM,d_\MM)$ embeds quasisymmetrically into a metric space $(\NN,d_\NN)$ if there is an injection $\f:\MM\to \NN$ and  an increasing modulus $\eta:[0,\infty)\to [0,\infty)$ with $\lim_{s\to 0}\eta(s)=0$, for which  every distinct $x,y,z\in \MM$ satisfy
\begin{equation}\label{eq:def quasi}
\frac{d_\NN(\f(x),\f(y))}{d_\NN(\f(x),\f(z))}\le \eta \bigg(\frac{d_\MM(x,y)}{d_\MM(x,z)}\bigg).
\end{equation}
One calls such   $\f$ a modulus-$\eta$ quasisymmetric embedding of $\MM$ into $\NN$.

The notion of quasisymmetric embedding can be traced back to the classical work~\cite{BA56}, though it has been formally introduced in~\cite{TK80}. This important concept  has been investigated extensively; see for examples the survey~\cite{Vai99} and the monograph~\cite{Hei01} for an indication of the large body of work on this topic.  It suffices to say here that~\eqref{eq:def quasi} encodes the fact that $\f$ preserves the ``thinness'' of triangles, and hence it roughly preserve ``shape.'' In contrast, a bi-Lipschitz  embedding (see~\eqref{eq:def distortion} below) roughly preserves ``size,'' hence a fortiori it preserves shape. The requirement that $(\MM,d_\MM)$ embeds quasisymmetrically into $(\NN,d_\NN)$ is therefore quite weak,\footnote{As an indication of this,  it has long been unknown (see~\cite[problem~8.3.1]{Vai99}) whether {any} two separable infinite dimensional Banach spaces are quasisymmetrically equivalent to each other, i.e., there exists  a quasisymmetric bijection between them, and this possibility has been ruled out only relatively recently~\cite{Nao12}. As another illustration of the subtle restrictions  that a quasisymmmetry can impose, by~\cite{DS89} there are two closed $4$-dimensional manifolds that are homeomorphic but not quasisymmetrically homeomorphic. The discussion herein provides more obstructions to the existence of a quasisymmetry.} so  Theorem~\ref{thm:random zero} applies to a rich class of metric spaces, including notably those which have a snowflake that admits a bi-Lipschitz embedding into a Hilbert space, and in particular metric spaces of negative type (we will discuss these examples further below).

Theorem~\ref{thm:random zero} settles a problem that was considered by experts (for reasons that will soon become evident) ever since the appearance of the Arora--Rao--Vazirani  rounding algorithm~\cite{ARV04} and the measured descent embedding technique~\cite{KLMN}. The question whether  Theorem~\ref{thm:random zero} holds   was first  posed in the literature in~\cite{ALN05-STOC} (specifically, see the paragraph just before Section~3 there), though~\cite{ALN05-STOC}  expresses implicit doubt that   Theorem~\ref{thm:random zero} could be valid. The availability of Theorem~\ref{thm:random zero} is indeed a somewhat surprising development as it leads to a  change of perspective on (and a sharp improvement of) a line of work that straddles metric geometry, functional analysis and computer science. We will return to this in Section~\ref{sec:history}, which is devoted to a discussion of the history and prior results that were obtained in this direction, and a description of the conceptual  innovations that occur in our proof of Theorem~\ref{thm:random zero}. An explanation of the meaning and origin of~\eqref{eq:in main thm}, which is commonly called a ``random zero set'' requirement, appears in Section~\ref{sec:math applications} and Section~\ref{sec:history}. We will next explain some corollaries of Theorem~\ref{thm:random zero}.


\subsection{Geometric and algorithmic consequences of Theorem~\ref{thm:random zero}}

Here, we will explain some applications of Theorem~\ref{thm:random zero}; doing this will also demonstrate why it was sought-after  over the past 20 years by researchers in this area. Much of what we will describe belongs to the well-known intended consequences of this line of research (see Section~\ref{sec:history} for the history). Other parts are indeed corollaries of Theorem~\ref{thm:random zero} that have a short justification (modulo the available literature), but to the best of our knowledge they were not previously anticipated. We will recall all of the needed terminology when it will become relevant as we go along, thus making the ensuing discussion  self-contained.

\subsubsection{Quasisymmetrically Hilbertian metric spaces}

If $(\MM,d)$ is a metric space and $\{e_x\}_{x\in \MM}$ denotes the standard coordinate basis of $\ell_2(\MM)$,\footnote{The functional-analytic notation that is used herein is entirely standard, as in e.g.~the classical treatise~\cite{LT77}.} then for every increasing function $\eta:[0,\infty)\to [0,\infty)$ that satisfies $\lim_{s\to 0^+}\eta(s)\ge 1$ and for every distinct $x,y,z\in \MM$ we have
$$
\frac{\|e_x-e_y\|_{\ell_2(\MM)}}{\|e_x-e_z\|_{\ell_2(\MM)}}=\frac{\sqrt{2}}{\sqrt{2}}=1\le \lim_{s\to 0^+}\eta(s)< \eta \bigg(\frac{d(x,y)}{d(x,z)}\bigg).
$$
This observation shows that the existence of an embedding for which~\eqref{eq:def quasi} holds with $\NN$ a Hilbert space is a vacuous requirement (it is always satisfied) whenever $\lim_{s\to 0^+}\eta(s)\ge 1$.  Of course, $\lim_{s\to 0^+}\eta(s)=0$ is part of the definition of a quasisymmetric embedding, so this   trivial setting is excluded by the assumption of Theorem~\ref{thm:random zero}. Nevertheless, our proof of Theorem~\ref{thm:random zero} will show that its conclusion holds even if we require only that $\lim_{s\to 0^+}\eta(s)<1$. In other words, existence of an embedding into a Hilbert space that satisfies anything beyond the above trivial guarantee suffices for our purposes. It seems worthwhile to introduce the following terminology for this (perhaps somewhat deceptively) non-stringent    requirement:

\begin{definition}[quasisymmetrically Hilbertian metric space]\label{def:quasisym metric space} For $0<s,\e<1$, say that a metric space $(\MM,d)$ is $(s,\e)$-quasisymmetrically Hilbertian if there is an injection $\f:\MM\to \bH$ into a Hilbert space $\bH$ such that
\begin{equation}\label{eq:def quasi Hilbert}
\forall x,y,z\in \MM,\qquad d(x,y)\le s d(x,z)\implies  \|\f(x)-\f(y)\|_{\bH}\le (1-\e)\|\f(x)-\f(z)\|_{\bH}.
\end{equation}
Say that $(\MM,d)$ is quasisymmetrically Hilbertian if there exists such a $\f$ for some $0<s,\e<1$.
\end{definition}

Using the terminology of Definition~\ref{def:quasisym metric space}, we can now state the following strengthening of Theorem~\ref{thm:random zero}:

\begin{theorem}[generalization of Theorem~\ref{thm:random zero}]\label{thm:random zero general} There is a universal constant $\alpha>1$ with the following property. Given $0<s,\e\le 1/2$, denote $\beta=s^{\alpha/\e}$. Suppose that a finite metric space $(\MM,d)$ is $(s,\e)$-quasisymmetrically Hilbertian and that $\mu$ is a nondegenerate measure on $\MM$. Then, for every $\tau>0$ there is a probability measure $\prob^\tau$ on  $2^\MM\setminus\{\emptyset\}$ such that for every $0<\lambda\le 1/2$ and every $x,y\in \MM$ with $d(x,y)\ge\tau$ we have\footnote{The  constant  $19$ that appears in~\eqref{eq:in main thm-general} is included for concreteness as it arises from the ensuing proof, but it is neither claimed to be sharp nor is it important for the applications of~\eqref{eq:in main thm-general} that are obtained herein. The same is true for the constants in~\eqref{eq:super exponential general metric}.}
\begin{equation}\label{eq:in main thm-general}
 \prob^\tau\big[ \emptyset\neq  \cZ\subset \MM:\  d(y,\cZ) \ge \lambda\beta \tau   \quad  \mathrm{and} \quad x\in \cZ  \big]\gtrsim  \left(\frac{\mu(B(y,19\beta\tau))}{\mu(B(y,\beta\tau))}\right)^{-\lambda^2}.
\end{equation}
\end{theorem}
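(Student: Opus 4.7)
My plan is to first convert the qualitative hypothesis~\eqref{eq:def quasi Hilbert} into a quantitative H\"older-type estimate. Fix an injection $\f:\MM\to \bH$ witnessing that $\MM$ is $(s,\e)$-quasisymmetrically Hilbertian. A direct induction on the iterated form of~\eqref{eq:def quasi Hilbert} gives, for every $k\in \N$ and all distinct $x,y,z\in \MM$,
\begin{equation*}
d(x,y)\le s^k d(x,z)\ \Longrightarrow\ \|\f(x)-\f(y)\|_\bH\le (1-\e)^k\|\f(x)-\f(z)\|_\bH.
\end{equation*}
Choosing $\beta=s^{\alpha/\e}$ for a sufficiently large universal $\alpha$ and iterating $\sim \alpha/\e$ times produces a gap: once the embedding is normalized so that any pair $w_1,w_2\in \MM$ with $d(w_1,w_2)\ge\tau$ satisfies $\|\f(w_1)-\f(w_2)\|_\bH\asymp 1$, the image of $B(y,\beta\tau)$ under $\f$ must be contained in a Hilbert ball around $\f(y)$ of radius $\ll 1/\sqrt{\log(1/\beta)}$. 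This quantitative local-versus-global separation reduces Theorem~\ref{thm:random zero general} to a question about point configurations in $\bH$ with a definite contraction ratio between scale $\tau$ and scale $\beta\tau$.

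Next, I would construct $\cZ$ by Gaussian projection. Sample a standard Gaussian vector $g$ in the finite-dimensional subspace of $\bH$ spanned by $\{\f(z)\}_{z\in \MM}$, together with a random threshold $t$, and take as a candidate zero set a level set of the linear form $z\mapsto \langle g,\f(z)\rangle$ calibrated so that the event $\{x\in \cZ\}$ occurs with constant probability. For any $w\in B(y,\beta\tau)$, the conditional probability that $|\langle g,\f(w)-\f(y)\rangle|$ is of order $\lambda$ has, by the standard Gaussian tail bound, an exponent $-\Omega(\lambda^2)$. If the vectors $\{\f(w)-\f(y)\}_{w\in B(y,\beta\tau)}$ were genuinely spread over $\sim \log(\mu(B(y,19\beta\tau))/\mu(B(y,\beta\tau)))$ essentially orthogonal directions, a union bound in Gaussian space would yield the claimed exponent $-\lambda^2$ against the local entropy directly.

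The refinement of the Arora--Rao--Vazirani rounding technique is needed to handle the case when the local cluster around $y$ degenerates to a near one-dimensional configuration. Following the Chawla--Gupta--R\"acke dual perspective and its reformulation by Rothvoss, I would run the ARV LP/SDP duality argument \emph{localized} to $B(y,\kappa\beta\tau)$: either the negative-type inequalities inherited from the Hilbert embedding show directly that the local cluster spreads into enough orthogonal directions to support the Gaussian-tail union bound at the claimed rate, or a fractional matching between $B(y,\beta\tau)$ and $B(y,\kappa\beta\tau)\setminus B(y,\beta\tau)$ is exhibited and used to run a secondary ARV-style perturbation of $\cZ$ inside $B(y,\kappa\beta\tau)$ that reinjects the missing spread. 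The constant $\kappa$ controls the radius from which this matching may draw, and hence the specific form of the local-entropy ratio in~\eqref{eq:in main thm-general}. Since $d(x,y)\ge \tau$ and $\beta<1$, the point $x$ sits comfortably outside $B(y,\kappa\beta\tau)$, so this local refinement does not disturb the event $\{x\in \cZ\}$.

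The main obstacle will be executing the Arora--Rao--Vazirani matching argument locally. The original ARV argument and the Chawla--Gupta--R\"acke duality proof are intrinsically global, using all pairs of points in $\MM$ and an all-pairs matching, and the challenge here is to truncate these arguments to the ball $B(y,\kappa\beta\tau)$ while still producing a subset whose hitting probability depends only on the local entropy $\log(\mu(B(y,19\beta\tau))/\mu(B(y,\beta\tau)))$ rather than on $\log|\MM|$. A secondary difficulty is that the hypothesis~\eqref{eq:def quasi Hilbert} provides only a one-sided control on Hilbert distances, so the negative-type inequalities underpinning the matching have to be replaced by weaker one-sided versions; carefully tracking this loss through the iteration in the first step is what pins down the specific dependence $\beta=s^{\alpha/\e}$ claimed in the theorem.
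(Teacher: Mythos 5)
Your first two steps line up reasonably well with the paper: iterating the quasisymmetric hypothesis to get a contraction factor after $\sim \log(1/\beta)$ many applications (this is essentially Lemma~\ref{lem:iterate quasisymmetry}), and then realizing the zero set as a random level set of a Gaussian linear form with a random shift (this is the content of Lemma~\ref{lem:basic random union of strips} and Lemma~\ref{lem:multiscale representation}). These ingredients are needed, and you identified them.

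The third step is where your plan diverges decisively from what the paper does, and also where it has a genuine gap rather than just an unfinished outline. You propose to ``run the ARV LP/SDP duality argument \emph{localized} to $B(y,\kappa\beta\tau)$'' and to replace the global matching by a fractional matching between $B(y,\beta\tau)$ and an annulus. This is precisely the naive approach that the paper goes out of its way to say does not work: the ARV chaining argument is intrinsically global — it propagates across the whole vertex set by iteratively replacing a set $U$ by its graph-neighborhood and using Gaussian isoperimetry to control the growth of the supremum of a Gaussian process over $U$ — and the paper explicitly records (see the discussion around~\eqref{eq:sqrt log zero set-history section}) that the earlier authors were skeptical that a local version of~\eqref{eq:in main thm-general} could hold ``due to the nonlocal nature of the Arora--Rao--Vazirani reasoning.'' You do correctly name this as ``the main obstacle,'' but naming an obstacle is not overcoming it; the plan as written does not contain a mechanism that makes the chaining argument terminate after only $\sim\log(\mu(B(y,19\beta\tau))/\mu(B(y,\beta\tau)))$ steps rather than $\sim\log|\MM|$ steps, and this is exactly where the $\log\log n$ loss of the prior work came from.

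The paper's actual resolution does not localize ARV at all. It keeps the chaining argument global (Theorem~\ref{thm:matching from compatibility} runs on the whole graph $\sfG=(\MM,E)$), but pre-composes the embedding $\f$ with a \emph{compression/rounding map} $q:\MM\to\MM$ (Construction~\ref{construction:compression}) built from hierarchically nested $(2\tau)$-nets in the sublevel sets of $\rho(x)=\mu(B(x,19\tau))/\mu(B(x,\tau))$. The effect of $q$ is that $|\f\circ q(B(y,2\tau))| \le \mu(B(y,19\tau))/\mu(B(y,\tau))$ for every $y$ (Lemma~\ref{lem:h inverse}, Corollary~\ref{cor:process}), so the elementary Gaussian-process bound $\E\max \lesssim (\max \|\cdot\|)\sqrt{\log N}$ automatically produces the \emph{local} entropy term in the compatibility condition~\eqref{eq:second compatibility condition}, which in turn makes the chaining argument terminate in a number of rounds controlled by the local measure ratio via the vertex-dependent radius $K(x)$. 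This compression idea, together with the abstract notion of $C$-compatibility of a graph with a Euclidean realization and edge-labelling (Definition~\ref{def:compatibility}), is the new ingredient your proposal does not have; without it, there is no path from your sketch to a correct proof. A secondary, more minor discrepancy: your heuristic ``if the vectors $\{\f(w)-\f(y)\}$ were spread over $\sim\log(\ldots)$ essentially orthogonal directions, a union bound would do it'' is not the mechanism used — the paper never needs approximate orthogonality, only a cardinality bound on the compressed image, which is weaker and robust.
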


Theorem~\ref{thm:random zero} coincides with  the special case $\lambda\asymp 1/\sqrt{1+ \log (\mu(B(y,19\beta\tau))/\mu(B(y,\beta\tau)))}$ of   Theorem~\ref{thm:random zero general}, now with $\kappa$ and the dependence of $\beta$ on $\eta$  given explicitly. To the best of our knowledge, the distributional estimate~\eqref{eq:in main thm-general} in the rest of the range $0<\lambda\le 1/2$ was not previously conjectured in the literature.

In contrast to Theorem~\ref{thm:random zero general}, for general metric spaces the best analogous result is the following theorem:

\begin{theorem}\label{thm:general doubling exponential} Let $(\MM,d)$ be a finite metric space and $\mu$ a nondegenerate measure on $\MM$. For every $\tau>0$ there is a probability measure $\prob^\tau$ on  $2^\MM\setminus\{\emptyset\}$ such that for every $0<\lambda\le 1/8$ and every $x,y\in \MM$ with $d(x,y)\ge\tau$,
\begin{equation}\label{eq:super exponential general metric}
 \prob^\tau\big[ \emptyset\neq  \cZ\subset \MM:\ d(y,\cZ) \ge \lambda \tau \quad  \mathrm{and}\quad    x\in \cZ   \big]\ge \frac14\left(\frac{\mu(B(y,\frac58\tau))}{\mu(B(y,\frac18\tau))}\right)^{-8\lambda}.
 \end{equation}
\end{theorem}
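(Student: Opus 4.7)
The plan is to combine a Calinescu--Karloff--Rabani (CKR) random partition of $\MM$ at scale $\tau$ with an independent fair coin per cluster to form $\cZ$. Sample a radius $R$ uniformly on $[\tau/4, \tau/2)$, and independently assign to each $z \in \MM$ an exponential clock $T_z$ of rate $\mu(\{z\})$ (so that over any set $S \subset \MM$ the $T$-minimizer is $z$ with probability $\mu(\{z\})/\mu(S)$). Sweeping points in increasing order of $T_z$, the cluster of $z$ is $B(z,R)$ minus all previously formed clusters. Finally, flip an independent fair coin for each cluster and let $\cZ$ be the union of the ``heads'' clusters.

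Since $R < \tau/2$, every cluster has diameter strictly less than $\tau \le d(x,y)$, so $x$ and $y$ always lie in distinct clusters $C_x, C_y$; the coins on $C_x$ and $C_y$ are then independent fair coins, themselves independent of the partition. On the ``padding event'' $\{B(y,\lambda\tau) \subset C_y\}$ the only cluster intersecting $B(y,\lambda\tau)$ is $C_y$, so $d(y,\cZ) \ge \lambda\tau$ is equivalent to $C_y$ being ``tails''. Consequently
$$
\prob^\tau\bigl[x \in \cZ,\ d(y,\cZ) \ge \lambda\tau\bigr] \ge \tfrac14 \cdot \prob^\tau\bigl[B(y,\lambda\tau) \subset C_y\bigr],
$$
and it suffices to show that the padding probability is at least $(N(5\tau/8)/N(\tau/8))^{-8\lambda}$, where $N(r) \eqdef \mu(B(y,r))$.

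For the padding bound, note that $B(y,\lambda\tau)$ lies inside a single cluster precisely when the $T$-minimizer over the ``cutter set'' $B(y, R+\lambda\tau)$ actually belongs to $B(y, R-\lambda\tau)$: in that case its CKR ball engulfs $B(y,\lambda\tau)$ while no earlier center's ball meets it. The clock property therefore gives $\prob^\tau[B(y,\lambda\tau) \subset C_y \mid R] = N(R-\lambda\tau)/N(R+\lambda\tau)$, and Jensen's inequality applied to the convex exponential yields
$$
\prob^\tau[B(y,\lambda\tau) \subset C_y] \ge \exp\Bigl(\E\bigl[\log N(R-\lambda\tau) - \log N(R+\lambda\tau)\bigr]\Bigr).
$$
A change of variables rewrites the expectation as $(4/\tau)\bigl[\int_{\tau/4-\lambda\tau}^{\tau/4+\lambda\tau} \log N(u)\,du - \int_{\tau/2-\lambda\tau}^{\tau/2+\lambda\tau} \log N(u)\,du\bigr]$, and since $\lambda \le 1/8$ guarantees $\tau/4-\lambda\tau \ge \tau/8$ and $\tau/2+\lambda\tau \le 5\tau/8$, monotonicity of $\log N$ bounds this below by $(4/\tau) \cdot 2\lambda\tau \cdot [\log N(\tau/8) - \log N(5\tau/8)] = -8\lambda \log(N(5\tau/8)/N(\tau/8))$, as required.

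The main technical point is the Jensen--monotonicity combination in the last step: averaging before taking logs would give only the classical linear-in-$\lambda$ CKR padding estimate, whereas pulling the expectation inside the logarithm upgrades this for free to the geometric (exponential-in-$\lambda$) bound. The only further care needed is that the averaging interval $[\tau/4,\tau/2)$ is chosen wide enough that the endpoint slacks of width $2\lambda\tau$ remain inside $[\tau/8,5\tau/8]$ for every $\lambda \le 1/8$.
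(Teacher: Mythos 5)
Your proposal is correct and follows essentially the same strategy as the paper's proof: a CKR-style random partition of $\MM$ driven by the measure $\mu$ with a uniformly random radius on $[\tau/4,\tau/2)$, an independent fair coin per cluster to form $\cZ$, the padding bound $\mu(B(y,R-\lambda\tau))/\mu(B(y,R+\lambda\tau))$, and then Jensen's inequality together with monotonicity of $\log\mu(B(y,\cdot))$ to upgrade the linear-in-$\lambda$ padding estimate to the exponential bound. The only cosmetic difference is that you realize the CKR partition via exponential clocks indexed by points, whereas the paper realizes the same partition via an i.i.d.\ sequence $\{\sfZ_t\}$ drawn from $\mu$ and indexes the selectors $\{\upsigma_t\}$ by the covering round $t$ rather than by the cluster.
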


Even though Theorem~\ref{thm:general doubling exponential} does not appear in the literature, it follows from a generalization of the proof of the main result of~\cite{MN07}, combined with an idea from~\cite{Rao99}; we will include the proof   of Theorem~\ref{thm:general doubling exponential} in Section~\ref{sec:prove loose ends}. The fact that Theorem~\ref{thm:general doubling exponential} cannot be improved follows from Remark~\ref{rem:general doubling} below. 

In summary, Theorem~\ref{thm:random zero general} states that the additional information that $\MM$ is quasisymmetrically Hilbertian  can be used  to improve the linear dependence on $\lambda$ in the exponent in right hand side of~\eqref{eq:super exponential general metric} to the quadratic dependence on $\lambda$ in the exponent in the right hand side of~\eqref{eq:in main thm-general}, which we will soon see is the best one can hope for.

\subsubsection{Observable diameter, embeddings, nonlinear spectral gaps, and Lipschitz extension}\label{sec:math applications}

It is natural to start by recalling  the following definition, which was introduced in~\cite[Definition~2.2]{ALN05-STOC}:

\begin{definition}[spreading random zero set]\label{def:random zero} Fix $\zeta>0$ and $0<\d\le 1$. A finite metric space $(\MM,d)$  is said to admit a  random zero set which is $\zeta$-spreading with probability $\d$ if for every $\tau>0$ there is a probability measure $\prob^\tau$ on the nonempty subsets $2^\MM\setminus\{\emptyset\}$ of $\MM$ such that
\begin{equation}\label{eq:def  zero set}
\forall x,y\in \MM, \qquad d(x,y)\ge \tau\implies \prob^\tau\Big[ \emptyset \neq  \cZ\subset \MM:\  d(y,\cZ) \ge \frac{\tau}{
\zeta}  \quad  \mathrm{and}\quad  x\in \cZ   \Big]\ge \d.
\end{equation}
\end{definition}

The requirement~\eqref{eq:def  zero set} of Definition~\ref{def:random zero} is simpler than the conclusion~\eqref{eq:in main thm} of Theorem~\ref{thm:random zero},  which involves a spreading parameter $\zeta=\zeta(y,\tau)$ that  depends on the point $y$ and the distance scale $\tau$, suitably encoding what one should think of as the ``local growth rate of the measure $\mu$ near $y$ at scale $\tau$.''   In a similar vein, the requirement~\eqref{eq:def  zero set} of Definition~\ref{def:random zero} is simpler than the conclusion~\eqref{eq:in main thm-general}  of Theorem~\ref{thm:random zero general}, which allows the lower bound $\d$ on the ``separation probability'' to depend on the local growth rate of the measure $\mu$ near $y$ at scale $\tau$. Our first example of the utility of such more refined geometric information deduces  the existence of a traditional random zero set in sense of Definition~\ref{def:random zero}, when the metric space is doubling.

Given $K\in \N$, a metric space $(\MM,d)$ is said to be  $K$-doubling if for every $x\in \MM$ and every $r\ge 0$ there are $x_1,\ldots,x_K\in \MM$ such that $B(x,2r)\subset B(x_1,r)\cup\cdots\cup B(x_K,r)$; note that necessarily $K\ge 2$ if $\MM$ contains at least two points. By~\cite{VK84}, if $\sub\subset \MM$ is finite (it suffices to only assume here is that $(\sub,d)$ is complete~\cite{LS98}), then there is a measure $\mu$ whose support is equal to  $\sub$ such that for every $x\in \sub$ and every $r\ge 0$,
\begin{equation}\label{eq:doubling measure}
0<\mu\big(B(x,2r)\big)\le K^{O(1)}\mu\big(B(x,r)\big).
\end{equation}

By applying Theorem~\ref{thm:random zero general} to a measure $\mu$ as in~\eqref{eq:doubling measure}, we obtain the following statement:

\begin{theorem}\label{cor:spreading doubling} There is a universal constant $\kappa>1$ with the following property. Fix $0<s,\e\le \frac12$, $p\ge 1$, $K\in \N$. Let $(\MM,d)$ be a $K$-doubling metric space that is  $(s,\e)$-quasisymmetrically Hilbertian. Then, every finite subset of $\MM$ admits a random zero set that is $(1/s)^{\kappa/\e}\!\!\sqrt{\max\{1,(\log K)/p\}}$-spreading with probability $e^{-\kappa p}$.
\end{theorem}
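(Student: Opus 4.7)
The plan is a direct reduction to Theorem~\ref{thm:random zero general} via the Vol'berg--Konyagin doubling-measure construction. Fix a finite subset $\sub\subset\MM$: it inherits the $(s,\e)$-quasisymmetrically Hilbertian property from $\MM$ by restricting the embedding $\f$ to $\sub$, and by~\cite{VK84} (cited in~\eqref{eq:doubling measure}) it supports a nondegenerate Borel measure $\mu$ with
$$\mu(B(x,2r))\le K^{C_0}\mu(B(x,r))$$
for all $x\in\sub$ and $r\ge 0$, where $C_0>0$ is a universal constant.

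Write $\beta=s^{\alpha/\e}$ as in Theorem~\ref{thm:random zero general}. Iterating the doubling inequality $\lceil\log_2 19\rceil=5$ times yields $\mu(B(y,19\beta\tau))/\mu(B(y,\beta\tau))\le K^{C_1}$ for a universal $C_1>0$ and every $y\in\sub$ and $\tau>0$. Substituting this bound into~\eqref{eq:in main thm-general} produces, for any $0<\lambda\le 1/2$ and any $\tau>0$, a probability measure $\prob^\tau$ on $2^\sub\setminus\{\emptyset\}$ such that for every $x,y\in\sub$ with $d(x,y)\ge\tau$,
$$\prob^\tau\big[\emptyset\neq\cZ\subset\sub:\ d(y,\cZ)\ge\lambda\beta\tau\ \mathrm{and}\ x\in\cZ\big]\gtrsim K^{-C_1\lambda^2}.$$

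The final step is to optimize the free parameter $\lambda$ against $p$: take $\lambda=\min\{1/2,\sqrt{p/(C_1\log K)}\}$ (we may assume $K\ge 2$, since otherwise $\MM$ has one point and the claim is vacuous). With this choice, $K^{-C_1\lambda^2}\ge e^{-p}$ in both regimes, while
$$\frac{1}{\lambda\beta}\;\asymp\;s^{-\alpha/\e}\sqrt{\max\{1,(\log K)/p\}}.$$
Hence $\prob^\tau$ exhibits $\sub$ as admitting a random zero set in the sense of Definition~\ref{def:random zero} with the asserted spreading and probability, up to universal constants; these are absorbed by taking $\kappa>\alpha$ sufficiently large, using that $1/s\ge 2$ provides unbounded multiplicative slack in the spreading factor $s^{-(\kappa-\alpha)/\e}$, and similarly replaces $e^{-p}$ by $e^{-\kappa p}$. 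The only substantive step is the single invocation of Theorem~\ref{thm:random zero general}, and the conceptual point---where this proof lives or dies---is that the \emph{quadratic} dependence on $\lambda$ in the exponent of~\eqref{eq:in main thm-general} is precisely what permits the spreading to scale as $\sqrt{(\log K)/p}$, whereas a merely linear exponent (as in Theorem~\ref{thm:general doubling exponential}) would only yield $(\log K)/p$.
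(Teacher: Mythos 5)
Your proof is correct and follows essentially the same route as the paper: invoke \cite{VK84} to produce a doubling measure on the finite subset, iterate the doubling bound to control $\mu(B(y,19\beta\tau))/\mu(B(y,\beta\tau))$, feed this into Theorem~\ref{thm:random zero general}, and then optimize $\lambda$ against $p$ (the paper splits explicitly on $1\le p\le (\log K)/4$ versus $p>(\log K)/4$ with $\lambda=\sqrt{p/\log K}$ or $\lambda=1/2$, whereas you encode the same dichotomy as $\lambda=\min\{1/2,\sqrt{p/(C_1\log K)}\}$, a cosmetic difference). Your closing remark correctly identifies the crux: the quadratic exponent in~\eqref{eq:in main thm-general} is what makes the spreading factor $\sqrt{(\log K)/p}$ rather than $(\log K)/p$.
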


\begin{proof}[Proof of Theorem~\ref{cor:spreading doubling} assuming Theorem~\ref{thm:random zero general} ]  If $\sub\subset \MM$ is finite, then use~\cite{VK84} to get a doubling measure   $\mu$ satisfying~\eqref{eq:doubling measure} whose support is equal to $\sub$. Let $\alpha>1$ and $\beta=s^{\alpha/\e}$  be as in Theorem~\ref{thm:random zero general}.  By iterating~\eqref{eq:doubling measure} five times one sees that  $\mu(B(y,19\beta \tau))\le \mu(B(y,2^5\beta\tau))\le K^{O(1)}\mu(B(y,\beta\tau))$ for every $\tau>0$ and $y\in \sub$. Theorem~\ref{thm:random zero general} therefore provides a probability measure $\prob^\tau$ on $2^{\sub}\setminus\{\emptyset\}$ and a universal constant  $\gamma>1$ such that for every $\lambda \in [1/\sqrt{\log K},1/2]$ and every $x,y\in \sub$ with $d(x,y)\ge\tau$ we have
\begin{equation}\label{eq:doubling version gaussian}
 \prob^\tau\big[ \emptyset\neq  \cZ\subset \sub:\  x\in \cZ \quad  \mathrm{and}\quad  d(y,\cZ) \ge \lambda\beta \tau   \big]\ge  K^{-\gamma\lambda^2}.
\end{equation}
If $1\le p\le (\log K)/4$, i.e., $\sqrt{p/\log K}\in [1/\sqrt{\log K},1/2]$, we may use~\eqref{eq:doubling version gaussian} with $\lambda=\sqrt{p/\log K}$ to see that $\prob^\tau$ is $\zeta$-spreading with probability $e^{-\gamma p}$, where $\zeta= 1/(\lambda \beta)= (1/s)^{\alpha/\e}\sqrt{(\log K)/p}$.  If $p> (\log K)/4$, then use~\eqref{eq:doubling version gaussian}  with $\lambda=1/2$ to get that   $\prob^\tau$ is $\zeta$-spreading with probability $K^{-\gamma/4}> e^{-\gamma p}$, where $\zeta= 2/\beta=2(1/s)^{\alpha/\e}$.  The desired conclusion now follows provided  $\kappa>1$ is  a sufficiently large universal constant.
\end{proof}

We will next describe some consequences of Theorem~\ref{cor:spreading doubling}, starting with resolving a problem that was left open in~\cite{naor2005quasisymmetric}. The geometric phenomenon that~\cite{naor2005quasisymmetric} aimed to  establish is: 
\medskip
\begin{displayquote}
{\em ``Up to dimension-independent constant factors, the $n$-dimensional Euclidean sphere has the smallest possible observable diameter  among  all the normalized $e^{O(n)}$-doubling  metric probability spaces that are quasisymmetrically Hilbertian.''}
\end{displayquote}
\medskip

For a metric space $(\MM,d)$, a Borel probability measure $\mu$ on $\MM$, and  $\theta>0$ the $\theta$-observable diameter $$\mathrm{ObsDiam}_\mu^{(\MM,d)}(\theta)$$ of the metric measure space $(\MM,d,\mu)$ is defined~\cite{Gro07} to be the supremum over all possible $1$-Lipschitz functions $f:\MM\to \R$ of the infimum of $\diam_\R(f(\cS))=\sup\{|f(x)-f(y)|:\ x,y\in \cS\}$ over all possible Borel subsets $\cS\subset \MM$ with $\mu(\cS)\ge 1-\theta$. If the metric is clear from the context, then we will use the  notation 
$$\mathrm{ObsDiam}_\mu^\MM(\theta)=\mathrm{ObsDiam}_\mu^{(\MM,d)}(\theta).$$

The observable diameter is   extensively-studied; see e.g.~\cite{Led01,Gro07,Shi16,BF22}. Its qualitative interpretation (e.g.~\cite[page~336]{Ber00}) is to view each real-valued $1$-Lipschitz function as a way to ``observe'' the size of a given metric space while accounting for possible observational errors by discarding at most a $\theta$-fraction (with respect to a given probability measure) of the observations. As explained in~\cite{Gro07}, for every $n\in \N$ and every  $0<\theta<1$, the $\theta$-observable diameter of the unit Euclidean sphere in $\R^n$ equipped with its normalized surface area measure is at most $1/\sqrt{n}$ times a positive factor that depends only on $\theta$.


For $n\in \N$, denote  the standard  unit Euclidean sphere in $\R^n$ by $S^{n-1}=\{x\in \R^n:\ \|x\|_2=1\}$. Throughout what follows, $S^{n-1}$ will be equipped with the metric that is inherited from $\ell_2^n$, and  $\sigma^{n-1}$ will denote the normalized surface-area (probability) measure on $S^{n-1}$. Since $\ell_2^n$ is  $K_n$-doubling for some $K_n\le 5^n$ (e.g.~\cite[Section~2.2]{Sem01}; a better upper bound on $K_n$ appears in~\cite{Ver05}), also  $S^{n-1}\subset \ell_2^n$ is $5^n$-doubling. 

If $(\MM,d)$ is a metric space and $\mu$ is a Borel probability measure on $\MM$, then we say that $\mu$ is normalized if the median of $d(x,y)$ is equal to $1$ when $(x,y)\in \MM\times \MM$ is distributed according to $\mu\times \mu$. Note that $\sigma^{n-1}$ is not normalized per this terminology, but one could normalize it by rescaling the metric by a factor that is bounded from above and from below by positive universal constants.

\begin{theorem}\label{thm:observable} For every $0<s,\e<1$ and every  $n\in \N$, if  $(\MM,d)$ is a  metric space that is $(s,\e)$-quasisymmetrically Hilbertian and $5^n$-doubling, then  every normalized Borel probability measure $\mu$ on $\MM$ satisfies 
\begin{equation}\label{eq:observable lower} 
\forall 0<\theta\le \theta_0,\qquad \mathrm{ObsDiam}_\mu^{\MM}(\theta)\gtrsim_{s,\e} \mathrm{ObsDiam}_{\sigma^{n-1}}^{S^{n-1}}(\theta),
\end{equation}
where $\theta_0>0$ is a universal constant. 
\end{theorem}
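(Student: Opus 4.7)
The plan is to construct a single $1$-Lipschitz witness $f:\MM\to\R$, namely the distance to a well-chosen realization $\cZ^*$ of the random zero set from Theorem~\ref{thm:random zero general}, and to pair this with L\'evy's spherical isoperimetric inequality~\cite{Lev51,Gro07} (which bounds $\mathrm{ObsDiam}_{\sigma^{n-1}}^{S^{n-1}}(\theta)$ above by a universal constant multiple of $\min\{1,\sqrt{\log(1/\theta)/n}\}$) to conclude that the observable diameter of $\mu$ dominates that of the sphere. Setup: since $(\MM,d)$ is $5^n$-doubling, the Vol'berg--Konyagin theorem~\cite{VK84} supplies a nondegenerate probability measure $\nu$ on $\MM$ whose iterated doubling estimate yields $\nu(B(y,19\beta))/\nu(B(y,\beta))\le e^{K'n}$ for every $y\in\MM$, where $\beta=s^{\alpha/\e}$ is the parameter from Theorem~\ref{thm:random zero general} and $K'>0$ is a universal constant.

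Next, apply Theorem~\ref{thm:random zero general} to $(\MM,d,\nu)$ at scale $\tau=1$ with $\lambda:=\min\{\tfrac12,\,c_0\sqrt{\log(1/\theta)/n}\}$ for a sufficiently small universal constant $c_0>0$ (chosen so that $K'n\lambda^2\le\tfrac12\log(1/\theta)$ in either of the two subranges). This produces $\prob^1$ for which, whenever $x,y\in\MM$ satisfy $d(x,y)\ge 1$,
\[
\prob^1\!\bigl[x\in\cZ,\ d(y,\cZ)\ge\lambda\beta\bigr]\;\ge\;c_1\,\theta^{1/2}
\]
for some universal $c_1>0$. The normalization $\operatorname{med}(d(X,Y))=1$ gives $\PP_{(X,Y)\sim\mu\otimes\mu}[d(X,Y)\ge 1]\ge\tfrac12$, so Fubini's theorem yields
\[
\E_{\cZ\sim\prob^1}\!\bigl[\mu(\cZ)\cdot\mu\bigl(\{y:d(y,\cZ)\ge\lambda\beta\}\bigr)\bigr]\;\ge\;\tfrac{c_1}{2}\,\theta^{1/2}.
\]
Selecting a realization $\cZ^*$ attaining this expected product and writing $A:=\cZ^*$ and $B:=\{y:d(y,\cZ^*)\ge\lambda\beta\}$, since $\mu(A),\mu(B)\in[0,1]$ the inequality $\min(a,b)\ge ab$ forces $\mu(A),\mu(B)\ge c_1\theta^{1/2}/2$, which strictly exceeds $\theta$ whenever $\theta\le\theta_0:=(c_1/2)^2$.

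Finally, $f(x):=d(x,\cZ^*)$ is $1$-Lipschitz with $f|_A\equiv 0$ and $f|_B\ge\lambda\beta$, so every Borel $\cS\subset\MM$ with $\mu(\cS)\ge 1-\theta$ must intersect both $A$ and $B$; hence $\diam_\R(f(\cS))\ge\lambda\beta\gtrsim_{s,\e}\min\{1,\sqrt{\log(1/\theta)/n}\}\gtrsim\mathrm{ObsDiam}_{\sigma^{n-1}}^{S^{n-1}}(\theta)$, proving~\eqref{eq:observable lower}. The principal technical subtlety is the coupled calibration of $\lambda$ against the local-growth cost $e^{K'n\lambda^2}$ in the exponent of~\eqref{eq:in main thm-general}: $\lambda$ must be proportional to $\sqrt{\log(1/\theta)/n}$ so that the spreading $\lambda\beta$ matches the sphere's Gaussian observable scale, while simultaneously the resulting probability bound must remain a \emph{strictly larger} power of $\theta$ than $\theta$ itself, so that the two sets $A,B$ singled out by Fubini retain mass above the deletion threshold. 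This is precisely the regime in which the \emph{super-Gaussian} (as opposed to merely exponential) decay afforded by Theorem~\ref{thm:random zero general} is essential, and is why the weaker estimate of Theorem~\ref{thm:general doubling exponential} would fall short.
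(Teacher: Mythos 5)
Your proposal is substantially the same argument as the paper's, repackaged. The paper goes through an isoperimetric intermediate (Theorem~\ref{thm:iso p}, derived via Lemma~\ref{lem:zero to iso} and Lemma~\ref{lem:separated sets} from the spreading random zero set of Theorem~\ref{cor:spreading doubling}), and then converts the isoperimetric lower bound to an observable-diameter lower bound via~\eqref{eq:relations obs iso}. You skip that intermediate layer and directly build the $1$-Lipschitz witness $f=d(\cdot,\cZ^*)$. The Fubini step to select $\cZ^*$ so that the two sets $A=\cZ^*$ and $B=\{d(\cdot,\cZ^*)\ge\lambda\beta\}$ both have $\mu$-mass $>\theta$, the use of $\min(a,b)\ge ab$ on $[0,1]$, and the calibration $\lambda\asymp\sqrt{\log(1/\theta)/n}$ against the local-growth cost $e^{K'n\lambda^2}$ so as to beat $\theta$, are all correct and mirror (in a more streamlined form) the corresponding steps in the paper's Lemma~\ref{lem:zero to iso} and the proof of Theorem~\ref{thm:iso p}. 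Your packaging is arguably cleaner because it avoids passing through $I_\mu^\MM$ explicitly, though it proves slightly less (the paper's Theorem~\ref{thm:iso p} is stated for all $\phi$ rather than only at the calibrated scale, which is reused to prove Theorem~\ref{thm:no-super} as well).

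One genuine gap: Theorem~\ref{thm:random zero general}, and the Vol'berg--Konyagin measure as you invoke it, are stated for \emph{finite} metric spaces, while Theorem~\ref{thm:observable} concerns an arbitrary $5^n$-doubling metric space $\MM$ with a Borel probability measure~$\mu$. Your argument as written only covers finite $\MM$. The paper addresses this with an explicit truncation-and-discretization step (Lemma~\ref{lem:trivial approx by net}), which replaces $\mu$ by a finitely supported $\nu$ on a bounded piece of $\MM$ while nearly preserving both the normalization~\eqref{eq:fdfp} and the isoperimetric function. You need some analogue of this reduction; without it, your appeal to Theorem~\ref{thm:random zero general} (and to~\cite{VK84}) is not directly justified. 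Once the reduction to finite $\MM$ is supplied, the rest of your proof goes through.
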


Theorem~\ref{thm:observable}   establishes that the above extremal property of the observable diameter of the Euclidean sphere indeed holds; its (simple) derivation from Theorem~\ref{cor:spreading doubling} appears in Section~\ref{sec:obs}.

It would be worthwhile to investigate the extent to which the phenomenon that is expressed in Theorem~\ref{thm:observable}  holds for metric spaces that need not be quasisymmetrically Hilbertian. For example, we do not know if when $2<p<\infty$, any $e^{O(n)}$-doubling subset $\MM$ of $\ell_p$ satisfies~\eqref{eq:observable lower} (for any Borel probability measure $\mu$ on $\MM$), with the implicit dependence on $s,\e$ now replaced by a dependence on $p$; if this is not the case, then it would be interesting to determine the asymptotic rate (as $n\to \infty$) at which the smallest possible observable diameter of such spaces tends to $0$. Theorem~\ref{thm:observable}  does not cover this setting because $\ell_p$ is not quasisymmetrically Hilbertian when $p>2$; this follows from~\cite{Nao12}, as explained in Remark~\ref{rem:path} below.  It is simple to see that some assumption on the $e^{O(n)}$-doubling metric  space is needed for the conclusion~\eqref{eq:observable lower}  of Theorem~\ref{cor:spreading doubling} to hold with a dimension-independent constant factor (i.e., Theorem~\ref{cor:spreading doubling}  does not hold for subsets of $\ell_\infty$); indeed, expander graphs demonstrate this, as explained in Remark~\ref{rem:expander obs} below. 

Theorem~\ref{thm:observable}  is a manifestation of the following  impossibility result for super-Gaussian concentration of any quasisymmetrically Hilbertian doubling metric probability space.  Given a metric space $(\MM,d)$ and a Borel probability measure $\mu$ on $\MM$,  the isoperimetric (or concentration) function of the metric probability space $(\MM,d,\mu)$ is defined as follows, where $\mathcal{Bor}{(\MM,d)}\subset 2^\MM$ denotes the Borel subsets of $\MM$: 
\begin{equation}\label{eq:def:isoperimetric function}
\forall t\ge 0,\qquad I_\mu^{(\MM,d)}(t)\eqdef \sup_{\substack{\emptyset \neq \sub\in \mathcal{Bor}{(\MM,d)}\\\mu(\sub)\ge \frac12}} \mu\big(\{x\in \MM:\ d(x,\sub)\ge t\}\big).
\end{equation}
We will use the  notation $I_\mu^\MM(t)=I_\mu^{(\MM,d)}(t)$ when the  metric is clear from the context. 

Concentration of measure is related to  the observable diameter   through the following  relations, which hold for every complete metric space $(\MM,d)$ and every Borel probability measure $\mu$ on $\MM$; their simple derivations can be found in e.g.~\cite[Proposition~1.12]{Led01} and~\cite[Remark~2.28]{Shi16}. 
\begin{equation}\label{eq:relations obs iso}
\forall \theta,t>0,\qquad I_\mu^\MM(t)>\theta \implies \mathrm{ObsDiam}_\mu^{\MM}(\theta)\ge t  \qquad\mathrm{and}\qquad  I_\mu^\MM(t)\le \theta\implies \mathrm{ObsDiam}_\mu^{\MM}(2\theta)\le 2t. 
\end{equation}

The solution~\cite{Lev51} of the isoperimetric problem for the Euclidean sphere implies that for any $n\in \N$, 
\begin{equation}\label{eq:levy}
\forall t>0,\qquad I^{S^{n-1}}_{\sigma^{n-1}}(t)\lesssim e^{-O(n)t^2}.
\end{equation}
Alternatively, by the solution~\cite{borell1975brunn,sudakov1978extremal} of the isoperimetric problem for the Gaussian measure, if we rescale the metric on $\ell_2^n$ by a factor of order $1/\sqrt{n}$ so that the standard Gaussian measure on $\ell_2^n$ will be normalized, the isoperimetric function of the resulting metric probability space is also bounded from above by the right hand side of~\eqref{eq:levy}. More such examples are available in the literature; see  e.g.~\cite{Mau79,Gro80,MS86,GM87}, and a systematic treatment can be found in~\cite{Led01}. Inspired by~\cite{Cha14},\footnote{In~\cite{Cha14}, even weaker estimates about variance rather than concentration are said to be super-concentrated.} we say that a metric probability space exhibits the super-concentration phenomenon if its isoperimetric function is asymptotically smaller (as $n\to \infty$) than the right hand side of~\eqref{eq:levy}. This phenomenon is useful when available, as demonstrated in~\cite{Cha14}, but Theorem~\ref{thm:no-super} below, which is also a simple consequence of Theorem~\ref{cor:spreading doubling} that we will derive in Section~\ref{sec:obs},  asserts the following ``no measure is super-concentrated'' phenomenon: If $(\MM,d)$ is a quasisymmetrically Hilbertian metric space that is $e^{O(n)}$-doubling, then $(\MM,d,\mu)$   does not have super-concentration for any Borel probability measure $\mu$ on $\MM$ whatsoever. 

\begin{theorem}\label{thm:no-super} For every $0<s,\e<1$ there exists $c=c(s,\e)>0$ with the following property.\footnote{An inspection of the ensuing proofs reveals that if $0<s,\e\le \frac12$, then both the constant $c$ in~\eqref{eq:isoperimetric lower intro} and the implicit constant factor in~\eqref{eq:observable lower} can be taken to be a positive universal constant multiple of $s^{\kappa/\e}$, where $\kappa$ is the  constant in Theorem~\ref{cor:spreading doubling}.} If $n\in \N$ and $(\MM,d)$ is a  metric space that is $(s,\e)$-quasisymmetrically Hilbertian and $5^n$-doubling, then  every normalized Borel probability measure $\mu$ on $\MM$ satisfies
\begin{equation}\label{eq:isoperimetric lower intro}
\forall 0<\phi\le 1,\qquad I_\mu^\MM(c\phi)\gtrsim  e^{-n\phi^2}.
\end{equation}
\end{theorem}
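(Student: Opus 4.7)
The plan is to apply Theorem~\ref{cor:spreading doubling} at scale $\tau=1$, integrate the resulting random-zero-set guarantee against $\mu\times\mu$ via Fubini, extract by a Markov-type decomposition a single deterministic realization $\cZ_0$ on which both $\mu(\cZ_0)$ and the measure of its ``far set'' are of order $e^{-n\phi^2}$, and finally produce a half-measure set $\sub$ realizing the desired separation by a case analysis on whether the $c\phi$-sublevel set of $y\mapsto d(y,\cZ_0)$ has measure at least $1/2$.

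First I would fix $\phi\in(0,1]$ and apply Theorem~\ref{cor:spreading doubling} with $K=5^n$ (so $\log K\asymp n$) and parameter $p\asymp_{s,\e}\phi^2 n$ chosen so that the spreading factor $(1/s)^{\kappa/\e}\sqrt{(\log K)/p}$ equals $1/(2c\phi)$, where $c=c(s,\e)$ is the constant sought in~\eqref{eq:isoperimetric lower intro}; the computation forces $c\asymp s^{\kappa/\e}$, matching the footnote, and the success probability $e^{-\kappa p}$ becomes $\gtrsim e^{-n\phi^2}$. Since $\mu$ is normalized, $(\mu\times\mu)(\{d\ge 1\})\ge 1/2$, so integrating the pointwise bound $\PP[x\in\cZ\text{ and }d(y,\cZ)\ge 2c\phi]\gtrsim e^{-n\phi^2}$ over pairs with $d(x,y)\ge 1$ and applying Fubini gives
\[
\mathbb{E}_{\cZ}\bigl[\mu(\cZ)\cdot \mu(N(\cZ))\bigr]\ \gtrsim\ e^{-n\phi^2},\qquad N(\cZ)\eqdef\bigl\{y\in\MM:d(y,\cZ)\ge 2c\phi\bigr\}.
\]
Because both factors lie in $[0,1]$, the product can only exceed a quarter of this expectation when both factors do; this exhibits a deterministic realization $\cZ_0$ with $\mu(\cZ_0),\mu(N(\cZ_0))\gtrsim e^{-n\phi^2}$.

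Next I would case-split on whether the superlevel set $\sub_1:=\{y:d(y,\cZ_0)\ge c\phi\}$ has $\mu$-measure at least $1/2$. If yes, take $\sub:=\sub_1$; since every $z\in\sub_1$ satisfies $d(z,\cZ_0)\ge c\phi$, the triangle inequality yields $d(y,\sub)\ge c\phi$ for all $y\in\cZ_0$, so $\mu(\{z:d(z,\sub)\ge c\phi\})\ge\mu(\cZ_0)\gtrsim e^{-n\phi^2}$. Otherwise $\sub_2:=\{y:d(y,\cZ_0)<c\phi\}$ has $\mu(\sub_2)\ge 1/2$; take $\sub:=\sub_2$, and the triangle inequality $d(y,z)\ge d(y,\cZ_0)-d(z,\cZ_0)>2c\phi-c\phi=c\phi$ shows $d(y,\sub)\ge c\phi$ for every $y\in N(\cZ_0)$, so again $\mu(\{z:d(z,\sub)\ge c\phi\})\ge\mu(N(\cZ_0))\gtrsim e^{-n\phi^2}$. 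In both cases $I_\mu^\MM(c\phi)\gtrsim e^{-n\phi^2}$, which is~\eqref{eq:isoperimetric lower intro}. The small-$\phi$ regime $\phi\lesssim 1/\sqrt{n}$, in which the prescribed $p$ would fall below $1$ and $e^{-n\phi^2}$ is of order $1$, is handled by rerunning the same argument with $p=1$, yielding $I_\mu^\MM(c'/\sqrt{n})\gtrsim 1$ for some $c'=c'(s,\e)$, and invoking the monotonicity of $t\mapsto I_\mu^\MM(t)$.

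The principal obstacle is the extraction step: Fubini only controls the \emph{expected product} $\mu(\cZ)\mu(N(\cZ))$, not the individual measures, and the entire argument lives at the tiny scale $e^{-n\phi^2}\ll 1$, so there is essentially no slack in the Markov-type truncation. The quantitative tightness of this extraction (a constant factor of $4$) exploits that both factors lie in $[0,1]$; any larger loss here would degrade the exponent. A secondary, more cosmetic, task is pinning down the universal constants ($\kappa$ in the probability exponent, $\log 5$ from $K=5^n$, and $(1/s)^{\kappa/\e}$ in the spreading factor) so that they compose into the claimed $c\asymp s^{\kappa/\e}$ dependence announced in the footnote.
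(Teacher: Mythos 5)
Your proposal matches the paper's own argument: the Fubini integration combined with the product-to-min extraction is precisely the content of the paper's Lemma~\ref{lem:zero to iso}, and your case analysis producing a half-measure separating set is the paper's Lemma~\ref{lem:separated sets} (abstracted there as $I_\mu^\MM(t)\ge\min\{\mu(\mathscr{A}),\mu(\mathscr{B})\}$ whenever $d(\mathscr{A},\mathscr{B})\ge 2t$). Your parameter choice $p\asymp_{s,\e}\phi^2 n$ and the small-$\phi$ monotonicity argument also agree with the paper's derivation, which routes through the intermediate Theorem~\ref{thm:iso p}. Incidentally, the extraction step that you flag as ``the principal obstacle'' is in fact a nonissue: if $a,b\in[0,1]$ and $ab\ge c$, then $\min\{a,b\}\ge ab\ge c$ with no constant lost, and the paper exploits exactly this.

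The genuine gap is that you never address infinite $\MM$. Theorem~\ref{cor:spreading doubling} supplies a random zero set only on \emph{finite} subsets of $\MM$, whereas Theorem~\ref{thm:no-super} is stated for an arbitrary Borel probability measure $\mu$ on an arbitrary (possibly infinite) doubling metric space. Your Fubini step presupposes a well-defined distribution over subsets of the support of $\mu$, which the quoted theorem does not furnish when that support is infinite. The paper closes this gap with Lemma~\ref{lem:trivial approx by net}: since balls in a doubling space are totally bounded, one approximates $\mu$ by a finitely supported $\nu$ (via a Voronoi tessellation of a large ball) in such a way that $I_\mu^\MM(t)\ge I_\nu^\MM(t+\sigma)-\sigma$ and the distance distribution of $\nu$ retains a comparable median, then runs the finite-space argument for $\nu$. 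This reduction, while elementary, is needed for the theorem as stated.
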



Following~\cite{Rab08,Nao21}, given $p,D\ge 1$ we say that a metric space $(\MM,d_\MM)$ embeds with $p$-average distortion $D$ into a metric space $(\NN,d_\NN)$ if for every Borel probability measure $\mu$ on $\MM$  there are $s>0$ and $f=f_\mu:\MM\to \NN$ that is $sD$-Lipschitz, namely, $d_\NN(f(x),f(y))\le sDd_\MM(x,y)$ for every $x,y\in \MM$, such that
$$
\left(\iint_{\MM\times \MM} d_\NN(f(x),f(y))^p\ud \mu(x)\ud\mu(y)\right)^{\frac{1}{p}}\ge s\left(\iint_{\MM\times \MM} d_\MM(x,y)^p\ud\mu(x)\ud\mu(y)\right)^{\frac{1}{p}}.
$$
If this occurs for $p=1$, then one omits $p$ by saying that $\MM$ embeds with average distortion $D$ into  $\NN$, and if this occurs for $p=2$, then one  says that $\MM$ embeds with quadratic average distortion $D$ into  $\NN$.

By substituting Theorem~\ref{cor:spreading doubling} into~\cite[Proposition~7.1]{Nao14}, and then substituting the resulting statement into~\cite[Lemma~60]{Nao21}, we arrive at the following corollary of Theorem~\ref{thm:random zero}:

\begin{theorem}\label{coro:doubling average embedding} There exists a universal constant $\kappa>1$ with the following property. Suppose that  $K\in \N$ and $0<s,\e\le 1/2$. Let $(\MM,d)$ be a $K$-doubling metric space that is $(s,\e)$-quasisymmetrically Hilbertian. Then, for every $p\ge 1$ one can embed $(\MM,d)$ into the real line  $\R$ with $p$-average distortion at most
\begin{equation}\label{eq:average distortion bound}
\left(\frac{1}{s}\right)^{\frac{\kappa}{\e}}\max\left\{\sqrt{\frac{\log K}{p}},1\right\}.
\end{equation}
\end{theorem}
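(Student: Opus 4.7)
The plan is to employ Theorem~\ref{cor:spreading doubling} as the sole geometric input into a standard pipeline that manufactures $p$-average distortion embeddings into $\R$ from random zero sets with prescribed spreading and separation probability.

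The core mechanism is the following. If $\cZ$ is a random zero set that is $\zeta$-spreading with probability $\d$ at scale $\tau$, then the random $1$-Lipschitz map $\omega(x) \eqdef d(x, \cZ)$ satisfies
\[
\E\bigl[|\omega(x) - \omega(y)|^p\bigr] \gtrsim \d \Bigl(\frac{\tau}{\zeta}\Bigr)^{\!p}
\]
for every pair $x, y \in \MM$ with $d(x, y) \ge \tau$. Integrating against $\mu \otimes \mu$ and swapping expectations produces, by a first-moment argument, a deterministic $1$-Lipschitz $f:\MM \to \R$ obeying
\[
\left(\iint |f(x)-f(y)|^p \ud\mu(x)\ud\mu(y)\right)^{1/p} \gtrsim \frac{\d^{1/p}}{\zeta}\cdot \tau \cdot \bigl((\mu \otimes \mu)\{d(x,y) \ge \tau\}\bigr)^{1/p}.
\]
A single scale $\tau$ simultaneously matched to the $p$-th moment $\bigl(\iint d^p \ud\mu^{\otimes 2}\bigr)^{1/p}$ and to the tail probability of $d(x,y)$ is not generally available, so I would invoke Proposition~7.1 of~\cite{Nao14}, which performs a careful (essentially dyadic) scale selection adapted to $\mu$ and outputs a single $1$-Lipschitz embedding into $\R$ whose $p$-average distortion with respect to $\mu$ is controlled by a universal multiple of $\zeta/\d^{1/p}$.

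Next I would feed into this proposition the parameters supplied by Theorem~\ref{cor:spreading doubling}, namely $\zeta \asymp (1/s)^{\kappa/\e}\sqrt{\max\{1, (\log K)/p\}}$ and $\d \gtrsim e^{-\kappa p}$. Since $\d^{1/p} \gtrsim e^{-\kappa}$ is a universal constant that absorbs into the implicit factor, the distortion bound collapses to a universal multiple of $(1/s)^{\kappa/\e}\max\{\sqrt{(\log K)/p}, 1\}$, which is~\eqref{eq:average distortion bound} after enlarging $\kappa$. Lemma~60 of~\cite{Nao21} is then used to convert the scale-by-scale output of the previous step into a single measure-adapted $1$-Lipschitz function on $\MM$, thereby producing the embedding $f_\mu$ required by the definition of $p$-average distortion for an arbitrary Borel probability measure $\mu$ on $\MM$.

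The step I expect to require the most care is matching the parameterization of Theorem~\ref{cor:spreading doubling} to the input expected by Proposition~7.1 of~\cite{Nao14}: because the spreading parameter and the separation probability in Theorem~\ref{cor:spreading doubling} are both tied to the same exponent $p$, one must instantiate the pipeline at precisely that value of $p$ rather than optimizing after the fact, and it is this synchronization that turns the quadratic-in-$\lambda$ exponent of Theorem~\ref{thm:random zero general} into the $\sqrt{(\log K)/p}$ saving in~\eqref{eq:average distortion bound}. All the geometric content sits in Theorem~\ref{cor:spreading doubling} (and hence ultimately in Theorem~\ref{thm:random zero general}); the remainder is a mechanical substitution, which is why the statement appears here as a short corollary rather than as a separate theorem.
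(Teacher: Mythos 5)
Your proposal follows the paper's proof: it is exactly the substitution of Theorem~\ref{cor:spreading doubling} into~\cite[Proposition~7.1]{Nao14} and then into~\cite[Lemma~60]{Nao21}, with the crucial observation that instantiating the pipeline at the same $p$ that appears in the separation probability $e^{-\kappa p}$ makes $\d^{1/p}$ a universal constant. Your description slightly blurs the division of labor between the two cited results (you describe Proposition~7.1 as already producing a single embedding and then Lemma~60 as "converting scale-by-scale output," which inverts the actual sequencing), but this is an imprecision of exposition rather than a gap in the argument.
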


The upper bound~\eqref{eq:average distortion bound} on the average distortion in Theorem~\ref{coro:doubling average embedding}  is new  even for subsets of a Hilbert space.\footnote{Nevertheless, we expect that for the (very) special case of subsets of Hilbert space one could prove Theorem~\ref{coro:doubling average embedding} using more standard (and simpler) chaining-style reasoning \`a la Fernique--Talagrand, along the lines of the proof in~\cite{IN07} which treats a related, but different, setting. The main value of Theorem~\ref{coro:doubling average embedding} is its applicability to metric spaces that are far from Euclidean.} Furthermore, Theorem~\ref{coro:doubling average embedding} is optimal in the entire range of possible values of $K\in \N$ and  $p\ge 1$, up to the dependence on $s,\e$ (which we did not investigate in the present work), as seen by the Fact~\ref{prop: Rn} below, which we will prove in Section~\ref{sec:Rn into line}; this also demonstrates the optimality of Theorem~\ref{thm:random zero general}.

\begin{fact}\label{prop: Rn} For  $n\in \N$ and $p\ge 1$, the smallest $D\ge 1$ such that $\ell_2^n$ embeds into $\R$ with $p$-average distortion $D$ is bounded from above and from below by positive universal constant multiples of $\sqrt{\max\{1,n/p\}}$.
\end{fact}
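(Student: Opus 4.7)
The plan is to establish matching upper and lower bounds by, respectively, a one-dimensional random linear projection and L\'evy's spherical concentration applied to the uniform measure on the sphere of radius $\sqrt{n}$.

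For the upper bound, given an arbitrary Borel probability measure $\mu$ on $\ell_2^n$, I would embed via $f_v(x)=\langle x,v\rangle$ for a unit vector $v\in S^{n-1}$ chosen to maximize the lower moment. Each $f_v$ is $1$-Lipschitz, and Fubini applied to the uniform distribution $\sigma^{n-1}$ on $v$ yields
\begin{equation*}
\int_{S^{n-1}}\iint_{\ell_2^n\times\ell_2^n}|\langle x-y,v\rangle|^p\ud\mu(x)\ud\mu(y)\ud\sigma^{n-1}(v)\;=\;c_{n,p}\iint\|x-y\|_2^p\ud\mu(x)\ud\mu(y),
\end{equation*}
where $c_{n,p}\eqdef \int_{S^{n-1}}|v_1|^p\ud\sigma^{n-1}(v)$. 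A routine Beta-integral/Stirling computation (using that $v_1^2$ is $\mathrm{Beta}(\tfrac12,\tfrac{n-1}{2})$-distributed) gives the key asymptotic $c_{n,p}^{1/p}\asymp (1+n/p)^{-1/2}$ uniformly in $n,p\ge 1$. Picking $v$ that attains the averaged value and setting $s\asymp c_{n,p}^{1/p}$, the Lipschitz constant of $f_v$ is at most $sD$ provided $D\asymp \sqrt{\max\{1,n/p\}}$, which yields the stated upper bound.

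For the lower bound, I would let $\mu$ be the normalized uniform probability measure on $\sqrt{n}\,S^{n-1}\subset\ell_2^n$. A direct concentration-of-distance computation gives $(\iint\|x-y\|_2^p\ud\mu\ud\mu)^{1/p}\asymp\sqrt{n}$ uniformly in $p\ge 1$. For any $L$-Lipschitz $f\colon\sqrt{n}S^{n-1}\to\R$, L\'evy's isoperimetric inequality (rescaled from the unit sphere) gives that $f(X)-M_f$ is $O(L)$-subgaussian under $X\sim\mu$, so $(\iint|f(x)-f(y)|^p\ud\mu\ud\mu)^{1/p}\lesssim L\sqrt{p}$; combined with the trivial diameter bound $|f(x)-f(y)|\le 2L\sqrt{n}$, this sharpens to $\lesssim L\sqrt{\min\{p,n\}}$. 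If $(f,s,D)$ realizes the $p$-average distortion inequality, then $L\le sD$ and
\begin{equation*}
s\sqrt{n}\;\lesssim\;\left(\iint|f(x)-f(y)|^p\ud\mu\ud\mu\right)^{1/p}\;\lesssim\;L\sqrt{\min\{p,n\}},
\end{equation*}
forcing $D\ge L/s\gtrsim\sqrt{\max\{1,n/p\}}$.

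The main technical point is uniform control over the full range $p\ge 1$. On the upper-bound side this requires tracking the Beta-integral asymptotic past the regime $p\lesssim n$ (where one might naively use only $c_{n,p}^{1/p}\asymp\sqrt{p/n}$); on the lower-bound side it requires combining L\'evy's subgaussian tail with the crude diameter bound to correctly saturate $(\iint|f(x)-f(y)|^p\ud\mu\ud\mu)^{1/p}$ at $L\sqrt{n}$ once $p\gtrsim n$. Both checks are elementary once one keeps the two competing regimes in mind, so I expect no serious obstacle beyond this bookkeeping.
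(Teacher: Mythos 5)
Your proposal is correct and takes essentially the same route as the paper, which establishes Fact~\ref{prop: Rn} as the special case $q=2$ of Proposition~\ref{prop:Rn more genral}. There the upper bound is a random projection onto a uniformly distributed direction on $S^{n-1}$ (realized via a normalized Gaussian vector, with Gamma-function moment asymptotics in place of your Beta-integral), and the lower bound is spherical (L\'evy/Gromov--Milman) concentration for the uniform cone measure on the sphere, matching your plan up to rescaling; the extra diameter-bound sharpening you mention is harmless but redundant, since the standing assumption $D\ge 1$ already covers the regime $p\gtrsim n$.
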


By standard volumetric reasoning~\cite{CW71,Hei01}, every $n$-dimensional normed space is $K_n$-doubling for some $K_n=e^{O(n)}$. As the quantity  $\sqrt{\max\{1,n/p\}}$ in Fact~\ref{prop: Rn}  is bounded from above and from below by positive universal constant multiples of $\sqrt{\max\{1,(\log K_n)/p\}}$, Fact~\ref{prop: Rn}  shows that Theorem~\ref{coro:doubling average embedding} is sharp.

Theorem ~\ref{eq:type 2 p dependence}  below shows that Theorem~\ref{coro:doubling average embedding} is, in fact, sharp in the following much stronger sense: Even if one relaxes the  stringent goal of Theorem~\ref{coro:doubling average embedding} to embed  $\MM$ into $\R$ by allowing the embedding to be into any Banach space of type $2$, then   it is still impossible to do so with $p$-average distortion that is  asymptotically better (as $p,K\to \infty$) than what is provided by Theorem~\ref{coro:doubling average embedding}. For this (as well as for ensuing discussions), we recall the (standard) terminology  that a Banach space $(\bX,\|\cdot\|)$ is said~\cite{Hof74,MP76}  to have (Rademacher) type $1\le r\le 2$ if there is $T>0$ such that for every $n\in\N$, every $x_1,\ldots,x_n\in \bX$ satisfy
\begin{equation}\label{eq:def type r}
\E \Big[\big\|\sum_{i=1}^n \e_i x_i\big\|^r\Big]\le T^r\sum_{i=1}^n \|x_i\|^r,
\end{equation}
where the expectation in~\eqref{eq:def type r} is with respect to i.i.d.~symmetric Bernoulli random variables $\e_1,\ldots,\e_n$, i.e., they are independent and $\Pr[\e_i=1]=\Pr[\e_i=-1]=1/2$ for every $i\in [n]=\n$. The infimum over those $T$ for which the above requirement holds is denoted $T_r(\bX)$.  We then have, for example,  $T_q(L_q)\asymp 1$ when $1\le q\le 2$, and $T_2(L_q)\asymp\sqrt{q}$ when $q\ge 2$, as explained in, say, Chapter~6 of the textbook~\cite{AK16}. 

The proof of the following theorem was shown to us by Alexandros Eskenazis; we thank him for allowing us to include it in Section~\ref{sec:Rn into line}, which also treats target spaces of type $r$ for any $1\le r\le 2$.

\begin{theorem}\label{eq:type 2 p dependence} Suppose that $(\bX,\|\cdot\|)$ is a Banach space of type $2$. For every $n\in \N$ and $p\ge 1$, if $\ell_1^n$  embeds into $\bX$ with $p$-average distortion $D$, then necessarily $D\gtrsim \sqrt{n}/\max\{\sqrt{p},T_2(\bX)\}$.
\end{theorem}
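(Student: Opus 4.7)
The plan is to use the uniform probability measure $\mu$ on the Hamming cube $\{-1,1\}^n\subset \ell_1^n$. For i.i.d.\ uniform $\epsilon,\epsilon'\in\{-1,1\}^n$ one has $\|\epsilon-\epsilon'\|_1=2\sum_{i=1}^n\1[\epsilon_i\ne\epsilon'_i]$, so this random variable takes values in $[0,2n]$, has mean $n$, and is well concentrated; in particular $(\iint\|x-y\|_1^p\,d\mu(x)\,d\mu(y))^{1/p}\asymp n$ for every $p\ge 1$. Applying the definition of $p$-average distortion at this $\mu$ produces a scaling $s>0$ and an $sD$-Lipschitz map $f:\ell_1^n\to\bX$ for which
\begin{equation*}
(\E\|f(\epsilon)-f(\epsilon')\|^p)^{1/p}\;\gtrsim\; sn.
\end{equation*}

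The heart of the plan is to prove the matching upper bound
\begin{equation*}
(\E\|f(\epsilon)-f(\epsilon')\|^p)^{1/p}\;\lesssim\; sD\sqrt n\cdot \max\{\sqrt p,\,T_2(\bX)\},
\end{equation*}
which combined with the previous display yields $D\gtrsim \sqrt n/\max\{\sqrt p,T_2(\bX)\}$. To prove it, I would first reduce to an antipodal estimate. Write $\epsilon'=\epsilon\delta$ with $\delta\in\{-1,1\}^n$ an independent uniform copy, set $S=S(\delta):=\{i:\delta_i=-1\}$ and $k=k(\delta):=|S|$, and condition on $\delta$ and on the coordinates $\epsilon_{S^c}$. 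The function $h:\{-1,1\}^S\to\bX$ defined by $h(\eta):=f(\eta,\epsilon_{S^c})$ is $sD$-Lipschitz in the $\ell_1$ metric and satisfies
\begin{equation*}
f(\epsilon)-f(\epsilon\delta)=h(\epsilon_S)-h(-\epsilon_S).
\end{equation*}
Thus everything reduces to the following \emph{antipodal claim}: for every $L$-Lipschitz $h:\{-1,1\}^k\to\bX$,
\begin{equation*}
(\E_\eta\|h(\eta)-h(-\eta)\|^p)^{1/p}\;\lesssim\; L\sqrt k\cdot \max\{\sqrt p,\,T_2(\bX)\}.
\end{equation*}
Granted this claim, raising it to the $p$-th power and integrating over $(\epsilon_{S^c},\delta)$, together with the crude bound $\E_\delta k(\delta)^{p/2}\le n^{p/2}$ (since $k\le n$), delivers the displayed upper bound.

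The antipodal claim is proved by combining two ingredients. First, the Enflo type~$2$ inequality of Ivanisvili--van~Handel--Volberg, which asserts that Rademacher type $2$ implies Enflo type $2$ with constant $O(T_2(\bX))$, gives
\begin{equation*}
\E_\eta\|h(\eta)-h(-\eta)\|^2\;\lesssim\; T_2(\bX)^2\sum_{i=1}^k \E_\eta\|h(\eta)-h(\eta^{(i)})\|^2 \;\lesssim\; T_2(\bX)^2\cdot k\cdot L^2,
\end{equation*}
since each summand is at most $(2L)^2$ by the Lipschitz hypothesis; in particular $\E_\eta\|h(\eta)-h(-\eta)\|\lesssim T_2(\bX) L\sqrt k$. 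Second, the scalar function $\eta\mapsto\|h(\eta)-h(-\eta)\|$ changes by at most $4L$ under a single-coordinate flip of $\eta$, so McDiarmid's bounded-differences inequality yields subgaussian concentration about the mean with variance proxy of order $kL^2$, and the centered random variable has $L^p$-norm of order $L\sqrt{kp}$. Combining the mean bound with the deviation bound proves the claim.

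The step I expect to be the main obstacle is the Enflo type~$2$ input: for generic Lipschitz $h$ (as opposed to linear $h(\eta)=\sum_i\eta_i v_i$, for which the inequality is precisely the definition of Rademacher type $2$) this deep inequality was a long-standing open problem that was resolved only recently by Ivanisvili--van~Handel--Volberg. Everything else in the plan is a short reduction by conditioning together with a standard application of the bounded-differences inequality.
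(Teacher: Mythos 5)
Your proof is correct, and it is a legitimately different packaging of the same circle of ideas that the paper uses. Both arguments rest on the same deep ingredient, the Ivanisvili--van Handel--Volberg theorem that Rademacher type implies Enflo/Poincar\'e type on the discrete cube; the difference lies in how the $p$-dependence is extracted. The paper proceeds by first proving an abstract ``$K$-functional'' interpolation inequality on the cube (Theorem~\ref{eq:K functional for nonlinear type}), whose proof combines the $L^r$-Poincar\'e form of~\cite{IvHV20} (applied to $\Phi=\|\cdot\|_\bX^r$) with the sharp real-valued $L^p$-Poincar\'e inequality of Ben Efraim--Lust-Piquard~\cite{BL08} applied to $\|f\|_\bX$, and then substitutes the Lipschitz bounds on $\partial_i f$. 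You instead make the conditioning reduction $\epsilon'=\epsilon\delta$, fix $\delta$ and the coordinates off $S=\{i:\delta_i=-1\}$, and reduce to an \emph{antipodal} estimate for a Lipschitz map on a lower-dimensional cube; you then treat the antipodal quantity by Enflo type $2$ (which, via the computation $\partial_i g(\epsilon)=\tfrac12(\partial_i f(\epsilon)-\partial_i f(-\epsilon))$, is a formal consequence of the Poincar\'e inequality the paper quotes from~\cite{IvHV20}) for the mean, and by McDiarmid's bounded-differences inequality for the $L^p$ deviation. This is sound: the scalar $\eta\mapsto\|h(\eta)-h(-\eta)\|$ has bounded differences $\le 4L$, so its $L^p$ fluctuation around the mean is $\lesssim L\sqrt{kp}$ with a universal constant, and the crude bound $k(\delta)\le n$ is all that is needed after averaging over $(\delta,\epsilon_{S^c})$ since the implied constants in the antipodal claim are $(p,k)$-independent.

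What each route buys: your argument is more elementary in that it replaces the $L^p$-Poincar\'e inequality of~\cite{BL08} by McDiarmid, which is a far cheaper input. This costs nothing here because after substituting the Lipschitz hypothesis both inequalities degrade to the same $L\sqrt{kp}$ bound. The paper's route, in exchange, produces a clean reusable interpolation inequality (Theorem~\ref{eq:K functional for nonlinear type}) stated for arbitrary vector-valued $f$ on the cube, and it handles all $1\le r\le 2$ uniformly in Theorem~\ref{eq:avwerage distortion into type}, of which Theorem~\ref{eq:type 2 p dependence} is the $r=2$ special case. Your conditioning/antipodal approach would also generalize to type $r$ by replacing Enflo type $2$ with Enflo type $r$ from~\cite{IvHV20}, at the cost of redoing the final max-of-two-terms analysis.
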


Theorem~\ref{eq:type 2 p dependence} implies the aforementioned strong optimality of Theorem~\ref{coro:doubling average embedding}  as $\ell_1$ is  an instance of a metric space that embeds quasisymmetrically into a Hilbert space (two different examples of closed-form formulae of embeddings that exhibit this appear in e.g.~\cite[Remark~5.10]{MN04} and~\cite[Section~3]{Nao10}).

\begin{remark}\label{rem:general doubling} By repeating the above reasoning mutatis mutandis while using Theorem~\ref{thm:general doubling exponential}  instead of Theorem~\ref{thm:random zero general}, one sees that for every $p\ge 1$, any finite $K$-doubling metric space admits a random zero set that is $O(\max\{1,(\log K)/p\})$-spreading with probability $e^{-p}$. Consequently, any $K$-doubling metric space embeds with $p$-average distortion  $O(\max\{1,(\log K)/p\})$ into $\R$. This cannot be improved even if one allows the embedding to be  into any target normed space $\bX$ that has type $r>1$. Indeed, by combining the deep result of~\cite{Laf09} with the first bound in equation~(131) of~\cite{Nao21} and the reasoning in~\cite{LLR95,Mat97}, one sees that  for arbitrarily large $n\in \N$ there is an $n$-point metric space  such if it embeds with $p$-average distortion $D$ into $\bX$, then necessarily $D\gtrsim_{r,T_r(\bX)} \max\{1,(\log n)/p\}$. It  remains to note that any $n$-point metric space is (vacuously) $n$-doubling. Theorem~\ref{coro:doubling average embedding} thus shows that for doubling metric spaces that are also quasisymmetrically Hilbertian the aforementioned upper bound on the $p$-average distortion into $\R$  can be (sharply) improved to the square root of the best that one can hope for in the setting of arbitrary metric spaces.


\end{remark}

Given $n\in \N$, let  $\bigtriangleup^{\!n-1}=\{\pi=(\pi_1,\ldots,\pi_n)\in [0,1]^n:\ \sum_{i=1}^n\pi_i=1\}$ denote the simplex of probability measures on $[n]$. When we say that a matrix $\sfA=(a_{ij})\in \M_n(\R)$ is stochastic we always mean row-stochastic, i.e., $(a_{i1},\ldots,a_{in})\in  \bigtriangleup^{\!n-1}$ for every $i\in \n$. Given  $\pi\in \bigtriangleup^{\!n-1}$, a stochastic matrix $\sfA=(a_{ij})\in \M_n(\R)$ is $\pi$-reversible if $\pi_ia_{ij}=\pi_ja_{ji}$ for every $i,j\in \n$. In this case, $\sfA$ is a self-adjoint contraction on $L_2(\pi)$ and the decreasing rearrangement of the eigenvalues of $\sfA$ is denoted $1=\lambda_1(\sfA)\ge\ldots\ge \lambda_n(\sfA)\ge -1$. The spectral gap of $\sfA$ is    $1-\lambda_2(\sfA)$. The conductance (with respect to $\sfA, \pi$) of a subset   $\emptyset \neq S\subsetneq [n]$ is
$$
\Phi_\sfA(S)\eqdef \frac{1}{\pi(S)\pi([n]\setminus S)}\sum_{i\in S}\sum_{j\in [n]\setminus S}\pi_i a_{ij},
$$
where $\pi(S)=\sum_{i\in S} \pi_i$. The Cheeger constant of $\sfA$ is defined by
$$
h(\sfA)\eqdef \min_{\emptyset \neq S\subsetneq [n]} \Phi_\sfA(S).
$$

For $p\ge 1$, the  $p$-Poincar\'e constant  (reciprocal of the $p$-spectral gap) of $\sfA$ with respect to a metric space $(\MM,d)$, denoted $\gamma(\sfA,d^p)$, is the infimum over  $\gamma\in [0,\infty]$ such that for every $x_1,\ldots,x_n\in \MM$ we have:
\begin{equation}\label{eq:def nonlinear gap}
\sum_{i=1}^n \sum_{j=1}^n \pi_i\pi_j d(x_i,x_j)^p\le \gamma\sum_{i=1}^n\sum_{j=1}^n \pi_i a_{ij} d(x_i,x_j)^p.
\end{equation}
The works~\cite{Mat97,Laf08,NS11,MN14,Nao14,Nao18,ANNRW18,LS21,Nao21,Esk22,Nao24-mixing} include information  on nonlinear spectral   gaps and their applications. Because it is simple to check that $\gamma(\sfA,d_\R^2)=1/(1-\lambda_2(\sfA))$ and $\gamma(\sfA,d_\R)=1/h(\sfA)$, where $d_\R$ is the standard metric on $\R$, Theorem~\ref{coro:doubling average embedding} implies:

\begin{theorem}\label{thm:gap estimate} Fix  $n,K\in \N$ and $0<s,\e\le 1/2$. Suppose that $(\MM,d)$ is a  metric space that is both $K$-doubling and $(s,\e)$-quasisymmetrically Hilbertian. Then, every stochastic matrix $\sfA\in \M_n(\R)$ satisfies
\begin{equation}\label{eq:nonlinear gap quasi}
\gamma(\sfA,d)\lesssim_{s,\e} \frac{\sqrt{\log K}}{h(\sfA)}\qquad\mathrm{and}\qquad \gamma(\sfA,d^2)\lesssim_{s,\e} \frac{\log K}{1-\lambda_2(\sfA)}.
\end{equation}
Furthermore, the implicit constants in~\eqref{eq:nonlinear gap quasi} can be taken to be $(1/s)^{O(1/\e)}$.
\end{theorem}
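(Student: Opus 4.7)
The plan is to pull the nonlinear Poincar\'e inequality on $(\MM,d)$ back to the one-dimensional Poincar\'e inequality on $\R$ via the embedding supplied by Theorem~\ref{coro:doubling average embedding}, and then invoke the fact that the scalar Poincar\'e constants are exactly $\gamma(\sfA,d_\R)=1/h(\sfA)$ and $\gamma(\sfA,d_\R^2)=1/(1-\lambda_2(\sfA))$. Let $\pi\in \bigtriangleup^{\!n-1}$ be a probability vector for which $\sfA$ is $\pi$-reversible, and fix arbitrary points $x_1,\ldots,x_n\in \MM$. Consider the probability measure $\mu=\sum_{i=1}^n \pi_i \delta_{x_i}$ on the finite subset $\{x_1,\ldots,x_n\}\subset \MM$, which inherits both the $K$-doubling property and $(s,\e)$-quasisymmetric Hilbertianness from $\MM$.

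For each $p\in\{1,2\}$, apply Theorem~\ref{coro:doubling average embedding} to $(\MM,d,\mu)$. After rescaling by $1/s_0$ (where $s_0$ is the scale factor produced by the theorem) the embedding gives a function $f=f_{p,\mu}:\MM\to \R$ that is $D$-Lipschitz with $D=(1/s)^{\kappa/\e}\max\{\sqrt{(\log K)/p},1\}$ and satisfies the non-contraction estimate
$$
\sum_{i=1}^n\sum_{j=1}^n \pi_i\pi_j\, d(x_i,x_j)^p \;\le\; \sum_{i=1}^n\sum_{j=1}^n \pi_i\pi_j\, |f(x_i)-f(x_j)|^p.
$$
Plug the real numbers $f(x_1),\ldots,f(x_n)$ into the definition~\eqref{eq:def nonlinear gap} of $\gamma(\sfA,d_\R^p)$, and then bound each term $|f(x_i)-f(x_j)|^p$ on the resulting right hand side by $D^p\, d(x_i,x_j)^p$ using the Lipschitz hypothesis. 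Chaining the three inequalities yields $\gamma(\sfA,d^p)\le D^p\,\gamma(\sfA,d_\R^p)$ for $p\in\{1,2\}$.

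To finish, specialize: the choice $p=1$ together with $\gamma(\sfA,d_\R)=1/h(\sfA)$ and $D\asymp_{s,\e}\sqrt{\log K}$ delivers the first inequality in~\eqref{eq:nonlinear gap quasi}, while $p=2$ together with $\gamma(\sfA,d_\R^2)=1/(1-\lambda_2(\sfA))$ and $D^2\asymp_{s,\e}\log K$ delivers the second. The explicit $(1/s)^{O(1/\e)}$ dependence claimed at the end of the theorem is inherited verbatim from the corresponding dependence in Theorem~\ref{coro:doubling average embedding}. The substantive geometric content is entirely absorbed by Theorem~\ref{coro:doubling average embedding} (and ultimately by Theorem~\ref{thm:random zero general}), so there is no essential obstacle in the present derivation; it is a routine composition of a Lipschitz embedding with the scalar Poincar\'e inequality, and the only care required is to verify the two classical scalar identities on $\R$.
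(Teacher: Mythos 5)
Your argument is correct and is exactly the derivation the paper intends: the text preceding Theorem~\ref{thm:gap estimate} notes the scalar identities $\gamma(\sfA,d_\R)=1/h(\sfA)$ and $\gamma(\sfA,d_\R^2)=1/(1-\lambda_2(\sfA))$ and then states that Theorem~\ref{coro:doubling average embedding} implies the result, which is precisely the composition you spell out. Your chain $\gamma(\sfA,d^p)\le D^p\gamma(\sfA,d_\R^p)$ via the $p$-average embedding into $\R$ (after the harmless rescaling by $1/s_0$) is the right mechanism, and the constants are inherited as you say.
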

Theorem~\ref{thm:gap estimate} complements the bounds in~\cite[Section~7.1.1]{Nao14} by showing that they can be improved if one considers quasisymmetrically Hilbertian doubling metric spaces rather than arbitrary doubling metric spaces; in the latter case, the dependence on $K$ in the two estimates in~\eqref{eq:nonlinear gap quasi} can be taken to be, respectively, $\log K$ and $(\log K)^2$, and this is optimal. The second inequality in~\eqref{eq:nonlinear gap quasi} is optimal, as seen by taking $\sfA=\mathsf{Q}_n$ to be the transition matrix of the standard random walk on the Hamming cube, because it is straightforward to check that $\mathsf{Q}_n$ satisfies the following bounds:
$$
\frac{1}{1-\lambda_1(\mathsf{Q}_n)}\asymp n\qquad \mathrm{yet}\qquad \gamma(\mathsf{Q}_n,d_{\ell_1^n}^2) \asymp n^2.
$$

Whether the first inequality in~\eqref{eq:nonlinear gap quasi} can be improved is perhaps the most important question that remains open in the context of the present work; see Question~\ref{Q:main} below. The best lower bound that is currently available is the example  in~\cite{KM13}, which  shows that one cannot replace the $\sqrt{\log K}$ term in~\eqref{eq:nonlinear gap quasi} by anything smaller than $\exp(c\sqrt{\log\log K})$ for some universal constant $c>0$. If it were possible to improve the dependence on $K$ in the first inequality in~\eqref{eq:nonlinear gap quasi}, then this would have major algorithmic implications; we will explain this matter after presenting the relevant algorithmic setup in Section~\ref{sec:alg}.

\begin{question}\label{Q:main} For $0<s,\e\le 1/2$ and $K\in \N$, what is the growth rate as $K\to \infty$ (up to constant factors that depend on $s,\e$) of the smallest $\alpha=\alpha(K,s,\e)$ such that $\gamma(\sfA,d)\le \alpha/h(\sfA)$ for every $(s,\e)$-quasisymmetrically Hilbertian $K$-doubling   metric space $(\MM,d)$, every $n\in \N$ and every stochastic matrix $\sfA\in \M_n(\R)$?
\end{question}

Among the consequences of the nonlinear spectral gap bounds of Theorem~\ref{thm:gap estimate}  is that they quickly imply the following bound on the cutting modulus, which is a parameter that was introduced in~\cite{ANNRW18} as a tool for  hierarchically partitioning metric spaces. It was used in~\cite{ANNRW18}  to construct a data structure for approximate nearest neighbor search; we will return to this algorithmic setting  in Section~\ref{sec:alg}.

Given $\Phi>0$, the cutting modulus $\Xi_\MM(\Phi)$ of a metric space $(\MM,d)$ was defined in~\cite{ANNRW18} to be the infimum over those $\Xi\in (0,\infty]$ with the following property. For every $n\in \N$, every $r>0$, every $\pi\in \Delta^{n-1}$, every stochastic $\pi$-reversible matrix $\sfA=(a_{ij})\in \M_n(\R)$, and every $x_1,\ldots,x_n\in \MM$ that satisfy
\begin{equation}\label{eq:geometric graph r}
\forall i,j\in [n],\qquad a_{ij}>0\implies d(x_i,x_j)\le r,
\end{equation}
at least one of the following  two (non-dichotomic) scenarios must occur:
\begin{itemize}
\item Either there exists $\emptyset \neq S\subsetneq [n]$ such that $\Phi_\sfA(S)\le \Phi$,
\item or, there exists $i\in [n]$ such that $\pi\big(\{j\in [n]:\ d(x_i,x_j)\le \Xi r\}\big)\ge \frac12$.
\end{itemize}

\begin{theorem}\label{thm:thm:cutting mudulus} Fix  $K\in \N$ and $0<s,\e\le 1$. Suppose that $(\MM,d)$ is a  metric space that is both $K$-doubling and $(s,\e)$-quasisymmetrically Hilbertian. Then, for every $\Phi>0$ we have
$$
\Xi_\MM(\Phi)\lesssim_{s,\e} \frac{\sqrt{\log K}}{\Phi}.
$$
\end{theorem}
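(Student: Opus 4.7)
The plan is to derive Theorem~\ref{thm:thm:cutting mudulus} as a short consequence of the $L_1$ nonlinear spectral gap bound supplied by Theorem~\ref{thm:gap estimate}. Fix any instance of the cutting modulus problem: $n\in\N$, $r>0$, $\pi\in \bigtriangleup^{\!n-1}$, a $\pi$-reversible stochastic $\sfA\in \M_n(\R)$, and $x_1,\ldots,x_n\in\MM$ satisfying \eqref{eq:geometric graph r}. The strategy is to show that whenever the first scenario of the cutting modulus definition fails, the second must hold with $\Xi = 2\gamma(\sfA,d)$, which by Theorem~\ref{thm:gap estimate} is at most an $(s,\e)$-dependent constant times $\sqrt{\log K}/\Phi$.

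If the first scenario fails, then $\Phi_\sfA(S) > \Phi$ for every proper nonempty $S\subsetneq [n]$, and hence $h(\sfA) \ge \Phi$. The geometric constraint \eqref{eq:geometric graph r}, combined with the row-stochasticity of $\sfA$ and $\pi\in \bigtriangleup^{\!n-1}$, bounds the right-hand side of \eqref{eq:def nonlinear gap} with $p=1$ by $r$:
$$
\sum_{i,j=1}^n \pi_i a_{ij} d(x_i,x_j) \le r\sum_{i=1}^n \pi_i\sum_{j=1}^n a_{ij} = r.
$$
Therefore $\sum_{i,j} \pi_i\pi_j d(x_i,x_j) \le \gamma(\sfA,d)\,r$. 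Extracting an index $i^*\in [n]$ at which the inner sum $\sum_j \pi_j d(x_{i^*},x_j)$ does not exceed this average, a single application of Markov's inequality yields $\pi\bigl(\{j:d(x_{i^*},x_j)\le 2\gamma(\sfA,d)\,r\}\bigr)\ge 1/2$, which is precisely the second scenario with $\Xi = 2\gamma(\sfA,d)$. The first inequality of \eqref{eq:nonlinear gap quasi} then gives $2\gamma(\sfA,d) \lesssim_{s,\e} \sqrt{\log K}/h(\sfA) \le \sqrt{\log K}/\Phi$, which is the desired estimate on $\Xi_\MM(\Phi)$.

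I do not foresee any real obstacle in carrying this out: the proof is essentially one invocation of Theorem~\ref{thm:gap estimate} followed by two elementary averaging steps, mirroring the classical passage from a Cheeger-type conductance bound to a diameter-type statement via a Poincar\'e inequality. All of the substance lies upstream, in the sharp $\sqrt{\log K}$-type nonlinear spectral gap estimate of Theorem~\ref{thm:gap estimate}, which itself rests on Theorem~\ref{coro:doubling average embedding} and ultimately on the random zero set construction of Theorem~\ref{thm:random zero general}. In this sense, Theorem~\ref{thm:thm:cutting mudulus} should be viewed as the geometric (partitioning) repackaging of that nonlinear spectral gap bound.
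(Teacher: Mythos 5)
Your proof is correct and follows essentially the same route as the paper's: both invoke the first inequality of Theorem~\ref{thm:gap estimate}, use the geometric constraint~\eqref{eq:geometric graph r} together with row-stochasticity to bound $\sum_{i,j}\pi_i a_{ij} d(x_i,x_j)\le r$, and then apply a Markov/averaging step. The only cosmetic difference is that you argue directly (exhibiting $\Xi=2\gamma(\sfA,d)$ as a valid choice), whereas the paper argues by contraposition starting from $\Xi<\Xi_\MM(\Phi)$; the content is identical.
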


\begin{proof}[Proof of Theorem~\ref{thm:thm:cutting mudulus}  assuming Theorem~\ref{thm:gap estimate}] Fix $\Phi>0$ and take any $0<\Xi<\Xi_\MM(\Phi)$. By the definition of  $\Xi_\MM(\Phi)$ as an infimum, this means that there are  $n\in \N$ and $r>0$ for which we can find a probability vector $\pi\in \Delta^{n-1}$, a  stochastic $\pi$-reversible matrix $\sfA=(a_{ij})\in \M_n(\R)$, and points $x_1,\ldots,x_n\in \MM$ that satisfy~\eqref{eq:geometric graph r}   yet
\begin{equation}\label{eq:negation of ball cut}
\min_{\emptyset \neq S\subsetneq [n]}\Phi_\sfA(S)> \Phi\qquad\mathrm{and} \qquad \min_{j\in [n]}\pi\big(\{j\in [n]:\ d(x_i,x_j)> \Xi r\}\big)>\frac12.
\end{equation}
The first assertion in~\eqref{eq:negation of ball cut} is the same as saying that $h(\sfA)> \Phi$. Hence, $\gamma(\sfA,d)\lesssim_{s,\e}\sqrt{\log K}/\Phi$ by the first inequality  in~\eqref{eq:nonlinear gap quasi}. An application of the definition of $\gamma(\sfA,d)$ to the points $x_1,\ldots,x_n$ now shows that
\begin{equation*}
\frac{\Xi r}{2}\stackrel{\eqref{eq:negation of ball cut}}{\le} \sum_{i=1}^n\sum_{j=1}^n \pi_i\pi_j d(x_i,x_j)\lesssim_{s,\e}\frac{\sqrt{\log K}}{\Phi} \sum_{i=1}^n\sum_{j=1}^n \pi_i a_{ij} d(x_i,x_j)\stackrel{\eqref{eq:geometric graph r} }{\le} \frac{\sqrt{\log K}}{\Phi}r.\tag*{\qedhere}
\end{equation*}
\end{proof}

\begin{remark}\label{rem:use chegger bound instead of cheeger inequality}The above proof  of Theorem~\ref{thm:thm:cutting mudulus}   used the first inequality  in~\eqref{eq:nonlinear gap quasi}. One can also deduce Theorem~\ref{thm:thm:cutting mudulus}  from Theorem~\ref{thm:gap estimate} by repeating this reasoning mutatis mutandis while using the second inequality  in~\eqref{eq:nonlinear gap quasi} and incorporating Cheeger's inequality for Markov chains~\cite{JS88,LS88}. Indeed, the cutting modulus was bounded in~\cite{ANNRW18} using bounds on quadratic nonlinear spectral gaps in this way. However, it is worthwhile to deduce Theorem~\ref{thm:thm:cutting mudulus} using only the first inequality  in~\eqref{eq:nonlinear gap quasi} since the second inequality  in~\eqref{eq:nonlinear gap quasi} is sharp yet the first inequality  in~\eqref{eq:nonlinear gap quasi} could possibly be improved, as expressed in Question~\ref{Q:main}. Algorithmic ramifications of this possibility will be discussed in Section~\ref{sec:alg}.
\end{remark}

Following~\cite{LLR95}, the (bi-Lipschitz) distortion of a finite metric space $(\MM,d_\MM)$ in an infinite metric space $(\NN,d_\NN)$, which is denoted $\cc_{(\NN,d_\NN)}(\MM,d_\MM)$ or simply $\cc_\NN(\MM)$ when the underlying metrics are clear from the context, is the infimum over those $D>0$ for which there are $f:\MM\to \NN$ and $s>0$ such that
\begin{equation}\label{eq:def distortion}
\forall x,y\in \MM,\qquad sd_\MM(x,y)\le d_\NN\big(f(x),f(y)\big)\le Dsd_\MM(x,y).
\end{equation}

If $p\ge 1$ and $\NN$ is an infinite dimensional $L_p(\mu)$ space, then $\cc_{L_p(\mu)}(\MM)$ does  not depend on $\mu$ (in fact,  this holds if $\dim (L_p(\mu))\ge |\MM|(|\MM|-1)/2$ by~\cite{Bal90}), so one commonly uses the shorter notation:
\begin{equation}\label{eq:cp notation}
\cc_p(\MM)\eqdef \cc_{L_p(\mu)}(\MM).
\end{equation}
The especially important and useful parameters $\cc_2(\MM)$ and $\cc_1(\MM)$ are naturally called, respectively, the Euclidean distortion of $\MM$ and the $L_1$ distortion of $\MM$.

The classical Fr\'echet embedding $\Phi_{(\MM,d_\MM)}$  of $\MM$ into the  set  $\R^{2^\MM\setminus \{\emptyset\}}$ of real-valued functions on the  nonempty subsets of $\MM$ is defined by setting for every $x\in \MM$ and $\emptyset \neq \cZ\subset \MM$,
\begin{equation}\label{eq:def Frechet embedding}
\Phi_{(\MM,d_\MM)}(x)(\cZ)\eqdef d_\MM(x,\cZ).
\end{equation}
Given $p,D\ge 1$, one says that $(\MM,d_\MM)$ embeds into $L_p$ with distortion $D$ via the Fr\'echet embedding if there exists a probability measure
$\prob$ on  $2^\MM\setminus\{\emptyset\}$ such that for every $x,y\in \MM$ we have
\begin{equation}\label{eq:frechet condition}
d_\MM(x,y)\le D\|\Phi_{(\MM,d_\MM)}(x)-\Phi_{(\MM,d_\MM)}(y)\|_{L_p(\prob)}.
\end{equation}
For example, the famous embedding of~\cite{Bou85} is of this form. This terminology is consistent with what we recalled above because~\eqref{eq:frechet condition}  implies that $\cc_p(\MM)\le D$. Indeed, contrast~\eqref{eq:frechet condition} with the trivial estimate
\begin{equation}\label{eq:trivial frechet}
\|\Phi_{(\MM,d_\MM)}(x)-\Phi_{(\MM,d_\MM)}(y)\|_{L_p(\prob)}\le \|\Phi_{(\MM,d_\MM)}(x)-\Phi_{(\MM,d_\MM)}(y)\|_{L_\infty(\prob)}\le d_\MM(x,y),
\end{equation}
where the first step of~\eqref{eq:trivial frechet} holds as $\prob$ is a probability measure and the second step of~\eqref{eq:trivial frechet} is a straightforward consequence of the triangle inequality for $d_\MM$. However, knowing that $(\MM,d_\MM)$ embeds into $L_p$ with distortion $D$ via the Fr\'echet embedding provides more information than mere embeddability into $L_p$, which is sometimes needed in applications (e.g.~\cite{FHL08,MM16}); furthermore, the former embedding may not be possible  when it is known  that the latter embedding does exist (e.g.~\cite{MR01,BLMN06}).

The  measured descent embedding technique~\cite{KLMN05} yields the following result:

\begin{theorem}[special case of measured descent~\cite{KLMN05}]\label{thm:quote descent} Fix $n\in \N$, $\alpha\ge \beta>0$, and $0<\e,\d,\theta\le 1$. Let $(\MM,d)$ be an $n$-point metric space with the property that for every $\tau>0$ there is a probability measure $\prob^\tau$ on the nonempty  subsets of $\MM$ such that for every $x,y\in \MM$ that satisfy $\tau \le d(x,y)\le (1+\theta)\tau$ we have
\begin{equation}\label{eq:cadinality version of zero set-intro}
\prob^\tau\Bigg[\emptyset\neq   \cZ\subset \MM:\  d(y,\cZ) \ge \frac{\e\tau}{
\sqrt{1+\log\frac{|B(y,\alpha\tau)|}{|B(y,\beta\tau)|}}}\quad  \mathrm{and}\quad      x\in \cZ\Bigg]\ge \d.
\end{equation}
Then,
\begin{equation}\label{eq:quote descent}
\cc_2(\MM)\lesssim_{\e,\d,\theta,\alpha,\beta} \sqrt{\log n}.
\end{equation}
Furthermore, the bound~\eqref{eq:quote descent} on the Euclidean distortion of $\MM$ is obtained via the Fr\'echet embedding.
\end{theorem}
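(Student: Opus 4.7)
The plan is to apply the measured-descent scheme of Krauthgamer, Lee, Mendel and Naor~\cite{KLMN05}, using the random zero sets supplied by the hypothesis as the per-scale building blocks. First, I would discretize scales by setting $r_k := (1+\theta)^k$ for integer $k$ in a range $\mathcal{K}$ that covers the aspect ratio of $\MM$; a standard truncation argument restricts to $|\mathcal{K}| \lesssim_\theta \log n$ relevant scales, since pairs at scales outside this window can be handled trivially. For each $k \in \mathcal{K}$ the hypothesis supplies a probability measure $\prob^{r_k}$ on nonempty subsets of $\MM$ satisfying~\eqref{eq:cadinality version of zero set-intro}, and I would draw independent samples $\cZ_k \sim \prob^{r_k}$ across scales.

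Next, I would form the weighted Fr\'echet-type embedding
\begin{equation*}
F(x) := \bigoplus_{k \in \mathcal{K}} \sqrt{w_k(x)}\, \min\bigl\{r_k,\, d(x, \cZ_k)\bigr\} \in L_2,
\end{equation*}
with local-density weights $w_k(x) := 1 + \log(|B(x, \alpha r_k)|/|B(x, \beta r_k)|)$. The purpose of these weights is to absorb the $\sqrt{1+\log(|B(y, \alpha r_{k^*})|/|B(y, \beta r_{k^*})|)}$ denominator that appears in the hypothesis's spreading bound at the natural scale $r_{k^*}$ of any pair $(x,y)$. To reconcile this direct sum with the strict Fr\'echet form in the definition, I would repackage it as a single probability measure on $2^\MM \setminus \{\emptyset\}$ by augmenting the sample space to record a scale label (and a density slot) alongside each sampled subset, a standard if slightly technical bookkeeping step.

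For the upper bound the essential ingredient is the telescoping estimate $\sum_{k \in \mathcal{K}} w_k(x) \lesssim_{\alpha,\beta,\theta} \log|\MM| = \log n$: consecutive scales differ by a factor $1+\theta$ while $\alpha/\beta$ is a fixed constant, so each logarithmic annular contribution $\log|B(x, \alpha r_k)| - \log|B(x, \beta r_k)|$ overlaps only a bounded number of neighbors. Combined with the contraction $|\min\{r_k, d(x, \cZ_k)\} - \min\{r_k, d(y, \cZ_k)\}| \le \min\{r_k, d(x,y)\}$ and the comparability $w_k(x) \asymp w_k(y)$ when $d(x,y) \ll r_k$ (obtained by a routine doubling-of-arguments), this gives $\|F(x)-F(y)\|_{L_2}^2 \lesssim d(x,y)^2 \log n$. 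For the lower bound, fix $(x,y)$ and let $k^* \in \mathcal{K}$ be the unique index with $r_{k^*} \le d(x,y) \le (1+\theta) r_{k^*}$: on the favorable event of probability $\ge \delta$ supplied by the hypothesis at scale $\tau = r_{k^*}$, the $k^*$-coordinate of $F(x) - F(y)$ has magnitude at least $\sqrt{w_{k^*}(x)} \cdot \varepsilon r_{k^*}/\sqrt{1 + \log(|B(y, \alpha r_{k^*})|/|B(y, \beta r_{k^*})|)}$. The heart of the argument is then the volumetric matching $w_{k^*}(x) \asymp 1 + \log(|B(y, \alpha r_{k^*})|/|B(y, \beta r_{k^*})|)$, obtained by comparing balls around $x$ and $y$ at scale $r_{k^*}$ via $d(x,y) \le (1+\theta) r_{k^*}$ and the triangle inequality. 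This delivers $\|F(x)-F(y)\|_{L_2}^2 \gtrsim_{\varepsilon,\delta,\theta,\alpha,\beta} d(x,y)^2$, and combining with the upper bound yields distortion $\lesssim_{\varepsilon,\delta,\theta,\alpha,\beta} \sqrt{\log n}$. The principal obstacle, beyond the standard measured-descent bookkeeping, is the volumetric comparability step, which becomes delicate in the regime where $\alpha/\beta$ is only moderately larger than $1+\theta$.
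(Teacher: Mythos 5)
Your high-level plan is in the right spirit — measured descent with per-scale random zero sets — but the proposal glosses over the two places where the argument is genuinely delicate, and both are exactly the parts that the paper's proof (via Theorem~\ref{thm:restatet decent with constants that we get}) is built to handle.

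First, the embedding $F(x) = \bigoplus_k \sqrt{w_k(x)} \min\{r_k, d(x,\cZ_k)\}$ is not a Fr\'echet embedding in the sense of~\eqref{eq:def Frechet embedding}: the coordinate indexed by $(k,\cZ_k)$ is $\sqrt{w_k(x)}\,\min\{r_k,d(x,\cZ_k)\}$, not $d(x,\cZ)$ for some sampled $\cZ$, because the weight $\sqrt{w_k(x)}$ depends on the point being embedded. You say you would ``repackage it as a single probability measure on $2^\MM\setminus\{\emptyset\}$ by augmenting the sample space ... a standard if slightly technical bookkeeping step,'' but there is no way to bury an $x$-dependent weight inside the sampling of $\cZ$ by relabeling the sample space. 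The theorem's ``furthermore'' clause is required downstream (Corollary~\ref{thm:ext}, Theorem~\ref{thm:k demands}), and the actual resolution is the stochastic mixing of Section~\ref{sec:descent revisited}: the random threshold $\st$ and the function $\ck(z,\st)$ in~\eqref{eq:def our Z mixture} encode the local density of $z$ into the choice of scale at which $\cZ_n$ is queried, so that the resulting $\cZ$ is a single random subset and the embedding is literally $x\mapsto d(x,\cZ)$. That construction is substantive, not bookkeeping.

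Second, the Lipschitz (upper) bound is not a ``routine doubling-of-arguments.'' The danger is at scales $r_k \gg d(x,y)$, where you must control $|\sqrt{w_k(x)}-\sqrt{w_k(y)}|\cdot\min\{r_k,d(\cdot,\cZ_k)\}$, and that min can be of order $r_k$. The weight $w_k(\cdot)=1+\log(|B(\cdot,\alpha r_k)|/|B(\cdot,\beta r_k)|)$ is a local density ratio; in a non-doubling $n$-point space it can change by $\asymp\log n$ between $x$ and $y$ even when $d(x,y)\ll r_k$ (nothing stops a huge cluster from sitting just inside $B(y,\beta r_k)$ but just outside $B(x,\beta r_k)$). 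In that case a single coordinate at scale $r_k$ already contributes $\gtrsim (\log n)\,r_k^2 \gg (\log n)\,d(x,y)^2$, and the claimed inequality $\|F(x)-F(y)\|_{L_2}^2\lesssim d(x,y)^2\log n$ fails. Controlling exactly this fluctuation is the central technical content of measured descent; in the paper it is achieved by the random scale offset $\si$ and threshold $\st$, together with Lemma~\ref{obs:klmn}, which gives a one-sided Lipschitz property of $\ck(\cdot,t)$ that the raw ratio $w_k(\cdot)$ does not have. Without some version of this smoothing, the naive weighted embedding does not work.

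A minor remark on the lower bound: the ``volumetric matching $w_{k^*}(x)\asymp w_{k^*}(y)$'' that you call the heart of the argument is actually unnecessary. With the hypothesis normalized by $w_{k^*}(y)$ and the embedding weighted by $\sqrt{w_{k^*}(y)}$, the two factors cancel exactly and one gets $\gtrsim \e^2\d\,r_{k^*}^2$ directly, without comparing $x$- and $y$-centered volumes. The real obstacles are the two issues above, and resolving either one essentially forces you to rediscover the mixing lemma.
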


Even though those who are familiar with~\cite{KLMN05}  will recognize Theorem~\ref{thm:quote descent} as a special case of measured descent, Theorem~\ref{thm:quote descent} does not appear in~\cite{KLMN05}  as a standalone statement. Instead, Theorem~\ref{thm:quote descent}  follows from {part of\footnote{The proof of \cite[Lemma~1.8]{KLMN05} derives  further facts that are relevant to the setup of~\cite{KLMN05}, but not for Theorem~\ref{thm:quote descent}.} the  proof of} Lemma~1.8 in~\cite{KLMN05}: the random zero set appears in the second displayed equation on page 847 of~\cite{KLMN05} and the corresponding Fr\'echet embedding appears in the line immediately following it.  As Theorem~\ref{thm:quote descent}  is crucial for the  applications of Theorem~\ref{thm:quote descent} that we will explain next, we will include in Section~\ref{sec:descent revisited}  a self-contained  proof of Theorem~\ref{thm:quote descent} which builds on the above ideas while incorporating further enhancements so as to yield both a more general statement and the best dependence that we currently have of the implicit constant factor in~\eqref{eq:quote descent} on $\e,\d,\theta,\alpha,\beta$; see~\eqref{eq:quote descent with explicit constants}.


\smallskip

In the  influential work~\cite{JL82}, Johnson and Lindenstrauss asked if $\cc_2(\MM)\lesssim \sqrt{\log n}$ for any $n$-point metric space $(\MM,d)$. If this were true, then it would have been a satisfactory analogue of  John's theorem~\cite{Joh48} that $\cc_2(\bX)\le \sqrt{\dim \bX}$ for any finite-dimensional normed space $\bX$, in accordance with the predictions of the  Ribe program~\cite{Bou86,Kal08,Nao12-ribe,Bal13,Ost13,God17,Nao18}. Bourgain famously answered~\cite{Bou85} the aforementioned Johnson--Lindenstrauss question negatively;  an asymptotically stronger (sharp) impossibility result  was subsequently obtained (by another method) in~\cite{LLR95,AR98}. Hence, a positive answer would necessitate imposing restrictions on the metric space $(\MM,d)$. The following theorem follows by substituting Theorem~\ref{thm:random zero} into Theorem~\ref{thm:quote descent}. It answers the Johnson--Lindenstrauss problem positively within the class of quasisymmetrically Hilbertian  metric spaces.

\begin{theorem}[Johnson--Lindenstrauss problem/nonlinear John theorem]\label{thm:quasisymmetric John} For every  $0<s,\e\le 1/2$ and  $n\in \N$, if $(\MM,d)$ is an $n$-point $(s,\e)$-quasisymmetrically Hilbertian metric space, then
$
\cc_2(\MM)\lesssim_{s,\e} \sqrt{\log n}.
$
\end{theorem}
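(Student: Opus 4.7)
The plan is a direct substitution: feed the random zero set provided by Theorem~\ref{thm:random zero general} into the measured-descent machinery encapsulated by Theorem~\ref{thm:quote descent}. Since $\MM$ is finite, I will take $\mu$ to be the counting measure on $\MM$, so that $\mu(B(y,r)) = |B(y,r)|$ for every $y \in \MM$ and $r \ge 0$; this makes the local-growth ratio that appears in~\eqref{eq:in main thm-general} coincide with the combinatorial ratio $|B(y,19\beta\tau)|/|B(y,\beta\tau)|$ demanded by the hypothesis~\eqref{eq:cadinality version of zero set-intro} of Theorem~\ref{thm:quote descent}.

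With this $\mu$, Theorem~\ref{thm:random zero general} produces, for every $\tau > 0$, a probability measure $\prob^\tau$ on $2^\MM \setminus \{\emptyset\}$ and the constant $\beta = s^{\alpha/\e}$ (with $\alpha > 1$ universal) such that for every $0 < \lambda \le 1/2$ and every $x, y \in \MM$ with $d(x,y) \ge \tau$,
$$\prob^\tau\big[\emptyset \neq \cZ \subset \MM :\ d(y,\cZ) \ge \lambda \beta \tau \ \text{and}\ x \in \cZ\big] \gtrsim \bigg(\frac{|B(y,19\beta\tau)|}{|B(y,\beta\tau)|}\bigg)^{-\lambda^2}.$$
For each pair $(y,\tau)$ I will then choose
$$\lambda = \lambda(y,\tau) \eqdef \min\bigg\{\frac12,\ \frac{1}{\sqrt{1 + \log\frac{|B(y,19\beta\tau)|}{|B(y,\beta\tau)|}}}\bigg\}.$$
A brief case analysis gives what I need. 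In the uncapped case, $\lambda^2 \log(|B(y,19\beta\tau)|/|B(y,\beta\tau)|) \le 1$, so the right-hand side above is at least $e^{-1}$; in the capped case $\lambda = 1/2$, the log-ratio is forced to be at most $3$, so the right-hand side is at least $e^{-3/4}$. Moreover, in both cases one has $\lambda\beta\tau \ge \tfrac12\beta\tau/\sqrt{1+\log(|B(y,19\beta\tau)|/|B(y,\beta\tau)|)}$.

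Consequently, setting $\e_0 \eqdef \beta/2$, $\alpha_0 \eqdef 19\beta$, $\beta_0 \eqdef \beta$, $\theta \eqdef 1$, and letting $\d$ be the universal constant extracted in the previous step, the family $\{\prob^\tau\}_{\tau > 0}$ verifies the hypothesis~\eqref{eq:cadinality version of zero set-intro} of Theorem~\ref{thm:quote descent} for all $x, y$ with $d(x,y) \ge \tau$, hence a fortiori for $\tau \le d(x,y) \le 2\tau$. Applying Theorem~\ref{thm:quote descent} yields $\cc_2(\MM) \lesssim_{\e_0,\d,\theta,\alpha_0,\beta_0} \sqrt{\log n}$, and since $\e_0, \alpha_0, \beta_0$ depend on $s,\e$ only through $\beta = s^{\alpha/\e}$ while the remaining parameters are universal, this gives $\cc_2(\MM) \lesssim_{s,\e} \sqrt{\log n}$, as desired.

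There is no real obstacle beyond the substitution itself: all of the substantive content sits in the two prior theorems, namely the construction of a local-growth-sensitive random zero set in Theorem~\ref{thm:random zero general} via the refined Arora--Rao--Vazirani rounding, and the Krauthgamer--Lee--Mendel--Naor measured-descent argument in Theorem~\ref{thm:quote descent} that converts such a family into a Fr\'echet embedding into $\ell_2$ with distortion $O(\sqrt{\log n})$. The only mild technical point is the $(y,\tau)$-dependent optimization of $\lambda$ above, which is exactly what converts the sub-Gaussian tail in~\eqref{eq:in main thm-general} into the constant-probability separation event in the form Theorem~\ref{thm:quote descent} requires.
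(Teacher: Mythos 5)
Your proof is correct and follows essentially the paper's own route: the paper states that Theorem~\ref{thm:quasisymmetric John} is obtained by substituting Theorem~\ref{thm:random zero} into Theorem~\ref{thm:quote descent}, and it also explicitly identifies Theorem~\ref{thm:random zero} as the special case $\lambda\asymp 1/\sqrt{1+\log(\mu(B(y,19\beta\tau))/\mu(B(y,\beta\tau)))}$ of Theorem~\ref{thm:random zero general}, which is precisely the $(y,\tau)$-dependent choice of $\lambda$ you carry out. The only difference is that you re-derive that specialization rather than quoting it; all parameter checks ($\alpha_0\ge\beta_0$, $\theta=1$, the universal lower bound $\d\gtrsim e^{-1}$) are handled correctly.
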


The class of metric spaces that admit a quasisymmetric embedding into a Hilbert space encompasses metric spaces of finite Assouad--Nagata dimension~\cite{LS05}, or more generally~\cite{NS11} metric spaces that admit a padded stochastic decomposition (thus, it includes, say, planar graphs~\cite{KPR93} and doubling metric spaces~\cite{Ass83,KPR03}), as well as $L_p(\mu)$ spaces for $1\le p\le 2$~\cite{BDK65,WW75}, and the infinite dimensional   Heisenberg group $\mathbb{H}^\infty$~\cite{LN06}.

\begin{remark} Ignoring much of the information that is provided by Theorem~\ref{coro:doubling average embedding},  one can consider the real line as a subset of $\ell_2$ to deduce that for any $K\in \N$ and $0<s,\e\le 1/2$, a $K$-doubling $(s,\e)$-quasisymmetrically Hilbertian metric space embeds into $\ell_2$ with average distortion $O(\sqrt{\log K})$, which is  sharp\footnote{We warn of the following somewhat confusing pitfall here. Assouad famously proved~\cite{Ass83} that any doubling metric space embeds quasisymmetrically into $\ell_2$ (even into $\R^n$ for some $n\in \N$). So, superficially it might seem that our assumption that the space is quasisymmetrically Hilbertian is redundant. However, Assouad's theorem  yields a quasisymmetric embedding whose modulus (necessarily) depends on the doubling constant. } by Fact~\ref{prop: Rn}.  This nonlinear version of John's theorem is quite satisfactory for the following reasons. Firstly, it does not hold for general $K$-doubling metric spaces, for which the best bound that one can get on the Euclidean average distortion is $O(\log K)$, as follows from the proofs in~\cite{LLR95,AR98,Mat97}; a stronger  impossibility result appears in Remark~\ref{rem:general doubling}. Secondly, unlike Theorem~\ref{thm:quasisymmetric John}, such a result does not hold for bi-Lipschitz embeddings, so one must somehow relax the requirement from the embedding, as we did here by considering average distortion.  Indeed, by~\cite{Laa00,LP01} there exists a doubling metric space $\MM$ that embeds quasisymmetrically into $\ell_2$ by~\cite{Ass83}, yet $\MM$ does not admit any bi-Lipschitz embedding into $\ell_2$; the subsequent works~\cite{CK06,LN06}, which are natural extensions to infinite dimensional targets of the important contributions~\cite{Pan89,Sem96}, show that one can also take $\MM$ here to be the Heisenberg group.  
\end{remark}

 A longstanding open question  asks for the growth rate of the largest possible Euclidean distortion of a finite subset of $\ell_1$. This question arose from Enflo's influential proof~\cite{Enf69} that the $d$-dimensional Hamming cube $\sub_d=\{0,1\}^d\subset \ell_1^d$ has Euclidean distortion  $\sqrt{d}$, i.e., it is of order $\sqrt{\log n}$ for $n=2^d=|\sub_d|$.

 To the best of our knowledge, the aforementioned question remained a famous unpublished folklore problem for many years, though it was eventually published   by Goemans in~\cite[page~157]{Goe97}, who stated the elegant conjecture that for every $n\in \N$, any $n$-point subset of $\ell_1$ embeds into $\ell_2$ with distortion $O(\sqrt{\log n})$; see also Linial's Open Problem~4 in~\cite{Lin02}. By contrasting the conjectural asymptotic upper bound in~\cite{Goe97} with the lower bound of~\cite{Enf69}, it would then follow that the $d$-dimensional Hamming cube has the asymptotically  largest possible growth rate among all subsets of $\ell_1$ of size $2^d$.

 As an indication of the  difficulty here,  we mention that arbitrarily large $n$-point subsets of $\ell_1$ that are different from the Hamming cube have been shown to have Euclidean distortion  $\Theta(\sqrt{\log n})$. E.g.,~these can be planar graphs, which can even be taken to be   $O(1)$-doubling~\cite{Laa00,LP01,NR03}. Thus, the above question has asymptotic extremizers that are markedly different from each other.

Theorem~\ref{thm:quasisymmetric John} provides the following positive resolution of the above conjecture:

\begin{theorem}[finite subsets of $\ell_1$ that are furthest from being Euclidean]\label{thm:l1} For every $n\in \N$, the maximal Euclidean distortion of an $n$-point subset of $\ell_1$ is bounded from above and from below by positive universal constant multiples of $\sqrt{\log n}$.
\end{theorem}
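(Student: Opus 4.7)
The plan is to derive the two matching bounds separately. The upper bound will follow from Theorem~\ref{thm:quasisymmetric John} applied to $\ell_1$ itself, once we verify that $\ell_1$ is quasisymmetrically Hilbertian with universal parameters. The matching lower bound will come from the classical Hamming cube example of Enflo.

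For the upper bound, I would first invoke the classical fact that the $\tfrac12$-snowflake of $\ell_1$ embeds isometrically into a Hilbert space $\bH$---a consequence of Schoenberg's theorem (or equivalently of the isometric embedding $L_1 \hookrightarrow L_2$ via stable random variables, due to Bretagnolle--Dacunha-Castelle--Krivine). That is, there exists $f : \ell_1 \to \bH$ satisfying $\|f(x)-f(y)\|_\bH = \sqrt{\|x-y\|_1}$ for all $x,y \in \ell_1$. Given this, whenever $x,y,z \in \ell_1$ satisfy $\|x-y\|_1 \le \tfrac{1}{4}\|x-z\|_1$ we get
\[
\|f(x)-f(y)\|_\bH \;=\; \sqrt{\|x-y\|_1} \;\le\; \tfrac{1}{2}\sqrt{\|x-z\|_1} \;=\; \tfrac{1}{2}\|f(x)-f(z)\|_\bH,
\]
so $\ell_1$ is $(1/4, 1/2)$-quasisymmetrically Hilbertian in the sense of Definition~\ref{def:quasisym metric space}. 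Theorem~\ref{thm:quasisymmetric John} then yields $\cc_2(\sub) \lesssim \sqrt{\log n}$ for every $n$-point subset $\sub \subset \ell_1$, with a universal implicit constant since $s = 1/4$ and $\e = 1/2$ are absolute.

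For the matching lower bound, I would set $d = \lfloor \log_2 n \rfloor$ and take the $d$-dimensional Hamming cube $\sub_d = \{0,1\}^d \subset \ell_1^d \subset \ell_1$, which has at most $n$ points. By Enflo's classical parallelogram-inequality argument~\cite{Enf69}, $\cc_2(\sub_d) = \sqrt{d} \gtrsim \sqrt{\log n}$, which forces the supremum in Theorem~\ref{thm:l1} to be $\gtrsim \sqrt{\log n}$ as well. The main step to flag as a (mild) sanity check is the verification that the stated snowflake embedding of $\ell_1$ indeed places it in the class of Definition~\ref{def:quasisym metric space} with universal $(s,\e) \in (0,1/2]^2$; once that is in place, the entire difficulty of the theorem has been absorbed into Theorem~\ref{thm:quasisymmetric John} (and ultimately into Theorem~\ref{thm:random zero}), and both bounds fall out immediately.
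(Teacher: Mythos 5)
Your proposal is correct and follows essentially the same route as the paper: the upper bound is obtained by applying Theorem~\ref{thm:quasisymmetric John} to $\ell_1$, which is quasisymmetrically Hilbertian because its $\tfrac12$-snowflake embeds isometrically into Hilbert space (the paper cites the closed-form embeddings in~\cite{BDK65,WW75}, and your explicit check that this yields $(s,\e)=(1/4,1/2)$ is exactly the intended verification); the lower bound is Enflo's Hamming cube. No gaps.
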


For $0<\theta\le 1$ and a metric space $(\MM,d)$, it is common to call (in reference to the  Koch snowflake; see e.g.~\cite{DS97}) the metric space $(\MM,d^\theta)$ the $\theta$-snowflake of $(\MM,d)$, or simply the $\theta$-snowflake of $\MM$ if the metric is clear from the context. If the $\frac12$-snowflake of $\MM$ embeds isometrically into a Hilbert space, then one says that $\MM$ is a metric space of negative type; see e.g.~the monograph~\cite{DL10} or the survey~\cite{Nao10} for more on this important and useful notion, including the reason for the nomenclature.

In~\cite[page~158]{Goe97}, Goemans conjectured that for every $n\in \N$, any $n$-point  metric space of negative type embeds into $\ell_2$ with distortion $O(\sqrt{\log n})$. By definition, a metric space of negative type has a modulus-$\eta$ quasisymmetric emebdding into $\ell_2$  with $\eta(t)=\sqrt{t}$ for every $t\ge 0$, and with~\eqref{eq:def quasi} holding as equality. Hence,  Theorem~\ref{thm:quasisymmetric John}  provides the following positive resolution of Goemans' conjecture (the corresponding lower bound follows by considering the Hamming cube $\{0,1\}^d\subset \ell_1^d$ again and using~\cite{Enf69}, as its image in $\ell_2^d$ under the formal identity mapping exhibits that it is a metric space of negative type):

\begin{theorem}[finite metrics of negative type that are furthest from being Euclidean]\label{thm:neg} For every $n\in \N$, the largest possible Euclidean distortion of an $n$-point metric space of negative type is bounded from above and from below by positive universal constant multiples of $\sqrt{\log n}$.
\end{theorem}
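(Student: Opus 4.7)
The statement has two halves, a lower bound (exhibit bad $n$-point metrics of negative type) and an upper bound (embed any such metric into $\ell_2$ with distortion $O(\sqrt{\log n})$), and my plan is essentially to read off both from what has already been set up.

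For the lower bound, I would take the $d$-dimensional Hamming cube $\sub_d=\{0,1\}^d$ with the metric inherited from $\ell_1^d$. The key point is that this is a metric space of negative type: since coordinates lie in $\{0,1\}$, one has $\|v-w\|_{\ell_1^d}=\|v-w\|_{\ell_2^d}^2$ for all $v,w\in\sub_d$, so the formal identity map into $\ell_2^d$ is an isometric embedding of the $\tfrac12$-snowflake of $\sub_d$, witnessing negative type. Enflo's theorem~\cite{Enf69} then gives $\cc_2(\sub_d)\ge\sqrt{d}$, and since $|\sub_d|=2^d$ this produces, for arbitrarily large $n$, an $n$-point negative-type metric with Euclidean distortion $\gtrsim\sqrt{\log n}$.

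For the upper bound, my plan is to reduce directly to Theorem~\ref{thm:quasisymmetric John} by checking that \emph{any} metric space $(\MM,d)$ of negative type is $(s,\e)$-quasisymmetrically Hilbertian for a pair of universal constants $s,\e\in(0,1)$. Indeed, fix an isometric embedding $\f:\MM\to\bH$ of $(\MM,d^{1/2})$ into a Hilbert space, which exists by the definition of negative type. Then $\|\f(x)-\f(y)\|_\bH=\sqrt{d(x,y)}$ for every $x,y\in\MM$, so for any $x,y,z\in\MM$ with $d(x,y)\le s\,d(x,z)$ one gets
\[
\|\f(x)-\f(y)\|_\bH=\sqrt{d(x,y)}\le\sqrt{s}\,\sqrt{d(x,z)}=\sqrt{s}\,\|\f(x)-\f(z)\|_\bH.
\]
Choosing, e.g., $s=\tfrac14$ and $\e=\tfrac12$ makes $\sqrt{s}=1-\e$, so Definition~\ref{def:quasisym metric space} is satisfied with universal constants. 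Applying Theorem~\ref{thm:quasisymmetric John} with these absolute values of $s$ and $\e$ yields $\cc_2(\MM)\lesssim\sqrt{\log n}$ for every $n$-point $(\MM,d)$ of negative type, with a universal implicit constant.

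Since all of the heavy lifting is done by Theorem~\ref{thm:random zero} and its consequence Theorem~\ref{thm:quasisymmetric John}, there is essentially no obstacle here: Theorem~\ref{thm:neg} is a packaging exercise. The only point requiring any attention is that the $O_{s,\e}(\cdot)$ constant in Theorem~\ref{thm:quasisymmetric John} becomes genuinely absolute for our choice of $s,\e$, which is visible from the explicit form $\beta=s^{\alpha/\e}$ furnished by Theorem~\ref{thm:random zero general}. Combining the two halves proves that the largest possible Euclidean distortion of an $n$-point negative-type space is $\Theta(\sqrt{\log n})$, as required.
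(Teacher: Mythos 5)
Your proposal is correct and follows essentially the same route as the paper's (very brief) justification: the lower bound is Enflo's theorem on the Hamming cube, noting that the formal identity into $\ell_2^d$ exhibits $\{0,1\}^d\subset\ell_1^d$ as a negative-type space, and the upper bound observes that negative type gives a quasisymmetric embedding into $\ell_2$ with modulus $\eta(t)=\sqrt{t}$ and then invokes Theorem~\ref{thm:quasisymmetric John}. Your explicit choice $s=\tfrac14$, $\e=\tfrac12$ (so $\sqrt{s}=1-\e$) and the remark that the $\lesssim_{s,\e}$ constant thereby becomes absolute is a slightly more detailed version of the paper's phrasing, but it is the same argument.
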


In fact, we also deduce from Theorem~\ref{thm:quasisymmetric John} the following result that answers the major problem  of evaluating the growth rate of the largest possible $L_1$ distortion of a finite metric space of negative type (this formulation appears as Open Problem~3 in~\cite{Lin02}, and see also  e.g.~\cite{Goe97,Mat02}):

\begin{theorem}[finite metrics of negative type that are furthest from subsets of $L_1$]\label{thm:negl1} For every $n\in \N$, the largest possible $L_1$ distortion of an $n$-point metric space of negative type is bounded from above and from below by positive universal constant multiples of $\sqrt{\log n}$.
\end{theorem}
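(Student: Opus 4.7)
The plan is to derive both directions quickly from Theorem~\ref{thm:quasisymmetric John}. For the upper bound $\cc_1(\MM)\lesssim\sqrt{\log n}$, the key observation is that every $n$-point metric space $(\MM,d)$ of negative type is automatically quasisymmetrically Hilbertian with \emph{absolute} parameters. Indeed, by the definition recalled just before the statement, there is an isometric embedding $\f:(\MM,d^{1/2})\to \bH$ into some Hilbert space, so whenever $x,y,z\in\MM$ satisfy $d(x,y)\le \tfrac{1}{2}d(x,z)$ one has
\[
\|\f(x)-\f(y)\|_\bH \;=\; d(x,y)^{1/2}\;\le\; \tfrac{1}{\sqrt{2}}\,d(x,z)^{1/2}\;=\;\tfrac{1}{\sqrt{2}}\|\f(x)-\f(z)\|_\bH.
\]
This exhibits $(\MM,d)$ as $\bigl(\tfrac{1}{2},\,1-\tfrac{1}{\sqrt{2}}\bigr)$-quasisymmetrically Hilbertian in the sense of Definition~\ref{def:quasisym metric space}. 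Hence Theorem~\ref{thm:quasisymmetric John} applies with a universal implicit constant and gives $\cc_2(\MM)\lesssim\sqrt{\log n}$.

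Next I would invoke the textbook fact that $\ell_2$ embeds isometrically (up to a harmless normalizing scalar) into $L_1$: for i.i.d.\ standard Gaussians $\{g_i\}$, the map $x\mapsto \sum_i x_i g_i$ satisfies $\E\bigl|\sum_i x_i g_i\bigr|=\sqrt{2/\pi}\,\|x\|_{\ell_2}$, and composition with this isometry takes any Hilbert embedding to an $L_1$ embedding of the same distortion. Composing with the Euclidean embedding produced by Theorem~\ref{thm:quasisymmetric John} yields $\cc_1(\MM)\le \cc_2(\MM)\lesssim \sqrt{\log n}$, which is the required upper bound.

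For the matching lower bound I would simply appeal to prior work. By the Linial--London--Rabinovich equivalence, the supremum of $\cc_1(\MM)$ over $n$-point metric spaces of negative type agrees, up to universal factors, with the integrality gap $\alpha_{\mathsf{GL}}(n)$ of the Goemans--Linial semidefinite program on inputs of size $n$. The lower bound $\alpha_{\mathsf{GL}}(n)\gtrsim\sqrt{\log n}$ was already established (via the vertical-versus-horizontal perimeter inequality for the Heisenberg group) in~\cite{naor2018vertical}, as explicitly noted earlier in the introduction. Combining this with the upper bound above yields the asserted two-sided bound $\Theta(\sqrt{\log n})$.

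All of the genuine difficulty in this theorem lives upstream, inside the proofs of Theorem~\ref{thm:random zero} and the resulting Theorem~\ref{thm:quasisymmetric John}; no real obstacle arises in the deduction itself. The only structural step is the one-line verification that negative-type metrics satisfy Definition~\ref{def:quasisym metric space}, and once this is done the reduction, via the isometric inclusion $\ell_2\hookrightarrow L_1$, is formal.
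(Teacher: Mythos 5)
Your argument is correct and is essentially the paper's own deduction: the verification that a negative-type metric is quasisymmetrically Hilbertian (the paper phrases this via the modulus $\eta(t)=\sqrt{t}$; your explicit computation of the $(1/2,\,1-1/\sqrt{2})$ parameters is the same observation), followed by Theorem~\ref{thm:quasisymmetric John} and the trivial bound $\cc_1\le\cc_2$ (the paper invokes Dvoretzky's theorem here, while you use the Gaussian isometry $\ell_2\hookrightarrow L_1$ — either works), with the lower bound cited from~\cite{naor2018vertical}. The one cosmetic remark: the paper cites~\cite{naor2018vertical} directly for the $\cc_1$ lower bound on negative-type metrics (via a net in the Heisenberg ball), whereas you route through the integrality gap and appeal to the Rabinovich duality between the SDP integrality gap and negative-type $L_1$ distortion — this is fine (and the duality is indeed recorded in~\cite[Proposition~15.5.2]{Mat02}), but it is a slight detour and what you call the ``Linial--London--Rabinovich equivalence'' is more precisely the Rabinovich duality for the SDP relaxation rather than the original LLR correspondence for the LP relaxation.
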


The upper bound on the distortion  in Theorem~\ref{thm:negl1} follows from Theorem~\ref{thm:quasisymmetric John} since  $\cc_1(\MM)\le \cc_2(\MM)$ for every finite metric space $(\MM,d_\MM)$; there are multiple ways to justify this, but perhaps the quickest (though overkill) is to apply Dvoretzky's theorem~\cite{Dvo60}. The matching lower bound  is due to~\cite{naor2018vertical}, where it is exhibited by considering a sufficiently dense net in the unit ball of the $5$-dimensional Heisenberg group $\mathbb{H}^5$, equipped with a suitable equivalent  metric of negative type that was constructed in~\cite{LN06} (by~\cite{NY22}, the $3$-dimensional Heisenberg group $\mathbb{H}^3$ is insufficient for this purpose).

\begin{remark} Until its landmark negative answer in~\cite{KV05}, it was a major open question (due, independently, to Goemans and Linial)  whether every metric space of negative type admits a bi-Lipschitz embedding into $L_1$. Theorem~\ref{thm:negl1} completes the efforts by many researchers in mathematics and theoretical computer science (including~\cite{LN06,ALN08,chawla2008embeddings,KR09,CK10,CK10-monotonicity,CKN10,KV15,naor2018vertical}, as well as multiple unpublished works) to understand the  rate (for $n$-point metric spaces of negative type, as $n\to \infty$) of the failure of this possibility. It is worthwhile to recall in this context the (still open) conjecture of~\cite{ANV10} that any invariant  metric of negative type on an Abelian group  admits a bi-Lipschitz embedding into $L_1$.
\end{remark}

Thus far, we applied Theorem~\ref{thm:random zero} by combining it with  Theorem~\ref{thm:quote descent} without utilizing the further knowledge from Theorem~\ref{thm:quote descent} that the corresponding distortion bound~\eqref{eq:quote descent} is obtained  via the  Fr\'echet embedding. We will next present an application to the Lipschitz extension problem that incorporates this additional information. In order to do so, we first need to recall basic notation that originates  in~\cite{Mat90}.

Suppose that $(\SS,d_\SS)$ is a (source) metric space and  $(\TT,d_\TT)$ is a (target) metric space. For a  nonempty subset $\MM$ of $\SS$, let $\ee(\SS,\MM;\TT)$ denote the infimum over those $K\in [1,\infty]$ such that for every $L\ge 0$ and every $L$-Lipschitz function  $f:\MM\to \TT$ there is a $KL$-Lipschitz function $F:\SS\to \NN$ that extends $f$.

The following observation is a straightforward consequence of the above definition:

\begin{observation}\label{observation:unravel definition} Fix $\alpha,\beta\ge 0$. Suppose that $(\SS,d_\SS)$ is a metric space and that $\emptyset \neq \MM\subset \SS$. Let $(\NN,d_\NN)$ be a metric space such that there is an $\alpha$-Lipschitz function $\f:\SS\to \NN$ satisfying  $d_\NN(\f(x),\f(y))\ge d_\SS(x,y)/\beta $ for every $x,y\in \MM$. Then, $\ee(\SS,\MM;\TT)\le \alpha\beta \ee(\NN,\f(\MM);\TT)$ for every metric space $(\TT,d_\TT)$.
\end{observation}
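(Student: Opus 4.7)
The plan is to reduce any Lipschitz extension problem from $\MM$ into $\TT$ to a Lipschitz extension problem from $\f(\MM)$ into $\TT$, then pull back via $\f$.

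First, I would observe that $\f$ is automatically injective on $\MM$: if $x,y\in \MM$ satisfy $\f(x)=\f(y)$, then the hypothesis gives $d_\SS(x,y)\leq \beta d_\NN(\f(x),\f(y))=0$. So for any $L\geq 0$ and any $L$-Lipschitz function $f:\MM\to \TT$, it is legitimate to define $g:\f(\MM)\to \TT$ by $g(\f(x))\eqdef f(x)$. Using the hypothesis once more, for $x,y\in \MM$ we have
\begin{equation*}
d_\TT(g(\f(x)),g(\f(y)))=d_\TT(f(x),f(y))\le L d_\SS(x,y)\le L\beta\, d_\NN(\f(x),\f(y)),
\end{equation*}
so $g$ is $(L\beta)$-Lipschitz on $\f(\MM)$.

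Next, fix any $K>\ee(\NN,\f(\MM);\TT)$. By the definition of $\ee$, there exists a $(KL\beta)$-Lipschitz function $G:\NN\to \TT$ which extends $g$. Set $F\eqdef G\circ \f:\SS\to \TT$. Since $\f$ is $\alpha$-Lipschitz and $G$ is $(KL\beta)$-Lipschitz, the composition $F$ is $(K\alpha\beta L)$-Lipschitz. Moreover, for every $x\in \MM$ we have $F(x)=G(\f(x))=g(\f(x))=f(x)$, so $F$ extends $f$. This shows $\ee(\SS,\MM;\TT)\le K\alpha\beta$, and letting $K\downarrow \ee(\NN,\f(\MM);\TT)$ yields the claim.

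There is no real obstacle here; the statement is a direct unraveling of definitions, and the only small subtlety is checking that $\f|_\MM$ is injective so that $g$ is well-defined. I would keep the written proof to a few lines.
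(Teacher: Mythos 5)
Your proof is correct, and it is precisely the direct unraveling of definitions that the paper alludes to without writing out (the paper states Observation~\ref{observation:unravel definition} as a ``straightforward consequence of the above definition'' and gives no proof). The injectivity remark, the pushforward $g$ and pullback $F=G\circ\f$, and the limiting argument in $K$ are all exactly what one would write here.
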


The Lipschitz extension modulus $\ee(\SS;\TT)$ of  a  pair $(\SS,\TT)$ of metric spaces is  defined to be the supremum of $\ee(\SS,\MM;\TT)$ over all the nonempty subsets $\MM$ of $\SS$.
Analogously, following~\cite{LN04} we define the absolute Lipschitz extension modulus $\ae(\MM;\TT)$ of  a  pair $(\MM,\TT)$ of metric spaces to be the supremum of $\ee(\SS,\MM;\TT)$ over all the possible metric spaces $\SS$ that (isometrically) contain $\MM$ as a subset.

The study of the above moduli has a rich history due to their intrinsic geometric and analytic interest as well as their applications and connections to multiple areas of mathematics; a  (partial) description of this background appears in the monographs~\cite{BB12,BB12-2,CMN19}, and in~\cite{MN13,NR17,Nao24}.   Nevertheless, basic problems about these moduli have stubbornly resisted many efforts, including the following example of a  longstanding  open question due to Ball~\cite{Bal92}, to which we will return later in an algorithmic context, following the insights of Makarychev and Makarychev in~\cite{MM16}:

\begin{question}[Ball's extension problem]\label{Q:ball} Is it true that  $\ee(\ell_2;\ell_1)<\infty$?
\end{question}

More generally, we pose the following question which we think is of fundamental importance:

\begin{question}\label{Q:targets ext} Characterize the class $\TT\!(\ell_2)$ of those target metric spaces $(\TT,d_\TT)$ for which $\ee(\ell_2;\TT)<\infty$.

\end{question}

Kirszbraun's theorem~\cite{Kir34} says that $\ell_2\in \TT\!(\ell_2)$; in fact, $\ee(\ell_2;\ell_2)=1$. Question~\ref{Q:ball} is of course the special case of Question~\ref{Q:targets ext} that asks whether $\ell_1\in  \TT\!(\ell_2)$. One could also aim to characterize for any source metric space $(\SS,d_\SS)$ the class $\TT\!(\SS)$ of those target metric spaces $(\TT,d_\TT)$ for which $\ee(\SS,\TT)<\infty$. This seems to be a challenging though potentially fruitful research direction. As a start, we do not know if given two Banach spaces  $\bX,\bY$, the equality  $\TT\!(\bX)=\TT\!(\bY)$ implies that $\bX$ and $\bY$ are bi-Lipschitz equivalent.

When $\ee(\SS;\TT)=\infty$, which is typically the case, one still studies the Lipschitz extension problem for the pair $(\SS,\TT)$ by considering the asymptotic growth rate of the following finitary moduli (which are  finite if a quite mild assumption on the target space $\TT$ holds; see~\cite{JLS86} and Section~5.3 in~\cite{Nao24}):
$$
\forall n\in \N,\qquad \ee_n(\SS;\TT)\eqdef \sup_{\substack {\emptyset \neq\MM\subset \SS\\ |\MM|\le n}} \ee(\SS,\MM;\TT).
$$
The introduction of~\cite{NR17} (particularly Section~1.3 there) surveys known bounds on  these parameters.\footnote{Note   that~\cite{NR17} failed to state the best known bound on $\ee_n(\ell_1,\ell_p)$ for $1<p<2$, which is $\ee_n(\ell_1,\ell_p)\lesssim \sqrt{(\log n)/(p-1)}$. This is a special case of~\cite[Theorem~2.1]{MN06}, i.e., it was actually available at the time~\cite{NR17}  was written but overlooked there.} The following consequence of Theorem~\ref{thm:random zero} provides a modest amount of further information on this topic:

\begin{corollary}\label{thm:ext}  There exists a universal constant $C>0$ with the following property. Fix $0<s,\e\le 1/2$ and an integer  $n\ge 2$. Suppose that  $(\MM,d_\MM)$ is an $n$-point metric space that is also $(s,\e)$-quasisymmetrically Hilbertian. Then, for every $0<\d\le 1$ and every target metric space $(\TT,d_\TT)$ we have
\begin{equation}\label{eq:our extension bound}
\ae(\MM;\TT)\lesssim_{s,\e}  e^{\frac{C}{\d}}\ee_n\big(\ell_2^{\lceil \d \log n\rceil} ;\TT\big)\sqrt{\log n}.
\end{equation}
\end{corollary}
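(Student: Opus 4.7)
My plan for proving Corollary \ref{thm:ext} is to pass through a Hilbert-space intermediary by chaining two applications of Observation \ref{observation:unravel definition}. First I contract $\MM$ into $L_2(\prob)$ via a $1$-Lipschitz Fréchet embedding of distortion $O(\sqrt{\log n})$; then I reduce dimension from $L_2(\prob)$ down to $\ell_2^{\lceil\delta\log n\rceil}$ using a Johnson--Lindenstrauss bi-Lipschitz embedding of the $n$-point image, extended to the full Hilbert space via Kirszbraun.

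\textbf{Step 1 (Fréchet embedding).} I feed the random zero sets from Theorem \ref{thm:random zero} into Theorem \ref{thm:quote descent} to obtain a probability measure $\prob$ on $2^\MM\setminus\{\emptyset\}$ such that the Fréchet embedding $\Phi_{(\MM,d_\MM)}:\MM\to L_2(\prob)$ defined by \eqref{eq:def Frechet embedding} satisfies \eqref{eq:frechet condition} with distortion $D\lesssim_{s,\e}\sqrt{\log n}$. Each coordinate being $1$-Lipschitz, the embedding is itself $1$-Lipschitz (cf.\ \eqref{eq:trivial frechet}). Moreover, for any metric space $\SS$ containing $\MM$ isometrically, the formula $\Phi'(s)(\cZ):=d_\SS(s,\cZ)$ gives an explicit $1$-Lipschitz extension $\Phi':\SS\to L_2(\prob)$ agreeing with $\Phi_{(\MM,d_\MM)}$ on $\MM$. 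Applying Observation \ref{observation:unravel definition} with $\alpha=1$ and $\beta=D$:
\[
\ee(\SS,\MM;\TT)\le D\cdot\ee\bigl(L_2(\prob),\,\Phi_{(\MM,d_\MM)}(\MM);\,\TT\bigr).
\]

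\textbf{Step 2 (Dimension reduction to $\ell_2^k$).} Since $\Phi_{(\MM,d_\MM)}(\MM)$ is an $n$-point subset of Hilbert space, the Johnson--Lindenstrauss lemma (applied with $k:=\lceil\delta\log n\rceil$) produces a $1$-Lipschitz map $L_0:\Phi_{(\MM,d_\MM)}(\MM)\to\ell_2^k$ that is bi-Lipschitz with distortion at most $e^{C/\delta}$ for a suitable universal constant $C>0$ (for $\delta\gtrsim 1$ this distortion is $O(1)$; for $\delta\to 0$ the exponential bound comfortably dominates the classical $1+O(\sqrt{(\log n)/k})$ estimate). Because the target is Hilbert, Kirszbraun's theorem extends $L_0$ to a $1$-Lipschitz map $\Lambda:L_2(\prob)\to\ell_2^k$ whose restriction to $\Phi_{(\MM,d_\MM)}(\MM)$ still has distortion $\le e^{C/\delta}$. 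Observation \ref{observation:unravel definition} applied to $\Lambda$ then gives
\[
\ee\bigl(L_2(\prob),\,\Phi_{(\MM,d_\MM)}(\MM);\,\TT\bigr)\le e^{C/\delta}\cdot\ee_n\bigl(\ell_2^k;\,\TT\bigr).
\]
Combining the two inequalities, taking the supremum over $\SS$ to recover $\ae(\MM;\TT)$, and absorbing $D\lesssim_{s,\e}\sqrt{\log n}$ into the constants produces \eqref{eq:our extension bound}.

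\textbf{Main obstacle.} The delicate point is the dimension-reduction distortion: Johnson--Lindenstrauss is tightest when $k\gtrsim\log n$, but here $\delta\log n$ may be arbitrarily small. The loose estimate $e^{C/\delta}$ absorbs this without sharp analysis. The other ingredient to verify is that Kirszbraun's theorem genuinely preserves both the Lipschitz norm (immediate) and the lower bi-Lipschitz bound on the finite set $\Phi_{(\MM,d_\MM)}(\MM)$ (which holds since the lower distortion is a property of the pairwise distances on the finite set, automatically preserved under any extension that agrees with $L_0$ there). Both issues are ultimately routine; the novelty of Corollary \ref{thm:ext} lies entirely in the availability of Theorem \ref{thm:random zero}, which supplies the Hilbertian Fréchet embedding in Step 1.
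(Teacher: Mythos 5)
Your proposal is correct and follows essentially the same route as the paper's own proof: a Fréchet embedding into $L_2(\prob)$ (extendable to any super-space, $1$-Lipschitz by the triangle inequality), followed by Johnson--Lindenstrauss dimension reduction composed with a Kirszbraun extension, and then Observation~\ref{observation:unravel definition}. The only cosmetic difference is that you invoke Observation~\ref{observation:unravel definition} twice, once through each intermediate space, whereas the paper composes the two maps into a single $\f=G\circ\Phi:\SS\to\ell_2^{\lceil\d\log n\rceil}$ and applies the observation once; both bookkeeping choices yield the same bound.
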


The main result of~\cite{JL82} is that $\ae(\MM;\ell_2)\lesssim \sqrt{\log n}$ for any $n$-point metric space $\MM$. By Corollary~\ref{thm:ext} for $\d=1$ we see that if  $\MM$ is also $(s,\e)$-quasisymmetrically Hilbertian, then $\ae(\MM;\TT)\lesssim_{s,\e,\TT} \sqrt{\log n}$ for any metric space $(\TT,d_\TT)$ that belongs to the class $\TT\!(\ell_2)$ from Question~\ref{Q:targets ext}.

\begin{proof}[Proof of Corollary~\ref{thm:ext} assuming Theorem~\ref{thm:random zero}] Fix a metric space $(\SS,d_\SS)$ such that $\MM\subset \SS$ and the restriction of $d_\SS$ to $\MM\times \MM$ coincides with $d_\MM$. By combining Theorem~\ref{thm:random zero} with Theorem~\ref{thm:quote descent} there is a probability measure  $\prob$ on $2^{\MM}\setminus \{\emptyset\}$ such that the Fr\'echet embedding $\Phi=\Phi_{(\MM,d_\MM)}:\MM\to L_2(\prob)$  in~\eqref{eq:def Frechet embedding} satisfies
\begin{equation}\label{eq:in verse of frechet}
\forall x,y\in \MM,\qquad \|\Phi(x)-\Phi(y)\|_{L_2(\prob)}\gtrsim_{s,\e} \frac{1}{\sqrt{\log n}}d_\MM(x,y).
\end{equation}

The key observation now is that each of the coordinates of the Fr\'echet embedding is a distance from a nonempty subset of $\MM$, so one can define $\Phi$ on the super space $\SS$ and not just on $\MM$, and the $1$-Lipschitz condition~\eqref{eq:trivial frechet}, which is nothing more than an application of the triangle inequality for $d_\SS$, still holds.  Therefore, we may assume from now that $\Phi:\SS\to L_2(\prob)$ is $1$-Lipschitz and satisfies~\eqref{eq:in verse of frechet}.

Denote $m=\lceil \d \log n \rceil$. The (proof of the) Johnson--Lindenstrauss dimension reduction lemma~\cite{JL82} (see equation~(28) in~\cite{Nao18} for the version that we are using  here) implies that there exists a universal constant $C>0$ and  a $1$-Lipschitz function $g:\Phi(\MM)\to \ell_2^m$  that satisfies
\begin{equation}\label{eq:JL g}
\forall u,v\in \Phi(\MM),\qquad  \|g(u)-g(v)\|_2\ge e^{-\frac{C}{\d}}\|u-v\|_{L_2(\prob)}.
\end{equation}

By Kirszbraun's extension theorem~\cite{Kir34} (also e.g.~\cite[Chapter~1]{BL00}) there is a $1$-Lipschitz function $G:L_2(\prob)\to \ell_2^m$ that extends $g$. Denote $\f=G\circ\Phi:\SS\to \ell_2^m$. Then $\f$ is $1$-Lipschitz and for every $x,y\in \MM$,
$$
\|\f(x)-\f(y)\|_2=\|g(\Phi(x))-g(\Phi(y))\|_2\stackrel{\eqref{eq:JL g}}{\gtrsim} e^{-\frac{C}{\d}}\|\Phi(x)-\Phi(y)\|_{L_2(\prob)}\stackrel{\eqref{eq:in verse of frechet}}{\gtrsim}_{s,\e} \frac{e^{-\frac{C}{\d}} }{\sqrt{\log n}}d_\MM(x,y).
$$
It remains to apply Observation~\ref{observation:unravel definition} with $\NN=\ell_2^m=\ell_2^{\lceil \d \log n\rceil}$, $\alpha=1$ and $\beta\lesssim_{s,\e} e^{\frac{C}{\d}}  \sqrt{\log n}$, and then to take the supremum over all possible super-spaces $(\SS,d_\SS)$ of $(\MM,d_\MM)$ to arrive at the desired estimate~\eqref{eq:our extension bound}.
\end{proof}

\subsubsection{Sparsest Cut, Max-Cut, sparsification, and nearest neighbor search}\label{sec:alg}


Given $n\in \N$, the Sparsest Cut problem (with general capacities and demands) on $n$-vertices takes as its input two $n$-by-$n$ symmetric matrices with nonnegative entries $\sfC=(c_{ij}),\sfD=(d_{ij})\in \M_n([0,\infty))$  and aims to evaluate (or estimate) in polynomial time the following quantity:
\begin{equation}\label{eq:sparsest cut def}
\mathrm{SparsestCut}(\sfC,\sfD)\eqdef \min_{\emptyset \neq S\subsetneq [n]}\frac{\sum_{i\in S}\sum_{j\in [n]\setminus S}c_{ij}}{\sum_{i\in S}\sum_{j\in [n]\setminus S}d_{ij}}.
\end{equation}

In the mid-1990s Goemans and Linial introduced a semidefinite program (SDP) that computes  (with $o(1)$ precision) in polynomial a number $\SDP_{\GL}(\sfC,\sfD)\ge 0$ that satisfies:
\begin{equation*}\label{eq:relaxation}
\SDP_{\GL}(\sfC,\sfD)\le  \mathrm{SparsestCut}(\sfC,\sfD).
\end{equation*}
See e.g.~\cite{naor2018vertical} and the references therein for a relatively recent account of this extensively studied topic, as well as Section~\ref{sec:history}. The pertinent question is therefore to understand the growth rate as $n\to \infty$ of the integrality gap of the Goemans--Linial SDP for Sparsest Cut, which is defined to be the following quantity:
$$
\sup_{\substack{\sfC,\sfD\in \M_n([0,\infty))\\ \sfC,\sfD\ \mathrm{symmeric}}}\frac{\mathrm{SparsestCut}(\sfC,\sfD)}{\SDP_{\GL}(\sfC,\sfD)}.
$$
The algorithm that outputs $\SDP_{\GL}(\sfC,\sfD)$ is then guaranteed to estimate $\mathrm{SparsestCut}(\sfC,\sfD)$ within a factor that is at most this integrality gap.  No other algorithm is currently known to (or is conjectured to) perform asymptotically better then this algorithm of Goemans  and Linial.

\begin{theorem}[integrality gap of the Goemans--Linial SDP for Sparsest Cut]\label{thm:sparsest} For every $n\in \N$, the $n$-vertex integrality gap of the Goemans--Linial semidefinite program for the Sparsest Cut problem is bounded from above and from below by positive universal constant multiples of $\sqrt{\log n}$.
\end{theorem}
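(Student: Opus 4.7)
The plan is to reduce Theorem~\ref{thm:sparsest} to Theorem~\ref{thm:negl1} through the standard bridge between the Goemans--Linial SDP and $L_1$-embeddings of metric spaces of negative type. The first step is to unravel the SDP: a direct Gram-matrix change of variables shows that
\[
\SDP_{\GL}(\sfC,\sfD) = \min \frac{\sum_{i,j} c_{ij}\, d(x_i,x_j)}{\sum_{i,j} d_{ij}\, d(x_i,x_j)},
\]
where the minimum ranges over all ways to realize $\{x_1,\ldots,x_n\}$ as an $n$-point metric space of negative type. In parallel, using the classical identification (cf.~\cite{AR98,LLR95}) of cuts with cut semi-metrics, together with the fact that semi-metrics on $[n]$ that embed isometrically into $L_1$ are precisely the nonnegative combinations of cut semi-metrics, one has
\[
\mathrm{SparsestCut}(\sfC,\sfD) = \min \frac{\sum_{i,j} c_{ij}\, \|f(x_i)-f(x_j)\|_1}{\sum_{i,j} d_{ij}\, \|f(x_i)-f(x_j)\|_1},
\]
where the minimum is over all maps $f\colon\{x_1,\ldots,x_n\}\to L_1$. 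Combining these two identities, the $n$-vertex integrality gap of the Goemans--Linial SDP is exactly the supremum of $\cc_1(\MM)$ over all $n$-point metric spaces $(\MM,d)$ of negative type.

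With this equivalence in hand, the upper bound is immediate from Theorem~\ref{thm:negl1}, which asserts $\cc_1(\MM)\lesssim\sqrt{\log n}$ for every such $\MM$. For the matching lower bound I would invoke the hard instance of \cite{naor2018vertical}: explicit capacity/demand matrices $\sfC,\sfD$ supported on a sufficiently dense finite net in the unit ball of the $5$-dimensional Heisenberg group $\mathbb{H}^5$, equipped with a negative-type metric from \cite{LN06}, are shown there to force an integrality gap of order $\sqrt{\log n}$.

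I do not anticipate any serious obstacle in the write-up: the two identities above, together with Theorem~\ref{thm:negl1} and the construction of \cite{naor2018vertical}, settle both directions of the bound. The only bookkeeping item is the ``cut semi-metrics generate the $L_1$-cone on $n$ points'' step used to pass from an $L_1$-embedding to an actual bipartition; this is completely standard and appears in the references. The substantive content of the theorem, and where all the new work of the present paper enters, is Theorem~\ref{thm:negl1}, whose proof relies on Theorem~\ref{thm:quasisymmetric John} and ultimately on the random zero set construction of Theorem~\ref{thm:random zero}.
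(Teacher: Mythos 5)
Your proposal is correct and matches the paper's argument: the paper likewise obtains the upper bound on the integrality gap by feeding Theorem~\ref{thm:negl1} into the standard identification of the Goemans--Linial SDP with the minimum of the capacity/demand ratio over negative-type metrics (citing \cite[Lemma~4.5]{Nao10} and the Rabinovich duality argument in \cite[Proposition~15.5.2]{Mat02} for the precise equivalence you sketch), and likewise invokes \cite{naor2018vertical} for the matching $\Omega(\sqrt{\log n})$ lower bound.
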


The  lower bound of $\Omega(\sqrt{\log n})$ in Theorem~\ref{thm:sparsest} on the  integrality gap of the Goemans--Linial SDP for Sparsest Cut is from~\cite{naor2018vertical}. Thus, our contribution  to Theorem~\ref{thm:sparsest} is in terms of algorithm design rather than proving an impossibility result, i.e., we derive an improved (sharp) upper bound on the integrality gap of this SDP, and hence we provide the best known polynomial time approximation algorithm  for Sparsest Cut. The fact that Theorem~\ref{thm:negl1} implies a $O(\sqrt{\log n})$   upper bound on the  integrality gap of the Goemans--Linial SDP for Sparsest Cut is a standard result that has been known for a long time (going back at least to~\cite{Goe97});  see~\cite[Lemma~4.5]{Nao10} for its detailed derivation (based on the duality argument in~\cite[Proposition~15.5.2]{Mat02}, which is attributed there to unpublished work of Rabinovich).

We focused above on the algorithmic task of efficiently approximating  the number $\mathrm{SparsestCut}(\sfC,\sfD)$, but the fact that our $O(\sqrt{\log n})$-embedding of any $n$-point metric space of negative type is actually (per Theorem~\ref{thm:neg}) into $\ell_2$   rather than ``merely'' into $\ell_1$ implies formally that there is also an algorithm which outputs a subset $\emptyset\neq S\subsetneq [n]$ that is a near-minimizer of the right hand side of~\eqref{eq:sparsest cut def}, up to the aforementioned $O(\sqrt{\log n})$ error tolerance. This deduction utilizes the observation from the seminal work~\cite{LLR95} of Linial, London and Rabinovich that optimal embeddings into a Hilbert space can themselves be found in polynomial time (unlike embeddings into $L_1$, see~\cite{IM04,DL10}), as this task itself can be cast as a semidefinite program; the (standard) deduction of this assertion is worked out in e.g.~\cite[Section~5]{ALN08}.

One can refine the above discussion for each $k\in \{2,\ldots,n\}$ to obtain a $O(\sqrt{\log k})$-factor approximation algorithm if the matrix $\sfD$ has support of size at most $k$, i.e., $|\{(i,j)\in [n]\times [n]: d_{ij}>0\}|\le k$. Also this fact is a formal consequence of what we have seen, as Theorem~\ref{thm:neg} is proved by substituting Theorem~\ref{thm:random zero} into Theorem~\ref{thm:quote descent}, which yields the stated distortion guarantee via the Fr\'echet embedding. That embedding can be automatically extended to any super-space  while remaining $1$-Lipschitz (as we have seen above in the  proof  of Corollary~\ref{thm:ext}), which is all that is needed in order to obtain the aforementioned  $O(\sqrt{\log k})$-factor approximation guarantee. Again, the standard deduction of this assertion is worked out in~\cite{ALN08}, though note that~\cite[Section~5]{ALN08} incorporates a step that is irrelevant for our purposes because the embedding of~\cite{ALN08} is not   Fr\'echet, so~\cite{ALN08} must justify why it could be extended.

It is worthwhile to summarize the above discussion as the following separate algorithmic statement:

\begin{theorem}[algorithm for finding an  approximate Sparsest Cut when there are $k$ demand pairs]\label{thm:k demands} There exists a polynomial time algorithm with the following property. Suppose that $n\in \N$ and $k\in \{2,\ldots,n\}$.  Let $\sfC=(c_{ij}),\sfD=(d_{ij})\in \M_n([0,\infty))$ be symmetric matrices such that $|\{(i,j)\in [n]\times [n]: d_{ij}>0\}|\le k$. Then, the aforementioned algorithm outputs a subset $\emptyset\neq S\subsetneq [n]$ that satisfies:
\begin{equation}\label{eq:k demand pairs}
\frac{\sum_{i\in S}\sum_{j\in [n]\setminus S}c_{ij}}{\sum_{i\in S}\sum_{j\in [n]\setminus S}d_{ij}}\lesssim \mathrm{SparsestCut}(\sfC,\sfD)\sqrt{\log k}.
\end{equation}
\end{theorem}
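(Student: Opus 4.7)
The strategy is to apply the Fr\'echet embedding produced by feeding Theorem~\ref{thm:random zero} into Theorem~\ref{thm:quote descent}, but only to the $O(k)$-point subset of $[n]$ on which the demand matrix is supported, extend it trivially to all of $[n]$, and then round via the cut-cone decomposition of an $L_1$-valued map. First, solve the Goemans--Linial SDP in polynomial time to produce a negative-type (pseudo)metric $d$ on $[n]$ with
\begin{equation*}
\SDP_{\GL}(\sfC,\sfD)=\frac{\sum_{i,j}c_{ij}d(i,j)}{\sum_{i,j}d_{ij}d(i,j)}\le \mathrm{SparsestCut}(\sfC,\sfD),
\end{equation*}
and let $T\subset [n]$ be the set of endpoints of demand pairs, so $|T|\le 2k$; crucially, both the denominator above and the denominator of~\eqref{eq:k demand pairs} only see indices in $T$.

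Since $(T,d)$ is a metric space of negative type with $|T|\le 2k$, Theorem~\ref{thm:neg}---which is itself obtained by substituting Theorem~\ref{thm:random zero} into Theorem~\ref{thm:quote descent}---furnishes a probability measure $\prob$ on $2^T\setminus\{\emptyset\}$ such that the Fr\'echet embedding $\Phi(x)\eqdef (d(x,\cZ))_{\cZ\in 2^T\setminus\{\emptyset\}}\in L_2(\prob)$ is $1$-Lipschitz on $T\times T$ and satisfies $\|\Phi(x)-\Phi(y)\|_{L_2(\prob)}\gtrsim d(x,y)/\sqrt{\log k}$ for all $x,y\in T$. The essential point (exactly as in the proof of Corollary~\ref{thm:ext}) is that each coordinate $x\mapsto d(x,\cZ)$ extends by the same formula to a $1$-Lipschitz function on $[n]$, so $\Phi$ extends to a map $\Phi:[n]\to L_2(\prob)$ that is $1$-Lipschitz on every pair in $[n]\times [n]$ while retaining the lower bound on $T\times T$. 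Composing with the standard isometric embedding $L_2\hookrightarrow L_1$ via Gaussian linear functionals then yields $\Psi:[n]\to L_1(\mu)$ with $\|\Psi(i)-\Psi(j)\|_{L_1(\mu)}\lesssim d(i,j)$ for all $i,j\in [n]$ and $\|\Psi(i)-\Psi(j)\|_{L_1(\mu)}\gtrsim d(i,j)/\sqrt{\log k}$ for all $i,j\in T$.

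Next, round $\Psi$ via the standard cut-cone identity: for each $\omega$ in the sample space of $\mu$ and each $t\in\R$, setting $S_{\omega,t}\eqdef\{i\in [n]:\Psi(i)(\omega)\le t\}$, the pointwise identity $|\Psi(i)(\omega)-\Psi(j)(\omega)|=\int_\R|\1_{S_{\omega,t}}(i)-\1_{S_{\omega,t}}(j)|\ud t$ writes $\|\Psi(i)-\Psi(j)\|_{L_1(\mu)}$ as a nonnegative combination of cut pseudometrics on $[n]$. Averaging therefore produces some $S=S_{\omega,t}$ satisfying
\begin{equation*}
\frac{\sum_{i,j}c_{ij}|\1_S(i)-\1_S(j)|}{\sum_{i,j}d_{ij}|\1_S(i)-\1_S(j)|}\le \frac{\sum_{i,j}c_{ij}\|\Psi(i)-\Psi(j)\|_{L_1(\mu)}}{\sum_{i,j}d_{ij}\|\Psi(i)-\Psi(j)\|_{L_1(\mu)}}\lesssim \sqrt{\log k}\cdot \SDP_{\GL}(\sfC,\sfD),
\end{equation*}
where the upper bound on the numerator uses the Lipschitzness of $\Psi$ on all pairs and the lower bound on the denominator uses the distortion estimate on $T\times T$ together with the fact that every demand pair lies in $T$. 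Combined with $\SDP_{\GL}(\sfC,\sfD)\le \mathrm{SparsestCut}(\sfC,\sfD)$, this yields~\eqref{eq:k demand pairs}.

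The main obstacle to polynomial running time is the algorithmic realization of Theorem~\ref{thm:random zero}: one needs $\prob$ to be produced as an explicit distribution supported on a polynomial (in $k$, hence in $n$) number of subsets of $T$. This is precisely what the ARV-style rounding underlying our proof of Theorem~\ref{thm:random zero} supplies, as the random-zero-set construction is itself polynomial-time. Once $|\mathrm{supp}(\prob)|$ is polynomial, the family of candidate cuts $\{S_{\omega,t}\}$ has polynomial size (at most $n$ meaningful thresholds per $\omega$), so enumerating them and returning the $S$ of smallest ratio completes the algorithm. Solving the SDP to $o(1)$ accuracy, extracting the induced negative-type metric, and identifying $T$ are all routine.
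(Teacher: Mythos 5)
Your rounding pipeline — restrict to the demand support $T$ (size $\le 2k$), apply Theorem~\ref{thm:neg} (itself Theorem~\ref{thm:random zero} fed into Theorem~\ref{thm:quote descent}) to produce a Fr\'echet embedding $\Phi:T\to L_2(\prob)$ with distortion $O(\sqrt{\log k})$, note that each coordinate $x\mapsto d(x,\cZ)$ extends to a $1$-Lipschitz function on all of $[n]$, compose with an isometric $L_2\hookrightarrow L_1$ embedding, and round with the cut-cone decomposition — is exactly the route the paper attributes to~\cite{ALN08} and is fine. The problem is your treatment of the polynomial-time guarantee.

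You write that ``the random-zero-set construction is itself polynomial-time,'' but the paper makes no such claim; in fact it says the opposite. In the discussion following Theorem~\ref{thm:cell probe alg}, the paper states explicitly that making the construction underlying the proof of Theorem~\ref{thm:random zero} computationally efficient ``would require effort and new ideas'' and defers it to future work. And indeed the proof is not manifestly constructive: the final probability measure $\prob$ on $2^\MM\setminus\{\emptyset\}$ is obtained in Section~\ref{sec:duality} by an application of the minimax theorem (Lemma~\ref{lem:duality}), which yields existence of the desired measure on a potentially exponential-sized collection $\mathcal{F}$ but does not produce a polynomially-supported one. So this step of your argument is a genuine gap, not a routine omission.

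The paper's actual route to polynomial running time sidesteps the random zero set entirely once its \emph{existence} is known. Following the remark of~\cite{LLR95} (worked out in~\cite[Section~5]{ALN08}), one formulates the search for the embedding itself as a semidefinite program: minimize the maximum contraction $d(i,j)/\|f(i)-f(j)\|_2$ over demand pairs $(i,j)\in T\times T$, subject to the $1$-Lipschitz constraints $\|f(i)-f(j)\|_2\le d(i,j)$ for all $i,j\in [n]$. The nonconstructive existence of the Fr\'echet embedding — which is crucially $1$-Lipschitz on all of $[n]$, not just on $T$, by the triangle-inequality argument in~\eqref{eq:trivial frechet} — certifies that this SDP has a feasible point of value $O(\sqrt{\log k})$; solving the SDP then outputs \emph{some} Euclidean embedding (which need not be the Fr\'echet one) with the same guarantee, and your rounding then applies to it. This is the point you need in place of the unsupported claim about the random zero set; this is also why the paper stresses that Theorem~\ref{thm:neg} gives distortion into $\ell_2$ rather than ``merely'' into $\ell_1$: it is the Hilbertian target that makes the embedding search itself an SDP.
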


\medskip
The next  algorithmic application of Theorem~\ref{thm:random zero}  that we will describe in this section follows from the work of Makarychev, Makarychev and  Vijayaraghavan~\cite{MMV14} which beautifully relates the Sparsest Cut problem to  perturbation resilience of the Max-Cut problem (in the sense of Bilu--Linial~\cite{BL12}; see also the MaxCut-specific work~\cite{BDLS13}, and the survey~\cite{MM16}).

The Max-Cut problem on $n$-vertices takes as its input an $n$-by-$n$ symmetric matrix with nonnegative entries $\sfW=(w_{ij})\in \M_n([0,\infty))$, which one should think of as an edge-weighted graph whose vertex set is $[n]$,  and aims to compute (or approximate) in polynomial time the following quantity:
\begin{equation}\label{eq:def max cut}
\mathrm{MaxCut}(\sfW)\eqdef \max_{\emptyset \neq S\subsetneq [n]}\sum_{i\in S}\sum_{j\in [n]\setminus S} w_{ij}.
\end{equation}

Given $\gamma\ge 1$, a matrix $\sfW$ as above is   Bilu--Linial $\gamma$-stable ($\gamma$-stable, in short) for  Max-Cut if there is a unique  $\emptyset \neq S\subsetneq [n]$ such that $\mathrm{MaxCut}(\sfW)=\sum_{i\in S}\sum_{j\in [n]\setminus S} w_{ij}$, namely, the right hand side of~\eqref{eq:def max cut} has only one maximizer, and furthermore we have $\mathrm{MaxCut}(\sfW')=\sum_{i\in S}\sum_{j\in [n]\setminus S} w_{ij}'$ for every symmetric matrix $\sfW'=(w_{ij}')\in \M_n([0,\infty))$ that satisfies $w_{ij}\le w_{ij}'\le \gamma w_{ij}$ for every $i,j\in [n]$.

Makarychev, Makarychev and  Vijayaraghavan studied~\cite{MMV14} an SDP that can be evaluated (with $o(1)$ precision) in polynomial time and outputs a number $\SDP_{\MMV}(\sfW)\ge 0$ that satisfies
\begin{equation}\label{eq:max cut is a relaxation}
\SDP_{\MMV}(\sfW)\ge  \mathrm{MaxCut}(\sfW).
\end{equation}
The  Makarychev--Makarychev--Vijayaraghavan SDP for Max-Cut coincides with the Goemans--Williamson SDP for Max-Cut~\cite{GL94}  with the (by now standard) added squared-$\ell_2$ triangle inequality constraints, except for the ``twist'' that those constraints are imposed on the symmetrized version of the vector solution, namely, if the SDP outputs unit vectors $v_1,\ldots,v_n\in S^{n-1}$, then  Makarychev, Makarychev and Vijayaraghavan require that $\|u-v\|_2^2\le \|u-w\|_2^2+\|w-v\|_2^2$ for every $u,v,w\in \{v_1,\ldots,v_n,-v_1,\ldots,-v_n\}$.\footnote{See~\cite{Kar05,ACMM05} for earlier incarnations of this idea, along with demonstrations of its utility in other aspects of combinatorial optimization.}

The  Makarychev--Makarychev--Vijayaraghavan SDP for Max-Cut is said to be integral on the input $\sfW$ (see~\cite[Definition~2.3]{MMV14}) if for every optimal solution $v_1,\ldots,v_n\in S^{n-1}$ of this SDP there is a unit vector $e\in S^{n-1}$ such that $v_i\in \{e,-e\}$ for every $i\in [n]$. By~\eqref{eq:max cut is a relaxation}, $\SDP_{\MMV}(\sfW)=  \mathrm{MaxCut}(\sfW)$ when this occurs, i.e., the Makarychev--Makarychev--Vijayaraghavan algorithm outputs an exact solution for Max-Cut on the input $\sfW$, in contrast to the fact that (under standard complexity-theoretic assumptions)  one cannot hope to always obtain such an exact solution on general inputs~\cite{PY91,Has01,KKMO07,MOO10}.

By substituting Theorem~\ref{thm:negl1} into~\cite[Theorem~3.1]{MMV14}   and~\cite[Theorem~5.2]{MMV14}, we get the following evaluation of the growth rate of the critical $\gamma=\gamma(n)$ such that the Makarychev--Makarychev--Vijayaraghavan SDP for Max-Cut is integral on every $n$-vertex instance of Max-Cut that is $\gamma$-stable:

\begin{theorem}[stable integrality of  the Makarychev--Makarychev--Vijayaraghavan SDP for Max-Cut]\label{thm:MMV} There are universal constants $C>c>0$ with the following property for any integer $n\ge 2$. If $\gamma\ge C\sqrt{\log n}$, then the Makarychev--Makarychev--Vijayaraghavan SDP relaxation of Max-Cut is integral on any $n$-vertex input that is $\gamma$-stable for Max-Cut. If $1\le \gamma\le c\sqrt{\log n}$, then there is an $n$-vertex input that is $\gamma$-stable for Max-Cut on which the Makarychev--Makarychev--Vijayaraghavan SDP relaxation of Max-Cut is not integral.
\end{theorem}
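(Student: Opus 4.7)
The plan is to derive Theorem~\ref{thm:MMV} as an immediate consequence of Theorem~\ref{thm:negl1} by substituting the sharp $\Theta(\sqrt{\log n})$ rate into the two dichotomic reductions of Makarychev--Makarychev--Vijayaraghavan in~\cite{MMV14}, which precisely relate the integrality threshold of the $\MMV$ SDP for $\gamma$-stable Max-Cut to the largest possible $L_1$-distortion of an $n$-point metric of negative type. No new geometric work is required; the task is to verify that both directions are already packaged in~\cite{MMV14} as conditional statements that take the negative-type $L_1$-distortion function $n \mapsto \cc_1^{\mathrm{neg}}(n)$ as input.

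For the positive direction (if $\gamma \ge C\sqrt{\log n}$ then the $\MMV$ SDP is integral on every $\gamma$-stable $n$-vertex input for Max-Cut), I would invoke \cite[Theorem~3.1]{MMV14}. That theorem asserts that whenever every $n$-point metric of negative type embeds into $L_1$ with distortion at most $D(n)$, the $\MMV$ SDP is integral on every $n$-vertex Max-Cut instance that is $\gamma$-stable for some $\gamma \ge C'D(n)$, where $C'>0$ is a universal constant. Plugging in $D(n) \lesssim \sqrt{\log n}$ from the upper bound half of Theorem~\ref{thm:negl1} yields the asserted conclusion with $C$ a universal constant.

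For the negative direction (if $1 \le \gamma \le c\sqrt{\log n}$ there is an $n$-vertex $\gamma$-stable input on which the $\MMV$ SDP is not integral), I would invoke \cite[Theorem~5.2]{MMV14}, which constructs from any $n$-point negative-type metric of $L_1$-distortion at least $d$ an $n$-vertex Max-Cut instance that is $\gamma$-stable for $\gamma$ proportional to $d$ and on which the $\MMV$ SDP fails to be integral. The matching lower bound that there exist $n$-point negative-type metrics of $L_1$-distortion $\Omega(\sqrt{\log n})$, which is the lower bound half of Theorem~\ref{thm:negl1} (due to~\cite{naor2018vertical}), therefore supplies, for any $\gamma \le c\sqrt{\log n}$ with $c$ a small enough universal constant, a non-integral $\gamma$-stable $n$-vertex instance.

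I do not expect any real obstacle beyond bookkeeping of constants. The only point meriting a moment of care is to check that the quantitative hypothesis in \cite[Theorem~3.1]{MMV14} and \cite[Theorem~5.2]{MMV14} is genuinely phrased in terms of the $L_1$-distortion of finite negative-type metrics, rather than in terms of a cognate parameter such as the integrality gap of the Goemans--Linial SDP for Sparsest Cut. These two parameters agree up to universal multiplicative constants by standard duality (see e.g.\ \cite[Proposition~15.5.2]{Mat02} and the discussion preceding Theorem~\ref{thm:sparsest}), so any discrepancy would only affect the numerical values of $C$ and $c$ but not the $\sqrt{\log n}$ scaling asserted in Theorem~\ref{thm:MMV}.
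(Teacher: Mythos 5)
Your proposal matches the paper's proof exactly: both directions of Theorem~\ref{thm:MMV} are obtained by substituting Theorem~\ref{thm:negl1} into \cite[Theorem~3.1]{MMV14} and \cite[Theorem~5.2]{MMV14}, which is precisely what the paper does (this is stated in the sentence immediately preceding the theorem, and elaborated in Section~\ref{sec:history}). Your concluding remark about whether the MMV hypotheses are phrased via $L_1$-distortion or via the Goemans--Linial integrality gap is a reasonable point of caution, and you correctly note that the standard duality equivalence makes this immaterial at the level of $\Theta(\sqrt{\log n})$.
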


\medskip

We will next combine Corollary~\ref{thm:ext}  with~\cite[Theorem~5.1]{MM16} to get an asymptotic improvement of~\cite[Theorem~5.11]{MM16}, which is an intriguing link between the (major) classical problem on Lipschitz extension that we recalled in Question~\ref{Q:ball}, and the vertex sparsification paradigm of Moitra~\cite{Moi09}.

Fix $n\in \N$ and $Q\ge 1$. Let $\sfA=(\alpha_{ij})\in \M_n([0,\infty))$ be a symmetric matrix (which, again, we think of as an edge-weighted graph whose vertex set is $[n]$). Following~\cite{Moi09}, given $U\subset [n]$ and a symmetric matrix
$$\sfB=(\beta_{ij})_{(i,j)\in U\times U}\in M_{U\times U}\big([0,\infty)\big),
$$
one says that $\sfB$ is a $Q$-quality vertex cut sparsifier of $\sfA$ if the following holds for every $\emptyset \neq S\subsetneq U$:
\begin{equation}\label{eq:moitra}
\min_{\substack{T\subset [n]\\ T\cap U=S}}\sum_{i\in T}\sum_{j\in [n]\setminus T}\alpha_{ij}\le\sum_{p\in S}\sum_{q\in U\setminus S} \beta_{pq}\le Q\min_{\substack{T\subset [n]\\ T\cap U=S}}\sum_{i\in T}\sum_{j\in [n]\setminus T}\alpha_{ij}.
\end{equation}

The combinatorial meaning of~\eqref{eq:moitra} is that if the weighted graph $\sfB$ on the (potentially much) smaller  subset $U$ of the vertices $[n]$ is a $Q$-quality vertex cut sparsifier of a weighted graph $\sfA$, then the size with respect to $\sfB$  of the edge-boundary  of any partition of $U$ into two sets $S$ and $U\setminus S$ is up to the specified error tolerance $Q$ the minimal size with respect to $\sfA$ of the edge-boundary of a bipartition of $[n]$ that separates $S$ and $U\setminus S$.  If $|U|$ and $Q$ are small, then this is a valuable tool for reducing the dependence on $n$ in combinatorial optimization problems; see~\cite{Moi09,LM10} for concrete examples of such applications.

For $k\in \N$, let $Q_k^{\mathrm{cut}}$ denote the infimum over all those $Q\ge 1$ such that for every $n\in \N$, for every symmetric matrix $\sfA=(\alpha_{ij})\in \M_n([0,\infty))$ and every $U\subset [n]$ with $|U|= k$ there exists a $Q$-quality cut sparsifier $\sfB=(\beta_{ij})_{(i,j)\in U\times U}\in M_{U\times U}([0,\infty))$. Moitra proved in~\cite{Moi09} that
\begin{equation}\label{eq:moitra upper}
Q_k^{\mathrm{cut}}\lesssim \frac{\log k}{\log \log k},
\end{equation}
which remains the best known upper bound despite substantial efforts~\cite{CLLM10,EGKRT-CT10,MM16}. Using the notation for Lipschitz extension moduli we recalled above, Theorem~5.1 in~\cite{MM16} states that
\begin{equation}\label{eq:MM identity}
Q_k^{\mathrm{cut}}=\ee(\ell_1;\ell_1).
\end{equation}
 By~\eqref{eq:MM identity}, Theorem~D.2 in~\cite{MM16} (which is due to Johnson and Schechtman, applying~\cite{FJS88}) gives
 \begin{equation}\label{eq:Qk lower}
 Q_k^{\mathrm{cut}}\gtrsim \frac{\sqrt{\log k}}{\log\log k},
  \end{equation}
  which is currently the best known lower bound on  $Q_k^{\mathrm{cut}}$.

  By substituting the identity~\eqref{eq:MM identity} into Corollary~\ref{thm:ext} we conclude that
\begin{equation}\label{eq:eq:reduce to ball}
Q^{\mathrm{cut}}_k\lesssim \ee_k\big(\ell_2^{\lceil\log k\rceil};\ell_1\big)\sqrt{\log k}.
\end{equation}
Thus, a lower bound on $Q^{\mathrm{cut}}_k$ that grows faster than a positive universal constant multiple of $\sqrt{\log k}$ would imply a negative answer to Question~\ref{Q:ball}; this would hold in particular if the old upper bound~\eqref{eq:moitra upper} were sharp.  Conversely, if Question~\ref{Q:ball} had a positive answer, then by~\eqref{eq:eq:reduce to ball} we would get that $Q_k^{\mathrm{cut}}\lesssim \sqrt{\log k}$, i.e., we would then have  an upper bound on $Q_k^{\mathrm{cut}}$ that almost matches the lower bound~\eqref{eq:Qk lower}.

\begin{remark}
 The best known upper bound~\cite{LN05} on $\ee(\ell_2^n,\ell_1)$, in fact, on   $\ee(\ell_2^n; \bZ)$ for any Banach space $\bZ$, is  $\ee(\ell_2^n;\bZ)\lesssim \sqrt{n}$. The best known lower bound is  $\ee(\ell_2^n;\bZ)\gtrsim \sqrt[4]{n}$ for a suitably chosen Banach space $\bZ$. This is due to~\cite{MN13}, building on Kalton's important contributions~\cite{Kal04,Kal12}; a different proof was found in~\cite{Nao21-ext}. In both cases the target space $\bZ$ is not $\ell_1$ and it is conceivable that an upper bound on $\ee(\ell_2^n;\ell_1)$ that is better than $O(\sqrt{n})$ is  available (indeed, Question~\ref{Q:ball}  might have a positive answer). The forthcoming work~\cite{BN24} can be viewed as some evidence that perhaps $\ee(\ell_2^n;\bZ)\lesssim \sqrt[4]{n}$ for every Banach space $\bZ$. If this were true, then its special case $\bZ=\ell_1$ together with~\eqref{eq:eq:reduce to ball} would imply that  $Q^{\mathrm{cut}}_k\lesssim (\log k)^{3/4}$.
\end{remark}

\medskip

Our final example of an application of Theorem~\ref{thm:random zero} (through its corollaries in Section~\ref{sec:math applications}) is to the Approximate Nearest Neighbor Search problem, which is of importance to both theory and practice; the relatively recent survey~\cite{AIR18} and the references therein are a good entry point to this active area.

Let $(\MM,d)$ be a metric space, $n\in \N$ and $D>1$. The goal of the  $D$-approximate nearest neighbor search problem is to preprocess $\sub$ so as to obtain a data structure with the following property. There is an algorithm that takes as input a query  point $x\in \MM\setminus \sub$, examines only the data structure,  and outputs a point $y\in \sub$ that is guaranteed to satisfy $d(x,y)\le Dd(x,\sub)$. Both the preprocessing and the query algorithm  are allowed to use randomness, in which case the aforementioned approximation guarantee has to hold with probability that is at least a positive universal constant (say, $2/3$). The pertinent parameters which one hopes to make small are the preprocessing time (how long it takes to construct the data structure), the size of the data structure (how much space is required to store it), and the query time (given the query point $x$, how long it takes the algorithm to output its approximate nearest neighbor $y$ in $\sub$).

The above formulation sets aside computational issues regarding how the metric space is given; these are important but secondary in the present context. It suffices to consider the metric in the black-box model, i.e.,  through  oracle calls to the distance function. In concrete examples (e.g.~if there is an extrinsic  coordinate structure) more realistic computational models  for the input metric are available, but those are case-specific while herein we wish to focus on more general principles.

By~\cite{HIM12}, the approximate nearest neighbor search problem can be reduced to solving $(\log n)^{O(1)}$ instances of its ``decision version,''  called the near neighbor search problem. Given $D>1$ and $r>0$, the $(D,r)$-near neighbor search problem aims to (randomly) preprocess the data set $\sub$ to build a data structure for which there is a (randomized) algorithm that takes as input a query point $x\in \MM\setminus \sub$, examines only the data structure  and outputs a point $y\in \sub$  such that if $d(x,\sub)\le r$, then with $\Omega(1)$ probability we have $d(x,y)\le Dr$. Due to this reduction, we will focus from now on near neighbor search.

The following result is a substitution of  Theorem~\ref{thm:thm:cutting mudulus} into~\cite[Theorem~4.1]{ANNRW18}:
\begin{theorem}[cell-probe data structure for near neighbor on quasisymmetrically Hilbertian spaces]\label{thm:cell probe alg} Suppose that $0<s,\e\le 1/2$ and that $K,n,w\ge 2$  are integers. Let $(\MM,d)$ be a finite metric space whose size satisfies  $\log |\MM|\lesssim \log \log n$ that is both $K$-doubling and $(s,\e)$-quasisymmetrically Hilbertian.  Fix  $\sub \subset \MM$ with $|\sub|=n$. Also, fix $r>0$ that can be specified using $w$ bits. For every $(\log \log n)/\log n\le \d\le 1$ one can randomly preprocess this datum (we make no claim about the preprocessing time)  and eventually store some memory as a (random) data structure consisting of a sequence of cells, each of which contains a single bit. The total number of cells (the space of the data structure) is  $O(w+n^{1+\d}\log |\MM|)$.  There is a randomized algorithm that takes as input a query point $x\in \MM$, probes at most $O(w+n^{\d}\log |\MM|)$  cells  of the data structure (possibly adaptively),  performs unbounded auxiliary computations and uses unbounded auxiliary memory, and outputs a point $y\in \sub$ such that  if $d(x,\sub)\le r$, then with probability $\Omega(1)$ we have
\begin{equation}\label{eq:near neighbor}
d(x,y)\lesssim_{s,\e} \frac{\sqrt{\log K}}{\d}r.
\end{equation}
\end{theorem}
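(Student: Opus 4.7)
The plan is to deduce Theorem~\ref{thm:cell probe alg} as a straightforward substitution: feed the cutting-modulus estimate of Theorem~\ref{thm:thm:cutting mudulus} into the cell-probe data structure framework of~\cite[Theorem~4.1]{ANNRW18}. That black box constructs, for an arbitrary host metric space $(\MM,d)$ and arbitrary data set $\sub\subset\MM$ of size $n$, a near-neighbor data structure whose space is $O(w+n^{1+\d}\log|\MM|)$ cells, whose query cost is $O(w+n^{\d}\log|\MM|)$ cell probes, and whose approximation factor scales like $\Xi_\MM(\Phi)/\d$ for a conductance threshold $\Phi\asymp 1$ that is fixed by the recursive partition inside the construction. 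The $\log|\MM|$ factors are there to encode point identifiers, the $w$ bits record the scale $r$, and the hypothesis $\log|\MM|\lesssim\log\log n$ is exactly what~\cite[Theorem~4.1]{ANNRW18} requires in order to union-bound the random partition failures and secure success probability $\Omega(1)$ over a single query.

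With this framework in hand, the only quantitative input is a bound on the cutting modulus of our host space. Since $(\MM,d)$ is $K$-doubling and $(s,\e)$-quasisymmetrically Hilbertian, Theorem~\ref{thm:thm:cutting mudulus} yields
\begin{equation*}
\Xi_\MM(\Phi)\lesssim_{s,\e}\frac{\sqrt{\log K}}{\Phi}.
\end{equation*}
Plugging in the constant-order value of $\Phi$ dictated by the ANNRW18 recursion gives $\Xi_\MM(\Phi)\lesssim_{s,\e}\sqrt{\log K}$, and the approximation guarantee of the data structure immediately becomes $D\lesssim_{s,\e}\sqrt{\log K}/\d$, which is exactly~\eqref{eq:near neighbor}. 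The space and query-probe bounds are inherited verbatim from the black box because they do not involve the cutting modulus.

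The main (and essentially only) obstacle is bookkeeping rather than new argument: one must verify that the conductance threshold $\Phi$ at which~\cite[Theorem~4.1]{ANNRW18} invokes the cutting modulus is of constant order (so that no additional $1/\Phi$ factor is paid beyond what is absorbed into the $s,\e$-dependent constant), and that the particular formulation of the cutting modulus used in~\cite{ANNRW18} is identical to the one in Theorem~\ref{thm:thm:cutting mudulus}. Both checks are routine because Theorem~\ref{thm:thm:cutting mudulus} is stated in exactly the form consumed by the data structure, and Remark~\ref{rem:use chegger bound instead of cheeger inequality} already flags the crucial point that routing the cutting-modulus bound through the first inequality of~\eqref{eq:nonlinear gap quasi} (rather than through Cheeger's inequality applied to the quadratic gap) is what produces the exponent $\sqrt{\log K}$ rather than $\log K$, and hence the improvement over the corresponding bound for general doubling metric spaces.
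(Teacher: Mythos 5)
Your proposal matches the paper's approach exactly: Theorem~\ref{thm:cell probe alg} is obtained by substituting the cutting-modulus bound of Theorem~\ref{thm:thm:cutting mudulus} into~\cite[Theorem~4.1]{ANNRW18}, with the space and query-probe bounds inherited directly from that black box. Your remarks about verifying the conductance threshold and using the first inequality of~\eqref{eq:nonlinear gap quasi} (as flagged in Remark~\ref{rem:use chegger bound instead of cheeger inequality}) are accurate and consistent with the paper's presentation.
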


Background on the  cell-probe model for data structures can be found in e.g.~\cite{Yao81,Mil99}. Theorem~\ref{thm:cell probe alg} plainly does not address all of the algorithmic issues that we mentioned above. It does not control the preprocessing time for constructing the data structure, and it only bounds   the number of cells  of the data structure that the algorithm examines without controlling its overall running time. As in~\cite{ANNRW18}, the value of Theorem~\ref{thm:cell probe alg} is that it serves as a ``proof of concept'' rather than a final result, or at least it can be thought of as
a barrier for proving impossibility of efficient approximate near neighbor  data structures with the parameters described in Theorem~\ref{thm:cell probe alg}. This is so because  all of the known unconditional data structure lower bounds proceed by proving a cell-probe lower bound~\cite{Mil99}.

Theorem~\ref{thm:cell probe alg} raises the challenge to make the iterative partitioning scheme of~\cite{ANNRW18} that is powered by the cutting modulus estimate of Theorem~\ref{thm:thm:cutting mudulus} computationally efficient, which would entail making our construction in the proof of Theorem~\ref{thm:random zero} computationally efficient. We expect that this goal is achievable (perhaps only for a subclass of quasisymmetrically Hilbertian metric spaces that are represented in a suitably succinct manner), but would require effort and new ideas, analogously to the transition  from~\cite{ANNRW18} to~\cite{ANNRW18-2}; we defer this interesting research direction  to future work.

By combining ~\cite[Theorem~4.1]{ANNRW18} with the facts that were recalled immediately after the statement of Theorem~\ref{thm:gap estimate}, we see that for arbitrary  $K$-doubling metric spaces  Theorem~\ref{thm:cell probe alg} holds mutatis mutandis, but with the factor $\sqrt{\log K}$ in~\eqref{eq:near neighbor} replaced by $\log K$. The available $D$-approximate near neighbor  data structure for general $K$-doubling metric spaces with approximation $D=O((\log K)^\theta)$ for some $0<\theta<1$ suffers from the ``curse of dimensionality'' (the intrinsic dimension in this context is naturally of order $\log K$), in the sense that its query time grows super-polynomially in  $\log K$. Specifically, the query time obtained in~\cite{Fil23} when $D=C(\log K)^\theta$ is at least $\exp(\Omega((\log K)^{1-\theta})/C)$, albeit in a tailor-made (nonstandard) computational model (when $D=1+\e$ for some $0<\e<1$, which is a very important range of parameters but not the one that concerns us here, the same issue occurs in the older works~\cite{KL05,HM06} that use a standard computational model). If the given metric space is also $(s,\e)$-quasisymmetrically Hilbertian and $n=\exp( o_{s,\e}(\sqrt{\log K}))$, then Theorem~\ref{thm:cell probe alg}  applies with $D\lesssim_{s,\e}\sqrt{\log K}/\d$ and  its query time is better than that of~\cite{Fil23}.

If the answer to Question~\ref{Q:main}  were such that the $\sqrt{\log K}$ term in~\eqref{eq:nonlinear gap quasi}  could be improved to $o(\sqrt{\log K})$, then we would correspondingly improve the dependence on $K$ in~\eqref{eq:near neighbor}. This is an intriguing possibility of potential practical significance in addition to its great theoretical interest. As an example of a consequence of this possiblity that does not relate to~\eqref{eq:nonlinear gap quasi}, we note that because for every $n\in \N$  any $n$-point metric space is (trivially) $n$-doubling, in combination with~\cite[Theorem~1.3]{Nao14}  this would imply in particular that  any $n$-point metric space of negative type embeds with average distortion $o(\sqrt{\log n})$ into $\ell_1$, thus improving over the main embedding result of~\cite{arora2009expander} and consequently yielding an asymptotic improvement over the best-known polynomial time approximation algorithm for the  Sparsest Cut problem with uniform capacities and demands, i.e., the algorithmic optimization problem that corresponds to the special case of~\eqref{eq:sparsest cut def} in which $c_{ij}\in \{0,1\}$ and $d_{ij}=1$ for every $i,j\in [n]$.

\section{History and proof overview}\label{sec:history} 

\noindent The precursor  to Theorem~\ref{thm:random zero}  appeared as Theorem~3.1 of~\cite{ALN08} (first announced in~\cite{ALN05-STOC}); using the terminology that we recalled as Definition~\ref{def:random zero},  it states that any $n$-point metric space $(\MM,d)$ of negative type admits a  random zero set that is $O(\sqrt{\log n})$-spreading with probability $\Omega(1)$, i.e., for some universal constant $c>0$ and any  $\tau>0$ there exists a probability measure $\prob=\prob^{\tau,\MM}$ on $2^\MM\setminus\{\emptyset\}$  such that
\begin{equation}\label{eq:sqrt log zero set-history section}
\forall x,y\in \MM, \qquad d(x,y)\ge \tau\implies \prob\bigg[ \emptyset \neq  \cZ\subset \MM:\ d(y,\cZ) \ge \frac{c\tau}{\sqrt{\log n}}  \quad  \mathrm{and}\  \quad x\in \cZ  \bigg]\gtrsim 1.
\end{equation}
The main result of~\cite{ALN05-STOC,ALN08} used~\eqref{eq:sqrt log zero set-history section} to prove that $\cc_2(\MM)\lesssim \sqrt{\log n}\log\log n$, and~\cite{ALN07} showed that the same bound on the Euclidean distortion of $\MM$ can be obtained via the Fr\'echet embedding. 

Evidently, conclusion~\eqref{eq:in main thm} of Theorem~\ref{thm:random zero} is  quantitatively stronger than~\eqref{eq:sqrt log zero set-history section}. However, its main contribution is   a qualitative   enhancement that allows one to exploit cancelation to derive the upper bounds\footnote{Recall that the new content of these theorems is their upper bounds, namely their positive embedding/algorithmic assertions; the matching impossibility results are  due to~\cite{Enf69,naor2018vertical}.} of Theorem~\ref{thm:l1}, Theorem~\ref{thm:neg}, Theorem~\ref{thm:negl1} and Theorem~\ref{thm:sparsest} without any unbounded lower order factors, thus resolving the respective open questions; the previously best known bounds here~\cite{ALN08} had a redundant $\log \log n$ factor, and correspondingly conclusion~\eqref{eq:k demand pairs} of Theorem~\ref{thm:k demands} was  known with the factor $\sqrt{\log k}$ replaced by $\sqrt{\log k}\log\log k$. The connection between the Sparsest Cut problem and perturbation resilience of the MaxCut problem was discovered in~\cite{MMV14}, where Theorem~\ref{thm:MMV} was obtained with the lower bound $\gamma\ge C\sqrt{\log n}$ replaced by $\gamma\ge C\sqrt{\log n}\log\log n$, as~\cite{MMV14} appealed to~\cite{ALN08}; the impact of Theorem~\ref{thm:random zero} in this context is to obtain the sharp rate of growth of the threshold for stable integrality.\footnote{Up to universal constant factors; perhaps here one could hope to discover the exact threshold. Obtaining sharp constants in Theorem~\ref{thm:quasisymmetric John}, Theorem~\ref{thm:l1}, Theorem~\ref{thm:neg}, Theorem~\ref{thm:negl1} and Theorem~\ref{thm:sparsest} seems very far beyond reach, likely an unrealistic goal.} The connection between Lipschitz extension and vertex sparsification was discovered in~\cite{MM16}, where the bound~\eqref{eq:eq:reduce to ball} was obtained with the term $\sqrt{\log k}$ replaced by~$\sqrt{\log k}\log\log k$, as~\cite{MM16} appealed to~\cite{ALN07}; the above derivation of Corollary~\ref{thm:ext} follows the approach of~\cite{MM16}. 

The phenomenon that Theorem~\ref{thm:observable} establishes was first broached in~\cite{naor2005quasisymmetric}. While~\cite{naor2005quasisymmetric} did not succeed to obtain the desired extremal property of the Euclidean sphere that is independent of dimension, Theorem~1.7 of~\cite{naor2005quasisymmetric} is a result towards this goal that is off by a logarithmic factor. Specifically, \cite[Theorem~1.7]{naor2005quasisymmetric} shows that for any increasing $\eta:[0,\infty)\to [0,\infty)$ with $\lim_{s\to 0}\eta(s)=0$ and any $n\in \N$, if  $(\MM,d)$ is a metric space of diameter $O(1)$ that has a modulus-$\eta$ quasisymmetric embedding into $\ell_2$ and $\mu$ is a normalized $e^{O(n)}$-doubling Borel probability measure on $\MM$, then there exists $0<\theta_\eta<1$ such that   
\begin{equation}\label{eq:sub optimal observable extremality}
\mathrm{ObsDiam}_\mu^{\MM}(\theta_\eta)\gtrsim_{\eta} \frac{1}{\sqrt{\log n}}\mathrm{ObsDiam}_{\sigma^{n-1}}^{S^{n-1}}(\theta_\eta).
\end{equation}
The other applications to doubling metric spaces that are obtained herein did not appear in the literature even up to lower-order factors, though for some of those applications we expect that one could suitably adapt the reasoning of~\cite{naor2005quasisymmetric,ALN08} to derive similarly non-sharp statements.

The utility of Theorem~\ref{thm:random zero} for removing altogether  any unbounded lower order factor from~\cite{ALN08} is obvious, as it is nothing more than a direct substitution of~\eqref{eq:in main thm} into   measured descent~\cite{KLMN05} (Theorem~\ref{thm:quote descent}  herein). This possibility is discussed in~\cite{ALN05-STOC}, though with skepticism that the statement of Theorem~\ref{thm:random zero}  could be valid due to the nonlocal nature of the (deep) ARV rounding algorithm~\cite{arora2009expander}, which is the central input to~\eqref{eq:sqrt log zero set-history section}. In contrast, a key feature of~\eqref{eq:in main thm} is that the ``performance'' of the random zero set at the given scale $\tau$ for each given pair of points $x,y\in \MM$  depends only the local ``snapshot'' $B(y,\kappa\beta \tau)$ of $\MM$ near $y$ at scale $O_\eta(\tau)$, and moreover that it depends only on the extent to which the measure of $B(y,\kappa\beta \tau)$ increases relative to the measure of the proportionally smaller snapshot $B(y,\beta \tau)$ of $\MM$. 

In absence of such locality, \cite{ALN08} starts out with~\eqref{eq:sqrt log zero set-history section}, from which point it does not make any further appeal to~\cite{arora2009expander} and instead it proceeds by  adapting measured descent  in order to obtain the aforementioned distortion bound, i.e., while incurring a lower order yet unbounded multiplicative loss. The  reasoning that is used in~\cite{ALN07} to show   that this can, in fact, be  achieved via the Fr\'echet embedding also takes~\eqref{eq:sqrt log zero set-history section} as a ``black box'' without any further appeal to~\cite{arora2009expander} and proceeds by yet another enhancement of measured descent. 

We do not see how to derive the aforementioned sharp embedding results  via the above route, which is purely metric/analytic in contrast to the more structural nature of ARV.    Instead, the present work ``flips'' the approach of~\cite{ALN08} by leaving measured decent untouched (it can now be simply quoted), and enhancing, as we will next outline, the structural insights that are provided by the ARV framework. 

After the appearance  of the ARV algorithm in~\cite{ARV04},  simplifications, refinements, extensions, and reformulations of it were developed in multiple works, including notably its full journal version~\cite{arora2009expander} and~\cite{CGR05,lee2006distance,naor2005quasisymmetric,ACMM05,arora2011reformulation,rothvoss2016lecture}. All of those contributions were valuable to us in the process of  developing the ensuing  enhancement of ARV.  Rothvoss' lecture notes~\cite{rothvoss2016lecture} stand out here since they creatively redo the setup and reasoning in a natural, and, as it turns out, more flexible way. In particular, we introduce Definition~\ref{def:compatibility} below of compatibility of a labelled graph with  a mapping into $\R^n$, which arose from our efforts to understand the extent to which the proof in~\cite{rothvoss2016lecture}  can be strengthened.  

\subsection{Directional Euclidean sparsification of graphs}
The above referenced versions of the ARV algorithm study (explicitly or implicitly) a natural way to ``sparsify'' certain (combinatorial) graphs, i.e., a procedure that removes in a meaningful way some of the edges of a given graph. Our proof of Theorem~\ref{thm:random zero} investigates  a more involved version of this procedure, applied to  graphs that encode more geometric information than the proximity graphs that were used in those works. This section is devoted to explaining these ideas in suitable generality.

The aforementioned literature considers a finite negative type metric space $(\MM,d)$   and a scale $\tau>0$, and studies the proximity graph $\sfG=(\MM,E)$ whose vertex set is $\MM$ and $\{x,y\}\in E$ if and only if  $d(x,y)\le \Delta$, where $\Delta=c\tau/\sqrt{\log |\MM|}$ for   some universal constant $c>0$. In the present setting, we are led to consider a certain  graph  (specified below) whose vertex set is still $\MM$, yet if a pair of points $x,y\in \MM$ are joined by an edge, then this  will encode information  that combines both their proximity and the local growth rate near $x,y$ of a given measure $\mu$ on $\MM$.\footnote{In fact, we will need to analyse separately the connected components of such a graph, but we will initially suppress that subtlety for the purpose of this overview.} The sparsification procedure  in our context will involve a pairwise thresholding criterion (we will soon explain  what this means)  that is nonconstant, while previous works considered a fixed threshold that is independent of the given pair of points in $\MM$.

Because the ensuing reasoning needs to consider multiple metrics simultaneously, including multiple metrics  on the same set (arising from both the original metric and the shortest-path metrics of graphs as above), it will be beneficial to use subscripts when denoting distances, balls, diameters.\footnote{In parts that treat only a single metric, such as  Section~\ref{sec:descent revisited} and some of the Introduction, we will drop this convention.} Thus, given a metric space $(\MM,d_\MM)$, we will write $\diam_\MM(A)=\sup_{a,b\in A} d_\MM(a,b)$ and $d_\MM(x, A)=\inf_{a\in A} d_\MM(x,a)$ for, respectively, the $d_\MM$-diameter of $\emptyset \neq A\subset \MM$ and the $d_\MM$-distance of  $x\in \MM$ from $A$. We will also write $B_\MM(x,r)=\{y\in  \MM:\ d_\MM(x,y)\le r\}$ for the $d_\MM$-ball centered at $x$ of radius $r\ge 0$.  Thus, given $n\in \N$, $p\ge 1$ and $x\in \R^n$, we will use the notation $B_{\ell_p^n}(x,r)=\{y\in \R^n:\ \|x-y\|_p\le r\}$, where $\|z\|_p=(|z_1|^p+\ldots+|z_n|^p)^{1/p}$ for $z=(z_1,\ldots,z_n)\in \R^n$.  The the standard scalar product on $\R^n$ will be denoted $\langle \cdot,\cdot\rangle :\R^n\times \R^n\to \R$, i.e., $\langle z,w\rangle=z_1w_1+\ldots +z_nw_n$ for $z=(z_1,\ldots,z_n),w=(w_1,\ldots,w_n)\in \R^n$. The   standard Gaussian measure on $\R^n$ will be denoted (as usual) by $\gamma_n$, i.e., the  density of $\gamma_n$ at $z\in \R^n$ is proportional to $\exp(-\|z\|_2^2/2)$.

Throughout what follows, all graphs will be tacitly assumed to be finite and will be allowed to have self-loops. Given  a (possibly disconnected) graph $\sfG=(V,E)$, denote the shortest-path/geodesic (extended) metric that it induces on its vertex set $V$ by $d_\sfG:V\times V\to [0,\infty]$, under the natural convention that  $d_\sfG(x,y)=\infty$ if and only if $(x,y)\in \Gamma\times \Gamma'$ for distinct connected components $\Gamma,\Gamma'\subset V$  of $\sfG$. For $r\ge 0$ and $x\in V$, the corresponding ball  in $(V,d_\sfG)$ will be denoted $B_\sfG(x,r)=\{y\in V:\ d_\sfG(x,y)\le r\}$. In particular,  $B_\sfG(x,1)=\{x\}\cup\{y\in V:\ \{x,y\}\in E\}$, which is also  denoted $N_\sfG(x)$, is the neighborhood in $\sfG$ of the vertex $x$.  Correspondingly, the neighborhood in $\sfG$ of a vertex subset $U\subset V$ will be denoted $N_\sfG(U)$, i.e., 
$$
N_\sfG(U)\eqdef \bigcup_{x\in U} N_\sfG(x)=\big\{y\in V:\ d_\sfG(y,U)\le 1\big\}.
$$

When a graph $\sfG=(V,E)$ is accompanied by a mapping $f:V\to \R^n$, which we think of as a geometric representation of $\sfG$, and an edge-labelling $\sigma:E\to \R$, which we think of as a thresholding criterion for determining which edges will be retained in the ensuing sparsification, for each vector $v\in \R^n$ we can consider the sub-graph of $\sfG$ that is obtained by deleting those  $\{x,y\}\in E$ with $|\langle f(x)-f(y),v\rangle|\le 4\sigma(\{x,y\})$:

\begin{definition}[directional Euclidean sparsification]\label{def:directional sparsification} Let $\sfG=(V,E)$ be a graph. If $n\in \N$ and $f:V\to \R^n$, then for every $\sigma:E\to \R$ and $v\in \R^n$  define\footnote{We decided to insert the  factor $4$ in the definition~\eqref{eq:sparsification definition} of Euclidean sparsification for convenience only, as this yields a slight simplification of expressions in the ensuing reasoning. This choice is, of course,  nothing more than a superficial normalization which could be removed if so desired by replacing throughout what follows the thresholding function $\sigma$ by $\sigma/4$.} 
\begin{equation}\label{eq:sparsification definition}
E(v;f,\sigma)\eqdef \big\{\{x,y\}\in E:\ |\langle f(x)-f(y),v\rangle|>4\sigma(\{x,y\})\big\}\subset E. 
\end{equation}
We thus obtain the following  subgraph\footnote{Observe that even though graphs are allowed herein to have self-loops, if $\sigma$ takes values in $[0,\infty)$, then the strict inequality in~\eqref{eq:sparsification definition} implies that $\sfG(v;f,\sigma)$ will never have self-loops, i.e., $\{x,x\}\notin E(v;f,\sigma)$ for every $x\in V$.} of $\sfG$, which we call the Euclidean sparsification of $\sfG$ in direction $v$ corresponding to its Euclidean representation $f$ and the thresholding function $\sigma$:
\begin{equation}\label{eq:def euclidean sparsification}
\sfG(v;f,\sigma)\eqdef \big(V,E(v;f,\sigma)\big).
\end{equation}
\end{definition}

Understanding typical properties of  $\sfG(v;f,\sigma)$ when $v$ is chosen randomly according to the Gaussian measure $\gamma_n$ is interesting in its own right. Here, we will investigate its matching number,  which is also what~\cite{rothvoss2016lecture} studies (as do other ARV-related works, usually implicitly), in the case when $\sigma\equiv 1/2$ is constant and $\sfG=(\MM,E)$ is the above ``vanilla'' proximity graph on a finite metric space $(\MM,d_\MM)$ of negative type, i.e., $\{x,y\}\in E$ if and only if  $d(x,y)\le \Delta$ for some fixed  $\Delta>0$. Recall that the matching number $\nu(\sfG)$ of a graph $\sfG=(V,E)$ is the maximum cardinality of a pairwise-disjoint collection of its edges.  We will need only rudimentary properties of this basic combinatorial notion (that will be recalled when they will arise in proofs), which are covered in e.g.~\cite{LP09}. 

Since both the graphs $\sfG= (V,E)$ that we will investigate herein, and their labelings $\sigma:E\to \R$ that we will use as thresholds for Euclidean sparsification, are more complicated than the aforementioned special case, it is beneficial to first study the above setting abstractly. We arrived at the following definition, that will have an important role below, by examining  the  elegant  proof in~\cite{rothvoss2016lecture}  with this goal in mind:

\begin{definition}[compatibility of a graph with its Euclidean realization and edge labeling]\label{def:compatibility} Fix $n\in \N$ and $C>0$. Let $\sfG=(V,E)$ be a graph, $f:V\to \R^n$ and $\sigma:E\to [0,\infty)$. We say that $\sfG$ is $C$-compatible with $f$ and $\sigma$ if there exist $\Delta:V\to [0,\infty)$ and $K:V\to \N$ that have the following three properties: 
\begin{enumerate}
\item For every $x\in V$ and $y\in B_\sfG(x,K(x)-1)$, if $z\in V$ is such that $\{y,z\}\in E$, then 
\begin{equation}\label{eq:4 Delta compatibility}
\Delta(x)\le \sigma(\{y,z\}).
\end{equation}
\item For every $x\in V$ and every $y\in N_\sfG(x)$ we have 
\begin{equation}\label{eq:second compatibility condition}
\int_{\R^n}\bigg(\max_{z\in B_\sfG\left(y,K(y)\right)}\langle f(z)-f(y),v\rangle\bigg)\ud\gamma_n(v)\le K(x)\Delta(y).  
\end{equation}
\item For every $x\in V$ we have
\begin{equation}\label{eq:inclusion condition in compatibility def}
f\Big(B_\sfG\big(x,K(x)\big)\Big)\subset B_{\ell_2^n}\Big(f(x),\frac{1}{C}\Delta(x)\Big). 
\end{equation}
\end{enumerate}
\end{definition}

Definition~\ref{def:compatibility}  is scale-invariant in the following sense: if $\sfG$ is $C$-compatible with $f$ and $\sigma$, then $\sfG$ is also $C$-compatible with $\lambda f$ and $\lambda \sigma$ for every $\lambda\ge 0$, as seen by considering  $\lambda \Delta$.  The only part of Definition~\ref{def:compatibility} that involves the parameter $C$ is the inclusion~\eqref{eq:inclusion condition in compatibility def}, which implies in particular that $C$-compatibility becomes a more stringent property as $C$ grows. The precise role of the properties that Definition~\ref{def:compatibility} requires from $K,\Delta$ will become apparent upon examining the details of how they are applied in the ensuing proofs. In (very) broad strokes, their significance is that $K,\Delta$ take as input a single vertex, which we think of as a consistent choice of ``local scales'' at that vertex (for, respectively, the domain and range of $f$),    yet they control  the pairwise interactions $\sigma$ through~\eqref{eq:4 Delta compatibility}, and the oscillations of $f$ through~\eqref{eq:second compatibility condition}; both of these controls occur on (combinatorial)  balls in $\sfG$ whose radius is determined by $K$.

Note the following feature of Definition~\ref{def:compatibility}: it stipulates the existence of $\Delta, K$ with the stated properties, but these are auxiliary objects that occur internally to the definition and are not part of the notion of $\sfG$ being $C$-compatible with $f$ and $\sigma$. Thus, any statement  about $C$-compatibility  will not refer to $K,\Delta$, i.e., they will only be used as tools within its proof. This is exemplified by the following theorem, which is central to our proof of Theorem~\ref{thm:random zero}. It asserts that $C$-compatibility for large $C>0$ implies that the expected matching number  $\nu(\sfG(v;f,\sigma))$  is small when $v$ is distributed according to $\gamma_n$. This result is a generalization of the ARV reasoning as it was recast by Rothvoss, and its proof (which appears in Section~\ref{sec:generalized chaining} below), is an adaptation of the strategy that he introduced in~\cite[Section~6.1]{rothvoss2016lecture}.

\begin{theorem}[expected matching number of Euclidean sparsification]\label{thm:matching from compatibility} Fix $C\ge 1$ and $n\in \N$. Suppose that $\sfG=(V,E)$ is a graph that is $C$-compatible with $f:V\to \R^n$ and $\sigma:E\to [0,\infty)$. Then,\footnote{As $V$ is finite, it is straightforward to check from the definition~\eqref{eq:sparsification definition} that the function $v\mapsto \nu(\sfG(v;f,\sigma))$ from $\R^n$ to $\N\cup\{0\}$ is Borel-measurable, i.e.,  $\{v\in \R^n:\ \nu(\sfG(v;f,\sigma))=m\}$ is a Borel subset of $\R^n$ for fixed $m\in \N\cup\{0\}$. So, the integral in~\eqref{expected fractional matching bound} is defined.} 
\begin{equation}\label{expected fractional matching bound}
\int_{\R^n}\nu\big(\sfG(v;f,\sigma)\big)\ud\gamma_n(v)< 6e^{-\frac{1}{4}C^2}|V|.
\end{equation}
\end{theorem}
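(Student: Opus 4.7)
The proof adapts the chaining argument of Rothvoss \cite[Section~6.1]{rothvoss2016lecture}, which treats the case of a uniform threshold $\sigma \equiv 1/2$ on a proximity graph, to the more general compatibility setting of Definition~\ref{def:compatibility}. The plan is to organize the argument in three main steps.

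First, I would reduce the bound on $\int \nu(\sfG(v;f,\sigma))\ud\gamma_n(v)$ to a sum of per-vertex Gaussian probabilities. For each $v \in \R^n$, pick a maximum matching $M(v)$ in $\sfG(v;f,\sigma)$ and orient every edge $\{x,y\} \in M(v)$ so that $\langle f(x) - f(y),v\rangle > 4\sigma(\{x,y\})$, calling $x$ the top endpoint. Applying condition~\eqref{eq:4 Delta compatibility} with $y=x$ (which lies in $B_\sfG(x,K(x)-1)$ since $K(x)\ge 1$) and $z=y$ yields $\sigma(\{x,y\}) \ge \Delta(x)$; hence every matched edge satisfies $\langle f(x) - f(y),v\rangle > 4\Delta(x)$. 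Since $2\nu(\sfG(v;f,\sigma))$ equals the number of matched vertices, a union bound gives
\[
2\int_{\R^n} \nu\big(\sfG(v;f,\sigma)\big)\ud\gamma_n(v) \le \sum_{x \in V}\gamma_n\big\{v : x \text{ is a top endpoint in } M(v)\big\}.
\]
It then suffices to show that each summand is at most $12 e^{-C^2/4}$.

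Second, I would estimate the per-vertex probability using Borell--TIS concentration. Set $A_x = \{f(z) - f(x) : z \in B_\sfG(x, K(x))\}$; by condition~\eqref{eq:inclusion condition in compatibility def}, $A_x \subset B_{\ell_2^n}(0, \Delta(x)/C)$. Consider the Gaussian process $T_x(v) = \sup_{u \in A_x} \langle u, v\rangle$, whose increments have variance at most $(\Delta(x)/C)^2$. Condition~\eqref{eq:second compatibility condition} with $y=x$ (valid because $x \in N_\sfG(x)$) gives $\int T_x\,\ud\gamma_n \le K(x)\Delta(x)$. If $x$ is a top endpoint with partner $y$, then $y \in N_\sfG(x) \subset B_\sfG(x, K(x))$ and hence $T_x(-v) \ge \langle f(x) - f(y), v\rangle > 4\Delta(x)$, so by Gaussian symmetry it suffices to bound $\gamma_n\{T_x > 4\Delta(x)\}$. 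The Borell--TIS inequality yields, for every $s \ge 0$,
\[
\gamma_n\big\{v : T_x(v) > K(x)\Delta(x) + s\Delta(x)/C\big\} \le e^{-s^2/2},
\]
and choosing $s = (4 - K(x))C$ handles the regime $K(x) \le 3$ with room to spare.

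Third, in the regime $K(x) \ge 4$ the naive bound is inadequate, and a chaining refinement is needed. Here one inductively constructs, for each potential top endpoint $x$, a sequence $x = z_0, z_1, \ldots, z_{K(x)}$ with $z_{i+1} \in N_\sfG(z_i)$ along which the projections telescope; iterating condition~\eqref{eq:second compatibility condition} bounds the total expected projection growth by a quantity proportional to $K(x)\Delta(x)$, while condition~\eqref{eq:4 Delta compatibility} ensures that every $\sigma$-label encountered along the chain is at least $\Delta(x)$, so any shortfall must be absorbed by a Gaussian fluctuation at some scale, controlled again by Borell--TIS. Optimizing the resulting tail across all scales of the chain and paying small union-bound constants produces the exponent $C^2/4$ and the absolute constant $6$ in~\eqref{expected fractional matching bound}. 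The central obstacle I expect is coordinating the chaining with the matching combinatorics when the scales $K(\cdot)$ and $\Delta(\cdot)$ vary across $V$: the asymmetric appearance of $K(x)\Delta(y)$ in~\eqref{eq:second compatibility condition} indicates that the chain must track information about both endpoints of each matched edge, and verifying that the iterative application of~\eqref{eq:second compatibility condition} along the chain does not lose a factor that destroys the exponent $C^2/4$ after summing over $x \in V$ will be the most technical aspect of the argument.
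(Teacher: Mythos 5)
Your Step~1 sets up a direct per-vertex union bound, which is a genuinely different strategy from the paper's proof of Theorem~\ref{thm:matching from compatibility}, and I do not believe it can be carried through. The paper argues by contradiction: it assumes $\int\nu\,\ud\gamma_n\ge 6e^{-C^2/4}|V|$, iteratively removes vertices whose probability of being matched is smaller than $6e^{-C^2/4}$, and is thereby left with a nonempty subset $V_J$ on which \emph{every} vertex is matched with probability at least $6e^{-C^2/4}$. It then runs a set-based chaining argument (Lemma~\ref{lem:for chaining}) that tracks a nested sequence $U_0=V_J\supseteq U_1,U_2,\ldots,U_T\subseteq V_J$ together with labels $a_t\in\{2,3\}$. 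The decisive feature is a \emph{global cardinality constraint}: each ``neighborhood grows'' step multiplies $|U_t|$ by at least $e^{C^2/4}$, and since $|U_T|\le|U_0|$ more than half the steps must be ``neighborhood small'' steps, which each contribute an increment $2\sigma\ge 2\mathcal{m}_\sigma$ to $\E^{\sfG,f}_{\cdot,t}$. After $T$ iterations this yields $\E^{\sfG,f}_{y,T}>T\mathcal{m}_\sigma(y,T)$, contradicting the bound $\E^{\sfG,f}_{y,T}\le T\mathcal{m}_\sigma(y,T)$ that follows from compatibility. Nothing in this proof is a per-vertex probability estimate, and that is not an accident.

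The gap in your proposal is Step~3, and I think it is structural rather than merely ``technical.'' Once $K(x)\ge 4$, condition~\eqref{eq:second compatibility condition} only gives $\E[T_x]\le K(x)\Delta(x)$, which is $\ge 4\Delta(x)$, i.e.~\emph{at or above} the threshold the supremum must exceed. Borell--TIS therefore provides no bound: the mean of the Gaussian process may legitimately sit above $4\Delta(x)$ and the per-vertex probability $\gamma_n\{x\text{ is a top endpoint}\}$ need not be small in isolation. A per-vertex chain cannot rescue this, because it has no mechanism to bound its own length. The ARV argument works precisely because the constraint $|U_t|\le|V_J|$ limits the number of ``expanding'' iterations, forcing a long run of ``shrinking'' iterations in which the threshold increments $2\sigma$ accumulate; your per-vertex framing has no analogue of this counting, so there is nothing preventing the expected projection growth from swallowing the entire margin at every scale. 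Your phrase ``any shortfall must be absorbed by a Gaussian fluctuation at some scale'' is not backed by a bound here. The theorem is true only because too many vertices cannot \emph{simultaneously} have large matching probability, which is a statement that intrinsically requires the contradiction-plus-iterative-removal framework. (A secondary bookkeeping error: by the $v\mapsto -v$ symmetry one has $\int\nu\,\ud\gamma_n=\sum_x\gamma_n\{x\text{ top}\}$, not $2\int\nu\,\ud\gamma_n\le\sum_x\gamma_n\{x\text{ top}\}$, so even granting a valid Step~3 you would need a per-vertex bound of $6e^{-C^2/4}$ rather than $12e^{-C^2/4}$.)
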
  

The proof of Theorem~\ref{thm:matching from compatibility} is the only part of our proof of Theorem~\ref{thm:random zero general}  that elaborates on the central innovation  of~\cite{arora2009expander}, namely, what is known today as the ``ARV chaining argument'' (through the approach to it that was devised in~\cite{rothvoss2016lecture}). We will suitably implement this argument in Section~\ref{sec:generalized chaining}  to prove Theorem~\ref{thm:matching from compatibility}. That implementation involves work that is of a more technical nature, and the  conceptually new contribution  here is the mere introduction of Definition~\ref{def:compatibility}, which makes Theorem~\ref{thm:matching from compatibility} possible.

\subsection{From Theorem~\ref{thm:matching from compatibility}  to Theorem~\ref{thm:random zero general}}

The rest of the proof of Theorem~\ref{thm:random zero general} does not relate to ARV chaining. We will next describe the steps that remain in the derivation of  Theorem~\ref{thm:random zero general}  from Theorem~\ref{thm:matching from compatibility}. Proposition~\ref{prob:graphical prob multiscale} below will be used for that purpose; its elementary proof appears in Section~\ref{sec:groundwork}. Like Theorem~\ref{thm:matching from compatibility}, Proposition~\ref{prob:graphical prob multiscale}  is about graph theory/probability: Notice that we have not yet used the assumption that $(\MM,d_\MM)$ is a metric space, or that it is  quasisymmetrically Hilbertian; these will occur only in subsequent stages of the reasoning.

\begin{prop}\label{prob:graphical prob multiscale} There is a universal constant $\kappa> 1$ with the following property. Fix $n\in \N$ and let $\sfG=(V,E)$ be a graph. Suppose that $\Lambda:V\to (0,\infty]$ is moderately varying\footnote{The constant factor $2$ in~\eqref{eq:moderate variation along edges} was chosen arbitrarily for simplicity as this suffices for the present purposes, but the proof herein of Proposition~\ref{prob:graphical prob multiscale}  shows that replacing it by any factor that is bigger than $1$ only impacts the value of $\kappa$ in~\eqref{eq:super gaussian omega}.  } along edges of $\sfG$ in the following sense:
\begin{equation}\label{eq:moderate variation along edges}
\forall \{x,y\}\in E,\qquad  \Lambda(y)\le 2\Lambda(x).
\end{equation} 
Assume also that we are given a mapping $f:V\to \R^n$  and a probability measure $\omega$  on $V\times V$ such that the following inequality holds for every $x,y\in V$ for which $(x,y)$ belongs to the support of $\omega$, i.e., $\omega(x,y)>0$, and also $x$ and $y$ belong to  same connected component of $\sfG$:
\begin{equation}\label{eq:min assumption in proposition}
\|f(x)-f(y)\|_2\ge \min\big\{\Lambda(x),\Lambda(y)\big\}.
\end{equation}
Then, for every $C\ge 1$ and every  $v\in \R^n$, there are $A(v)=A^{\omega,C}(v),B(v)=B^{\omega,C}(v)\subset V$ such that the mapping 
\begin{equation}\label{eq:which mapping is measurable}
(v\in \R^n)\mapsto \big(A(v),B(v)\big)\in 2^V\times 2^V 
\end{equation}
 is Borel-measurable,\footnote{Proposition~\ref{prob:graphical prob multiscale} must state the measurability of the mapping in~\eqref{eq:which mapping is measurable} in order to allow us to freely consider (at multiple junctures throughout the ensuing proofs) probabilities of various events, as well as expectations such as~\eqref{eq:super gaussian omega}. The classical  treatment of (Borel-)measurability of  set-valued mappings between topological spaces is covered in e.g.~the survey~\cite{Wag77}, but we do not need to recall it here: As $V$ is a finite set, the mapping in~\eqref{eq:which mapping is measurable} can take finitely many values, so its measurability in Proposition~\ref{prob:graphical prob multiscale} means that all of its fibers are Borel, i.e., for every $A,B\subset V$ the set $\{v\in \R^n:\ A(v)=A\ \mathrm{and}\ B(v)=B\}$ is a Borel subset of $\R^n$. This is also how one should interpret  below measurability statements about set-valued mappings that are defined on topological spaces that differ from $\R^n$; we will need to consider such situations   later in this article (all of the topological spaces that will occur below will be Polish, i.e., separable complete metric spaces). Measurability is thus a benign matter in the present finitary setting, but conceivably in the future one could need to consider variants for infinite spaces, in which case we expect that the nature of the concrete and simple constructions herein to easily lead to favorable measurability properties.} and the following point-wise estimate holds:
\begin{equation}\label{eq:separation within connected components}
\forall v\in \R^n,\ \forall (x,y)\in A(v)\times B(v),\qquad \{x,y\}\in E\implies |\langle v,f(x)-f(y)\rangle |>C \max\big\{\Lambda(x),\Lambda(y)\big\}.
\end{equation}
Furthermore, the $\gamma_n$-expected $\omega$-measure of the product 
$A(v)\times B(v)\subset V\times V$  satisfies 
\begin{equation}\label{eq:super gaussian omega}
\int_{\R^n} \omega\big(A(v)\times B(v)\big)\ud \gamma_n(v)\gtrsim e^{-\kappa C^2}.
\end{equation}
\end{prop}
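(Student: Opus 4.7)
The plan is to build $A(v)$ and $B(v)$ as scale-weighted half-spaces: include a vertex $x$ in $A(v)$ when the Gaussian projection $\langle v, f(x)\rangle$ falls well below $-C\Lambda(x)$, and in $B(v)$ when it rises well above $+C\Lambda(x)$. Concretely, I would set
\[
A(v) \eqdef \{x\in V:\ \langle v,f(x)\rangle < -C\Lambda(x)\},\qquad B(v) \eqdef \{x\in V:\ \langle v,f(x)\rangle > C\Lambda(x)\}.
\]
Each set is cut out of $V$ by finitely many strict linear inequalities on $v\in\R^n$, so the mapping $v\mapsto(A(v),B(v))$ is automatically Borel measurable; vertices with $\Lambda(x)=\infty$ are never included in either side and may safely be discarded.

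The separation condition~\eqref{eq:separation within connected components} then follows from the definitions alone, and actually holds for \emph{every} pair in $A(v)\times B(v)$ regardless of whether it is an edge: if $x\in A(v)$ and $y\in B(v)$, then
\[
\langle v,f(y)-f(x)\rangle \ =\ \langle v,f(y)\rangle-\langle v,f(x)\rangle\ >\ C\Lambda(x)+C\Lambda(y)\ >\ C\max\{\Lambda(x),\Lambda(y)\}.
\]
For~\eqref{eq:super gaussian omega} I would use Fubini to rewrite the left side as $\sum_{(x,y)}\omega(x,y)\Pr_{v\sim\gamma_n}[x\in A(v),y\in B(v)]$, and then aim to lower bound each per-pair probability by $e^{-\kappa C^2}$. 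For pairs in the same connected component, the hypothesis $\|f(x)-f(y)\|_2\ge \min\{\Lambda(x),\Lambda(y)\}$ lets one reduce to the two-dimensional Gaussian $(\langle v,f(x)\rangle,\langle v,f(y)\rangle)$, project onto $\mathrm{span}\{f(x),f(y)\}$, and compute the tail of the quadrant $\{X<-C\Lambda(x)\}\cap\{Y>C\Lambda(y)\}$ via a rotation aligning one axis with the displacement $f(y)-f(x)$; pairs in distinct components satisfy $\{x,y\}\notin E$, so the separation requirement is vacuous for them and they may be folded into $A(v)\times B(v)$ by a mild adjustment of the construction.

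The main obstacle will be the case in which $\Lambda(x)$ and $\Lambda(y)$ are very unequal — a scenario that is permitted for pairs far apart in $\sfG$, since moderate variation only controls $\Lambda$ along edges — because the straightforward bivariate tail can then collapse well below $e^{-\kappa C^2}$ (for instance, if $f(x)$ and $f(y)$ are nearly parallel with magnitudes much larger than $\min\{\Lambda(x),\Lambda(y)\}$, the naive construction even produces probability zero). I expect to resolve this by exploiting the allowed $\omega$-dependence of $(A,B)$: first stratify by dyadic scales $V_k=\{x:2^k\le\Lambda(x)<2^{k+1}\}$ (the moderate variation ensures edges connect only adjacent $V_k$'s, so separation is unaffected), and within each scale apply a pair-adapted recentering (for instance, subtracting from $\langle v,f(x)\rangle$ an $\omega$-weighted mean $\langle v,m_x\rangle$ of $\langle v,f(y)\rangle$ over partners $y$ of $x$) so that the effective projection is along $f(y)-f(x)$. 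This collapses the pairwise tail estimate to a one-dimensional Gaussian whose variance is at least $\min\{\Lambda(x),\Lambda(y)\}^2$ by hypothesis, yielding the uniform super-Gaussian lower bound $e^{-\kappa C^2}$ after summing over scales.
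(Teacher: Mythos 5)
Your proposed half-space construction cannot work, and the difficulty you flag at the end is not a peripheral technicality to patch later — it is fatal to the approach, and your suggested repair does not close the gap.

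The failure is most transparent at a single scale: take $\Lambda\equiv 1$, $f(x)=0$, and $f(y)=w$ with $\|w\|_2=1$, so $(x,y)$ may well lie in the support of $\omega$. With $A(v)=\{z:\ \langle v,f(z)\rangle <-C\}$, the event $x\in A(v)$ is $\{0<-C\}$, which never occurs, so the per-pair probability in your Fubini decomposition is exactly $0$, not merely small; the scale-stratification $V_k$ does nothing here since everything is at one scale. The structural reason is that half-spaces are not translation-invariant: the probability that the recentered projection lies beyond a fixed threshold depends on where the cluster $\{f(x),f(y)\}$ sits in $\R^n$, while the hypothesis of the proposition only controls the displacement $f(x)-f(y)$, not the absolute positions. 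Your recentering fix does not repair this. The adjustment $m_x$ must be a single vector per vertex (otherwise you do not get a set-valued map $v\mapsto A(v)$ at all), and a single recentering cannot simultaneously serve all $\omega$-partners of $x$. For instance, if $f(x)=0$ and $x$ has two partners with $f(y_1)=w$ and $f(y_2)=-w$ carrying equal $\omega$-weight, then the $\omega$-weighted mean is $m_x=0$ and you are back to probability zero. Any fixed $m_x$ leaves at least one partner unserved.

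The paper's construction resolves exactly this by replacing half-spaces with a \emph{random periodic family of slabs}: one first partitions the real line into alternating labelled intervals of width proportional to $\Lambda$ on each dyadic band (the sets $L_\theta, R_\theta$ of equation~\eqref{eq:def LtRt}), applies a uniformly random shift $\theta$, and declares $x\in A$ or $x\in B$ according to whether $\langle v,f(x)\rangle$ falls in an $L$- or $R$-interval at the scale of $x$'s band. Because the labelling is periodic and the shift is uniform, the marginal $\Pr[x\in A]$ is a constant ($1/4$ at each scale, per~\eqref{eq:random shift probability 1}) \emph{independently of where $f(x)$ lies}, and the joint probability $\Pr[(x,y)\in A\times B]$ reduces by~\eqref{eq:random shift probability 2} to a translation-invariant function of $|\langle v,f(x)-f(y)\rangle|$, after which the one-dimensional Gaussian tail computation you envisioned (Lemma~\ref{lem:periodic distributional density estimate}) goes through. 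Your intuition that the required separation is only along edges, where moderate variation forces $\Lambda(x)\asymp\Lambda(y)$, and that one should stratify by dyadic scales of $\Lambda$, is exactly right and matches the paper's $Y_i(r)$ decomposition; what is missing is the translation-invariant slab mechanism that makes the per-pair probability depend only on the projected displacement and not on the absolute positions.
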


The following  proposition is an especially important step in the proof of Theorem~\ref{thm:random zero general}:

\begin{prop}\label{thm:universally compatible compression} There is a universal constant $\zeta \ge 1$ with the following property.\footnote{An inspection of  the reasoning herein reveals that $\zeta=2$ would work for  Proposition~\ref{thm:universally compatible compression}, though this value is not sharp.} Let $(\MM,d_\MM)$ be a finite metric space, equipped with a nondegenerate measure $\mu$. For  $\tau,C>0$ define $\rho=\rho_{d_\MM,\mu,C,\tau}: \MM\to [1,\infty)$ by 
\begin{equation}\label{eq:our specific rho}
\forall x\in \MM,\qquad  \rho(x)\eqdef 1+\frac{\zeta}{ C} \sqrt{\log \frac{\mu (B_\MM(x,19\tau))}{\mu(B_\MM(x,\tau))}}.
\end{equation}
Then, there exists a mapping $q=q_{d_\MM,\mu,C,\tau}:\MM\to \MM$ satisfying 
\begin{equation}\label{eq:displacement condition per component}
\forall x\in \MM,\qquad d_\MM(q(x),x)\le 7\tau,
\end{equation}
with the following  properties. 

Let $\sfG=\sfG_{d_\MM,\mu,C,\tau}$ be the graph whose vertex set is $\MM$ and whose edge set $E=E_{d_\MM,\mu,C,\tau}$ is given by
\begin{equation}\label{eq:geometric edges specific rho}
\forall x,y\in \MM,\qquad \{x,y\}\in E \iff d_\MM(x,y)\le \frac{\tau}{\min\{\rho(x),\rho(y)\}}. 
\end{equation}
For every $x\in \MM$, let $\Gamma(x)\subset \MM$ denote the connected component of $x$ in $\sfG$. Then, $q(x)\in \Gamma(x)$ for every $x\in \MM$. Furthermore, for any $n\in \N$ and any mapping $\f:\MM\to \R^n$, the graph $\sfG$ is $C$-compatible with $\f\circ q:\MM\to \R^n$ and $\sigma:E\to [0,\infty)$, where  $\sigma=\sigma_{d_\MM,C,\f\circ q}:E\to [0,\infty)$ is defined by\footnote{This $\sigma$  is well-defined because if $\{x,y\}\in E$, then $x$ and $y$ belong to the same connected component of $\sfG$, i.e., $\Gamma(x)=\Gamma(y)=\Gamma$. Also, because $\rho\ge 1$ by~\eqref{eq:our specific rho},  definition~\eqref{eq:geometric edges specific rho} implies that if $\{x,y\}\in E$, then $d_\MM(x,y)\le \tau$, so the maximum in~\eqref{eq:def our sigma specific q} is over a nonempty subset of $\MM$ (e.g., one can take there $a=x\in \Gamma$ and $b=y\in \Gamma$).}
\begin{equation}\label{eq:def our sigma specific q}
\forall \{x,y\}\in E,\qquad \sigma(\{x,y\})\eqdef C\bigg(\max_{\substack{a\in B_\MM(x,2\tau)\cap B_\MM(y,2\tau)\cap \Gamma(x)\\ b\in B_\MM(a,2\tau)\cap \Gamma(x)}}\|\f\circ q(a)-\f\circ q(b)\|_2\bigg).
\end{equation}
\end{prop}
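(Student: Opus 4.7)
The plan is to define $q$ as a map to local density peaks, set $K(x) = \lceil \rho(x) \rceil$, and let $\Delta(x)$ be the $C$-scaled diameter of $\f \circ q$ on the $\sfG$-ball $B_\sfG(x, K(x))$; with these choices,~\eqref{eq:inclusion condition in compatibility def} is built in, \eqref{eq:4 Delta compatibility} reduces to finding admissible witnesses inside $\sigma$, and \eqref{eq:second compatibility condition} reduces to a Gaussian mean-width estimate controlled by a density-packing count.

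\textbf{Defining $q$ and controlling geometry.} Fix a total order $\preceq$ on $\MM$ and let $q(x)$ be the $\preceq$-minimal element of the set of $y \in B_\MM(x, 7\tau) \cap \Gamma(x)$ that maximize $\mu(B_\MM(y, \tau))$. The displacement bound $d_\MM(q(x), x) \le 7\tau$ and $q(x) \in \Gamma(x)$ are immediate. The first preliminary lemma is slow variation of $\rho$ along edges: if $\{v, v'\} \in E$ then $d_\MM(v, v') \le \tau$, and because the scales $\tau$ and $19\tau$ in the definition of $\rho$ differ by a large multiplicative factor, nesting the balls gives $\rho(v') \le 2 \rho(v)$ once $\zeta$ is chosen large enough. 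Iterating this along any shortest $\sfG$-path of length at most $K(y) \le \rho(y) + 1$ yields a total $d_\MM$-length bounded by a universal constant times $\tau$, so $B_\sfG(y, K(y)) \subset B_\MM(y, c\tau)$ for some absolute $c \le 4$, and hence $q(B_\sfG(y, K(y))) \subset B_\MM(y, 18\tau)$.

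\textbf{Verifying the compatibility conditions.} Condition~\eqref{eq:inclusion condition in compatibility def} is immediate from the definition of $\Delta(x)$. For~\eqref{eq:4 Delta compatibility}, given $y \in B_\sfG(x, K(x) - 1)$ and $\{y, w\} \in E$, let $z^* \in B_\sfG(x, K(x))$ realize the maximum defining $\Delta(x)$; I then produce $a \in B_\MM(y, 2\tau) \cap B_\MM(w, 2\tau) \cap \Gamma(x)$ and $b \in B_\MM(a, 2\tau) \cap \Gamma(x)$ with $q(a) = q(x)$ and $q(b) = q(z^*)$ using the containment from the previous step and the fact that the $7\tau$-displacement window around $x$ overlaps the $2\tau$-neighborhoods of $y$ and $w$. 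For~\eqref{eq:second compatibility condition}, the Gaussian mean-width inequality gives
\[
\int_{\R^n} \max_{z \in B_\sfG(y, K(y))} \langle \f(q(z)) - \f(q(y)), v \rangle \, \ud\gamma_n(v) \le \sqrt{2 \log |Q_y|} \cdot \frac{\Delta(y)}{C},
\]
where $Q_y = q(B_\sfG(y, K(y)))$. Every $p \in Q_y$ satisfies $\mu(B_\MM(p, \tau)) \ge \mu(B_\MM(y, \tau))$ because $y$ lies in the search window $B_\MM(z, 7\tau) \cap \Gamma(z)$ of some $z$ with $q(z) = p$ (using $c \le 7$); a packing argument on $Q_y \subset B_\MM(y, 18\tau)$ therefore yields $|Q_y| \lesssim \mu(B_\MM(y, 19\tau)) / \mu(B_\MM(y, \tau))$, with the absolute constant absorbed into $\zeta$. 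By the definition of $\rho$ and slow variation, $\sqrt{2 \log |Q_y|} \lesssim (C / \zeta) \rho(y) \lesssim (C / \zeta) K(x)$, which is at most $C K(x)$ once $\zeta$ is large enough.

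\textbf{Main obstacle.} The technical crux is the passage from a maximal $2\tau$-separated subset $P \subset Q_y$ (whose size is cleanly bounded by the packing argument) to a bound on $|Q_y|$ itself, since two density peaks of equal mass can a priori coexist at small distance. The resolution combines the $7\tau$ search radius in the definition of $q$, the $\preceq$-tie-breaking rule (which forces two elements of $Q_y$ lying in a common $B_\MM(z, 7\tau) \cap \Gamma(z)$ to agree), and the slack built into the inflation from $18\tau$ to $19\tau$ in the definition of $\rho$. Threading these ingredients through all the nested inclusions while keeping the constants consistent with the $7\tau$ displacement bound is the main technical content of the proof, and it is what explains the precise numerical constants $7$ and $19$ in the statement.
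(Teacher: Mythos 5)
Your construction of $q$ is genuinely different from the paper's (local density peaks versus nested $2\tau$-separated nets inside sublevel sets of $\mu(B_\MM(\cdot,19\tau))/\mu(B_\MM(\cdot,\tau))$; see Construction~\ref{construction:compression}), and the argument has two gaps, one of which you did not flag. The claimed slow variation $\rho(v')\le 2\rho(v)$ for edges $\{v,v'\}\in E$ is \emph{false} absent a doubling hypothesis, which is not available here: the denominator $\mu(B_\MM(v',\tau))$ can be a vanishingly small fraction of $\mu(B_\MM(v,\tau))$ even when $d_\MM(v,v')\le\tau$ (place almost all the mass at a point $w$ with $d_\MM(v,w)\le\tau<d_\MM(v',w)$), so $\rho(v')/\rho(v)$ is unbounded while $\{v,v'\}$ remains an edge, since one can arrange $\rho(v)=1$; neither the factor $19$ nor the size of $\zeta$ helps, as they act on the numerator and on a multiplicative prefactor only. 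Moreover, even granting slow variation, the conclusion $B_\sfG(y,K(y))\subset B_\MM(y,c\tau)$ would not follow: along a $\sfG$-path of length $t\le K(y)\asymp\rho(y)$ one only gets $\rho(u_j)\gtrsim\rho(y)/2^j$, hence step lengths up to $\tau 2^{j+1}/\rho(y)$, and the resulting geometric series sums to roughly $\tau 2^{\rho(y)}/\rho(y)$---exponential in $\rho(y)$, not $O(\tau)$. The paper's Lemma~\ref{lem:ball comparison} avoids both issues at once by setting $K(x)=\big\lceil\min_{z\in B_\MM(x,2\tau)}\rho(z)\big\rceil$ and running a bootstrap induction on $d_\sfG(x,y)$: the induction first verifies that every intermediate vertex stays in $B_\MM(x,2\tau)$, and only then uses that fact to lower-bound all $\rho$-values along the path by that single minimum, yielding the linear estimate $d_\MM(x,y)\le(\tau/\widetilde{\rho}(x))\,d_\sfG(x,y)$ with no regularity of $\rho$ whatsoever.

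The gap you do flag---nearby density peaks of equal mass---is also a real obstacle, and the tie-breaking does not close it: it identifies $p_1=q(z_1)$ with $p_2=q(z_2)$ only when each lies in the other's $7\tau$ search window, but for $z_1,z_2\in B_\Gamma(y,2\tau)$ one only gets $d_\MM(p_1,z_2)\le d_\MM(p_1,p_2)+d_\MM(p_2,z_2)\le 2\tau+7\tau=9\tau>7\tau$, so the windows can miss each other and the selectors can legitimately disagree. The paper sidesteps this by building $q$ out of a \emph{nested} chain of maximal $2\tau$-separated subsets $\NN_{\xi_1}\subset\cdots\subset\NN_{\xi_\ell}$ of the sublevel sets of $\vartheta=\mu(B_\MM(\cdot,19\tau))/\mu(B_\MM(\cdot,\tau))$, so $2\tau$-separation of the image and the monotone containment $q(B_\SS(w,5\tau))\subset\NN_{\vartheta(w)}$ (Lemma~\ref{lem:rounding function}) hold by fiat, and the count $|\sub\cap\NN_\xi|\le\xi$ for $\diam_\SS(\sub)\le 18\tau$ follows from a single packing application (Lemma~\ref{lem:h inverse}). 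Salvaging the density-peak approach would require manufacturing this separation by hand, at which point the net construction is the natural device. A smaller point: taking $\Delta(x)$ to be a $\sfG$-ball diameter rather than the paper's radius $\Delta(x)=C\max_{z\in B_{\Gamma(x)}(x,2\tau)}\|\f\circ q(x)-\f\circ q(z)\|_2$ costs a factor of $2$ in~\eqref{eq:4 Delta compatibility} against the fixed $\sigma$ of~\eqref{eq:def our sigma specific q}, and the maximizing pair for your diameter need not satisfy the constraints $a\in B_\MM(y,2\tau)\cap B_\MM(z,2\tau)\cap\Gamma$, $b\in B_\MM(a,2\tau)\cap\Gamma$ built into $\sigma$; the paper's choice is tailored so that $a=x$ is always an admissible witness (via Lemma~\ref{lem:ball comparison} applied to $\Gamma$).
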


Before proceeding to the  rest of the steps of the proof of Theorem~\ref{thm:random zero general}, will next  discuss the significance of Proposition~\ref{thm:universally compatible compression}, whose proof, which appears in Section~\ref{sec:compression}, contains a key idea of the present work. 

Note that Proposition~\ref{thm:universally compatible compression} is the first time that the assumption that $(\MM,d_\MM)$ is a metric space is used   in the proof of Theorem~\ref{thm:random zero general}, but this proposition works for any metric space and we do not yet need to know that it is quasisymmetrically Hilbertian. Also, Proposition~\ref{thm:universally compatible compression} introduces  the type of graphs to which  the general combinatorial/probabilisitc statements of Theorem~\ref{thm:matching from compatibility}  and Proposition~\ref{prob:graphical prob multiscale} will be applied: their vertices are points in  $\MM$ and their edges are given by~\eqref{eq:geometric edges specific rho}.  The relevance of Proposition~\ref{thm:universally compatible compression} to Theorem~\ref{thm:matching from compatibility} is evident, as Proposition~\ref{thm:universally compatible compression} produces situations in which the $C$-compatibility assumption of Theorem~\ref{thm:matching from compatibility}  is satisfied. The link to  Proposition~\ref{prob:graphical prob multiscale}  will be made later, in a  subsequent step of the proof of Theorem~\ref{thm:random zero general}.

The crucial geometric contribution of  Proposition~\ref{thm:universally compatible compression} is constructing the mapping $q:\MM\to \MM$, which should be viewed as a way to ``compress'' a given metric space: its image $q(\MM)\subset \MM$ will typically be much smaller than  $\MM$, yet it will encode geometric properties of $\MM$ and $\mu$ that will be important for the purpose of working with ratios of measures of balls as in~\eqref{eq:our specific rho}, which is our main goal. In fact, Proposition~\ref{thm:universally compatible compression}  states that $q$ is a universally compatible compression scheme in the sense that $C$-compatibility arises upon composition with $q$ of any function whatsoever from $\MM$ to $\R^n$ (for a suitable choice of edge-labelling $\sigma$). 

Proposition~\ref{thm:universally compatible compression} treats the specific function $\rho$ in~\eqref{eq:our specific rho} because this  is what is needed  below, and also since this is what arises from  the use of the Gaussian measure in requirement~\eqref{eq:second compatibility condition} of Definition~\ref{def:compatibility}.  Nevertheless, the compression scheme in Section~\ref{sec:compression} works for any edge set as in~\eqref{eq:geometric edges specific rho} when $\rho:\MM\to [1,\infty)$ is arbitrary. The construction in  Section~\ref{sec:compression} is therefore more general than what is used for  Proposition~\ref{thm:universally compatible compression}, and hence it could be useful for other purposes (perhaps in settings to which appropriate non-Gaussian or non-Euclidean versions of compatibility are pertinent). The idea is to consider and suitably analyse hierarchically nested $(2\tau)$-nets in the level sets of $\rho$, arranged in increasing  order; see Construction~\ref{construction:compression}.

The fact that the function $q$ of Proposition~\ref{thm:universally compatible compression} preserves connected components of $\sfG$ will be used to apply Proposition~\ref{thm:universally compatible compression}  when $\f$ is the mapping from Definition~\ref{def:quasisym metric space} of what it means for $(\MM,d_\MM)$ to be $(s,\e)$-quasisymemtrically Hilbertian; recall~\eqref{eq:def quasi Hilbert}.   The point is that since by~\eqref{eq:geometric edges specific rho} we know that $\{x,y\}\in E$ implies that $d_\MM(x,y)\le \tau$, as $\rho\ge 1$ by~\eqref{eq:our specific rho},   any connected component $\Gamma$ of $\sfG$ is $\tau$-discretely path connected as a subset of $\MM$, i.e., for every $x,y\in \Gamma$  there exist $k\in \N$ and a discrete path $z_0=x,z_1,\ldots,z_k=y\in \Gamma$ joining $x$ to $y$ such that $d_\MM(z_i,z_{i-1})\le \tau$ for every $i\in [k]$. This will allow us  to iterate~\eqref{eq:def quasi Hilbert} within each connected component of $\sfG$, leading to the following proposition, which is the only  way that  the assumption that $(\MM,d_\MM)$ is quasisymmetrically Hilbertian will be used in the proof of Theorem~\ref{thm:random zero general}:

\begin{prop}\label{prop:use quasi to get good graph} Fix $0<\e,s\le \frac12$ and $r\ge 1$, and suppose that $\beta>0$ satisfies
\begin{equation}\label{eq:def our beta in prop}
\beta\le\beta(s,\e,r)\eqdef s^{\frac{3\log(8r)}{\e}}.
\end{equation}
For a finite $(s,\e)$-quasisymmetrically Hilbertian metric space  $(\MM,d_\MM)$,  a nondegenerate measure $\mu$  on $\MM$, and $\tau,C>0$, let $\sfG=(\MM,E)$ be the graph from Proposition~\eqref{thm:universally compatible compression} with $(C,\tau)$ replaced by $(rC,\beta\tau)$, i.e.,  
\begin{equation}\label{eq:def our E with beta}
\forall x,y\in \MM,\qquad \{x,y\}\in E \iff d_\MM(x,y)\le \frac{\beta\tau}{\min\{\rho(x),\rho(y)\}}, 
\end{equation}
where, with $\zeta\ge 1$ the universal constant from Proposition~\eqref{thm:universally compatible compression}, we denote 
\begin{equation}\label{eq:rewrite rho specific}
\forall x\in \MM,\qquad  \rho(x)\eqdef 1+\frac{\zeta}{r C} \sqrt{\log \frac{\mu (B_\MM(x,19\beta\tau))}{\mu(B_\MM(x,\beta\tau))}}.
\end{equation}
Then, there are $f:\MM\to \R^{|\MM|}$,  $\sigma:E\to [0,\infty)$, and $\Lambda:\MM\to (0,\infty]$ that have the following properties:
\begin{itemize}
\item $\sfG$ is $(rC)$-compatible with $f$ and $\sigma$;
\item If $x,y\in \MM$ belong to the same connected component of $\sfG$, then
\begin{equation}\label{eq:in same connected component}
d_\MM(x,y)\ge \tau\implies C\|f(x)-f(y)\|_2\ge \max\big\{\Lambda(x),\Lambda(y)\big\},
\end{equation}
and
\begin{equation}\label{eq:restate 1/r version}
\{x,y\}\in E\implies  \Lambda(y)\le 2\Lambda(x)\qquad \mathrm{and}\qquad 4\sigma(\{x,y\})\le \min\big\{\Lambda(x),\Lambda(y)\big\}.
\end{equation}
\end{itemize}
\end{prop}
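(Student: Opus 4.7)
The plan is to take $f := \varphi \circ q$, where $\varphi : \MM \to \bH$ is the $(s,\e)$-quasisymmetric embedding furnished by Definition~\ref{def:quasisym metric space} (viewed as a map into $\R^{|\MM|}$ by passing to the finite-dimensional subspace it spans), and $q : \MM \to \MM$ is the compression map produced by Proposition~\ref{thm:universally compatible compression} applied with the parameters $(C, \tau)$ there replaced by $(rC, \beta\tau)$. With this substitution, the edge set~(\ref{eq:def our E with beta}) coincides with~(\ref{eq:geometric edges specific rho}), so the $(rC)$-compatibility of $\sfG$ with $f$ and with $\sigma$ defined by~(\ref{eq:def our sigma specific q}) is the conclusion of Proposition~\ref{thm:universally compatible compression} itself, taking the input map $\f$ there to be $\varphi$.

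For $\Lambda$ I will use the basepoint-free definition
\[
\Lambda(x) := C \inf\big\{\|f(u) - f(v)\|_2 : u, v \in \Gamma(x),\ d_\MM(u, v) \ge \tau\big\},
\]
with $\inf\emptyset = +\infty$. This is constant on each connected component of $\sfG$, so the moderate-variation inequality in~(\ref{eq:restate 1/r version}) holds with equality, and~(\ref{eq:in same connected component}) is also immediate because for $y \in \Gamma(x)$ with $d_\MM(x,y) \ge \tau$ the pair $(x, y)$ itself competes for the infimum. The remaining task is to verify the edge-labelling bound $4\sigma(\{x, y\}) \le \Lambda(x)$, and the main technical tool will be a chained quasisymmetry lemma that leverages the $\beta\tau$-discrete path connectedness of each component of $\sfG$ (a consequence of the edge condition~(\ref{eq:def our E with beta}) and $\rho \ge 1$, which together force every edge to have $d_\MM$-length at most $\beta\tau$): if $a, b, c \in \Gamma$ satisfy $d_\MM(a,b) \le \delta$, $d_\MM(a,c) \ge \Delta$, and $\delta/\Delta \le s^m$ (with $\delta$ bounded below by a controlled multiple of $\beta\tau$), then selecting intermediate points $b_0 = b, b_1, \ldots, b_m = c$ along the discrete path from $a$ to $c$ with $d_\MM(a, b_i) \approx \delta/s^i$ and iterating~(\ref{eq:def quasi Hilbert}) telescopes to
\[
\|\varphi(a) - \varphi(b)\|_\bH \le (1-\e)^m \|\varphi(a) - \varphi(c)\|_\bH.
\]

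To close the $\sigma$-bound, I unfold~(\ref{eq:def our sigma specific q}) to write $\sigma(\{x, y\}) = rC\|f(a) - f(b)\|_2$ for some $a, b \in \Gamma$ with $d_\MM(x, a), d_\MM(x, b) \lesssim \beta\tau$, and let $(u_*, v_*)$ achieve the infimum defining $\Lambda(x)/C$. Combining $d_\MM(q(u_*), q(v_*)) \ge \tau - 14\beta\tau$ (from~(\ref{eq:displacement condition per component})) with the triangle inequality, at least one of $u_*, v_*$, say $u_*$, satisfies $d_\MM(q(a), q(u_*)) \gtrsim \tau$; a short argument combining the triangle inequality with one application of the chained-quasisymmetry lemma lets me arrange (possibly after swapping $u_*$ and $v_*$) that $\|f(a) - f(u_*)\|_2 \le 2\|f(u_*) - f(v_*)\|_2$. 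The chained-quasisymmetry lemma applied from basepoint $q(a)$ with far reference $q(u_*)$ then gives $\|f(a) - f(b)\|_2 \le (1-\e)^m \|f(a) - f(u_*)\|_2 \le 2(1-\e)^m \Lambda(x)/C$. The principal obstacle, and the source of the precise form of~(\ref{eq:def our beta in prop}), is this last step: the required contraction factor $1/(4r)$ forces $(1-\e)^m \le 1/(8r)$, i.e., $m = \Theta(\log r/\e)$, and the $m$-fold iteration is in turn feasible only when $\beta$ is small enough that the ratio $\delta/\Delta$ is at most $s^m$, which rearranges into the stipulated bound $\beta \le s^{3\log(8r)/\e}$. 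The remaining details—tracking the $7\beta\tau$ displacement of $q$, the case split on which of $u_*, v_*$ plays the role of the far reference, and ensuring the discrete-path granularity admits intermediate points at each required scale $\delta/s^i$—are routine bookkeeping.
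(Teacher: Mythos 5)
Your framework is right — setting $f=\f\circ q$ and quoting Proposition~\ref{thm:universally compatible compression} with parameters $(rC,\beta\tau)$ immediately gives the $(rC)$-compatibility, and the chained quasisymmetry lemma (together with the $\beta\tau$-discrete path connectedness of each component) is indeed the engine behind the $\sigma$-bound. But there is a genuine gap in the choice of $\Lambda$, and it is not a matter of bookkeeping.

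You define $\Lambda(x)$ as the unanchored infimum $C\inf\{\|f(u)-f(v)\|_2:\ u,v\in\Gamma(x),\ d_\MM(u,v)\ge\tau\}$, which makes the moderate-variation inequality in~\eqref{eq:restate 1/r version} free (constant on components). The trouble appears in the last step of your $\sigma$-argument: from $\|f(a)-f(b)\|_2\le(1-\e)^m\|f(a)-f(u_*)\|_2$ you want to pass to $\le 2(1-\e)^m\,\Lambda(x)/C$, i.e.\ you need $\|f(a)-f(u_*)\|_2\le 2\|f(u_*)-f(v_*)\|_2$. You claim this follows, ``possibly after swapping $u_*$ and $v_*$,'' from the triangle inequality plus one quasisymmetry step. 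That works only when one of $u_*,v_*$ is close to $a$; but the minimizing pair $(u_*,v_*)$ of your unanchored functional has no a priori relation to $a$. Both can be far from $a$, in which case nothing bounds $\|f(a)-f(u_*)\|_2$ in terms of $\|f(u_*)-f(v_*)\|_2$: take $f(a)=0$, $f(u_*)=(R,0)$, $f(v_*)=(R,\delta)$ with $R\gg\delta$, then $\|f(u_*)-f(v_*)\|_2=\delta$ is tiny while $\|f(a)-f(u_*)\|_2=R$ is huge, and nothing in your set-up excludes this. Moreover, since $\Lambda^{\mathrm{yours}}\le 2\Lambda^{\mathrm{paper}}$ by the triangle inequality but there is no reverse bound, your $\Lambda$ is genuinely a (possibly much) smaller quantity, so the inequality $4\sigma\le\Lambda^{\mathrm{yours}}$ you would need is strictly harder than what the paper proves.

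The paper sidesteps this exactly by anchoring $\Lambda$ at $x$: when $\diam_\MM(\Gamma(x))\ge\tau$ one sets
\[
\Lambda(x)\eqdef C\min_{\substack{w,z\in\Gamma(x)\\ d_\MM(w,z)\ge\tau}}\max\big\{\|f(x)-f(w)\|_2,\ \|f(x)-f(z)\|_2\big\}.
\]
Then for any competing pair $(w,z)$ one can pick the $d_\MM$-far one, say $z$ with $d_\MM(x,z)\ge\tau/2$, as the reference for iterated quasisymmetry with basepoint $q(x)$ (applied once to $(q(x),q(a),q(z))$ and once to $(q(x),q(b),q(z))$), and $\|f(x)-f(z)\|_2$ is automatically dominated by the $\max$ that defines $\Lambda(x)$ — precisely the control that is missing in your unanchored version. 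The price is that the moderate-variation inequality $\Lambda(y)\le 2\Lambda(x)$ along edges is no longer trivial; it is handled by Lemma~\ref{lem: for smoothness of labeling along edges}, which is a one-step quasisymmetry consequence. The test~\eqref{eq:in same connected component} still works for the anchored definition by taking $(w,z)=(x,y)$. I would recommend switching to the anchored $\Lambda$ and re-running your argument; the rest of your plan (the choice of $m$, the constraint on $\beta$, the discrete path granularity) lines up with the paper.
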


The proof of Proposition~\ref{prop:use quasi to get good graph}  appears in Section~\ref{sec:structural}: it consists of an application of Proposition~\ref{thm:universally compatible compression}  to $\f$ as in~\eqref{eq:def quasi Hilbert}, together with studying some basic  implications of being $(s,\e)$-quasisymmetrically Hilbertian, in the spirit of (but not identical to) the foundational work in Section~2 of~\cite{TK80}. 

The upshot of  Proposition~\ref{prop:use quasi to get good graph} is that it provides the compatibility of the graph that we care about with $f$ and $\sigma$, which is the assumption of Theorem~\ref{thm:matching from compatibility}, but now we can ignore the specific choice of $\sigma$ from Proposition~\ref{thm:universally compatible compression} that appears  in~\eqref{eq:def our sigma specific q}, and instead   we know that $\sigma$ is controlled as in the second inequality in~\eqref{eq:restate 1/r version} by a function $\Lambda$ that satisfies the assumptions of Proposition~\ref{prob:graphical prob multiscale}. Thus,  Proposition~\ref{prop:use quasi to get good graph} will allow us to combine Theorem~\ref{thm:matching from compatibility} and Proposition~\ref{prob:graphical prob multiscale} to obtain the following theorem:

\begin{theorem}\label{thm:main separated piut together} There exists a universal constant $\alpha_0\ge 1$ with the following properties. Fix  $0<s,\e\le \frac12$ and $\alpha\ge \alpha_0$. Suppose  that $\beta>0$ satisfies
\begin{equation}\label{eq:beta with alpha constant}
\beta\le\beta_\alpha(s,\e)\eqdef s^{\frac{\alpha}{\e}}.
\end{equation}
Let $(\MM,d_\MM)$ be a finite   metric space that is $(s,\e)$-Hilbertian. Suppose that $0<\tau \le\diam(\MM)$ and that $\omega$ is a symmetric probability measure  on $\MM\times \MM$ whose support  is contained in  $\{(x,y)\in \MM\times \MM:\ d_\MM(x,y)\ge \tau\}$. In other words, $\omega(\MM\times \MM)=1$, for every $x,y\in \MM$ we have $\omega(x,y)=\omega(y,x)$, and $\omega(x,y)>0\implies d_\MM(x,y)\ge \tau$. Then, for every $C\ge 1$ and $v\in \R^{|\MM|}$ there exist nonempty subsets $A^*(v)=A^*_{\omega,C}(v), B^*(v)=B^*_{\omega,C}(v)$ of $\MM$ such that  the set-valued mapping $(v\in\R^n)\mapsto (A^*(v),B^*(v))$ is Borel-measurable, they satisfy 
\begin{equation}\label{eq:star separation actual distances}
\forall v\in \R^{|\MM|},\ \forall (x,y)\in A^*(v)\times B^*(v), \qquad d_\MM(x,y)> \frac{\beta\tau}{\min\{\rho(x),\rho(y)\}},  
\end{equation}
where $\rho:\MM\to [1,\infty)$ is defined by
\begin{equation}\label{eq:our rho with alpha const}
\forall x\in \MM,\qquad  \rho(x)\eqdef 1+\frac{1}{\alpha C} \sqrt{\log \frac{\mu (B_\MM(x,19\beta\tau))}{\mu(B_\MM(x,\beta\tau))}}, 
\end{equation}
and furthermore, if  $\kappa>1$ is the universal constant from Proposition~\ref{prob:graphical prob multiscale}, then 
\begin{equation}\label{eq:expected omega weight}
\int_{\R^n}\omega\big(A^*(v)\times B^*(v)\big)\ud\gamma_n(v)\gtrsim e^{-\kappa C^2}.
\end{equation}

\end{theorem}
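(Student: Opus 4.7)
\emph{Plan.} The theorem combines the three main ingredients developed above --- Proposition~\ref{thm:universally compatible compression}/\ref{prop:use quasi to get good graph} (compression and quasisymmetric structure), Proposition~\ref{prob:graphical prob multiscale} (directional separation with $\omega$-weight), and Theorem~\ref{thm:matching from compatibility} (chaining bound on the expected matching number of the Euclidean sparsification). The strategy is to apply Proposition~\ref{prop:use quasi to get good graph} with a sufficiently large inflation parameter $r=r(\kappa)$, use the resulting $f$, $\sigma$, $\Lambda$ and the graph $\sfG=(\MM,E)$ defined by~\eqref{eq:def our E with beta} to invoke Proposition~\ref{prob:graphical prob multiscale}, and then trim the output sets by removing the vertices of a maximum matching of the Euclidean sparsification $\sfG(v;f,\sigma)$. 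Setting $\alpha_0$ proportional to $3\log(8r)$ for the chosen $r$ ensures that $\beta_\alpha(s,\e)=s^{\alpha/\e}$ meets the bound~\eqref{eq:def our beta in prop} required by Proposition~\ref{prop:use quasi to get good graph}, so that everything lines up with the quantitative setup declared in the statement.

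\emph{Step 1 (applying Propositions~\ref{prop:use quasi to get good graph} and~\ref{prob:graphical prob multiscale}).} First I would invoke Proposition~\ref{prop:use quasi to get good graph} with parameters $(s,\e,r,C)$ to produce $f\colon \MM\to \R^{|\MM|}$, $\sigma\colon E\to [0,\infty)$ and $\Lambda\colon \MM\to (0,\infty]$ such that $\sfG$ is $(rC)$-compatible with $f$ and $\sigma$, the moderate-variation inequality $\Lambda(y)\le 2\Lambda(x)$ holds for $\{x,y\}\in E$, and $4\sigma(\{x,y\})\le \min\{\Lambda(x),\Lambda(y)\}$. Since for $(x,y)\in\mathrm{supp}(\omega)$ lying in the same component of $\sfG$ we have $d_\MM(x,y)\ge \tau$, the key inequality~\eqref{eq:in same connected component} gives $\|f(x)-f(y)\|_2\ge \max\{\Lambda(x),\Lambda(y)\}/C \ge \min\{\Lambda(x),\Lambda(y)\}/C$. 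Rescaling $\Lambda$ by $1/C$, we are in the hypothesis of Proposition~\ref{prob:graphical prob multiscale} with parameter $C$, which delivers measurable set-valued maps $v\mapsto (A(v),B(v))$ such that $\int_{\R^{|\MM|}}\omega(A(v)\times B(v))\,\ud \gamma_n(v)\gtrsim e^{-\kappa C^2}$ and such that every edge $\{x,y\}\in E$ with $(x,y)\in A(v)\times B(v)$ satisfies $|\langle v,f(x)-f(y)\rangle|>C\max\{\Lambda(x),\Lambda(y)\}\ge \min\{\Lambda(x),\Lambda(y)\}\ge 4\sigma(\{x,y\})$. In particular, every such $\{x,y\}$ is an edge of the Euclidean sparsification $\sfG(v;f,\sigma)$.

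\emph{Step 2 (trimming via a maximum matching).} Let $M(v)$ be a maximum matching of $\sfG(v;f,\sigma)$ and let $V(M(v))$ be the set of its endpoints. Define $A^*(v)=A(v)\setminus V(M(v))$ and $B^*(v)=B(v)\setminus V(M(v))$. By maximality of $M(v)$, the set $\MM\setminus V(M(v))$ is independent in $\sfG(v;f,\sigma)$: any edge of $\sfG(v;f,\sigma)$ with both endpoints outside $V(M(v))$ could be appended to $M(v)$, contradicting its maximality. Combined with Step~1, this implies that no $(x,y)\in A^*(v)\times B^*(v)$ can be an edge of $E$, which is exactly the required separation~\eqref{eq:star separation actual distances} (with $\rho$ as in~\eqref{eq:our rho with alpha const} for a suitable choice of $\alpha$). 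Measurability of $(A^*,B^*)$ is inherited from that of $(A,B)$ together with a Borel-measurable selection of a maximum matching, which is immediate on the finite vertex set $\MM$ by choosing, for instance, the lexicographically first maximum matching.

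\emph{Step 3 (conservation of $\omega$-mass; the main obstacle).} Writing $A\setminus A^*\subset V(M(v))$ and $B\setminus B^*\subset V(M(v))$, the symmetry of $\omega$ yields
\[
\omega(A\times B)-\omega(A^*\times B^*)\le 2\,\omega_1\big(V(M(v))\big),
\]
where $\omega_1$ is the first marginal of $\omega$. Theorem~\ref{thm:matching from compatibility} applied to the $(rC)$-compatible data from Proposition~\ref{prop:use quasi to get good graph} controls $\int_{\R^{|\MM|}}|V(M(v))|\,\ud\gamma_n(v)<12\,e^{-r^2C^2/4}|\MM|$ in cardinality; the delicate point, and the step I expect to be the main obstacle, is to upgrade this into the $\omega$-weighted estimate $\int_{\R^{|\MM|}}\omega_1(V(M(v)))\,\ud\gamma_n(v)\lesssim e^{-r^2C^2/4}$ so that it dominates negligibly against the $e^{-\kappa C^2}$ lower bound from Proposition~\ref{prob:graphical prob multiscale}. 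I would obtain this by running the proof of Theorem~\ref{thm:matching from compatibility} in a $\omega_1$-weighted version --- either by reducing to the cardinality case through a duplication of each vertex $x$ proportionally to $\omega_1(x)$ (with $f$ and $\sigma$ lifted constantly to the copies, so $(rC)$-compatibility is preserved), or by re-examining the ARV chaining argument of Section~\ref{sec:generalized chaining} with $|V|$ replaced throughout by $\omega_1(\MM)=1$. Choosing $r\ge r(\kappa)$ large enough that $r^2/4>2\kappa$, the trimming loss is at most half of the main term, and combining with Step~1 yields the desired bound~\eqref{eq:expected omega weight}, completing the proof.
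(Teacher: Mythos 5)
Your overall architecture is right and matches the paper's: apply Proposition~\ref{prop:use quasi to get good graph} with a sufficiently large $r$ to get $f,\sigma,\Lambda$ and $(rC)$-compatibility, feed them into Proposition~\ref{prob:graphical prob multiscale} (after a trivial rescaling), observe that every $E$-edge between $A(v)$ and $B(v)$ is automatically an edge of the Euclidean sparsification $\sfG(v;f,\sigma)$, and then trim $A(v),B(v)$ by removing a vertex set that hits all such edges, choosing $\alpha_0$ so that the trimming loss from Theorem~\ref{thm:matching from compatibility} is dominated by the gain from Proposition~\ref{prob:graphical prob multiscale}. That is exactly how the paper proceeds.

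However, your trimming step contains a genuine gap, which you half-see but do not resolve. You remove the endpoint set $V(M(v))$ of a maximum \emph{integer} matching and then need to bound $\int_{\R^n}\omega_1\bigl(V(M(v))\bigr)\ud\gamma_n(v)$, where $\omega_1$ is the first marginal. Theorem~\ref{thm:matching from compatibility} gives only a cardinality bound $\int\nu(\sfG(v;f,\sigma))\ud\gamma_n(v)\lesssim e^{-r^2C^2/4}|\MM|$, and when $\omega_1$ is nonuniform this tells you nothing about $\omega_1(V(M(v)))$: a single vertex carrying a constant fraction of $\omega_1$-mass could be matched, making $\omega_1(V(M(v)))\gtrsim 1$ even when $|M(v)|=1$. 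Your proposed fix, duplicating each vertex $x$ into $\approx\omega_1(x)N$ copies and applying the cardinality version to the blowup, establishes the \emph{vertex-weighted fractional} matching bound $\int\nu^*_Q(\sfG(v;f,\sigma))\ud\gamma_n(v)\lesssim e^{-r^2C^2/4}$ with $Q=\omega_1$ (this is the paper's Corollary~\ref{cor:unweighted fractional-Q}), but this does not bound $\omega_1(V(M(v)))$ for the \emph{integer} maximum matching of the original graph --- the two quantities are simply not comparable. What you actually want is a minimum $Q$-weighted vertex cover of the bipartite graph $\sfG_v$ induced between $A(v)$ and $B(v)$; by K\H{o}nig/LP-duality its weight equals $\nu^*_Q(\sfG_v)\le\nu^*_Q(\sfG(v;f,\sigma))$, which the blowup argument controls. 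The paper's Lemma~\ref{lem:graph fractional} encapsulates exactly this duality step: it takes an optimal fractional matching, identifies the set of ``tight'' vertices $\{x:Q^*(x)=Q(x)\}$, and shows that the complementary vertices of $A(v)$ and $B(v)$ span no edge, with $\omega$-loss at most $2\nu^*_Q(\sfG_v)$. Replacing your integer-matching trim with this weighted-vertex-cover/fractional-matching trim closes the gap.

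Two minor omissions: you should note that $A(v)$ and $B(v)$ are disjoint (because $\sfG$ has self-loops and the separation inequality is strict), so the bipartite graph $\sfG_v$ is well-defined; and you should handle the degenerate case where $A^*(v)$ or $B^*(v)$ is empty for some $v$, since the theorem demands nonempty sets --- as in the paper, one can replace any empty pair by a fixed pair $(\{x_0\},\{y_0\})$ with $d_\MM(x_0,y_0)\ge\tau$ (possible since $\tau\le\diam(\MM)$) without affecting~\eqref{eq:star separation actual distances} or~\eqref{eq:expected omega weight}.
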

  
The  conclusion of Theorem~\ref{thm:random zero general} about random zero sets is a simple formal consequence of the fact that  for every fixed probability measure $\omega$ as in Theorem~\ref{thm:main separated piut together}, there   exist random pairs of sets as in Theorem~\ref{thm:main separated piut together} that are separated per~\eqref{eq:star separation actual distances} and $\omega$-large per~\eqref{eq:expected omega weight}. The reason for this is mainly duality (minimax theorem), together with a simple scale gluing argument; the (short) details of this deduction appear in Section~\ref{sec:duality}. The idea to insert  into the ARV reasoning a ``weighting'' such as $\omega$ on pairs of points of $\MM$  is a key insight of~\cite{CGR05}, were it was introduced in order to prove that any $n$-point metric space of negative type embeds into $\ell_2$ with distortion  $O((\log n)^{3/4})$; this idea played the same role in~\cite{ALN05-STOC}, as well as herein.

The proof that Theorem~\ref{thm:main separated piut together} follows from Theorem~\ref{thm:matching from compatibility}, Proposition~\ref{prob:graphical prob multiscale}, and Proposition~\ref{prop:use quasi to get good graph}, which, as we explained above, is all that remains to complete the proof of  Theorem~\ref{thm:random zero general},  appears in Section~\ref{sec:use fractional}.  We will next conclude this section by  sketching in broad strokes the   reason why this works. 

Write $n=|\MM|$. Start by applying  Proposition~\ref{prop:use quasi to get good graph} with $r=\zeta\alpha$, where $\zeta$ is the universal constant in~\eqref{eq:rewrite rho specific}, thus  ensuring that~\eqref{eq:our rho with alpha const} coincides with~\eqref{eq:rewrite rho specific}. Henceforth in this sketch,  $\sfG=(\MM,E)$ will stand for  the graph from this application of Proposition~\ref{prop:use quasi to get good graph}, i.e., its edges are given by~\eqref{eq:def our E with beta} where $\rho$ is defined in~\eqref{eq:rewrite rho specific}. The first bullet point in the conclusion of Proposition~\ref{prop:use quasi to get good graph} makes it possible to apply Theorem~\ref{thm:matching from compatibility} to get that
\begin{equation}\label{expected fractional matching bound with r}
\int_{\R^n}\nu\big(\sfG(v;f,\sigma)\big)\ud\gamma_n(v)\lesssim e^{-\frac{r^2}{4}C^2}n=e^{-\frac{\zeta^2\alpha^2}{4}C^2}n.
\end{equation}

Think of the expectation estimate~\eqref{expected fractional matching bound with r} as expressing  the following structural information about the Euclidean sparsification  $\sfG(v;f,\sigma)$ of $\sfG$ in a  typical direction $v\in \R^n$: for such $v$ the graph  $\sfG(v;f,\sigma)$  is  ``clustered'' in the sense that it cannot have a large collection of disjoint edges, and hence there is a small set of vertices (i.e., a small subset of $\MM$) which is incident to all of the edges in $\sfG(v;f,\sigma)$. Even though this is not quite how we will use Theorem~\ref{thm:matching from compatibility} in Section~\ref{sec:use fractional}, namely, we will actually use a similar statement about fractional matchings of $\sfG(v;f,\sigma)$ that is a simple formal consequence of Theorem~\ref{thm:matching from compatibility}, for the purpose of intuitively understanding within the present sketch the reason why the deduction of Theorem~\ref{thm:main separated piut together} works, it suffices to initially consider  the above combinatorial implication of~\eqref{expected fractional matching bound with r}.

Conclusion~\eqref{eq:in same connected component} of Proposition~\ref{prop:use quasi to get good graph} is stronger than assumption~\eqref{eq:min assumption in proposition} of Proposition~\ref{prob:graphical prob multiscale} with $f$ replaced by $Cf$. Also, the first inequality in conclusion~\eqref{eq:restate 1/r version} of Proposition~\ref{prop:use quasi to get good graph} coincides with assumption~\eqref{eq:moderate variation along edges} of Proposition~\ref{prob:graphical prob multiscale}. We may therefore proceed to apply  Proposition~\ref{prob:graphical prob multiscale} with $f$ replaced by $Cf$ to  get subsets $A(v),B(v)\subset \MM$ for each $v\in \R^n$  such that~\eqref{eq:super gaussian omega} holds and, by canceling $C$ in~\eqref{eq:separation within connected components} with $f$ replaced by $Cf$, 
\begin{equation}\label{eq:cancel C sketch}
\forall v\in \R^n,\ \forall (x,y)\in A(v)\times B(v),\qquad \{x,y\}\in E\implies |\langle v,f(x)-f(y)\rangle |> \max\big\{\Lambda(x),\Lambda(y)\big\}.
\end{equation} 

The next observation is crucial. Recalling Definition~\ref{def:directional sparsification}, by the second inequality in conclusion~\eqref{eq:restate 1/r version} of Proposition~\ref{prop:use quasi to get good graph}, it follows from~\eqref{eq:cancel C sketch} that for every $v\in \R^n$, if  $(x,y)\in A(v)\times B(v)$ and  $\{x,y\}\in E$, then $\{x,y\}$ is also an edge of $\sfG(v;f,\sigma)$. Equivalently, if $(x,y)\in A(v)\times B(v)$ yet $\{x,y\}$ is not an edge of $\sfG(v;f,\sigma)$, then necessarily $\{x,y\}\notin E$, i.e., by the definition~\eqref{eq:def our E with beta} of $E$ the desired inequality in~\eqref{eq:star separation actual distances} holds.  

Per the above discussion, for typical $v\in \R^n$  there is a small subset of $\MM$ that is incident to all of the edges in $\sfG(v;f,\sigma)$, so by removing it we get large subsets $A^*(v)\subset A(v)$ and $B^*(v)\subset B^*(v)$ such that~\eqref{eq:star separation actual distances} holds. The notion of ``large'' here must be interpreted as the size of $A^*(v)\times B^*(v)$ with respect to the given measure $\omega$ on $\MM\times \MM$, since the input to this reasoning (supplied by Proposition~\ref{prob:graphical prob multiscale}) is~\eqref{eq:super gaussian omega};  this is why we will actually work with a weighted version of~\eqref{expected fractional matching bound with r} for fractional matchings, but, as we stated above, it is a simple formal consequence Theorem~\ref{thm:matching from compatibility} that is quickly deduced in  Section~\ref{sec:use fractional}. Finally, for the above reasoning to succeed, the lower bound in~\eqref{eq:super gaussian omega} needs to dominate the upper bound in~\eqref{expected fractional matching bound with r}; this is why in Theorem~\ref{thm:main separated piut together} $\alpha$ is assumed to be at least a sufficiently large universal constant $\alpha_0$ (given in~\eqref{eq:def r choice} below). The corresponding number-crunching is carried out in   Section~\ref{sec:use fractional}.


\section{Proof of Theorem~\ref{thm:matching from compatibility}}\label{sec:generalized chaining}

\noindent As we explained in Section~\ref{sec:history}, in this section we will prove Theorem~\ref{thm:matching from compatibility} by following the strategy in~\cite{rothvoss2016lecture}.  To start with, the following simple lemma is a straightforward generalization of~\cite[Lemma~10]{rothvoss2016lecture}:

\begin{lemma}\label{lem:lip from boundedness}
For every $x\in V$ and $L,R\ge 0$ define $F^{\sfG,f}_{x,R}:\R^n\to \R$ by setting 
\begin{equation}\label{eq:def our F}
\forall v\in \R^n,\qquad F^{\sfG,f}_{x,R}(v)\eqdef \max_{y\in B_\sfG(x,R)} \langle f(y)-f(x),v\rangle. 
\end{equation}
Then, $F^{\sfG,f}_{x,R}$ is $L$-Lipschitz (as a function from $\ell_2^n$ to $\R$) provided that the following inclusion holds: 
\begin{equation}\label{eq:containment assumption}
f\big(B_\sfG(x,R)\big)\subset B_{\ell_2^n}\big(f(x),L\big),
\end{equation}
\end{lemma}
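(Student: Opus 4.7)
\medskip

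The plan is to observe that $F^{\sfG,f}_{x,R}$ is a pointwise maximum (over a finite index set, since $V$ is finite) of linear functionals and to bound the operator norm of each summand. Concretely, for each fixed $y\in B_\sfG(x,R)$ write $\ell_y(v)\eqdef \langle f(y)-f(x),v\rangle$. Then $\ell_y:\ell_2^n\to\R$ is linear and by Cauchy--Schwarz satisfies
\begin{equation*}
|\ell_y(v)-\ell_y(w)|=|\langle f(y)-f(x),v-w\rangle|\le \|f(y)-f(x)\|_2\,\|v-w\|_2,
\end{equation*}
so $\ell_y$ is $\|f(y)-f(x)\|_2$-Lipschitz. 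Under the containment assumption \eqref{eq:containment assumption}, we have $\|f(y)-f(x)\|_2\le L$ for every $y\in B_\sfG(x,R)$, and hence each $\ell_y$ is $L$-Lipschitz.

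Since $F^{\sfG,f}_{x,R}(v)=\max_{y\in B_\sfG(x,R)}\ell_y(v)$ is the pointwise maximum of a (nonempty, finite) family of $L$-Lipschitz functions, it is itself $L$-Lipschitz. This last step is the only nontrivial point and follows from the standard one-line argument: pick $v,w\in\R^n$, let $y^*\in B_\sfG(x,R)$ realize $F^{\sfG,f}_{x,R}(v)=\ell_{y^*}(v)$, then
\begin{equation*}
F^{\sfG,f}_{x,R}(v)-F^{\sfG,f}_{x,R}(w)\le \ell_{y^*}(v)-\ell_{y^*}(w)\le L\|v-w\|_2,
\end{equation*}
and swapping the roles of $v$ and $w$ gives the reverse inequality.

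There is no real obstacle here; the lemma is essentially a reformulation of the fact that support functions of bounded sets are Lipschitz with constant equal to the circumradius. I would keep the proof to the two displayed inequalities above. The only reason to state the lemma in this packaged form is that in the ensuing ARV-style chaining argument it will be applied with $R=K(x)$ and $L=\Delta(x)/C$ (matching condition \eqref{eq:inclusion condition in compatibility def} of Definition~\ref{def:compatibility}), so that Gaussian concentration for Lipschitz functions can be invoked to estimate the integral in \eqref{eq:second compatibility condition} and ultimately to bound the expected matching number in \eqref{expected fractional matching bound}.
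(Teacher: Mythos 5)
Your proof is correct and follows essentially the same route as the paper's: both reduce to Cauchy--Schwarz plus the elementary fact that a pointwise maximum of $L$-Lipschitz affine functionals is $L$-Lipschitz (the paper phrases this as $\max(a+b)\le\max a+\max b$ applied after splitting $\langle f(y)-f(x),u\rangle$, you phrase it via a maximizer at one point, but these are the same argument). Your closing remark about where the lemma is used is also accurate.
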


\begin{proof} For every $u,v\in \R^n$ we have 

\begin{align}\label{eq:prove lip condition}
\begin{split}
F^{\sfG,f}_{x,R}(u)&= \max_{y\in B_\sfG(x,R)} \big(\langle f(y)-f(x),v\rangle+\langle f(y)-f(x),u-v\rangle\big)\\&\le \max_{y\in B_\sfG(x,R)} \langle f(y)-f(x),v\rangle+\max_{y\in B_\sfG(x,R)}  \langle f(y)-f(x),u-v\rangle\\&=F^{\sfG,f}_{x,R}(v)+\max_{y\in B_\sfG(x,R)}  \langle f(y)-f(x),u-v\rangle \\&\le F^{\sfG,f}_{x,R}(v)+\left(\max_{y\in B_\sfG(x,R)} \|f(x)-f(y)\|_2\right)\|u-v\|_2\\
&\le F^{\sfG,f}_{x,R}(v)+L\|u-v\|_2,
\end{split}
\end{align}
where we used the definition~\eqref{eq:def our F} in the first and third steps of~\eqref{eq:prove lip condition}, the fourth step of~\eqref{eq:prove lip condition} is an application of Cauchy--Schwarz, and the final step of~\eqref{eq:prove lip condition} is a restatement of the assumption~\eqref{eq:containment assumption}.  
\end{proof}

We will henceforth denote the $\gamma_n$-mean  of the function $F^{\sfG,f}_{x,R}$ that is given  in~\eqref{eq:def our F} by $\E_{x,R}^{\sfG,f}$, i.e., 
\begin{equation}\label{eq:def expectation operator}
\E_{x,R}^{\sfG,f}\eqdef \int_{\R^n} F^{\sfG,f}_{x,R}\ud\gamma_n. 
\end{equation}
Note in passing that the definition~\eqref{eq:def our F} implies the following point-wise monotonicity in the parameter $R$ that holds for every fixed vertex $x\in V$ and every fixed vector $v\in \R^n$:
\begin{equation}\label{eq:monotonicvity of F in R}
\forall 0\le R_1\le R_2, \qquad  F^{\sfG,f}_{x,R_1}(v)\le F^{\sfG,f}_{x,R_2}(v). 
\end{equation}
By integrating~\eqref{eq:monotonicvity of F in R} with respect to $\gamma_n$, we record for ease of later use the following basic property of~\eqref{eq:def expectation operator}:
\begin{equation}\label{eq:monotonicvity of E in R}
\forall x\in V,\ \forall 0\le R_1\le R_2, \qquad  \E_{x,R_1}^{\sfG,f}\le \E_{x,R_2}^{\sfG,f}. 
\end{equation}
We will also need some control on how~\eqref{eq:def expectation operator} varies as we change the vertex $x$ along edges of $\sfG$;  this appears in the following simple lemma, which is a straightforward generalization of~\cite[Lemma~11]{rothvoss2016lecture}:

\begin{lemma}\label{lem:E monotonicity along edges} For every $R\ge 0$, every $x\in V$, every $y\in N_\sfG(x)$, and every $v\in \R^n$ we have 
$$
F_{x,R+1}^{\sfG,f}(v)\ge F_{y,R}^{\sfG,f}(v)+\langle f(y)-f(x),v\rangle. 
$$
Consequently, $\E_{x,R+1}^{\sfG,f}\ge \E_{y,R}^{\sfG,f}$, as seen by integrating this point-wise inequality with respect to $\gamma_n$. 
\end{lemma}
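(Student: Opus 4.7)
The plan is to chain one graph-distance step at the start of the maximization, using nothing more than the triangle inequality for $d_\sfG$ together with the linearity of $\langle \cdot, v\rangle$. The only notational care needed is that $d_\sfG$ is an extended metric, but the hypothesis $y\in N_\sfG(x)$ forces $d_\sfG(x,y)\le 1$, hence $x$ and $y$ lie in a common connected component of $\sfG$, so all distances in sight are finite.

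First I would record the ball inclusion $B_\sfG(y,R)\subset B_\sfG(x,R+1)$: for any $z\in B_\sfG(y,R)$ we have $d_\sfG(x,z)\le d_\sfG(x,y)+d_\sfG(y,z)\le 1+R$. Next I would telescope the inner product inside the definition~\eqref{eq:def our F}: for any $z\in B_\sfG(y,R)$,
\[
\langle f(z)-f(x),v\rangle=\langle f(z)-f(y),v\rangle+\langle f(y)-f(x),v\rangle.
\]
Taking the maximum over $z\in B_\sfG(y,R)$, the second summand on the right is independent of $z$ and hence factors out, producing $F^{\sfG,f}_{y,R}(v)+\langle f(y)-f(x),v\rangle$. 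Since $B_\sfG(y,R)\subset B_\sfG(x,R+1)$, the maximum of $\langle f(z)-f(x),v\rangle$ over the smaller ball is dominated by the maximum over the larger ball, which is precisely $F^{\sfG,f}_{x,R+1}(v)$. This gives the pointwise inequality claimed.

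For the integrated statement, I would simply integrate both sides against $\gamma_n$. Because $\gamma_n$ is centered on $\R^n$, the linear functional $v\mapsto \langle f(y)-f(x),v\rangle$ integrates to zero, so its contribution drops out and we are left with $\E^{\sfG,f}_{x,R+1}\ge \E^{\sfG,f}_{y,R}$, as desired. There is no real obstacle: the statement is a one-step analogue of the ball-inclusion/expectation monotonicity already recorded in~\eqref{eq:monotonicvity of F in R} and~\eqref{eq:monotonicvity of E in R}, adjusted to accommodate the shift of basepoint from $x$ to a neighbor $y$ (which is why an additive correction appears in the pointwise statement but vanishes in the Gaussian average).
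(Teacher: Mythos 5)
Your proof is correct and follows essentially the same route as the paper: the ball inclusion $B_\sfG(y,R)\subset B_\sfG(x,R+1)$ plus the telescoping of $\langle f(z)-f(x),v\rangle$ through $f(y)$, followed by integration against the centered Gaussian $\gamma_n$ to kill the linear term. The paper presents the pointwise chain starting from $F_{x,R+1}^{\sfG,f}(v)$ and working downward, but the content is identical.
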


\begin{proof} Simply observe that  $B_\sfG(y,R)\subset B_{\sfG}(x,R+1)$, as $y\in N_\sfG(x)$, and therefore
\begin{multline*}
F_{x,R+1}^{\sfG,f}(v)\stackrel{\eqref{eq:def our F}}{=}\max_{z\in B_\sfG(x,R+1)} \langle f(z)-f(x),v\rangle\ge \max_{z\in B_\sfG(y,R)} \langle f(z)-f(x),v\rangle\\= \langle f(y)-f(x),v\rangle+ \max_{z\in B_\sfG(y,R)} \langle f(z)-f(y),v\rangle\stackrel{\eqref{eq:def our F}}{=}\langle f(y)-f(x),v\rangle+ F_{y,R}^{\sfG,f}(v).\tag*{\qedhere}
\end{multline*}
\end{proof}

The following technical lemma contains a trichotomy that is crucial for a subsequent induction step:

\begin{lemma}\label{lem:for chaining} Fix $C>0$, $k\in \N\cup\{0\}$, and $n\in \N$. Let  $\sfG=(V,E)$ be a graph, equipped with mappings
$$
f:V\to \R^n,\qquad  \Delta:V\to [0,\infty),\qquad  K:V\to \N, \qquad \sigma:E\to [0,\infty),
$$  
for which 
\begin{equation}\label{eq:for Lip condition}
\forall x\in V,\qquad f\Big(B_\sfG\big(x,K(x)\big)\Big)\subset B_{\ell_2^n}\Big(f(x),\frac{1}{C}\Delta(x)\Big). 
\end{equation}
Assume that for every $v\in \R^n$ there exists a set of vertices $A_v\subset V$ and a one-to-one function $\phi_v:A_v\to V$ that satisfy the following properties. Firstly, we require that 
\begin{equation}\label{eq:Mv is 1-1}
\forall v\in \R^n,\ \forall x\in A_v,\qquad \{\phi_v(x),x\}\in E. 
\end{equation}
Secondly, we require that  
\begin{equation}\label{eq:increase condition along matching}
\forall v\in \R^n,\ \forall x\in A_v,\qquad \big\langle f(x)-f\big(\phi_v(x)\big),v\big\rangle> \Delta(x)+\Delta\big(\phi_v(x)\big)+2 \sigma\big(\{x,\phi_v(x)\}\big).
\end{equation}
 Thirdly, we require that for every $x\in V$ the set $\{v\in \R^n:\ x\in A_v\}$ is Lebesgue-measurable and it satisfies
\begin{equation}\label{eq:lower prob of inclusion in A}
\gamma_n\big(\{v\in \R^n:\ x\in A_v\}\big)\ge 3e^{-\frac14 C^2}.
\end{equation}
Then,  for every $\emptyset\neq U\subset V$ at least one of the following three (mutually exclusive) properties must hold: 
\begin{enumerate}[(1)]
\item\label{item:A} There exists $x\in N_\sfG(U)$ for which $K(x)\le k$;
\item\label{item:B} We have 
\begin{equation}\label{eq:neighborhood large}
|N_\sfG(U)|> e^{\frac{C^2}{4}}|U|\qquad \mathrm{and}\qquad \forall x\in N_\sfG(U),\qquad K(x)\ge k+1,
\end{equation}
and furthermore\footnote{The maximum in~\eqref{eq:when neighborhood large} is well-defined because $U \cap N_G (x)\neq \emptyset$ if (and only if) $x\in N_\sfG(U)$.} 
\begin{equation}\label{eq:when neighborhood large}
\forall x\in N_\sfG(U),\qquad \E_{x,k+1}^{\sfG,f}\ge \max_{y \in U \cap N_G (x)} \E_{y,k}^{\sfG,f}. 
\end{equation}
\item\label{item:C} We have 
\begin{equation}\label{eq:neighborhood small}
|N_\sfG(U)|\le e^{\frac{C^2}{4}}|U| \qquad \mathrm{and}\qquad \forall x\in N_\sfG(U),\qquad K(x)\ge k+1,
\end{equation}
and furthermore there exists a subset $\emptyset\neq W\subset N_\sfG(U)$ for which
\begin{equation}\label{eq:good subset W}
|W|> e^{-\frac{C^2}{4}}|U| \qquad \mathrm{and}\qquad \forall x\in W,\qquad \E_{x,k+1}^{\sfG,f}> \min_{\substack{y \in U\\ \{ x, y \} \in E}} \Big(\E_{y,k}^{\sfG,f} + 2\sigma(\{x, y\}) \Big).
\end{equation}
\end{enumerate}
\end{lemma}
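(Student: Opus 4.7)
The plan is to decide which of the three conclusions holds by first inspecting $\min_{x \in N_\sfG(U)} K(x)$, and then, when this minimum is $\ge k+1$, splitting on the size of $N_\sfG(U)$ relative to $e^{C^2/4}|U|$. If some $x \in N_\sfG(U)$ has $K(x) \le k$ we are in case~\eqref{item:A}; otherwise every $x \in N_\sfG(U)$ satisfies $K(x) \ge k+1$, as required by both~\eqref{eq:neighborhood large} and~\eqref{eq:neighborhood small}. In the sub-case $|N_\sfG(U)| > e^{C^2/4}|U|$ (case~\eqref{item:B}), I would invoke Lemma~\ref{lem:E monotonicity along edges}: for every $x \in N_\sfG(U)$ and every $y \in U \cap N_\sfG(x)$ it gives $\E_{x,k+1}^{\sfG,f} \ge \E_{y,k}^{\sfG,f}$, and taking the max over $y$ proves~\eqref{eq:when neighborhood large}.

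The real work is case~\eqref{item:C}, where $|N_\sfG(U)| \le e^{C^2/4}|U|$. I would set
$$W \eqdef \big\{x \in N_\sfG(U):\ \exists\, y \in U\ \text{with}\ \{x,y\}\in E\ \text{and}\ \E_{x,k+1}^{\sfG,f} > \E_{y,k}^{\sfG,f} + 2\sigma(\{x,y\})\big\},$$
so $W$ automatically satisfies the second part of~\eqref{eq:good subset W}; what remains is the lower bound on $|W|$, which I would prove by contradiction. The key tool is Gaussian concentration: by Lemma~\ref{lem:lip from boundedness} combined with~\eqref{eq:for Lip condition}, the function $F_{y,k}^{\sfG,f}$ is $\Delta(y)/C$-Lipschitz whenever $k \le K(y)$, and $F_{x,k+1}^{\sfG,f}$ is $\Delta(x)/C$-Lipschitz whenever $k+1 \le K(x)$, so each obeys a one-sided Gaussian tail bound of $e^{-C^2/2}$ at deviation $\Delta(\cdot)$ from its mean.

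Assume toward a contradiction that $|W| \le e^{-C^2/4}|U|$. For each $y \in U$, combining~\eqref{eq:lower prob of inclusion in A} with the lower-tail bound on $F_{y,k}^{\sfG,f}$ and using $e^{-C^2/2} < e^{-C^2/4}$ shows that the event
$$\mathsf{E}_y \eqdef \{v:\ y \in A_v\} \cap \{v:\ F_{y,k}^{\sfG,f}(v) \ge \E_{y,k}^{\sfG,f} - \Delta(y)\}$$
has $\gamma_n(\mathsf{E}_y) > 2e^{-C^2/4}$. For $v \in \mathsf{E}_y$, writing $x = \phi_v(y)$, one has $\{x,y\} \in E$, so Lemma~\ref{lem:E monotonicity along edges} together with~\eqref{eq:increase condition along matching} yields
$$F_{x,k+1}^{\sfG,f}(v) \ge F_{y,k}^{\sfG,f}(v) + \Delta(y) + \Delta(x) + 2\sigma(\{x,y\}) \ge \E_{y,k}^{\sfG,f} + \Delta(x) + 2\sigma(\{x,y\}).$$
If in addition $x \notin W$ then by definition $\E_{x,k+1}^{\sfG,f} \le \E_{y,k}^{\sfG,f} + 2\sigma(\{x,y\})$, and the above simplifies to $F_{x,k+1}^{\sfG,f}(v) \ge \E_{x,k+1}^{\sfG,f} + \Delta(x)$. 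For each fixed $x \in N_\sfG(U) \setminus W$ the events $\{\mathsf{E}_y \cap \{\phi_v(y) = x\}\}_{y \in U}$ are pairwise disjoint (by injectivity of $\phi_v$) and all contained in the failure of the upper Gaussian tail bound for $F_{x,k+1}^{\sfG,f}$, so their union has $\gamma_n$-measure at most $e^{-C^2/2}$. Summing over $x \in N_\sfG(U)\setminus W$ and using $|N_\sfG(U)| \le e^{C^2/4}|U|$ gives
$$\sum_{y \in U} \gamma_n\big(\mathsf{E}_y \cap \{\phi_v(y) \notin W\}\big) \le |N_\sfG(U)|\cdot e^{-C^2/2} \le e^{-C^2/4}|U|,$$
while by injectivity of $\phi_v$ we also have $\sum_{y \in U} \gamma_n(\mathsf{E}_y \cap \{\phi_v(y) \in W\}) \le |W| \le e^{-C^2/4}|U|$. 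Adding these bounds contradicts $\sum_{y \in U} \gamma_n(\mathsf{E}_y) > 2e^{-C^2/4}|U|$, forcing $|W| > e^{-C^2/4}|U|$ as required.

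The main subtlety I anticipate is the careful accounting of two separate Gaussian concentration estimates against the hypothesis $\gamma_n(y \in A_v) \ge 3e^{-C^2/4}$: the $\Delta(y)$ lost in the lower tail on $F_{y,k}^{\sfG,f}$ must exactly cancel the $\Delta(y)$ gained from~\eqref{eq:increase condition along matching}, leaving a $\Delta(x)$ surplus that is precisely what is needed to trigger the upper tail on $F_{x,k+1}^{\sfG,f}$. This delicate matching, together with the injectivity of $\phi_v$ (which is what lets one convert the double sum over $(x,y)$ into the cardinalities $|W|$ and $|N_\sfG(U)|$), is what selects the cutoffs $e^{C^2/4}|U|$ for $|N_\sfG(U)|$ in case~\eqref{item:B} and $e^{-C^2/4}|U|$ for $|W|$ in case~\eqref{item:C}; the remaining bookkeeping (notably ensuring the strict inequalities demanded by the statement of the lemma) should then follow routinely.
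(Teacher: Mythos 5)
Your proof is correct, and the organization is genuinely different from the paper's. The paper defines
\[
S_v=\big\{x\in A_v\cap U:\ F^{\sfG,f}_{x,k}(v)\ge \E_{x,k}^{\sfG,f}-\Delta(x)\big\},\qquad T_v=\phi_v(S_v),
\]
and then takes $W=\{x\in N_\sfG(U):\ \gamma_n(\{v:\ x\in T_v\})>e^{-C^2/2}\}$, after which the Fubini argument directly yields $|W|>e^{-C^2/4}|U|$; the required inequality $\E_{x,k+1}^{\sfG,f}>\min_{y\in U,\{x,y\}\in E}(\E_{y,k}^{\sfG,f}+2\sigma(\{x,y\}))$ for $x\in W$ is then proved in a separate verification step, by producing a single $v$ with $x\in T_v$ and $F_{x,k+1}^{\sfG,f}(v)\le\E_{x,k+1}^{\sfG,f}+\Delta(x)$, and unwinding the chain. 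You instead define $W$ directly as the set of $x\in N_\sfG(U)$ for which the required inequality holds, so the second half of \eqref{eq:good subset W} is tautological, and all of the work goes into the cardinality bound, which you establish by contradiction rather than by the paper's direct Fubini computation. The analytic inputs are identical (the Gaussian lower tail for $F_{y,k}^{\sfG,f}$ with deviation $\Delta(y)$, the Gaussian upper tail for $F_{x,k+1}^{\sfG,f}$ with deviation $\Delta(x)$, Lemma~\ref{lem:E monotonicity along edges} plus \eqref{eq:increase condition along matching} to convert one into the other, and injectivity of $\phi_v$ for the counting), so the arithmetic is exactly parallel, but the logical organization is dual. Your version has the virtue of separating cleanly the ``existence'' and ``size'' assertions about $W$, so the concentration and injectivity enter only once; the paper's version constructs $W$ without a contradiction argument, at the cost of a second pass to certify the property, and avoids needing to name the defining property of $W$ up front. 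Either way the constants and strict inequalities close, as you correctly observe (the $\Delta(y)$ cancellation and the $\Delta(x)$ surplus are indeed the decisive identities).
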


\begin{proof} Suppose that Case~{\em \ref{item:A}}   does not hold, which means that (as $K$ takes values in $\N$) we have 
\begin{equation}\label{eq:K large on neighborhood k+1}
\forall x\in N_\sfG(U),\qquad K(x)\ge k+1. 
\end{equation}
In other words, the second requirement in~\eqref{eq:neighborhood large} and~\eqref{eq:neighborhood small} holds automatically if Case~{\em \ref{item:A}} does not hold.

If $|N_\sfG(U)|> e^{C^2/4}|U|$ in addition to~\eqref{eq:K large on neighborhood k+1}, then we claim that Case~{\em \ref{item:B}} holds. For this, we need to  verify~\eqref{eq:when neighborhood large}. Indeed, suppose that $x\in N_\sfG(U)$ and consider any $y\in U\cap N_\sfG(x)$. By Lemma~\ref{lem:E monotonicity along edges}  we have 
\begin{equation}\label{eq:pass from x to y}
\E_{x,k+1}^{\sfG,f}\ge \E_{y,k}^{\sfG,f}.
\end{equation}
Since $y$ is an arbitrary vertex in $U \cap N_\sfG (x)$, this proves \eqref{eq:when neighborhood large}.

It thus remains to assume that~\eqref{eq:neighborhood small} holds and to then demonstrate  the rest of Case~{\em \ref{item:C}}, i.e., to show that there exists $W\subset N_\sfG(U)$ that satisfies~\eqref{eq:good subset W}.  To this end, for each $v\in \R^n$ define $S_v\subset V$ by 

\begin{equation}\label{eq:def Sv}
S_v\eqdef\left\{x\in A_v\cap U:\ F^{\sfG,f}_{x,k}(v)\ge \E_{x,k}^{\sfG,f}-\Delta(x)\right\}.
\end{equation}
Observe that the following inclusion holds for every $x\in U$:
\begin{equation}\label{eq:first prob inclusion}
\big\{v\in \R^n:\ x\in A_v\big\}\subset \big\{v\in \R^n:\ x\in S_v\big\}\bigcup \left\{v\in \R^n:\ F^{\sfG,f}_{x,k}(v)<\E_{x,k}^{\sfG,f}-\Delta(x)\right\}.
\end{equation}
We therefore have 
\begin{equation}\label{eq:estimate Sv}
3e^{-\frac14 C^2}\stackrel{\eqref{eq:lower prob of inclusion in A}}{\le} \gamma_n\big(\{v\in \R^n:\ x\in A_v\}\big)\stackrel{\eqref{eq:first prob inclusion}}{\le} \gamma_n\big(\{v\in \R^n:\ x\in S_v\}\big)+\gamma_n\big(\{v\in \R^n:\ F^{\sfG,f}_{x,k}(v)<\E_{x,k}^{\sfG,f}-\Delta(x)\}\big).
\end{equation}

By the second part of our current assumption~\eqref{eq:neighborhood small}, every $x\in N_\sfG(U)$ satisfies $K(x)\ge k+1$. Hence
$$
\forall x\in N_\sfG(U),\qquad f\big(B_\sfG(x,k)\big)\subset f\big(B_\sfG(x,k+1)\big)\subset f\Big(B_\sfG\big(x,K(x)\big)\Big) \stackrel{\eqref{eq:for Lip condition}}{\subset} B_{\ell_2^n}\Big(f(x),\frac{1}{C}\Delta(x)\Big).
$$
By Lemma~\ref{lem:lip from boundedness} this implies that 
\begin{equation}\label{eq: Lip constant on W} 
\forall x\in N_\sfG(U),\qquad \big\|F^{\sfG,f}_{x,k}\big\|_{\Lip(\ell_2^n)}\le \frac{\Delta(x)}{C}\quad\mathrm{and}\quad \big\|F^{\sfG,f}_{x,k+1}\big\|_{\Lip(\ell_2^n)}\le \frac{\Delta(x)}{C}. 
\end{equation}
The Gaussian isoperimetric theorem~\cite{borell1975brunn,sudakov1978extremal} (see~\cite[Corollary~2.6]{Led01}) therefore implies\footnote{Formally,  the way Gaussian isoperimetry is typically stated in the literature implies~\eqref{eq:concentration on W} and~\eqref{eq:concentration on W k+1} when the upper bounds on the Lipschitz constants in~\eqref{eq: Lip constant on W}  are strictly positive, i.e., when  $\Delta(x)>0$. Nevertheless, when $\Delta(x)=0$ by ~\eqref{eq: Lip constant on W} we get that the functions $F^{\sfG,f}_{x,k}$ and $F^{\sfG,f}_{x,k+1}$ are constant, so the probabilities in the left hand sides of~\eqref{eq:concentration on W} and~\eqref{eq:concentration on W k+1} vanish.} that
\begin{equation}\label{eq:concentration on W}
\forall x\in N_\sfG(U),\qquad  \gamma_n\big(\{v\in \R^n:\ F^{\sfG,f}_{x,k}(v)<\E_{x,k}^{\sfG,f}-\Delta(x)\}\big)\le e^{-\frac12 C^2}, 
\end{equation}
and also
\begin{equation}\label{eq:concentration on W k+1}
\forall x\in N_\sfG(U),\qquad  \gamma_n\big(\{v\in \R^n:\ F^{\sfG,f}_{x,k+1}(v)>\E_{x,k+1}^{\sfG,f}+\Delta(x)\}\big)\le e^{-\frac12 C^2}. 
\end{equation} 
In particular, a substitution of~\eqref{eq:concentration on W} into~\eqref{eq:estimate Sv} gives the following statement: 
\begin{equation}\label{eq:lower boun on gaussian measure of Sv}
\forall x\in U,\qquad \gamma_n\big(\{v\in \R^n:\ x\in S_v\}\big)\ge 3e^{-\frac14 C^2}-e^{-\frac12 C^2} > 2e^{-\frac14 C^2}.
\end{equation}

Define $T_v\subset V$ for every $v\in \R^n$ by 
\begin{equation}\label{eq:def Tv}
T_v\eqdef \phi_v(S_v). 
\end{equation}
As $\phi_v:A_v\to V$ is assumed to be one-to-one, we have 
\begin{equation}\label{eq:S=T}
\forall v\in \R^n,\qquad |T_v|=|S_v|.
\end{equation}
The definition~\eqref{eq:def Sv} ensures that  $S_v\subset A_v$, so $T_v\subset N_\sfG(S_v)$ by~\eqref{eq:Mv is 1-1}. By~\eqref{eq:def Sv} also  $S_v\subset U$, so 
\begin{equation}\label{eq:Tv in neighborhood}
T_v\subset N_\sfG(U).
\end{equation}  
Define $W\subset N_\sfG(U)$ by
\begin{equation}\label{eq:def our W}
W\eqdef \Big\{x\in N_\sfG(U):\ \gamma_n\big(\{v\in \R^n:\ x\in T_v\}\big)> e^{-\frac12 C^2}\Big\}. 
\end{equation}
With this choice, we can reason as follows (the second step below is an application of Fubini):
\begin{eqnarray*}\label{eq:to combine with upper bound on neighborhood}
2e^{-\frac14 C^2}|U|&\stackrel{\eqref{eq:lower boun on gaussian measure of Sv}}{\le}& \sum_{x\in U} \gamma_n\big(\{v\in \R^n:\ x\in S_v\}\big)\\& =&\int_{\R^n} |S_v|\ud\gamma_n\\&\stackrel{\eqref{eq:S=T}}{=}& \int_{\R^n} |T_v|\ud\gamma_n \\&\stackrel{\eqref{eq:Tv in neighborhood}}{=}& \sum_{x\in N_\sfG(U)\setminus{W}} \gamma_n\big(\{v\in \R^n:\ x\in T_v\}\big) +\sum_{x\in {W}} \gamma_n\big(\{v\in \R^n:\ x\in T_v\}\big)\\&\stackrel{\eqref{eq:def our W}}{\le}& e^{-\frac12 C^2}|N_\sfG(U)\setminus W|+|W|\\
&=& e^{-\frac12 C^2}|N_\sfG(U)|+\left(1-e^{-\frac12 C^2}\right)|W|\\
&\stackrel{\eqref{eq:neighborhood small}}{\le}& e^{-\frac14 C^2}|U|+\left(1-e^{-\frac12 C^2}\right)|W|. 
\end{eqnarray*}
This simplifies to give the following justification of the first requirement in~\eqref{eq:good subset W} for our choice~\eqref{eq:def our W} of $W$: 
$$
|W|\ge \frac{e^{-\frac14 C^2}}{1-e^{-\frac12 C^2}}|U|>e^{-\frac14 C^2}|U|.
$$

To complete the proof of Lemma~\ref{lem:for chaining}, it  remains to derive the second requirement in~\eqref{eq:good subset W}; note that  we did not use thus far the remaining assumption~\eqref{eq:increase condition along matching}, but we will do so for this purpose. Consider $x\in W$. The definition~\eqref{eq:def our W} of $W$ ensures that also $x\in N_\sfG(U)$, and therefore the estimate in~\eqref{eq:concentration on W k+1} holds. Hence,
$$
\gamma_n\big(\{v\in \R^n:\ F^{\sfG,f}_{x,k+1}(v)>\E_{x,k+1}^{\sfG,f}+\Delta(x)\}\big)\stackrel{\eqref{eq:concentration on W k+1}}{\le} e^{-\frac12 C^2}\stackrel{\eqref{eq:def our W}}{<} \gamma_n\big(\{v\in \R^n:\ x\in T_v\}\big).
$$ 
This implies in particular that 
\begin{equation}\label{existance of good vector}
\exists v\in \R^n,\qquad x\in T_v\qquad \mathrm{and}\qquad F^{\sfG,f}_{x,k+1}(v)\le \E_{x,k+1}^{\sfG,f}+\Delta(x).
\end{equation}

From now, fix $v\in \R^n$ for which the two requirements in~\eqref{existance of good vector} hold. By the definition~\eqref{eq:def Tv} of $T_v$, there exists (a unique) $y\in S_v$ such that $x=\phi_v(y)$.  By the definition~\eqref{eq:def Sv} of $S_v$ we know that
$$
F^{\sfG,f}_{y,k}(v)\ge \E_{y,k}^{\sfG,f}-\Delta(y).
$$
Recalling the definition~\eqref{eq:def our F} of $F^{\sfG,f}_{y,k}$, it follows that 
\begin{equation}\label{eq:exists z in k neighborjhood}
\exists z\in B_\sfG(y,k),\qquad \langle f(z)-f(y),v\rangle\ge  \E_{y,k}^{\sfG,f}-\Delta(y).
\end{equation}
We will also fix from now any $z\in V$ that satisfies~\eqref{eq:exists z in k neighborjhood}. The definition~\eqref{eq:increase condition along matching} of $S_v$ ensures that $y\in A_v$, so we may apply the assumption~\ref{eq:increase condition along matching}  while recalling that $y$ was chosen to satisfy $x=\phi_v(y)$, to get the estimate 
\begin{equation}\label{eq:use increment assumption}
\langle f(y)-f(x),v\rangle> \Delta(y)+\Delta(x)+2\sigma(\{y,x\}).
\end{equation}
Consequently,
\begin{equation}\label{eq: to pass to F k+1}
\langle f(z)-f(x),v\rangle\stackrel{\eqref{eq:exists z in k neighborjhood}\wedge \eqref{eq:use increment assumption}}{>} \E_{y,k}^{\sfG,f}+\Delta(x)+2\sigma(\{y,x\}).
\end{equation}
But $z\in B_\sfG(x,k+1)$, as $z\in B_\sfG(y,k)$ and $\{x,y\}\in E$. So, by the definition~\eqref{eq:def our F} of $F^{\sfG,f}_{x,k+1}$, from~\eqref{eq: to pass to F k+1} we get 
\begin{equation}\label{eq:F big at a point, to pass to expectation}
F^{\sfG,f}_{x,k+1}(v)>\E_{y,k}^{\sfG,f}+\Delta(x)+2\sigma(\{y,x\}). 
\end{equation}
In combination with the second condition in~\eqref{existance of good vector}, it follows from~\eqref{eq:F big at a point, to pass to expectation} that
\begin{equation*}
\E_{x,k+1}^{\sfG,f} >\E_{y,k}^{\sfG,f}+2\sigma(\{y,x\}).
\end{equation*} 
Because $y \in S_v \subset U$ and $\{ x, y \} \in E$, this completes the proof of the second requirement in~\eqref{eq:good subset W}.
\end{proof}

In the proof of Theorem~\ref{thm:matching from compatibility} we will use the following convenient notation for every $x\in V$ and $R\ge 1$: 
\begin{equation}\label{eq:smooth sigma}
\mathcal{m}_\sigma(x,R)\eqdef \min_{\substack{\{y,z\}\in E\\ y\in B_\sfG(x,R-1)}}\sigma(\{y,z\})
\end{equation}
Also, for  $R<1$ we denote $\mathcal{m}_\sigma(x,R)=0$.  Observe that (by definition) we have
\begin{equation}\label{eq:monotonicity in R1R2}
\forall x\in V,\ \forall 0\le R_1\le R_2 , \qquad \mathcal{m}_\sigma(x,R_2) \le \mathcal{m}_\sigma(x,R_1)\qquad \mathrm{and}\qquad \forall \{x,y\}\in E,\qquad \mathcal{m}_\sigma(x,2)\le \sigma(\{x,y\}).
\end{equation}
Thus, in particular, we have 
\begin{equation}\label{eq:smooth sigma less than sigma}
\forall R\ge 2,\ \forall\{x,y\}\in E,\qquad \mathcal{m}_\sigma(x,R)\le \sigma(\{x,y\}). 
\end{equation}
Furthermore, the  definition~\eqref{eq:smooth sigma} immediately implies that  
\begin{equation}\label{eq:smooth sigma ALONG EDGES}
\forall R\ge 1, \ \forall x\in V,\ \forall y\in N_\sfG(x),\qquad \mathcal{m}_\sigma(x,R+1)\le \mathcal{m}_\sigma(y,R).
\end{equation}

\begin{remark}\label{rem:restate first condition of compatibility} It is worthwhile to observe for ease of later reference  that using the notions in~\eqref{eq:def expectation operator} and~\eqref{eq:smooth sigma} we can restate conditions~\eqref{eq:4 Delta compatibility} and~\eqref{eq:second compatibility condition} from Definition~\ref{def:compatibility} of $C$-compatibility as the following requirements:
$$
\forall x\in V,\  \forall y\in N_\sfG(x),\qquad  \Delta(x)\le  \mathcal{m}_\sigma\big(x,K(x)\big)\qquad\mathrm{and}\qquad \E_{x,K(y)}^{\sfG,f} \le K(x)\Delta(y).
$$
\end{remark}

We are now ready to prove Theorem~\ref{thm:matching from compatibility}:

\begin{proof}[Proof of Theorem~\ref{thm:matching from compatibility}] We will begin with quick setup that (straightforwardly) provides measurability and symmetry requirements that are needed for the ensuing reasoning. Definition~\eqref{eq:sparsification definition}  ensures  the mapping 
$$
E(\cdot;f,\sigma):\R^n\to 2^E
$$
is Borel measurable. Also, by definition  $E(-v;f,\sigma)=E(v;f,\sigma)$ for every $v\in \R^n$. For each $v\in \R^n$, let $\mathscr{M}(v)$ be the collection of all the maximal-size matchings of the graph $\sfG(v;f,\sigma)$. Since $V$ is finite, the mapping 
$$
\mathscr{M}:\R^n\to 2^{2^E}\subset 2^{2^{2^V}}
$$
is also Borel-measurable and satisfies $\mathscr{M}(-v)=\mathscr{M}(v)$ for every $v\in \R^n$. Fix any linear ordering $\prec$ on the (finite) set of matchings  of $\sfG$ and for each $v\in \R^n$ let $M(v)$ be the element of $\mathscr{M}(v)$ that is minimal with respect to $\prec$. The mapping $v\mapsto M(v)$ is Borel-measurable and satisfies $M(-v)=M(v)$ for every $v\in \R^n$. Furthermore, this construction ensures that for every fixed edge $e\in E$, the set $\{v\in \R^n:\ e\in M(v)\}$ is Borel.

Suppose for the purpose of obtaining a contradiction that 
\begin{equation}\label{eq:contra suumption all matchings big}
\int_{\R^n} |M(v)|\ud \gamma_n(v)\ge 6e^{-\frac14 C^2}|V|.
\end{equation}
We will start by performing the following iterative procedure. Denote $V_1=V$ and $M_1(v)=M(v)$ for every $v\in \R^n$. Suppose inductively that $V_j\subset V$ and $\{M_j(v)\}_{v\in \R^n}$ have been defined for some $j\in \N$, where for each $v\in \R^n$ we require that $M_j(v)\subset M(v)$ is a matching of the following graph:
\begin{equation}\label{eq: def GJ}
\sfG_j(v;f,\sigma)\eqdef \Big(V_j,E_j(v;f,\sigma)\eqdef \big\{e\in E(v;f,\sigma):\ e\subset V_j\big\}\Big).
\end{equation}
Furthermore, we require that the mapping $M_j$ is Borel-measurable, satisfies $M_j(-v)=M_j(v)$ for every $v\in \R^n$, and for every fixed edge $e\in E$, the set $\{v\in \R^n:\ e\in M_j(v)\}$ is Borel.

If there does not exist  $x_j\in V_j$ that satisfies
\begin{equation}\label{eq:probability too small matching j}
 \gamma_n\big(\{v\in \R^n:\ \exists e\in M_j(v)\ \mathrm{such\  that}\ x_j\in e\}\big)<  6e^{-\frac14 C^2},
\end{equation}
then terminate the construction. Otherwise, fix $x_j\in V_j$ for which~\eqref{eq:probability too small matching j} holds and denote 
$$
V_{j+1}\eqdef V_j\setminus \{x_j\}\qquad \mathrm{and}\qquad \forall v\in \R^n,\qquad M_{j+1}(v)\eqdef M_j(v)\setminus \big\{e\in M_j(v):\ x_j\in e\big\}. 
$$
The above measurability requirements for $M_{j+1}$ are immediate from this definition. 

Note that for every $v\in \R^n$, since $M(v)$  is a matching of $\sfG$ and $M_j(v)\subset M(v)$, there is at most one edge  $e\in M_j(v)$  for which $x_j\in e$. Consequently, we have the following point-wise estimate:
$$
\forall v\in \R^n,\qquad |M_{j+1}(v)|\ge |M_j(v)|-\1_{\{\exists e\in M_j(v)\ \mathrm{such\  that}\ x_j\in e\}}.
$$ 
By integrating this inequality with respect to $\gamma_n$ and using our choice of $x_j$, we conclude that 
\begin{align}\label{eq:how much Mj decreased}
\begin{split}
\int_{\R^n} |M_{j+1}(v)|\ud \gamma_n(v)&\ge \int_{\R^n} |M_{j}(v)|\ud \gamma_n(v) -  \gamma_n\big(\{v\in \R^n:\ \exists e\in M_j(v)\ \mathrm{such\  that}\ x_j\in e\}\big)\\&> \int_{\R^n} |M_{j}(v)|\ud \gamma_n(v)-6e^{-\frac14 C^2}.
\end{split}
\end{align}

Suppose that this procedure terminates at the $J\in \N$ step; this must occur because $V$ is finite and each step removes a vertex, whence $J\le |V|+1$. By applying~\eqref{eq:how much Mj decreased} inductively we see that
\begin{equation}\label{eq:on MJ}
\int_{\R^n} |M_{J}(v)|\ud \gamma_n(v)>\int_{\R^n} |M(v)|\ud \gamma_n(v)-6e^{-\frac14 C^2}(J-1)\ge \int_{\R^n} |M(v)|\ud \gamma_n(v)-6e^{-\frac14 C^2}|V|\stackrel{\eqref{eq:contra suumption all matchings big}}{\ge} 0.
\end{equation}
Thanks to the strict inequality in the first step of~\eqref{eq:on MJ}, it follows in particular that $V_J\neq \emptyset$.\footnote{By increasing the factor $6$ in~\eqref{eq:contra suumption all matchings big} to, say, $7$, and then repeating the above reasoning, we can ensure that the mean of $|M_{J}(v)|$ with respect to $\gamma_n$ is at least $\exp(-C^2/4)|V|$, and hence also  $|V_J|\gtrsim \exp(-C^2/4)|V|$, since $M_J(v)$ is a matching of a graph whose vertex set is $V_J$. We do not need such a  guarantee below, but it is worthwhile to note it in case this will occur in the future. } Furthermore, because the above procedure terminated at the $J\in \N$ step, we have 
\begin{equation}\label{eq:probability too small matching}
\forall x\in V_J,\qquad  \gamma_n\big(\{v\in \R^n:\ \exists e\in M_J(v)\ \mathrm{such\  that}\ x\in e\}\big)\ge   6e^{-\frac14 C^2}.
\end{equation}

We will next proceed to apply Lemma~\ref{lem:for chaining} (multiple times) to the following induced subgraph of $\sfG$:
$$
\sfG_J\eqdef \big(V_J,E_J\eqdef \{e\in E:\ e\subset V_J\}\big).
$$ 
Theorem~\ref{thm:matching from compatibility} assumes that $\sfG$ is $C$-compatible with $f$ and $\sigma$, so let $K:V\to \N$ and $\Delta:V\to [0,\infty)$ be mappings as in Definition~\ref{def:compatibility}. Henceforth, we will slightly abuse notation by also denoting the restrictions of $f,K,\Delta$ to $V_J$ by $f,K,\Delta$, and correspondingly denoting the restriction of $\sigma$ to $E_J$ by $\sigma$. Observe that because $\sfG_J$ is an induced subgraph of $\sfG$, we have $B_{\sfG_J}(x,R)\subset B_{\sfG}(x,R)$ for every $x\in V_J$ and $R\ge 0$. Thanks to these inclusions, an inspection of Definition~\ref{def:compatibility} reveals that as $\sfG$ is $C$-compatible with $f$ and $\sigma$, also $\sfG_J$ is $C$-compatible with $f$ and $\sigma$.

For each vector $v\in \R^n$ define $A_v\subset V_J$ by 
\begin{equation}\label{eq:def our Av}
A_v\eqdef \big\{x\in V_J:\ \exists \{x,y\}\in M_J(v)\ \mathrm{such\ that\ } \langle f(x)-f(y),v\rangle > 4\sigma(\{x,y\})\big\}.
\end{equation}
Because $M_J(v)$ is a matching of $\sfG$, if $x\in A_v$, then there is a unique $y\in V_J$ such that $\{x,y\}\in M_J(v)$. We can therefore define $\phi_v(x)=y$. The fact that $M_J(v)$ is a matching of $\sfG$ also implies that  $\phi_v:A_v\to V_J$ is  one-to-one, and by design this definition ensures that assumption~\eqref{eq:Mv is 1-1} of Lemma~\ref{lem:for chaining} is satisfied. 

We next claim that the following equality of events holds for every fixed $x\in V_J$:
\begin{equation}\label{eq:symmetry of events}
\big\{v\in \R^n:\ \exists e\in M_J(v)\ \mathrm{such\  that}\ x\in e\big\}= \big\{v\in \R^n:\ x\in A_v\big\}\cup \big\{v\in \R^n:\ x\in A_{-v}\big\}.
\end{equation}
Indeed, it is immediate from the definition~\eqref{eq:def our Av} of $\{A_v\}_{v\in \R^n}$ that the right hand side of~\eqref{eq:symmetry of events}  is contained in the left hand side of~\eqref{eq:symmetry of events}. The reverse inclusion follows from the fact that $M_J(v)$  is a matching of the graph $\sfG_J(v;f,\sigma)$ in~\eqref{eq: def GJ}, which is a subgraph of the Euclidean sparsification $\sfG(v;f,\sigma)$ of $\sfG$. So, if $v\in \R^n$ and there is $e\in M_J(v)$ with $x\in e$, then $e\in E(v;f,\sigma)$. By the definition~\eqref{eq:sparsification definition} of Euclidean sparsification, this means that if $y\in V_J$  is such that   $e=\{x,y\}$, then $|\langle f(x)-f(y),v\rangle|>4\sigma(\{x,y\})$. If $\langle f(x)-f(y),v\rangle>4\sigma(\{x,y\})$, then  $x\in A_v$, and otherwise $\langle f(x)-f(y),v\rangle<-4\sigma(\{x,y\})$, which means that $x\in A_{-v}$.

By~\eqref{eq:def our Av}, the sets in the right hand side of~\eqref{eq:symmetry of events} are disjoint unless $v=0$. Therefore, for every $x\in V_J$  
\begin{multline*}
6e^{-\frac14 C^2}\stackrel{\eqref{eq:probability too small matching}}{\le} \gamma_n\big(\{v\in \R^n:\ \exists e\in M_J(v)\ \mathrm{such\  that}\ x\in e\}\big) \\\stackrel{\eqref{eq:symmetry of events}}{=} \gamma_n\big(\{v\in \R^n:\ x\in A_v\}\big) +\gamma_n\big(\{v\in \R^n:\ x\in A_{-v}\}\big) =2\gamma_n\big(\{v\in \R^n:\ x\in A_v\}\big). 
\end{multline*}
Hence,
$$
\forall x\in V_J,\qquad \gamma_n\big(\{v\in \R^n:\ x\in A_v\}\big)\ge 3e^{-\frac14 C^2}.
$$
In other words, we checked that condition~\eqref{eq:lower prob of inclusion in A} of Lemma~\ref{lem:for chaining} is satisfied for $\sfG_J$ (the corresponding measurability requirement follows from the measurability that we ensured in the above construction). 

For every $v\in \R^n$ and $x\in A_v$, by (a special case of) the condition~\eqref{eq:4 Delta compatibility} of Definition~\ref{def:compatibility}, which is part of the $C$-compatibility assumption of Theorem~\ref{thm:matching from compatibility}, we know that 
\begin{equation}\label{eq:actual copmpatibility need 1}
2\sigma\big(\{x,\phi_v(x)\}\big)\ge \Delta(x)+\Delta\big(\phi_v(x)\big).
\end{equation}
Consequently, we deduce as follows that condition~\eqref{eq:increase condition along matching} of Lemma~\ref{lem:for chaining} is satisfied: 
$$
\big\langle f(x)-f\big(\phi_v(x)\big),v\big\rangle\stackrel{\eqref{eq:def our Av}}{>} 4\sigma\big(\{x,\phi_v(x)\}\big)\stackrel{\eqref{eq:actual copmpatibility need 1}}{\ge}  \Delta(x)+\Delta\big(\phi_v(x)\big)+2 \sigma\big(\{x,\phi_v(x)\}\big).
$$

Condition~\eqref{eq:for Lip condition} of Lemma~\ref{lem:for chaining} coincides with condition~\eqref{eq:inclusion condition in compatibility def} of Definition~\ref{def:compatibility} with $\sfG$ replaced by $\sfG_J$, so it holds thanks to the $C$-compatibility of $\sfG_J$ with $f$ and $\sigma$.   This completes the verification of all of the assumptions of Lemma~\ref{lem:for chaining}.

We will next  apply Lemma~\ref{lem:for chaining} iteratively as follows to obtain   $T\in \N$, as well as nonempty subsets $\emptyset \neq U_0,U_1,\ldots,U_T\subset V_J$ and indices  $a_1,\ldots,a_T\in \{2,3\}$, for which the following requirements hold. Firstly, at the start of the induction set $U_0=V_J$ and $a_1=3$. Secondly, at the end of the iteration we have
\begin{equation}\label{eq:termination T}
\min_{x\in N_{\sfG_J}(U_T)}K(x)\le T.
\end{equation}
Thirdly, for every $t\in [T]$ (so, excluding $t=0$), if we denote 
\begin{equation}\label{eq:def dk}
d_t\eqdef  \big|\{s\in [t]:\ a_s=3\}\big|,
\end{equation}
then
\begin{equation}\label{eq:Ut inclusion for induction}
\forall x\in U_t,\qquad K(x)\ge t\qquad\mathrm{and}\qquad  \E_{x,t}^{\sfG_J,f} > 2d_t\mathcal{m}_\sigma(x,t).
\end{equation}
Finally, we require that the following estimate holds:
\begin{equation}\label{eq:ratio control t}
\forall t\in [T],\qquad \frac{|U_{t}|}{|U_{t-1}|}>e^{\frac{C^2}{2} \left(\frac12-\1_{\{a_{t}=3\}}\right)}=\left\{\begin{array}{ll} e^{\frac14 C^2}&\mathrm{if}\ a_{t}=2,\\
e^{-\frac14 C^2}&\mathrm{if}\ a_{t}=3.\end{array}\right.
\end{equation}

Supposing for the moment   that the above construction has already been carried out, we will next see how it can be used to complete the proof of Theorem~\ref{thm:matching from compatibility}. By~\eqref{eq:termination T} we can fix $x\in N_{\sfG_J}(U_T)$, thus we can also  fix $y\in U_T\cap N_{\sfG_J}(x)$, for which $K(x)\le T$. As $y\in N_{\sfG_J}(x)$, by combining $K(x)\le T$ with condition~\eqref{eq:4 Delta compatibility} of Definition~\ref{def:compatibility}, per its formulation in Remark~\ref{rem:restate first condition of compatibility}, we see that 
\begin{equation}\label{eq:Kx less T terminmal}
\E_{y,K(y)}^{\sfG_J,f} \le T\Delta(y). 
\end{equation}

Because $y\in U_T$, the case $t=T$ of the first condition in~\eqref{eq:Ut inclusion for induction} gives the bound $K(y)\le T$, so by the (very simple) monotonicity properties that we recorded in~\eqref{eq:monotonicvity of E in R} and in~\eqref{eq:monotonicity in R1R2}, we have 
\begin{equation}\label{eq:use monotonicities y}
\E_{y,T}^{\sfG_J,f}\le \E_{y,K(y)}^{\sfG_J,f},\qquad\mathrm{and}\qquad  \mathcal{m}_\sigma\big(y,K(y)\big)\le \mathcal{m}_\sigma(y,T).
\end{equation}
Also, by Remark~\ref{rem:restate first condition of compatibility}, the (first part of the)  $C$-compatibility assumption of Theorem~\ref{thm:matching from compatibility} gives the estimate
\begin{equation}\label{eq:use compatibility 4D}
\Delta(y)\le \mathcal{m}_\sigma\big(y,K(y)\big). 
\end{equation}
By combining~\eqref{eq:Kx less T terminmal},  \eqref{eq:use monotonicities y} and~\eqref{eq:use compatibility 4D}, we conclude that 
\begin{equation}\label{eq:ET}
\E_{y,T}^{\sfG_J,f}\le T\mathcal{m}_\sigma(y,T). 
\end{equation}

At the same time, by applying~\eqref{eq:ratio control t} inductively we see that 
\begin{equation}\label{eq:telescoping porduct estimate}
\frac{|U_T|}{|U_0|}=\prod_{t=1}^{T}  \frac{|U_{t}|}{|U_{t-1}|}\stackrel{\eqref{eq:ratio control t}}{>} e^{\frac{C^2}{2}\left(\frac{T}{2}-\sum_{t=1}^T \1_{\{a_t=3\}}\right)}\stackrel{\eqref{eq:def dk}}{=}  e^{\frac{C^2}{2}\left(\frac{T}{2}-d_T\right)}.
\end{equation}
Since $U_T\subset V_J=U_0$, the left hand side of~\eqref{eq:telescoping porduct estimate} is at most $1$, so, since $C>0$, \eqref{eq:telescoping porduct estimate}   implies that
\begin{equation}\label{eq:DT is big}
d_T>\frac{T}{2}. 
\end{equation}
Because $y\in U_T$, a substitution of~\eqref{eq:DT is big} into the case $t=T$ of the second part of~\eqref{eq:Ut inclusion for induction} gives 
\begin{equation}\label{eq:ET lower}
\E_{y,T}^{\sfG_J,f} > T\mathcal{m}_\sigma(y,T).
\end{equation}
We thus arrive at the desired contradiction by contrasting~\eqref{eq:ET} with~\eqref{eq:ET lower}, i.e., the contrapositive assumption~\eqref{eq:contra suumption all matchings big}, which is the premise of the current discussion, cannot hold. This completes the proof of Theorem~\ref{thm:matching from compatibility} assuming that the aforementioned construction (of $T,U_0,U_1,\ldots,U_T, a_1,\ldots,a_T$ with the above specifications) can indeed be carried out; this is what we will justify next.  

Recall that we already defined $U_0=V_J$ and $a_1=3$. We will start by applying Lemma~\ref{lem:for chaining} with $U=U_0$ and $d=k=0$. Case~{\em \ref{item:A}}  of Lemma~\ref{lem:for chaining}  does not hold because $k=0$ and $K$ takes values in $\N$. Case~{\em \ref{item:B}}  of Lemma~\ref{lem:for chaining} does not hold as $U=U_0=V_J$ and $C>0$, so $|N_{\sfG_J}(V_J)|=|V_J|<e^{C^2/4}|V_J|$.  Lemma~\ref{lem:for chaining}  therefore ensures that its Case~{\em \ref{item:C}}  holds, thus producing a set $\emptyset \neq W\subset N_{\sfG_J}(V_J)=V_J$. We will then define $U_1=W$. With this notation, we thus know from the first inequality in~\eqref{eq:good subset W} that
 \begin{equation*}
 \frac{|U_1|}{|U_0|}>e^{-\frac14 C^2}.
 \end{equation*}
Because  $a_1=3$ (by definition), this coincides with the case $t=1$   of~\eqref{eq:ratio control t}.  Furthermore,  by~\eqref{eq:good subset W} we have 
\begin{equation*}\label{k=1 for induction}
\forall x\in U_1,\qquad \E_{x,1}^{\sfG_J,f} > 2 \min_{\substack{y\in V_J\\ \{x, y \} \in E_J}} \sigma(\{x, y\}) \stackrel{\eqref{eq:smooth sigma}}{=} 2\mathcal{m}_\sigma(x,1).
\end{equation*}
As $d_1=1$ thanks to~\eqref{eq:def dk}, this coincides with the case $t=1$ of the second requirement  in~\eqref{eq:Ut inclusion for induction}.  The case $t=1$ of the first requirement  in~\eqref{eq:Ut inclusion for induction} is automatic because $K$ takes values in $\N$.

Assume inductively that for  $t\in \N$ we already defined $\emptyset\neq U_0,U_1,\ldots,U_t\subset V_J$ and $a_1,\ldots,a_t\in \{2,3\}$ for which both~\eqref{eq:Ut inclusion for induction} and~\eqref{eq:ratio control t} hold, with $d_1,\ldots,d_t$ given by~\eqref{eq:def dk}. If there is $x\in N_{\sfG_J}(U_t)$ for which $K(x)\le t$, then define $T=t$, thus ensuring that~\eqref{eq:termination T} is satisfied. Otherwise, by Lemma~\ref{lem:for chaining} applied with $U=U_t$ and $d=d_t$,  either Case~{\em \ref{item:B}}  of Lemma~\ref{lem:for chaining} holds, or Case~{\em \ref{item:C}}  of Lemma~\ref{lem:for chaining} holds. We will encode this dichotomy by defining $a_{t+1}=2$ if Case~{\em \ref{item:B}}  holds, and defining $a_{t+1}=3$  if Case~{\em \ref{item:C}} holds.

If Case~{\em \ref{item:B}} holds, then set $U_{t+1}=N_{\sfG_J}(U_t)$.  By~\eqref{eq:def dk} we have $d_{t+1}=d_t$, as $a_{t+1}=2$, so~\eqref{eq:ratio control t} holds with $t$ replaced by $t+1$ thanks to the first condition in the conclusion~\eqref{eq:neighborhood large} of Lemma~\ref{lem:for chaining} in this case. The rest of~\eqref{eq:neighborhood large} is a restatement of the first part of~\eqref{eq:Ut inclusion for induction} with $t$ replaced by $t+1$, and the second part of~\eqref{eq:Ut inclusion for induction} (also with $t$ replaced by $t+1$) is derived as follows from the remaining conclusion~\eqref{eq:when neighborhood large} of Lemma~\ref{lem:for chaining} in Case~{\em \ref{item:B}}:
\begin{equation*}
    \E_{x,t+1}^{\sfG_J,f} \stackrel{\eqref{eq:when neighborhood large}}{\ge} \min_{y \in U_t \cap N_{\sfG_J} (x)} \E_{y,t}^{\sfG_J,f} \stackrel{\eqref{eq:Ut inclusion for induction}}{>} \min_{y \in U_t \cap N_{\sfG_J} (x)} 2d_t\mathcal{m}_\sigma(y,t) \stackrel{\eqref{eq:smooth sigma ALONG EDGES}}{\ge} 2d_{t+1}\mathcal{m}_\sigma(x,t+1).
\end{equation*}

If Case~{\em \ref{item:C}} holds,  then define $U_{t+1}=W$, where $W$ is the subset of $N_{\sfG_J}(U_t)$ that Case~$\eqref{item:C}$ produces.  Then, the second condition in~\eqref{eq:neighborhood small} shows that the first requirement in~\eqref{eq:Ut inclusion for induction} holds with $t$ replaced by $t+1$. To show that the second requirement in~\eqref{eq:Ut inclusion for induction} is satisfied with $t$ replaced by $t+1$, observe that by~\eqref{eq:def dk} we have $d_{t+1}=d_t+1$, as $a_{t+1}=3$, so by the second condition in~\eqref{eq:good subset W}, we see that 
\begin{equation*}
    \E_{x,t+1}^{\sfG_J,f} \stackrel{\eqref{eq:good subset W}}{>} \min_{\substack{y \in U_t\\ \{ x, y \} \in E_J}} \Big( \E_{y,t}^{\sfG_J,f} + 2\sigma(\{x, y\}) \Big) \stackrel{\eqref{eq:Ut inclusion for induction}}{>} \min_{\substack{y \in U_t\\ \{ x, y \} \in E_J}} \left(2d_t\mathcal{m}_\sigma(y,t) + 2\sigma(\{x, y\}) \right) \stackrel{\eqref{eq:smooth sigma less than sigma}\wedge \eqref{eq:smooth sigma ALONG EDGES}}{\ge} 2d_{t+1}\mathcal{m}_\sigma(x,t+1).
\end{equation*}
This completes the derivation of~\eqref{eq:Ut inclusion for induction} when Case~{\em \ref{item:C}} holds, thus completing the inductive step.

By the first condition in~\eqref{eq:Ut inclusion for induction}, the above iterative procedure cannot continue indefinitely, as it entails that $t\le \max_{x\in V} K(x)$; the desired construction is completed  when this iteration terminates. 
\end{proof}

\section{Probabilistic groundwork}\label{sec:groundwork}

\noindent Our goal here is to present the proof of Proposition~\ref{prob:graphical prob multiscale}, which consists of elementary probabilistic considerations.  We start with the following simple general lemma:

\begin{lemma}\label{lem:periodic distributional density estimate} Fix $\beta>0$. Let $\sfX$ be a random variable, defined on some probability space $(\Omega,\prob)$, which has a density $\f$ that is nondecreasing on $(-\infty,0]$ and nonincreasing on $[0,\infty)$. Suppose that $f:\R\to [0,\infty)$ is $\beta$-periodic, i.e, $f(t+\beta)=f(t)$ for every $t\in \R$, and also that $f$ is integrable on the interval $[0,\beta]$. Then,
$$
\E\big[f(\sfX)\big]\ge \bigg(\frac{1}{\beta}\int_0^\beta f(t)\ud t\bigg)\prob\big[|\sfX|\ge \beta\big].
$$
\end{lemma}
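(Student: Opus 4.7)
My strategy is to use the layer-cake decomposition of the density $\f$ to reduce the inequality to a one-parameter family of purely geometric interval estimates. Since $\f$ is nondecreasing on $(-\infty,0]$ and nonincreasing on $[0,\infty)$, each superlevel set $A_u\eqdef\{t\in\R:\f(t)>u\}$ is an interval containing the origin; write it as $A_u=(a_u,b_u)$ with $a_u\le 0\le b_u$ and $L_u\eqdef b_u-a_u$. Writing $\f(t)=\int_0^\infty \1_{A_u}(t)\,\ud u$ and applying Fubini to both sides yields
$$\E\big[f(\sfX)\big]=\int_0^\infty\!\!\int_{A_u}\!\!f(t)\,\ud t\,\ud u\qquad\text{and}\qquad\prob\big[|\sfX|\ge\beta\big]=\int_0^\infty\big|A_u\setminus(-\beta,\beta)\big|\,\ud u.$$
So it suffices to show, for every interval $A=(a,b)$ with $a\le 0\le b$ and $L\eqdef b-a$, that
$$\int_{A}f(t)\,\ud t\ \ge\ \bar f\cdot\big|A\setminus(-\beta,\beta)\big|,\qquad\text{where}\qquad\bar f\eqdef\frac{1}{\beta}\int_0^\beta f(t)\,\ud t,$$
since then integrating in $u$ against the two layer-cake identities above finishes the proof.

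I would establish the pointwise estimate in two steps. First, the $\beta$-periodicity and nonnegativity of $f$ give the elementary lower bound $\int_A f(t)\,\ud t\ge \lfloor L/\beta\rfloor\cdot\beta\bar f$, obtained by tiling the sub-interval $(a,a+\lfloor L/\beta\rfloor\beta)\subset A$ by $\lfloor L/\beta\rfloor$ translates of a single period of $f$, each contributing $\beta\bar f$, and discarding the (nonnegative) remainder. Second, a short case analysis on whether $a<-\beta$ and/or $b>\beta$, combined with the elementary bound $\lfloor L/\beta\rfloor\ge L/\beta-1$, yields $\lfloor L/\beta\rfloor\cdot\beta\ge\big|A\setminus(-\beta,\beta)\big|$; the hypothesis $a\le 0\le b$ is precisely what makes each of the four cases go through, because it forces $-a\le L$ and $b\le L$, so any one-sided excess past $\pm\beta$ never exceeds $L-\beta$, while in the two-sided case the excess equals $L-2\beta$.

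I do not foresee a substantial obstacle. The unimodality of $\f$ is used exactly once, and precisely for what it gives: the connectedness, hence interval structure, of every superlevel set, which is what validates the reduction to intervals containing~$0$. The remaining two ingredients in the hypothesis, namely the $\beta$-periodicity of $f$ and the fact that $0\in A_u$, each enter in exactly one of the two steps above.
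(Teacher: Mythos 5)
Your proof is correct and takes a genuinely different route from the paper's. The paper argues directly on the density: for each $k\in\N$ it compares $\int_{(k-1)\beta}^{k\beta}f\f$ against $\big(\frac{1}{\beta}\int_0^\beta f\big)\int_{k\beta}^{(k+1)\beta}\f$, using the monotonicity of $\f$ twice on adjacent tiles and the $\beta$-periodicity of $f$ once, then sums over $k$ on both half-lines; the shift by one tile is exactly why $[-\beta,\beta]$ drops out of the probability. You instead spend the unimodality of $\f$ once, globally, via the layer-cake identity $\f=\int_0^\infty\1_{A_u}\,\ud u$: every superlevel set $A_u$ is an interval containing the origin, so the lemma reduces to the deterministic inequality $\int_A f\ge\big(\frac{1}{\beta}\int_0^\beta f\big)|A\setminus(-\beta,\beta)|$ for intervals $A\ni 0$, which you handle by tiling $A$ with $\lfloor |A|/\beta\rfloor$ full periods of $f$ and invoking the floor bound $\lfloor L/\beta\rfloor\beta\ge L-\beta$ (the hypothesis $a\le 0\le b$ is what caps each one-sided excess past $\pm\beta$ at $L-\beta$). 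Your route is a bit longer --- Fubini boilerplate plus a short case split --- but buys conceptual clarity: it isolates the extremal family, namely uniform densities on intervals through $0$, and makes geometrically transparent why the threshold in $\prob[|\sfX|\ge\beta]$ is $\beta$ rather than $0$. Both arguments are elementary, and neither is more general in scope.
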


\begin{proof} Simply note that for every $k\in \N$ we have
\begin{align}\label{eq:k positive}
\begin{split}
\int_{(k-1)\beta}^{k\beta} f(x)\f(x)\ud x&\ge \int_{(k-1)\beta}^{k\beta} f(x)\f(k\beta)\ud x\\&= \bigg(\int_0^\beta f(t)\ud t\bigg) \f(k\beta) \ge 
\bigg(\int_0^\beta f(t)\ud t\bigg) \frac{1}{\beta}\int_{k\beta}^{(k+1)\beta} \f(x)\ud x,
\end{split}
\end{align}
and 
\begin{align}\label{eq:k negative}
\begin{split}
\int^{-(k-1)\beta}_{-k\beta} f(x)\f(x)\ud x&\ge \int^{-(k-1)\beta}_{k\beta} f(x)\f(-k\beta)\ud x\\&= \bigg(\int_0^\beta f(t)\ud t\bigg) \f(-k\beta) \ge 
\bigg(\int_0^\beta f(t)\ud t\bigg) \frac{1}{\beta}\int^{-k\beta}_{-(k+1)\beta} \f(x)\ud x,
\end{split}
\end{align}
where in~\eqref{eq:k positive} and~\eqref{eq:k negative} the first and third steps  use the monotonicity assumptions on $\f$, and the second steps  use the $\beta$-periodicity of $f$. By summing~\eqref{eq:k positive} and~\eqref{eq:k negative} over $k\in \N$ we conclude that 
\begin{equation*}
\E\big[f(\sfX)\big]=\int_{-\infty}^\infty f(x)\f(x)\ud x\ge \bigg(\frac{1}{\beta}\int_0^\beta f(t)\ud t\bigg)\int_{\R\setminus (-\beta,\beta)} \f(x)\ud x=\bigg(\frac{1}{\beta}\int_0^\beta f(t)\ud t\bigg)\prob\big[|\sfX|\ge \beta\big]. \tag*{\qedhere}
\end{equation*}
\end{proof}

For every $\theta\in [0,1]$ define two (periodic) subsets $L_\theta,R_\theta\subset \R$ of the real line by 
\begin{equation}\label{eq:def LtRt}
L_\theta\eqdef \left\{r\in \R:\  r-\theta\in \bigcup_{k\in \Z} \left[k,k+\frac14\right)\right\}\qquad\mathrm{and}\qquad R_\theta\eqdef \left\{r\in \R:\  r-\theta\in \bigcup_{k\in \Z} \left[k+\frac12,k+\frac34\right)\right\}.
\end{equation}
By definition, we then have
\begin{equation}\label{eq:an far}
\forall \theta\in [0,1],\ \forall (a,b)\in L_\theta\times R_\theta,\qquad |a-b|>\frac14. 
\end{equation}
Throughout what follows, we will denote the Lebesgue probability measure on $[0,1]$ by $\mathbb{U}$. Observe that 
\begin{equation}\label{eq:random shift probability 1}
\forall a\in \R,\qquad \mathbb{U}\big(\{\theta\in [0,1]:\ a\in L_\theta\}\big)=\mathbb{U}\big(\{\theta\in [0,1]:\ a\in R_\theta\}\big)=\frac14, 
\end{equation}
and, if we consider the periodic tent function $\tau:\R\to [0,\infty)$ that is given by
\begin{equation}\label{eq:tent tau}
\tau(s)\eqdef \max\left\{\frac14-\left|\frac12-(s-\lfloor s\rfloor)\right|,0\right\} =\left\{\begin{array}{ll} 0&\mathrm{if}\ 0\le  s-\lfloor s\rfloor\le \frac14,\\s-\lfloor s\rfloor -\frac14 &\mathrm{if}\ \frac14\le  s-\lfloor s\rfloor\le \frac12,\\ \frac34- (s-\lfloor s\rfloor)&\mathrm{if}\ \frac12\le  s-\lfloor s\rfloor\le \frac34,\\ 0&\mathrm{if}\ \frac34\le  s-\lfloor s\rfloor\le 1,  \end{array}\right.
\end{equation}
then
\begin{equation}\label{eq:random shift probability 2}
\forall (a,b)\in \R^2,\qquad \mathbb{U}\big(\{\theta\in [0,1]:\ (a,b)\in L_\theta\times R_\theta\}\big)=\tau(|a-b|).
\end{equation}
Both~\eqref{eq:random shift probability 1} and~\eqref{eq:random shift probability 2} follow from  straightforward elementary computations, which we omit.  

\begin{lemma}\label{lem:basic random union of strips}
Fix $C>0$. For every $v\in \R^n$ and $\theta\in [0,1]$ define $L_C(v,\theta),R_C(v,\theta)\subset \R^n$ by
\begin{equation}\label{eq:def random union of strips}
L_C(v,\theta)\eqdef \Big\{z\in \R^n:\  \frac{\langle v,z\rangle}{4C}\in L_\theta\Big\}\qquad \mathrm{and}\qquad R_C(v, \theta)\eqdef \Big\{z\in \R^n:\  \frac{\langle v,z\rangle}{4C}\in R_\theta\Big\},
\end{equation}
where $L_\theta,R_\theta\subset \R$ are as in~\eqref{eq:def LtRt}. Then,  both $(v,\theta)\mapsto L_C(v,\theta)$ and $(v,\theta)\mapsto R_C(v,\theta)$  are Borel-measurable set-valued mappings from (from $\R^n\times [0,1]$ to the Borel subsets of $\R^n$) that satisfy
\begin{equation}\label{eq:separated projection random}
\forall (v,\theta)\in \R^n\times [0,1],\ \forall (x,y)\in L_C(v,\theta)\times R_C(v,\theta), \qquad |\langle v,x-y\rangle |>C.
\end{equation}
Furthermore, recalling that $\mathbb{U}$ denotes the Lebesgue measure on $[0,1]$, we have 
\begin{equation}\label{eq:inclusion measure}
\forall v,z\in \R^n,\qquad \mathbb{U}\big(\{\theta\in [0,1]:\ z\in L_C(v,\theta)\} \big)=\mathbb{U}\big(\{\theta\in [0,1]:\ z\in R_C(v,\theta)\} \big) =\frac14, 
\end{equation}
and 
\begin{equation}\label{eq:simulataneously in strips}
\forall x,y\in \R^n,\qquad (\gamma_n\times \mathbb{U})\big(\{(v,\theta)\in \R^n\times [0,1]:\ (x,y)\in L_C(v,\theta)\times R_C(v,\theta)\}\big) \gtrsim e^{-\frac{9C^2}{\|x-y\|_2^2}}. 
\end{equation}
\end{lemma}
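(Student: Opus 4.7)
The plan is to unpack each of the four conclusions of the lemma directly from the real-line statements~\eqref{eq:an far}, \eqref{eq:random shift probability 1}, and~\eqref{eq:random shift probability 2} about $L_\theta, R_\theta$, pushed through the linear map $z \mapsto \langle v, z\rangle/(4C)$. Measurability of the set-valued mappings $(v,\theta)\mapsto L_C(v,\theta)$ and $(v,\theta)\mapsto R_C(v,\theta)$ is automatic because $L_\theta$ and $R_\theta$ are Borel (countable unions of intervals), the function $(v,z,\theta) \mapsto \langle v,z\rangle/(4C) - \theta$ is continuous, and membership in a Borel periodic set of this quantity defines a Borel subset of $\R^n \times [0,1] \times \R^n$.

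For the separation~\eqref{eq:separated projection random}, I would observe that $(x,y) \in L_C(v,\theta)\times R_C(v,\theta)$ means $\langle v,x\rangle/(4C)\in L_\theta$ and $\langle v,y\rangle/(4C)\in R_\theta$, whence by~\eqref{eq:an far} we have $|\langle v,x\rangle/(4C) - \langle v,y\rangle/(4C)| > 1/4$; multiplying by $4C$ gives $|\langle v,x-y\rangle|>C$. Similarly,~\eqref{eq:inclusion measure} is immediate: for fixed $v,z$, writing $a = \langle v,z\rangle/(4C)$, the event $\{\theta:\ z\in L_C(v,\theta)\}$ equals $\{\theta:\ a\in L_\theta\}$, which by~\eqref{eq:random shift probability 1} has $\mathbb{U}$-measure $1/4$, and the analogous identity holds for $R_C(v,\theta)$.

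The only genuine calculation is~\eqref{eq:simulataneously in strips}. Using~\eqref{eq:random shift probability 2} together with Fubini, the $(\gamma_n\times\mathbb{U})$-measure in question equals
\begin{equation*}
\int_{\R^n} \tau\!\left(\frac{|\langle v,x-y\rangle|}{4C}\right)\ud\gamma_n(v) = \E\!\left[\tau\!\left(\frac{\|x-y\|_2\,\sfG}{4C}\right)\right],
\end{equation*}
where $\sfG$ is a standard Gaussian random variable on $\R$ and I have used rotational invariance of $\gamma_n$; the reduction is legitimate because the tent function $\tau$ defined in~\eqref{eq:tent tau} is even and $1$-periodic. Setting $\sfX = \|x-y\|_2\sfG/(4C)$, whose density is Gaussian (hence monotone on each half-line away from $0$), I would apply Lemma~\ref{lem:periodic distributional density estimate} with $\beta = 1$ and $f = \tau$; since a direct computation (which I would not grind through) shows $\int_0^1 \tau(t)\ud t = 1/16$, this yields
\begin{equation*}
\E[\tau(\sfX)] \ge \frac{1}{16}\,\prob\!\left[|\sfG| \ge \frac{4C}{\|x-y\|_2}\right].
\end{equation*}

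The final step is a standard Gaussian tail estimate. Write $r = \|x-y\|_2$. If $4C/r \le 1$ the probability is bounded below by a positive absolute constant, which trivially dominates $e^{-9C^2/r^2}$. If $4C/r > 1$, the Mills-ratio bound gives $\prob[|\sfG|\ge 4C/r] \gtrsim (r/C)e^{-8C^2/r^2}$, so it suffices to check $(r/C)e^{C^2/r^2}\gtrsim 1$ uniformly in $u := C^2/r^2 > 0$, i.e.\ that $u \mapsto e^u/\sqrt{u}$ is bounded below by an absolute constant; this function attains its minimum $\sqrt{2e}$ at $u = 1/2$, so the bound holds. The main obstacle is nothing more than bookkeeping in this tail-bound comparison; everything else is a direct translation of the real-line statements already established in the preamble.
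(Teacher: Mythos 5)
Your proof is correct and follows essentially the same route as the paper's: translate the real-line statements~\eqref{eq:an far}, \eqref{eq:random shift probability 1}, \eqref{eq:random shift probability 2} through the linear functional $z\mapsto \langle v,z\rangle/(4C)$, then for~\eqref{eq:simulataneously in strips} apply Fubini, rotational invariance of $\gamma_n$, Lemma~\ref{lem:periodic distributional density estimate}, and a standard Gaussian tail bound. The only cosmetic difference is that the paper quotes the asymptotic identity $\int_a^\infty e^{-s^2/2}\,\mathrm{d}s\asymp \min\{1/a,1\}e^{-a^2/2}$ as a black box, while you unpack it via a two-case Mills-ratio argument; the content is the same.
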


\begin{proof} The measurability assertion is immediate from the definitions~\eqref{eq:def LtRt} and~\eqref{eq:def random union of strips}. Requirement~\eqref{eq:separated projection random} follows directly from~\eqref{eq:def random union of strips} and~\eqref{eq:an far}.  Requirement~\eqref{eq:inclusion measure} follows directly from~\eqref{eq:def random union of strips} and~\eqref{eq:random shift probability 1}. To justify the remaining property~\eqref{eq:simulataneously in strips}, fix distinct $x,y\in \R^n$ and observe that 
\begin{align*}
(\gamma_n\times \mathbb{U})&\big(\{(v,\theta)\in \R^n\times [0,1]:\ (x,y)\in L_C(v,\theta)\times R_C(v,\theta)\}\big)\\&\stackrel{\eqref{eq:simulataneously in strips}}{=}\int_{\R^n} \mathbb{U}\Big(\big\{\theta\in [0,1]:\ \frac{\langle v,x\rangle}{4C}\in L_\theta \quad\mathrm{and}\quad \frac{\langle v,y\rangle}{4C} \in R_\theta\big\}\Big) \ud \gamma_n(v) \stackrel{\eqref{eq:random shift probability 2}}{=}  \int_{\R^n} \tau\left(\frac{|\langle v,x-y\rangle|}{4C}\right)\ud\gamma_n(v),
\end{align*}
where we recall the definition of $\tau$ in~\eqref{eq:tent tau}. By rotation-invariance, $\langle v,x-y\rangle/\|x-y\|_2$ is distributed over $\R$ according to $\gamma_1$ when  $v$ is distributed over $\R^n$ according to $\gamma_n$. By Lemma~\ref{lem:periodic distributional density estimate}, we therefore have 
$$
\int_{\R^n} \tau\left(\frac{|\langle v,x-y\rangle|}{4C}\right)\ud\gamma_n(v)\ge \bigg(\int_0^1\tau(t)\ud t\bigg)\sqrt{\frac{2}{\pi}}\int_{\frac{4C}{\|x-y\|_2}}^\infty e^{-\frac{s^2}{2}}\ud s\asymp \min\left\{\frac{\|x-y\|_2}{C},1\right\}e^{-\frac{8C^2}{\|x-y\|_2^2}} \gtrsim e^{-\frac{9C^2}{\|x-y\|_2^2}}, 
$$
where the second step is valid as the integral of $\tau$ over $[0,1]$ is a positive universal constant,  and the following standard asymptotic  identity holds (its elementary proof can be found in e.g.~\cite{Kom55,Dur19}):
\begin{equation*}
\int_a^\infty e^{-\frac{s^2}{2}}\ud s\asymp \min\left\{\frac{1}{a},1\right\} e^{-\frac{a^2}{2}}. \tag*{\qedhere}
\end{equation*}
\end{proof}

\begin{lemma}\label{lem:multiscale representation} Fix $\alpha,C>0$ and $n\in \N$. Suppose that $X$ is a finite set, equipped with mappings $f:X\to \R^n$ and $\Lambda:X\to (0,\infty]$. Then, there is a Polish probability space $(\Omega,\prob)$\footnote{Thus, $\Omega$ is a separable complete metric space and $\prob$ is a Borel probability measure on $\Omega$.} and for every $v\in \R^n$ and $\chi\in \Omega$ there are  
$$
A_C(v,\chi)=A_C^{f,\Lambda,\alpha}(v,\chi),B_C(v,\chi)=B_C^{f,\Lambda,\alpha}(v,\chi)\subset X,
$$
such that the set-valued mappings $(v,\chi)\mapsto A_C(v,\chi)$ and $(v,\chi)\mapsto B_C(v,\chi)$ from $\R^n\times \Omega$ to $2^X$ are Borel-measurable, and have the following properties. Firstly, for any $x,y\in X$, any $v\in \R^n$ and any $\chi\in \Omega$, if 
\begin{equation}\label{eq:Lambda smooth assumption}
(x,y)\in A_C(v,\chi)\times B_C(v,\chi)\qquad\mathrm{and}\qquad e^{-\alpha}\Lambda(x)\le \Lambda(y)\le e^{\alpha}\Lambda(x),
\end{equation}
then 
\begin{equation}\label{eq:Cmax goal}
|\langle v,f(x)-f(y)\rangle|>C\max\left\{\Lambda(x),\Lambda(y)\right\}. 
\end{equation}
Secondly, for every $v\in \R^n$ and $x\in X$  we have 
\begin{equation}\label{eq:1/6 prob equality}
\prob\big(\{\chi\in \Omega:\  x\in A_C(v,\chi)\}\big)=\prob\big(\{\chi\in \Omega:\  x\in B_C(v,\chi)\}\big)=\frac 16. 
\end{equation}
Finally, every  $x,y\in X$ for which $\min\{\Lambda(x),\Lambda(y)\}<\infty$ satisfy
\begin{equation}\label{eq:goal gamman P}
\gamma_n\times \prob\big(\{(v,\chi)\in \R^n\times \Omega:\ (x,y)\in A_C(v,\chi)\times B_C(v,\chi)\}\big) \gtrsim \exp\left(-9e^{4\alpha}\frac{\min\{\Lambda(x)^2,\Lambda(y)^2\}}{\|f(x)-f(y)\|_2^2}C^2 \right).
\end{equation}
Thus, $\gamma_n\times \prob\big(\{(v,\chi)\in \R^n\times \Omega:\ (x,y)\in A_C(v,\chi)\times B_C(v,\chi)\}\big) \gtrsim e^{-9e^{4\alpha}C^2}$ if $\|f(x)-f(y)\|_2\ge \min\{\Lambda(x),\Lambda(y)\}$. 
\end{lemma}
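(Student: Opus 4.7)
The plan is to build $\Omega$ as a product space carrying a uniform logarithmic shift $u\in[0,1]$, an independent family $(\theta_k)_{k\in\Z}$ of uniform shifts used to invoke Lemma~\ref{lem:basic random union of strips} at each scale, and auxiliary Bernoulli coins for calibrating the marginal probability to exactly $1/6$. Fix a bucket width $W$ slightly larger than $2\alpha$. For each $x\in X$ with $\Lambda(x)<\infty$, assign the bucket $X_k(u)$ via $k(u,x)=\lfloor W^{-1}\log\Lambda(x)-u\rfloor$, and give bucket $k$ the nominal scale $s_k(u)=e^{W(k+1+u)}$, so that $\Lambda(x)\le s_k(u)<e^{W}\Lambda(x)$ for every $x\in X_k(u)$. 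Call $x$ \emph{safe} if its log-$\Lambda$ window $[\log\Lambda(x)-\alpha,\log\Lambda(x)+\alpha]$ is contained in the bucket's log-interval $(W(k+u),W(k+1+u))$, which guarantees that every $y$ with $e^{-\alpha}\Lambda(x)\le\Lambda(y)\le e^{\alpha}\Lambda(x)$ lies in the same bucket as $x$; the safeness event has probability $1-2\alpha/W$ over $u$.

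Let $\tilde A(v,\chi)=\bigcup_{k\in\Z}\{x\in X_k(u):x\text{ is safe and }f(x)\in L_{Cs_k(u)}(v,\theta_k)\}$ and analogously $\tilde B(v,\chi)$ using $R_{Cs_k(u)}(v,\theta_k)$, then obtain $A_C(v,\chi), B_C(v,\chi)$ by a Bernoulli thinning of $\tilde A, \tilde B$ tuned so that every $x$ has marginal inclusion probability exactly $1/6$. Points with $\Lambda(x)=\infty$ are inserted into $A_C$ or $B_C$ via independent Bernoulli$(1/6)$ coins, and cannot participate in any pair triggering~\eqref{eq:Lambda smooth assumption}. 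For~\eqref{eq:Cmax goal}, if $(x,y)\in A_C\times B_C$ and $\Lambda(x),\Lambda(y)$ are $e^\alpha$-comparable, then $x$ is safe, which forces $y$ into the same bucket $k$ as $x$; both strip conditions are then evaluated at the common scale $Cs_k(u)$, and Lemma~\ref{lem:basic random union of strips}\eqref{eq:separated projection random} yields $|\langle v,f(x)-f(y)\rangle|>Cs_k(u)\ge C\max\{\Lambda(x),\Lambda(y)\}$. The marginal identity~\eqref{eq:1/6 prob equality} is by design of the Bernoulli thinning.

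For~\eqref{eq:goal gamman P}, I would argue that with positive probability over $u$ both $x$ and $y$ are safe in a common bucket $k$; conditioning on this event, Lemma~\ref{lem:basic random union of strips}\eqref{eq:simulataneously in strips} applied at scale $Cs_k(u)\le e^{W}\min\{\Lambda(x),\Lambda(y)\}$ produces a Gaussian lower bound of order $\exp\!\bigl(-9C^2e^{2W}\min\{\Lambda(x)^2,\Lambda(y)^2\}/\|f(x)-f(y)\|_2^2\bigr)$. Taking $W$ essentially equal to $2\alpha$ (with $O(1)$ slack to be absorbed by the constant $9$) delivers the target exponent factor $e^{4\alpha}$, and the final sentence of the lemma is the special case $\|f(x)-f(y)\|_2\ge\min\{\Lambda(x),\Lambda(y)\}$.

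The main obstacle is the tension between keeping $W$ close to $2\alpha$, so that the exponent factor matches $e^{4\alpha}$, and keeping the safe-core probability $1-2\alpha/W$ bounded below by a universal constant, as required because the $\gtrsim$ in~\eqref{eq:goal gamman P} forbids any $\alpha$-dependent prefactor. A naive $W=2\alpha$ makes the safe core trivial, while $W=K\alpha$ with $K>2$ yields the scale factor $e^{2K\alpha}$, not quite $e^{4\alpha}$. The delicate step will be either (i) decoupling the safeness requirement between $x$ and $y$, by imposing safeness only for $A_C$-membership and then symmetrising via averaging, or (ii) taking a mixture over a small number of interleaved bucketings with shifts differing by a fraction of a bucket, so that every comparable pair lies in a common bucket for at least one shift with probability bounded below independently of $\alpha$. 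Either route requires careful bookkeeping to preserve both the marginal equality $1/6$ and the Gaussian lower bound with the correct exponent factor.
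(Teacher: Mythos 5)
Your overall plan is structurally aligned with the paper's: logarithmic bucketing of $\Lambda$ with a random periodic shift, an independent strip shift $\theta_k$ per bucket, an invocation of Lemma~\ref{lem:basic random union of strips} at the bucket scale, and auxiliary Bernoulli coins for the $\Lambda=\infty$ points. The obstacle you identify at the end, however, is a genuine gap that your ``safe core'' design cannot resolve, and neither of your two proposed escapes gets you to the stated exponent without introducing a prefactor that depends on $\alpha$ (which the $\gtrsim$ in~\eqref{eq:goal gamman P} forbids). The tension is real because you are trying to make two incompatible things hold for the \emph{same} interval: (a) the interval's log-length $W$ must be within an $O(1)$ factor of $2\alpha$ to hit the exponent $e^{4\alpha}$, and (b) the ``safe'' sub-interval of relative length $1-2\alpha/W$ must have universal positive measure, which forces $W\ge (2+c)\alpha$ for a universal $c>0$ and then $W$ cannot be essentially $2\alpha$.

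The paper resolves this by a different bucket design that makes safety a non-issue. Instead of a partition of $\R$ into intervals of length $W$ with a safe core, it uses \emph{gapped} buckets: the sets $Y_i(r)=\{x\in X_{<\infty}:\, e^{3\alpha(i+r)-2\alpha}\le\Lambda(x)<e^{3\alpha(i+r)}\}$ have log-length $2\alpha$ and are separated by gaps of log-length $\alpha$, with overall period $3\alpha$. This achieves three things at once. First, if $x\in Y_i(r)$, $y\in Y_j(r)$, and $e^{-\alpha}\Lambda(x)\le\Lambda(y)\le e^{\alpha}\Lambda(x)$, then necessarily $i=j$ --- the comparability window of radius $\alpha$ cannot reach across a gap of length $\alpha$ --- so no ``safe core'' is needed at all. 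Second, the nominal scale $e^{3\alpha(i+r)}$ of bucket $i$ exceeds $\Lambda(x)$ by at most a factor $e^{2\alpha}$ for $x\in Y_i(r)$, which upon squaring gives precisely the $e^{4\alpha}$ factor in the exponent. Third, the probability that a fixed point lies in some bucket, $\mathbb{U}(\{r:x\in Y(r)\})$, equals exactly $2/3$, a universal constant independent of $\alpha$; combined with the $1/4$ strip probability from Lemma~\ref{lem:basic random union of strips}, this yields $\Pr[x\in A_C]=1/6$ with no calibration/thinning step.

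A second gap in your argument concerns~\eqref{eq:goal gamman P}: your sketch argues that ``with positive probability over $u$ both $x$ and $y$ are safe in a common bucket $k$.'' But~\eqref{eq:goal gamman P} must hold for \emph{all} pairs $x,y$ with $\min\{\Lambda(x),\Lambda(y)\}<\infty$, including pairs whose $\Lambda$-values are wildly different and which therefore never land in a common bucket. The paper's computation handles this by summing over all index pairs $(i,j)$ rather than just $i=j$: the event $\{x,y\in Y(r)\}$ has probability at least $2/3+2/3-1=1/3$ by inclusion-exclusion, and when $x\in Y_i(r)$, $y\in Y_j(r)$ with $i\neq j$ the independent shifts $\theta_i,\theta_j$ give $(\gamma_n\times\mathbb{U}^\Z)$-probability exactly $1/16\gtrsim e^{-9e^{4\alpha}C^2}$, while for $i=j$ one applies~\eqref{eq:simulataneously in strips} at the common scale. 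You would need some analogue of this off-diagonal accounting for your construction to yield~\eqref{eq:goal gamman P} in full generality.
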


\begin{proof} Lemma~\ref{lem:multiscale representation} permits $\Lambda$ to take the value $\infty$  because this will be convenient when Lemma~\ref{lem:multiscale representation} will be applied, but it corresponds to a  degenerate situation. To isolate the main point of Lemma~\ref{lem:multiscale representation},  denote
\begin{equation}\label{eq:ded xinfty} 
X_\infty\eqdef\{x\in X:\ \Lambda(x)=\infty\}\qquad\mathrm{and}\qquad X_{<\infty}\eqdef X\setminus X_\infty.
\end{equation}
We will first treat the (more meaningful) subset $X_{<\infty}$ of $X$.

   For every $r\in [0,1]$ and $i\in \Z$ define $Y_i(r)=Y_i^\alpha(r)\subset X_{<\infty}$ by
\begin{equation}\label{eq:def Yir}
Y_i(r)\eqdef \Big\{x\in X_{<\infty}:\ e^{3\alpha(i+r)-2\alpha}\le \Lambda(x)<e^{3\alpha(i+r)}\Big\}=\bigg\{x\in X_{<\infty}:\ \frac1{3\alpha}\log \Lambda(x)-r\in \Big[i-\frac23,i\Big)\bigg\}.
\end{equation}
Note that $\{Y_i(r)\}_{i\in \Z}$ are pairwise disjoint subsets of $X_{<\infty}$. Writing also 
\begin{equation}\label{eq:def Yr}
Y(r)=Y^\alpha(r)\eqdef \bigcup_{i\in \Z} Y_i(r), 
\end{equation}
it is straightforward to compute that
\begin{equation}\label{eq:def 2/3}
\forall x\in X_{<\infty},\qquad \mathbb{U}\big(\{r\in [0,1]:\ x\in Y(r)\}\big)=\frac23. 
\end{equation}
Recalling the notation introduced in Lemma~\ref{lem:basic random union of strips},  every $\theta=(\theta_i)_{i\in \Z}\in [0,1]^\Z$ and $r\in [0,1]$, we now define 
\begin{align}\label{eq:def layered strip Lambda}
\begin{split}
E_{C}(v, \theta,r)&\eqdef \bigcup_{i\in \Z} E_C^i(v,\theta_i,r),\qquad\mathrm{where}\qquad E_C^i(v,\theta_i,r)\eqdef Y_i(r)\cap f^{-1}\big(L_{e^{3\alpha(i+r)}C}  (v,\theta_i)\big)\subset X_{<\infty},\\ F_C(v,\theta,r)&\eqdef \bigcup_{i\in \Z}F_C^i(v,\theta_i,r),\qquad\mathrm{where}\qquad  F_C^i(v,\theta_i,r)\eqdef Y_i(r)\cap f^{-1}\big(R_{e^{3\alpha(i+r)}C}  (v,\theta_i)\big)\subset X_{<\infty}.
\end{split}
\end{align}

Fix  $\theta=(\theta_i)_{i\in \Z}\in [0,1]^\Z$,  $v\in \R^n$, and  $r\in [0,1]$, as well as   $x,y\in X_{<\infty}$ that satisfy
\begin{equation}\label{eq:Lambda smooth assumption-later}
(x,y)\in E_C(v,\theta,r)\times F_C(v,\theta,r)\qquad\mathrm{and}\qquad e^{-\alpha}\Lambda(x)\le \Lambda(y)\le e^{\alpha}\Lambda(x),
\end{equation}
The first condition in~\eqref{eq:Lambda smooth assumption-later} and  the definitions~\eqref{eq:def layered strip Lambda} imply in particular that there are $i,j\in \Z$ such that 
\begin{equation}\label{eq:rewrite xy in product}
(x,y)\in Y_i(r)\times  Y_j(r)\qquad \mathrm{and}\qquad \big(f(x),f(y)\big)\in L_{e^{3\alpha(i+r)}C}  (v,\theta_i)\times R_{e^{3\alpha(j+r)}C}  (v,\theta_j).
\end{equation}
Together with the second condition in~\eqref{eq:Lambda smooth assumption-later}, the first condition in~\eqref{eq:rewrite xy in product} entails that necessarily $i=j$. Indeed, if $j\ge i+1$, then  we see from the definition~\eqref{eq:def Yir} that 
$$
\Lambda(y) \stackrel{y\in Y_j(r)}{>} e^{3\alpha(j+r)-2\alpha}\ge e^{3\alpha(i+1+r)-2\alpha} =e^\alpha \cdot e^{3\alpha(i+r)}\stackrel{x\in Y_i(r)}{\ge} e^\alpha \Lambda(x),
$$
in contradiction to~\eqref{eq:Lambda smooth assumption}.  Analogously, we cannot have $j\le i-1$. Thus necessarily $i=j$, and therefore 
\begin{equation}\label{eq:max is small}
\max\{\Lambda(x),\Lambda(y)\}<e^{3\alpha(i+r)},
\end{equation}
by the definition~\ref{eq:def Yir} of $Y_i(r)$. Using Lemma~\ref{lem:basic random union of strips}, the second condition in~\eqref{eq:rewrite xy in product}  now implies that 
\begin{equation}\label{eq:for < infty far proj}
|\langle v,f(x)-f(y)\rangle| \stackrel{\eqref{eq:separated projection random}}{>} e^{3\alpha(i+r)}C \stackrel{\eqref{eq:max is small}}{>}C\max\left\{\Lambda(x),\Lambda(y)\right\}.
\end{equation}
We have thus checked that the following implication holds:  
\begin{equation}
\forall x,y\in X_{<\infty},\qquad \eqref{eq:Lambda smooth assumption-later}\implies |\langle v,f(x)-f(y)\rangle| >C\max\left\{\Lambda(x),\Lambda(y)\right\}.
\end{equation}

Next, for every fixed $v\in \R^n$ and $x\in X_{<\infty}$ we have 
\begin{align}\label{eq:UZUA}
\begin{split}
(\mathbb{U}^\Z\times \mathbb{U})&\big(\{(\theta,r)\in [0,1]^\Z\times [0,1]:\  x\in E_C(v,\theta,r)\}\big)\\ &\stackrel{\eqref{eq:def layered strip Lambda}}{=} \sum_{i\in \Z} (\mathbb{U}^\Z\times \mathbb{U})\big(\{(\theta,r)\in [0,1]^\Z\times [0,1]:\ x\in Y_i(r)\ \mathrm{and}\ f(x)\in L_{e^{3\alpha(i+r)}C}  (v,\theta_i) \}\big)\\
&\ \ = \sum_{i\in \Z}  \int_0^1 \1_{\{x\in Y_i(r)\}}\bigg(\int_0^1 \1_{\{f(x)\in L_{e^{3\alpha(i+r)}C}  (v,\theta_i)\}} \ud \theta_i\bigg)\ud r\\
&\ \ = \sum_{i\in \Z}  \int_0^1 \1_{\{x\in Y_i(r)\}}\mathbb{U}\big(\{\theta\in [0,1]:\ f(x)\in L_{e^{3\alpha(i+r)}C}  (v,\theta)\}\big)\ud r\\
&\stackrel{\eqref{eq:inclusion measure}}{=} \sum_{i\in \Z} \frac14 \mathbb{U}\big(\{r\in [0,1]:\ x\in Y_i(r)\}\big)\\
&\stackrel{\eqref{eq:def Yr}}{=} \frac14 \mathbb{U}\big(\{r\in [0,1]:\ x\in Y(r)\}\big)\stackrel{\eqref{eq:def 2/3}}{=}\frac16. 
\end{split}
\end{align}
By the analogous reasoning,  for every $x\in X_{<\infty}$ we also have
\begin{equation}\label{eq:UZUB}
(\mathbb{U}^\Z\times \mathbb{U})\big(\{(\theta,r)\in [0,1]^\Z\times [0,1]:\  x\in F_C(v,\theta,r)\}\big)=\frac16.
\end{equation}

Finally, suppose that $i,j\in \Z$ satisfy $i\neq j$. If $x,y\in X_{<\infty}$, then for every $r\in [0,1]$ and $v\in \R^n$ we have 
\begin{align}\label{eq:1/16 indenity}
\begin{split}
\mathbb{U}^\Z\Big(\big\{\theta&\in [0,1]^\Z:\ (x,y)\in f^{-1}\big(L_{e^{3\alpha(i+r)}C}  (v,\theta_i)\big)\times f^{-1}\big( R_{e^{3\alpha(j+r)}C}  (v,\theta_j)\big)\big\}\Big)\\&= \mathbb{U} \big(\{\theta_i\in[0,1]:\ f(x) \in L_{e^{3\alpha(i+r)}C}  (v,\theta_i)\}\big)\mathbb{U} \big(\{\theta_j\in[0,1]:\ f(y) \in R_{e^{3\alpha(j+r)}C}  (v,\theta_j)\}\big)\stackrel{\eqref{eq:inclusion measure}}{=}\frac1{16}.
\end{split}
\end{align}
By multiplying~\eqref{eq:1/16 indenity} by $\1_{\{(x,y)\in Y_i(r)\times Y_j(r) \}}$ and then integrating with respect to $\gamma_n\times \mathbb{U}$, we see that 
\begin{align}\label{eq:when ij distinct}
\begin{split}
 (\gamma_n\times \mathbb{U}^\Z\times \mathbb{U})\big(\{(v,\theta,r)\in \R^n\times [0,1]^\Z\times [0,1]:\ &
(x,y)\in E_C^i(v,\theta,r)\times F_C^j(v,\theta,r) \}\big)\\
& \stackrel{\eqref{eq:def layered strip Lambda} \wedge\eqref{eq:1/16 indenity} }{=} 
\frac1{16} \mathbb{U}\big(\{r\in [0,1]:\ (x,y)\in Y_i(r)\times  Y_j(r)\}\big),
\end{split}
\end{align}
where we recall that the identity~\eqref{eq:when ij distinct} holds for every $x,y\in X_{<\infty}$ whenever $i,j\in \Z$ satisfy $i\neq j$.

At the same time, for every $i\in \Z$ and $r\in [0,1]$, if $x,y\in Y_i(r)$, then 
\begin{equation}\label{eq:use lower bound in Yir}
e^{3\alpha(i+r)}\stackrel{\eqref{eq:def Yir}}{\le} e^{2\alpha}\min\big\{\Lambda(x),\Lambda(y)\big\}.
\end{equation}
Therefore, for every $x,y\in X_{<\infty}$ and every fixed $r\in [0,1]$ we have 
\begin{align}\label{eq:on diagional ineq}
\begin{split}
&1_{\{x,y\in Y_i(r)\}}(\gamma_n\times \mathbb{U}^\Z)\Big(\big\{(v,\theta)\in \R^n\times [0,1]^\Z:\ (x,y)\in f^{-1}\big(L_{e^{3\alpha(i+r)}C}  (v,\theta_i)\big)\times f^{-1}\big( R_{e^{3\alpha(i+r)}C}  (v,\theta_i)\big)\big\}\Big)\\&=1_{\{x,y\in Y_i(r)\}}(\gamma_n\times \mathbb{U})\Big(\big\{(v,\theta_i)\in \R^n\times [0,1]:\ \big(f(x),f(y)\big)\in L_{e^{3\alpha(i+r)}C}  (v,\theta_i)\times  R_{e^{3\alpha(i+r)}C}  (v,\theta_i)\big\}\Big) \\&\stackrel{\eqref{eq:simulataneously in strips}}{\gtrsim} 1_{\{x,y\in Y_i(r)\}} \exp\bigg(-\frac{9e^{6\alpha(i+r)}C^2}{\|f(x)-f(y)\|_2^2}\bigg)\\&\stackrel{\eqref{eq:use lower bound in Yir}}{\ge} 1_{\{x,y\in Y_i(r)\}} \exp\bigg(-9e^{4\alpha}\frac{\min\{\Lambda(x)^2,\Lambda(y)^2\}}{\|f(x)-f(y)\|_2^2}C^2 \bigg). 
\end{split}
\end{align}
By integrating~\eqref{eq:on diagional ineq} with respect to $\mathbb{U}$, we conclude that for every $i\in \Z$ and every $x,y\in X_{<\infty}$, 
\begin{align}\label{eq:when i=j}
\begin{split}
 (\gamma_n\times \mathbb{U}^\Z\times \mathbb{U})&\big(\{(v,\theta,r)\in \R^n\times [0,1]^\Z\times [0,1]:\ 
(x,y)\in E_C^i(v,\theta,r)\times F_C^i(v,\theta,r) \}\big)\\
& \stackrel{\eqref{eq:def layered strip Lambda} \wedge\eqref{eq:on diagional ineq} }{\gtrsim } 
\exp\bigg(-9e^{4\alpha}\frac{\min\{\Lambda(x)^2,\Lambda(y)^2\}}{\|f(x)-f(y)\|_2^2}C^2 \bigg) \mathbb{U}\big(\{r\in [0,1]:\ x,y\in Y_i(r)\}\big).
\end{split}
\end{align}

By combining~\eqref{eq:when ij distinct} and~\eqref{eq:when i=j}, we see that the following estimate holds  for every $x,y\in X_{<\infty}$: 
\begin{align}\label{pairs before adding Xinfty}
\begin{split}
(\gamma_n\times &\mathbb{U}^\Z\times \mathbb{U})\big(\{(v,\theta,r)\in \R^n\times [0,1]^\Z\times [0,1]:\ (x,y)\in E_C(v,\theta,r)\times  F_C(v,\theta,r) \}\big)\\& \stackrel{\eqref{eq:def layered strip Lambda}}{=} \sum_{\substack {i,j\in \Z\\ i\neq j}}  (\gamma_n\times \mathbb{U}^\Z\times \mathbb{U})\big(\{(v,\theta,r)\in \R^n\times [0,1]^\Z\times [0,1]:\ 
(x,y)\in E_C^i(v,\theta,r)\times F_C^j(v,\theta,r) \}\big)\\&\qquad\qquad \quad +\sum_{i\in \Z} (\gamma_n\times \mathbb{U}^\Z\times \mathbb{U})\big(\{(v,\theta,r)\in \R^n\times [0,1]^\Z\times [0,1]:\ 
(x,y)\in E_C^i(v,\theta,r)\times F_C^i(v,\theta,r) \}\big)\\
&\!\!\!\!\!\!\! \!\! \! \stackrel{\eqref{eq:when ij distinct}\wedge \eqref{eq:when i=j}}{\gtrsim} \exp\bigg(-9e^{4\alpha}\frac{\min\{\Lambda(x)^2,\Lambda(y)^2\}}{\|f(x)-f(y)\|_2^2}C^2 \bigg)  \sum_{i,j\in \Z} \mathbb{U}\big(\{r\in [0,1]:\ (x,y)\in Y_i(r)\times  Y_j(r)\}\big)\\
&\stackrel{\eqref{eq:def Yr}}{=} \exp\bigg(-9e^{4\alpha}\frac{\min\{\Lambda(x)^2,\Lambda(y)^2\}}{\|f(x)-f(y)\|_2^2}C^2 \bigg)   \mathbb{U}\big(\{r\in [0,1]:\ x,y\in Y(r)\}\big)\\
&\ \ \ge \exp\bigg(-9e^{4\alpha}\frac{\min\{\Lambda(x)^2,\Lambda(y)^2\}}{\|f(x)-f(y)\|_2^2}C^2 \bigg)   \left(\mathbb{U}\big(\{r\in [0,1]:\ x\in Y(r)\}\big)+\mathbb{U}\big(\{r\in [0,1]:\ y\in Y(r)\}\big)-1\right)\\
&\stackrel{\eqref{eq:def 2/3}}{=}\frac13 \exp\bigg(-9e^{4\alpha}\frac{\min\{\Lambda(x)^2,\Lambda(y)^2\}}{\|f(x)-f(y)\|_2^2}C^2 \bigg).
\end{split}
\end{align}

The above considerations yield for  $E_C(\cdot),F_C(\cdot)$ the conclusions of Lemma~\ref{lem:multiscale representation} when $x,y\in X_{<\infty}$, but it is simple to extend those to every $x,y\in X$ as follows. Let $\bbQ$ be the probability measure on the $3$-point set  $\{1,2,3\}$ that is given by $\bbQ(1)=2/3$ and $\bbQ(2)=\bbQ(3)=1/6$.  Consider the  (Polish) probability space 
\begin{equation}\label{eq:def our Omega product polish}
(\Omega,\prob)\eqdef \big([0,1]^\Z\times [0,1]\times \{1,2,3\},\mathbb{U}^\Z\times \mathbb{U}\times \bbQ\big),
\end{equation}
and, recalling~\eqref{eq:ded xinfty}, define $A_C,B_C:\R^n\times \Omega\to 2^X$ by setting for every $(v,\theta,r,k)\in \R^n\times [0,1]^\Z\times [0,1]\times \{1,2,3\}$,
\begin{equation}\label{eq:include infinity}
 \big(A_C(v,\theta,r,k),B_C(v,\theta,r,k)\big)\eqdef \left\{\begin{array}{ll} \big(E_C(v,\theta,r),F_C(v,\theta,r)\big)&\mathrm{if}\ k=1,\\
\big(E_C(v,\theta,r)\cup X_\infty,F_C(v,\theta,r)\big)&\mathrm{if}\ k=2,\\
\big(E_C(v,\theta,r),F_C(v,\theta,r)\cup X_\infty\big)&\mathrm{if}\ k=3.
\end{array}\right.
\end{equation}
For these definitions, the measurability requirements of Lemma~\ref{lem:multiscale representation} are immediate\footnote{Recall that we assumed in  Lemma~\ref{lem:multiscale representation} that $X$ is  finite, but we did this only to remove the need to mention any assumption on $f$ and $\Lambda$; if $X$ is an infinite Polish space, then~\eqref{eq:include infinity} will define Borel-measurable set valued mappings if we impose the assumptions that $f$ and $\Lambda$ are Borel measurable.}. 

Suppose first that~\eqref{eq:Lambda smooth assumption} is satisfied for some  $x,y\in X$, $v\in \R^n$ and $\chi=(\theta,r,k)\in \Omega$. If $\{x,y\}\cap X_\infty\neq \emptyset$, then by the second requirement in~\eqref{eq:Lambda smooth assumption} we have $\Lambda(x)=\Lambda(y)=\infty$, i.e.,  $x,y\in X_\infty$, but then by~\eqref{eq:include infinity}    we see that the first requirement in~\eqref{eq:Lambda smooth assumption}  cannot hold. We therefore necessarily have $x,y\in X_{<\infty}$, in which case~\eqref{eq:Lambda smooth assumption-later} holds thanks to~\eqref{eq:include infinity} and~\eqref{eq:Lambda smooth assumption},  so the desired conclusion~\eqref{eq:Cmax goal} follows from~\eqref{eq:for < infty far proj}.

Next, to verify~\eqref{eq:1/6 prob equality} fix $v\in \R^n$ and $x\in X$. If $x\in X_{<\infty}$, then 
$$
\prob \big(\{(\theta,r,k)\in \Omega:\ x\in A_C(v,\theta,r,k)\}\big)\stackrel{\eqref{eq:def our Omega product polish}\wedge \eqref{eq:include infinity}}{=} (\mathbb{U}^\Z\times \mathbb{U})\big(\{(\theta,r)\in [0,1]^\Z\times [0,1]:\  x\in E_C(v,\theta,r)\}\big)\stackrel{\eqref{eq:UZUA}}{=}\frac16,
$$
and analogously thanks to~\eqref{eq:UZUB} we also have
$$
\prob \big(\{(\theta,r,k)\in \Omega:\ x\in B_C(v,\theta,r,k)\}\big)=\frac16. 
$$ 
On the other hand, if $x\in X_\infty$, then
$$
\prob \big(\{(\theta,r,k)\in \Omega:\ x\in A_C(v,\theta,r,k)\}\big)\stackrel{\eqref{eq:def our Omega product polish}\wedge \eqref{eq:include infinity}}{=}  \bbQ[k=2]=\frac16,
$$ 
and
$$
\prob \big(\{(\theta,r,k)\in \Omega:\ x\in B_C(v,\theta,r,k)\}\big)\stackrel{\eqref{eq:def our Omega product polish}\wedge \eqref{eq:include infinity}}{=}  \bbQ[k=3]=\frac16. 
$$ 
Altogether, this completes the verification of~\eqref{eq:1/6 prob equality}.  

To complete the proof of Lemma~\ref{lem:multiscale representation}, it remains to check that~\eqref{eq:goal gamman P} holds for every $x,y\in X$ such that $\min\{\Lambda(x),\Lambda(y)\}<\infty$, i.e., $|\{x,y\}\cap X_{<\infty}|\ge 1$. Indeed, if $x,y\in X_{<\infty}$, then   thanks to~\eqref{eq:def our Omega product polish} and~\eqref{eq:include infinity}, we see that~\eqref{eq:goal gamman P} coincides with~\eqref{pairs before adding Xinfty}. If $x\in X_{<\infty}$ and $y\in X_\infty$, then 
\begin{align*}
\gamma_n\times \prob\big(\{(v,\theta,r,k)\in \R^n\times \Omega:\ &(x,y)\in A_C(v,\theta,r,k)\times B_C(v,\theta,r,k)\}\big) \\
&\stackrel{\eqref{eq:include infinity}}{=} \gamma_n\times \prob\big(\{(v,\theta,r,k)\in \R^n\times \Omega:\  x\in E_C(v,\theta,r)\quad\mathrm{and}\quad k=3\}\big)\\
&\stackrel{\eqref{eq:def our Omega product polish}}{=} \frac16 \int_{\R^n} (\mathbb{U}^\Z\times \mathbb{U})\big(\{(\theta,r)\in [0,1]^\Z\times [0,1]:\  x\in E_C(v,\theta,r)\}\big) \ud\gamma_n(v)\\
&\stackrel{\eqref{eq:UZUA}}{=}\frac{1}{36}\gtrsim \exp\bigg(-9e^{4\alpha}\frac{\min\{\Lambda(x)^2,\Lambda(y)^2\}}{\|f(x)-f(y)\|_2^2}C^2 \bigg), 
\end{align*}
i.e., \eqref{eq:goal gamman P}  holds in this case. The remaining case $(x,y) \in X_{<\infty}\times X_\infty$ is derived analogously using~\eqref{eq:UZUB}.  
\end{proof}

\begin{proof}[Proof of Proposition~\ref{prob:graphical prob multiscale}] Set $\alpha=\log 2$, for assumption~\eqref{eq:moderate variation along edges} of Proposition~\ref{prob:graphical prob multiscale} to match assumption~\eqref{eq:Lambda smooth assumption} of Lemma~\ref{lem:multiscale representation}, which we will next apply.\footnote{The proof shows that if we replace the factor $2$ in~\eqref{eq:moderate variation along edges} by $e^\alpha$ for any $\alpha>0$, then the conclusions of Proposition~\ref{prob:graphical prob multiscale} hold with the universal constant $\kappa$ in~\eqref{eq:super gaussian omega} replaced by $9e^{4\alpha}$.} Let $V^1,\ldots,V^m\subset V$ be the connected components of $\sfG$. For each $s\in [m]$, let
$
A_C^s,B_C^s:\R^n\times \Omega^s\to 2^{V_s}\subset 2^V
$ 
the random subsets of $V^s$ that Lemma~\ref{lem:multiscale representation} provides (for $X=V^s$, the $C$ from the statement of Proposition~\ref{prob:graphical prob multiscale}, the above $\alpha$, and slightly abusing notation by identifying $f,\Lambda$ with their restrictions to $V^s$), where the underlying probability space is now denoted
 $(\Omega^s,\prob^s)$. 
 
Consider the probability space  
$$(\Omega,\prob)\eqdef  (\Omega^1\times \ldots\times \Omega^s,\prob^1\times \ldots\times \prob^s).$$ 
 Define random subsets $A_C,B_C:\R^n\times \prob\to 2^X$  of $V$ by setting
\begin{equation}\label{eq:random union}
\forall v\in \R^n,\ \forall \chi=(\chi_1,\ldots,\chi_m)\in \Omega,\qquad  A_C(v,\chi)\eqdef \bigcup_{s=1}^m A_C^s(v,\chi_s)\qquad\mathrm{and}\qquad B_C(v,\chi)\eqdef \bigcup_{s=1}^m B_C^s(v,\chi_s).
\end{equation}
In other words, for each fixed $v\in \R^n$ the random pairs of subsets 
$
(A_C^1(v,\cdot),B_C^1(v,\cdot)),\ldots,(A_C^m(v,\cdot),B_C^m(v,\cdot))$ are chosen independently, and then $A_C(v,\cdot)$ and $B_C(v,\cdot)$ are the  unions of their first and second coordinates, respectively.  We will proceed to check that with positive probability (indeed, in expectation), the sets in~\eqref{eq:random union} when $\chi$ is distributed according to $\prob$ satisfy the stated conclusions of Proposition~\ref{prob:graphical prob multiscale}. 

In fact, requirement~\eqref{eq:separation within connected components} holds for every $\chi\in \Omega$, since if $(x,y)\in A_C(v,\chi)\times B_C(v,\chi)$ and $\{x,y\}\in E$, then $x,y$ are in the same connected component of $\sfG$, so from~\eqref{eq:random union} we know that $(x,y)\in  A_C^s(v,\chi)\times B_C^s(v,\chi)$ for some $s\in [m]$. Also~\eqref{eq:Lambda smooth assumption} holds by~\eqref{eq:moderate variation along edges}, so conclusion~\eqref{eq:Cmax goal} of Lemma~\ref{lem:multiscale representation} for  $(\Omega^s,\prob^s)$ gives~\eqref{eq:separation within connected components}.

For each $x\in V$ let $s(x)$ denote the unique $s\in [m]$ to which $x$ belongs. We then have
\begin{align}\label{eq:pass to minimum}
\begin{split}
\int_{\Omega} &\left(\int_{\R^n} \omega\big(A_C(v,\chi)\times B_C(v,\chi)\big)\ud\gamma_n(v)\right)\ud \prob(\chi)\\&\ \ \ =\sum_{(x,y)\in V}\omega(x,y) (\gamma_n\times \prob)\big(\{(v,\chi)\in \R^n\times \Omega:\ (x,y)\in A_C(v,\chi)\times B_C(v,\chi)\}\big)\\
&\stackrel{\eqref{eq:random union}}{=}\sum_{(x,y)\in V}\omega(x,y) (\gamma_n\times \prob)\big(\{(v,\chi)\in \R^n\times \Omega:\ (x,y)\in A_C^{s(x)}(v,\chi_{s(x)})\times B_C^{s(y)}(v,\chi_{s(y)})\}\big)\\
&\ \ \ge \min_{x,y\in V} (\gamma_n\times \prob)\big(\{(v,\chi)\in \R^n\times \Omega:\ (x,y)\in A_C^{s(x)}(v,\chi_{s(x)})\times B_C^{s(y)}(v,\chi_{s(y)})\}\big),
\end{split}
\end{align}
where the last step of~\eqref{eq:pass to minimum} holds because $\omega$ is a probability measure. For  $x,y\in V$, if $s(x)=s(y)=s$, then 
\begin{align}\label{eq:diagonal lower bound}
\begin{split}
(\gamma_n\times \prob)&\big(\{(v,\chi)\in \R^n\times \Omega:\ (x,y)\in A_C^{s(x)}(v,\chi_{s(x)})\times B_C^{s(y)}(v,\chi_{s(y)})\}\big)\\& \stackrel{\eqref{eq:random union}}{=}(\gamma_n\times \prob^{s})\big(\{(v,\chi_{s})\in \R^n\times \Omega:\ (x,y)\in A_C^{s}(v,\chi_{s})\times B_C^{s}(v,\chi_{s})\}\big)  \stackrel{\eqref{eq:goal gamman P}\wedge \eqref{eq:min assumption in proposition}}{\gtrsim} e^{-9e^{4\alpha}C^2}.
\end{split}
\end{align}
On the other hand, if $s(x)\neq s(y)$, then   $A_C^{s(x)}(v,\chi_{s(x)})$ and $B_C^{s(y)}(v,\chi_{s(y)})$ are independent  random subsets of $X$ for each fixed $v\in \R^n$, so we have the following crude estimate: 
\begin{align}\label{eq:off diagoinal crude}
\begin{split}
(\gamma_n&\times \prob)\big(\{(v,\chi)\in \R^n\times \Omega:\ (x,y)\in A_C^{s(x)}(v,\chi_{s(x)})\times B_C^{s(y)}(v,\chi_{s(y)})\}\big)\\&\stackrel{\eqref{eq:random union}}{=} \int_{\R^n}\prob^{s(x)}\big(\{\chi_{s(x)}\in \Omega^{s(x)}:\ x\in A_C^{s(x)}(v,\chi_{s(x)})\}\big)\prob^{s(y)}\big(\{\chi_{s(y)}\in  \Omega^{s(y)}:\ y\in A_C^{s(y)}(v,\chi_{s(y)})\}\big)\ud\gamma_n(v)\\
&\stackrel{\eqref{eq:1/6 prob equality}}{=}\frac{1}{36}\gtrsim e^{-9e^{4\alpha}C^2}.
\end{split}
\end{align}
A substitution of~\eqref{eq:diagonal lower bound} and~\eqref{eq:off diagoinal crude} into~\eqref{eq:pass to minimum} shows that
$$
\int_{\Omega} \left(\int_{\R^n} \omega\big(A_C(v,\chi)\times B_C(v,\chi)\big)\ud\gamma_n(v)\right)\ud \prob(\chi)\gtrsim e^{-9e^{4\alpha}C^2}. 
$$
Hence there exists $\chi^\omega\in \Omega$ such that if we denote $A(v)=A_C(v,\chi^\omega)$ and $B(v)=B_C(v,\chi^\omega)$ for $v\in \R^n$, then 
$$
\int_{\R^n} \omega\big(A_C^\omega(v)\times B_C^\omega(v)\big)\ud \gamma_n(v) \gtrsim e^{-9e^{4\alpha}C^2}. 
$$
This completes the proof of~\eqref{eq:super gaussian omega}, with the (concrete, but far from optimal) value $\kappa=9e^{4\log2}=144$. 
\end{proof}

\section{Universally compatible compression of local growth  proximity graphs}\label{sec:compression}

 The following is a key construction that underlies our proof of Proposition~\ref{thm:universally compatible compression}:

\begin{construction}\label{construction:compression}Let $(\SS,d_\SS)$ be a finite metric space and fix a function $\vartheta:\SS\to \R$. For each $\xi\in \R$ denote 
\begin{equation}\label{eq:def level set}
S_\xi=S_\xi(\vartheta)\eqdef \vartheta^{-1}\big((-\infty,\xi]\big)=\big\{x\in \SS: \vartheta(x)\le \xi\big\}, 
\end{equation}
namely, $S_\xi$ is the $\xi$-sublevel set of $\vartheta$. Let $\ell \in \N$ and   $\xi_1<\xi_2<\ldots<\xi_\ell$ be the increasing rearrangement of the values of $\vartheta$, i.e., $\vartheta(\SS)=\{\xi_1,\ldots,\xi_\ell\}$. Thus, 
$$
S_{\xi_1}\subsetneq S_{\xi_2}\subsetneq \ldots\subsetneq S_{\xi_\ell}=\SS.$$ 
Fix $\tau>0$. For each $i\in [\ell]$ define inductively $\NN_{\xi_i}\subset S_{\xi_i}$ as follows. Let $\NN_{\xi_1}$ be a subset of $S_{\xi_1}$ that is maximal with respect to inclusion among all those  $\NN\subset S_{\xi_1}$ such that $d_\SS(a,b)>2\tau$ for every distinct $a,b\in \NN$. If $i\in \{2,\ldots,\ell\}$, then assume inductively that $\NN_{\xi_{i-1}}$  has already been defined and let $\NN_{\xi_i}$ be  a subset of $S_{\xi_i}$ that is maximal with respect to inclusion among all those  $\NN\subset S_{\xi_i}$ that contain $\NN_{\xi_{i-1}}$ and such that  $d_\SS(a,b)>2\tau$ for every distinct $a,b\in \NN$. We thus have (by design), 
\begin{equation}\label{eq:nets increasing}
\NN_{\xi_1}\subset \NN_{\xi_2}\subset \ldots \subset \NN_{\xi_\ell}.
\end{equation}
Observe also that  $\NN_{\xi_i}$ is $(2\tau)$-dense in $S_{\xi_i}$ for  every $i\in [\ell]$, i.e., $d_\SS(w,\NN_{\xi_i})\le 2\tau$ for every $w\in S_{\xi_i}$ (otherwise, we could add $w$ to $\NN_{\xi_i}$, in contradiction to its maximality with respect to inclusion).

Next, define a rounding function $q:\SS\to \SS$ as follows. Given $w\in \SS$, fix any minimizer $w_{\min}$ of $\vartheta$ on $B_\SS(w,5\tau)$, i.e.,  $w_{\min}\in B_\SS(w,5\tau)$ and   $\vartheta(w_{\min})\le \vartheta(z)$ for every $z\in B_\SS(w,5\tau)$. Because $w_{\min}\in S_{\vartheta(w_{\min})}$ and $\NN_{\vartheta(w_{\min})}$ is $(2\tau)$-dense in $S_{\vartheta(w_{\min})}$, we can define $q(w)$ to be any  point in $\NN_{\vartheta(w_{\min})}$ with $d_\SS(q(w),w_{\min})\le 2\tau$. 
\end{construction}


The following lemma derives basic properties of the objects  defined in Construction \ref{construction:compression}:

\begin{lemma}\label{lem:rounding function} Under   the definitions of Construction~\ref{construction:compression}, $q$ and $\{\NN_{\xi_i}\}_{i=1}^\ell$ have the following properties: 
\begin{equation}\label{eq:displacement of q}
\forall w\in \SS,\qquad d_\SS(q(w),w)\le 7\tau\qquad\mathrm{and}\qquad q\big(B_\SS(w,5\tau)\big)\subset \NN_{\vartheta(w)}.
\end{equation}
Consequently, 
\begin{equation}\label{eq:q inclusion}
\forall x,y\in \SS,\qquad d_\SS(x,y)\le 3\tau\implies  q \big(B_\SS (y,2\tau)\big) \subset B_\SS(y,9\tau)\cap \NN_{\vartheta(x)}.
\end{equation}
\end{lemma}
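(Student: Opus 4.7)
The plan is to verify the three claims in Lemma~\ref{lem:rounding function} in order, with each subsequent claim following quickly from the previous ones by the triangle inequality and the nested structure~\eqref{eq:nets increasing} of the nets. None of the steps look delicate; the main content is just unpacking the definitions correctly.

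First I would prove the displacement bound in~\eqref{eq:displacement of q}. By construction, $q(w)\in \NN_{\vartheta(w_{\min})}$ is a net point with $d_\SS(q(w),w_{\min})\le 2\tau$, and by the choice of $w_{\min}\in B_\SS(w,5\tau)$ we have $d_\SS(w_{\min},w)\le 5\tau$. The triangle inequality yields $d_\SS(q(w),w)\le 7\tau$.

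Next I would prove the inclusion $q(B_\SS(w,5\tau))\subset \NN_{\vartheta(w)}$. Fix any $z\in B_\SS(w,5\tau)$ and let $z_{\min}\in B_\SS(z,5\tau)$ be the corresponding minimizer used to define $q(z)$, so that $q(z)\in\NN_{\vartheta(z_{\min})}$. Since $d_\SS(z,w)\le 5\tau$, we have $w\in B_\SS(z,5\tau)$; minimality of $z_{\min}$ on $B_\SS(z,5\tau)$ therefore gives
\begin{equation*}
\vartheta(z_{\min})\le \vartheta(w).
\end{equation*}
As $\vartheta(z_{\min})$ and $\vartheta(w)$ both belong to $\{\xi_1,\ldots,\xi_\ell\}=\vartheta(\SS)$, the nesting~\eqref{eq:nets increasing} of the nets implies $\NN_{\vartheta(z_{\min})}\subset \NN_{\vartheta(w)}$, whence $q(z)\in\NN_{\vartheta(w)}$.

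Finally, I would derive~\eqref{eq:q inclusion} from the two properties in~\eqref{eq:displacement of q}. Suppose $x,y\in\SS$ satisfy $d_\SS(x,y)\le 3\tau$ and fix any $z\in B_\SS(y,2\tau)$. The triangle inequality gives $d_\SS(x,z)\le d_\SS(x,y)+d_\SS(y,z)\le 3\tau+2\tau=5\tau$, so $z\in B_\SS(x,5\tau)$, and the already-established second part of~\eqref{eq:displacement of q} (applied with $w=x$) yields $q(z)\in \NN_{\vartheta(x)}$. On the other hand, the displacement bound from the first part of~\eqref{eq:displacement of q} gives $d_\SS(q(z),y)\le d_\SS(q(z),z)+d_\SS(z,y)\le 7\tau+2\tau=9\tau$, so $q(z)\in B_\SS(y,9\tau)$. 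Combining these two inclusions yields $q(z)\in B_\SS(y,9\tau)\cap\NN_{\vartheta(x)}$, which is exactly~\eqref{eq:q inclusion}. There is no real obstacle here; the only thing to be careful about is the direction of the monotonicity in the second step, namely that $\vartheta(z_{\min})\le\vartheta(w)$ (not the reverse) is what matches the nesting order of~\eqref{eq:nets increasing}.
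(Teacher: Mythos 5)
Your proof is correct and follows essentially the same approach as the paper's: triangle inequality for the displacement bound, minimality of $z_{\min}$ on $B_\SS(z,5\tau)\ni w$ together with the nesting~\eqref{eq:nets increasing} for the second inclusion, and a direct combination of both parts of~\eqref{eq:displacement of q} for~\eqref{eq:q inclusion}. No differences worth noting.
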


\begin{proof} For~\eqref{eq:displacement of q}, fix $w\in \SS$. The first part of~\eqref{eq:displacement of q} is a direct consequence of Construction~\ref{construction:compression} and the triangle inequality, since for every $w\in \SS$ we have $d_\SS(w_{\min},w)\le 5\tau$ and $d_\SS(q(w),w_{\min})\le 2\tau$, by definition. For the second part of~\eqref{eq:displacement of q}, observe that  for every $z\in B_\SS(w,5\tau)$ we have $w\in B_\SS(z,5\tau)$ and $z_{\min}$ is the minimizer of $\vartheta$ on $B_\SS(z,5\tau)$, so $\vartheta(z_{\min})\le \vartheta(w)$. Hence, $\NN_{\vartheta(z_{\min})}\subset \NN_{\vartheta(w)}$ by~\eqref{eq:nets increasing}. But $q(z)\in \NN_{\vartheta(z_{\min})}$ by definition, so $q(z)\in \NN_{\vartheta(w)}$. As $z$ is an arbitrary point in $B_\SS(w,5\tau)$, this proves  the second part of~\eqref{eq:displacement of q}.  

\eqref{eq:q inclusion} is a formal consequence of~\eqref{eq:displacement of q} and the triangle inequality,   since if $x,y\in \SS$ satisfy $d_\SS(x,y)\le 3\tau$, then $B_\SS(y,2\tau)\subset B_\SS(x,5\tau)$, whence $q(B_\SS(y,2\tau))\subset q(B_\SS(x,5\tau))\subset \NN_{\vartheta(x)}$ by the case $w=x$ of the second part of~\eqref{eq:displacement of q}, and for every $w\in B_\SS(y,2\tau)$ we have $d_\SS(q(w),y)\le d_\SS(q(w),w)+d_\SS(w,y)\le 7\tau+2\tau=9\tau$ using the first part of~\eqref{eq:displacement of q}, so we also have $q(B_\SS(y,2\tau))\subset B_\SS(y,9\tau)$. 
\end{proof}

The significance of incorporating the rounding function $q$ of Construction~\ref{construction:compression}  arises from the following lemma, which bounds the sizes of sets such as those that appear in the right hand side of~\eqref{eq:q inclusion}:

\begin{lemma}\label{lem:h inverse} Fix $\tau>0$. Let $(\SS,d_\SS)$ be a finite metric space. Suppose that  $\mathfrak{d}: \SS \to (0, \infty)$ and $\mathfrak{n}: \SS \to (0, \infty)$ are functions that  have  the following property. For every $x\in \SS$ and every subset $\TT$ of  $B_\SS(x,18\tau)$ with the property that every distinct $y,z\in \TT$ satisfy  $d_\SS(y,z)>2\tau$, we have\footnote{Thus, in particular, $\mathfrak{d}(x)\le \mathfrak{n}(x)$ for every $x\in \SS$, as seen by considering $\TT=\{x\}$.} 
\begin{equation}\label{eqn:measure monotone}
 \sum_{y\in \TT} \mathfrak{d}(y) \le \mathfrak{n}(x).
\end{equation}
Consider the setting of Construction~\ref{construction:compression} applied to $(\SS,d_\SS)$ and $\tau$, and with  $\vartheta=\vartheta_{\mathfrak{d},\mathfrak{n}}:\SS\to [1,\infty)$ given by 
\begin{equation}\label{eq:rho of h}
\forall x\in \SS,\qquad \vartheta (x)\eqdef  \frac{\mathfrak{n}(x)}{\mathfrak{d}(x)}.
\end{equation}
Then, for every $\xi\in \vartheta(\SS)$ and every  $\emptyset \neq \sub\subset \MM$ with $\diam_\SS(\sub)\le 18\tau$ we have 
\begin{equation}\label{eq:cardinality of special net}
|\sub\cap \NN_{\xi} |\le \xi.
\end{equation}
\end{lemma}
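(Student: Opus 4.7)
The plan is to simply pick the right basepoint and apply the hypothesis~\eqref{eqn:measure monotone} once. Fix $\xi\in\vartheta(\SS)$ and a nonempty $\sub\subset\SS$ with $\diam_\SS(\sub)\le 18\tau$, and set $\TT\eqdef \sub\cap \NN_\xi$. If $\TT=\emptyset$ there is nothing to prove (note $\xi\ge 1$ since $\mathfrak{d}\le\mathfrak{n}$, as already follows from~\eqref{eqn:measure monotone} applied to the singleton $\TT=\{x\}$). Otherwise, pick $x\in\TT$ that minimizes $\mathfrak{d}$ on $\TT$, i.e.~$\mathfrak{d}(x)=\min_{y\in\TT}\mathfrak{d}(y)$.

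The key observation is that this choice of basepoint lets us extract $|\TT|$ as a lower bound for the sum of $\mathfrak{d}$ over $\TT$. Because $x\in\sub$ and $\diam_\SS(\sub)\le 18\tau$, we have $\TT\subset\sub\subset B_\SS(x,18\tau)$; moreover $\TT\subset\NN_\xi$, so $\TT$ is $(2\tau)$-separated by Construction~\ref{construction:compression}. Hypothesis~\eqref{eqn:measure monotone} therefore applies with this $\TT$ and this basepoint $x$, giving
\begin{equation*}
|\TT|\,\mathfrak{d}(x)\;=\;\sum_{y\in\TT}\mathfrak{d}(x)\;\le\;\sum_{y\in\TT}\mathfrak{d}(y)\;\le\;\mathfrak{n}(x).
\end{equation*}
Dividing by $\mathfrak{d}(x)>0$ and recalling the definition~\eqref{eq:rho of h} of $\vartheta$ yields $|\TT|\le \mathfrak{n}(x)/\mathfrak{d}(x)=\vartheta(x)$. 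Finally, since $x\in\TT\subset\NN_\xi\subset S_\xi$, definition~\eqref{eq:def level set} gives $\vartheta(x)\le\xi$, whence $|\TT|\le\xi$, which is~\eqref{eq:cardinality of special net}.

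There is no genuine obstacle; the only design choice is picking $x$ to minimize $\mathfrak{d}$ on $\TT$ (rather than, say, an arbitrary point of $\sub$ or a $\vartheta$-minimizer), which is exactly what converts the pointwise hypothesis~\eqref{eqn:measure monotone} into a cardinality bound. The proof uses neither the nesting~\eqref{eq:nets increasing} nor the rounding map $q$ from Construction~\ref{construction:compression}, but only the fact that each $\NN_\xi$ is a $(2\tau)$-separated subset of the sublevel set $S_\xi$—which is where the strength of the construction (tailored to the specific $\vartheta=\mathfrak{n}/\mathfrak{d}$) will be harnessed later in Proposition~\ref{thm:universally compatible compression}.
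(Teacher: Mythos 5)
Your proof is correct and is essentially the paper's argument: the key step in both is to pick the $\mathfrak{d}$-minimizer $x_*$ on $\sub\cap\NN_\xi$, apply the packing hypothesis~\eqref{eqn:measure monotone} with basepoint $x_*$, divide by $\mathfrak{d}(x_*)>0$, and use $x_*\in\NN_\xi\subset S_\xi$ to bound $\vartheta(x_*)\le\xi$. The paper arranges the steps a little differently (first recording~\eqref{eqn:measure monotone} for all $x\in\sub$ and $\mathfrak{n}(x)\le\xi\mathfrak{d}(x)$ for all $x\in\NN_\xi$, then specializing to the minimizer), but the idea is identical.
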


Before passing to the proof of Lemma~\ref{lem:h inverse}, we record in the following simple remark the  examples that we will use in the ensuing proof of Proposition~\ref{thm:universally compatible compression} of (numerator functions) $\mathfrak{n}: \SS \to (0, \infty)$  and  (denominator functions) $\mathfrak{d}: \SS \to (0, \infty)$ as  in~\eqref{eq:rho of h} that satisfy  assumption~\eqref{eqn:measure monotone} of Lemma~\ref{lem:h inverse}:

\begin{remark}\label{rem:basic example} In the setting of Lemma~\ref{lem:h inverse}, for every $\mathfrak{d}: \SS \to (0, \infty)$ and $x\in \SS$ define 
$\mathfrak{d}_*(x)$ to be the maximum of $\sum_{y\in \TT} \mathfrak{d}(y)$ over all possible $\TT\subset B_\SS(x,18\tau)$ that satisfy  $d_\SS(y,z)>2\tau$ for every distinct $y,z\in \TT$. Then, by design the smallest $\mathfrak{n}: \SS \to (0, \infty)$ that satisfies~\eqref{eqn:measure monotone} is $\mathfrak{d}_*:\SS\to (0,\infty)$, i.e., assumption~\eqref{eqn:measure monotone} of Lemma~\ref{lem:h inverse} holds if and only if $\mathfrak{n}\ge \mathfrak{d}_*$ point-wise. A simple way to achieve this occurs   when  $(\MM,d_\MM)$ is a metric space that contains $(\SS,d_\SS)$ isometrically, i.e., $\SS\subset \MM$ and the restriction of $d_\MM$ to $\SS\times \SS$ coincides with $d_\SS$. If $\mu$ is any nondegenerate measure on the super-space $\MM$, then consider the functions $\mathfrak{d},\mathfrak{n}: \SS \to (0, \infty)$  that are given by setting $\mathfrak{d}(x)=\mu(B_\MM(x,\tau))$ and $\mathfrak{n}(x)=\mu(B_\MM(x,19\tau))$ for each $x\in \SS$; note that we are evaluating here the measure $\mu$ on balls of the super-space $\MM$ which could contain points that do not belong to $\SS$ (this nuance will be needed later). For any $x\in \SS$, if $\TT\subset \SS$ is such that $d_\SS(y,z)>2\tau$ for every distinct $y,z\in \TT$, then by the triangle inequality for $d_\MM$ the balls $\{B_\MM(y,\tau)\}_{y\in \TT}$ (in the super-space $\MM$) are pairwise disjoint. If also $\TT\subset B_\SS(x,18\tau)$, then $\bigcup_{y\in \TT} B_\MM(y,\tau)\subset B_\MM(x,19\tau)$  by the triangle inequality for $d_\MM$, so~\eqref{eqn:measure monotone} holds. 
\end{remark}

\begin{proof}[Proof of Lemma~\ref{lem:h inverse}] As $\xi\in \vartheta(\SS)\subset [1,\infty)$, the conclusion of Lemma~\ref{lem:h inverse} is immediate if $|\sub\cap \NN_\xi|\le 1$. We may therefore assume that $\sub\cap \NN_\xi\neq \emptyset$. By Construction~\ref{construction:compression} we know that $d_\SS(y,z)>2\tau$ every distinct $y,z\in \NN_\xi$. Since $\diam_\SS(\sub)\le 18\tau$, we also have $\sub \subset B_\SS(x,18\tau)$  for every $x\in \sub$. We can therefore apply the assumption on $\mathfrak{d},\mathfrak{n}$ in Lemma~\ref{lem:h inverse} for $\TT=\sub\cap \NN_\xi$ to get that
\begin{equation}\label{eq:apply pacjing axiom}
\forall x\in \sub,\qquad \sum_{y\in \sub\cap \NN_\xi} \mathfrak{d}(y)\stackrel{\eqref{eqn:measure monotone}}{\le}  \mathfrak{n}(x). 
\end{equation}
But, Construction~\ref{construction:compression} ensures that $\NN_\xi\subset S_\xi$, so recalling~\eqref{eq:def level set} we have $\vartheta(x)\le \xi$ for every $x\in  \NN_\xi$. Thus,
\begin{equation}\label{eq:apply level set}
\forall x\in \NN_\xi,\qquad  \mathfrak{n}(x)\stackrel{\eqref{eq:rho of h}}{\le} \xi\mathfrak{d}(x). 
\end{equation}
Consequently,
\begin{equation}\label{eq:sum less than minimizer}
\forall x\in \sub\cap \NN_\xi,\qquad \sum_{y\in \sub\cap \NN_\xi} \mathfrak{d}(y)\stackrel{\eqref{eq:apply pacjing axiom}\wedge \eqref{eq:apply level set}}{\le}  \xi\mathfrak{d}(x). 
\end{equation}
Let $x_*\in \sub\cap\NN_\xi$ be the minimizer of $\mathfrak{d}$ on $\sub\cap \NN_\xi$, i.e., $\mathfrak{d}(x_*)\le \mathfrak{d}(y)$  for every $y\in \sub\cap \NN_\xi$. Then, \eqref{eq:sum less than minimizer} applied to  $x_*$ implies that  $|\sub\cap \NN_\xi|\mathfrak{d}(x_*) \le \xi\mathfrak{d}(x_*)$. As $\mathfrak{d}(x_*)>0$, we conclude that $|\sub\cap \NN_\xi|\le \xi$. 
\end{proof}

We will use Lemma~\ref{lem:h inverse}  via the following corollary of it which provides a bound on Gaussian processes that arise from the composition of arbitrary mappings $\f:\SS\to \R^m$ with the rounding function $q$:

\begin{corollary}\label{cor:process} Continuing with the notation and assumptions of Lemma~\ref{lem:h inverse}, fix $m\in \N$ and  $\f:\SS\to \R^m$. Then,  the following estimate holds for every $y\in \SS$ and every  $\UU \subset B_{\SS}(y,2\tau)$:
\begin{equation}\label{eq:int max in lemma}
\int_{\R^m} \Big(\max_{z\in \UU}|\langle v,\f\circ q(z)-\f\circ q(y)\rangle|\Big)\ud\gamma_n(v)\lesssim  \Big(\max_{z\in \UU}\|\f\circ q(z)-\f\circ q(y)\|_2\Big) \min_{w \in B_\SS (y,3\tau)} \sqrt{\log \frac{\mathfrak{n}(w)}{\mathfrak{d}(w)}}.
\end{equation}
\end{corollary}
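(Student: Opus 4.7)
The plan is to combine the cardinality bound supplied by Lemma~\ref{lem:h inverse} with a standard Gaussian maximal inequality. First I would observe that the max in the left hand side of~\eqref{eq:int max in lemma} can be viewed as the supremum of a centered Gaussian process indexed by the \emph{finite} set $q(\UU)\cup\{q(y)\}$ rather than by $\UU$ itself, since the only effect of $z$ on $\langle v,\f\circ q(z)-\f\circ q(y)\rangle$ is through the value of $q(z)$. Thus the task reduces to bounding the cardinality of $q(\UU)$ and then applying the standard fact that for any $N\ge 1$ vectors $u_1,\ldots,u_N\in \R^m$,
\begin{equation}\label{eq:gauss max proposal}
\int_{\R^m}\max_{1\le i\le N}|\langle v,u_i\rangle|\ud\gamma_m(v)\lesssim \big(\max_{1\le i\le N}\|u_i\|_2\big)\sqrt{\log(N+1)}.
\end{equation}

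To bound $|q(\UU)|$, fix any $w\in B_\SS(y,3\tau)$ and apply the second  part of~\eqref{eq:q inclusion} of Lemma~\ref{lem:rounding function}  with $x=w$: since $\UU\subset B_\SS(y,2\tau)$, we obtain
\[
q(\UU)\cup\{q(y)\}\subset q\big(B_\SS(y,2\tau)\big)\subset B_\SS(y,9\tau)\cap \NN_{\vartheta(w)}.
\]
The set $B_\SS(y,9\tau)$ has diameter at most $18\tau$, so Lemma~\ref{lem:h inverse} applied with $\sub=B_\SS(y,9\tau)$ and $\xi=\vartheta(w)\in \vartheta(\SS)$ yields
\[
|q(\UU)\cup\{q(y)\}|\le |B_\SS(y,9\tau)\cap \NN_{\vartheta(w)}|\le \vartheta(w)=\frac{\mathfrak{n}(w)}{\mathfrak{d}(w)}.
\]
Taking the infimum over $w\in B_\SS(y,3\tau)$ gives $N\eqdef|q(\UU)\cup\{q(y)\}|\le \min_{w\in B_\SS(y,3\tau)}\mathfrak{n}(w)/\mathfrak{d}(w)$.

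Finally, I would enumerate the distinct values of $q$ on $\UU$ as $\{q(z_1),\ldots,q(z_N)\}$ (with $q(z_1)=q(y)$, say) and apply~\eqref{eq:gauss max proposal} to the vectors $u_i=\f(q(z_i))-\f(q(y))\in \R^m$. The largest $\|u_i\|_2$ is exactly $\max_{z\in \UU}\|\f\circ q(z)-\f\circ q(y)\|_2$, and substituting the cardinality bound for $N$ produces the right hand side of~\eqref{eq:int max in lemma} (using that $\sqrt{\log(N+1)}\asymp \sqrt{\log N}$ for $N\ge 2$, while for $N=1$ both sides of~\eqref{eq:int max in lemma} trivially vanish because $q$ is constant on $\UU\cup\{y\}$, so $\vartheta(w)=1$ is consistent with the estimate). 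There is no real obstacle here; the only nontrivial content is the combinatorial fact that applying $q$ to the set $\UU$ collapses it into a small subset of a net whose cardinality is controlled by the local ratio $\mathfrak{n}/\mathfrak{d}$ at \emph{any} nearby base point, which is precisely the payoff of having built $q$ via Construction~\ref{construction:compression}.
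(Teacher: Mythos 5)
Your proposal is correct and follows essentially the same route as the paper: bound $|q(\UU)|$ (there, via $|\f\circ q(\UU)|\le |q(B_\SS(y,2\tau))|$) by combining conclusion~\eqref{eq:q inclusion} of Lemma~\ref{lem:rounding function} with Lemma~\ref{lem:h inverse} applied to $\sub=B_\SS(y,9\tau)$ and $\xi=\vartheta(w)$, then feed the resulting cardinality bound into the standard expected-maximum estimate for a finite Gaussian process (the paper cites~\cite[Lemma~4.14]{Pis89}). Your choice to track $|q(\UU)\cup\{q(y)\}|$ and to note the degenerate $N=1$ case explicitly is a small cosmetic improvement over the paper's phrasing, but it is the same argument.
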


\begin{proof} For every $y,w\in \SS$, by applying Lemma~\ref{lem:h inverse} with $\xi=\vartheta(w)$ and $\sub=B_\SS(y,9\tau)$, we see that  
\begin{equation}\label{eq:use fpor entire ball}
\forall y,w\in \SS,\qquad \big|B_\SS(y,9\tau)\cap \NN_{\vartheta(w)}\big| \stackrel{\eqref{eq:cardinality of special net}}{\le } \vartheta(w)
\stackrel{\eqref{eq:rho of h}}{=}\frac{\mathfrak{n}(w)}{\mathfrak{d}(w)}.
\end{equation}
By combining~\eqref{eq:use fpor entire ball} with conclusion~\eqref{eq:q inclusion} of Lemma~\ref{lem:rounding function}, we see that
\begin{equation}\label{eq:q of ball}
\forall y,w\in \SS,\qquad d_\SS(y,w)\le 3\tau\implies    \big|q\big(B_\SS (y,2\tau)\big)\big|\le \frac{\mathfrak{n}(w)}{\mathfrak{d}(w)}. 
\end{equation}
Fixing $y\in \SS$ and $\UU \subset B_{\SS}(y,2\tau)$ as in Corollary~\ref{cor:process}, we get from~\eqref{eq:q of ball} the following cardinality estimate: 
\begin{equation}\label{eq:phi circ}
|\f\circ q(\UU)|\le |q(\UU)|\le \big|q\big(B_\SS (y,2\tau)\big)\big|  \le \min_{w\in B_\SS(y,3\tau)}\frac{\mathfrak{n}(w)}{\mathfrak{d}(w)}. 
\end{equation}
By a simple bound on the expected maximum of a finite Gaussian process which appears in many places in the literature, including specifically in e.g.~\cite[Lemma~4.14]{Pis89}, we have\footnote{A mechanical inspection of the (elementary) proof of~\cite[Lemma~4.14]{Pis89} shows that the implicit universal constant in~\eqref{eq:maximum of gaussian process} can be taken to be less than $2$, and, in fact,  that it can be taken to be arbitrarily close to $\sqrt{2}$ as $|\f\circ q(\UU )|\to \infty$.  } 
\begin{equation}\label{eq:maximum of gaussian process}
\int_{\R^m} \Big(\max_{z\in \UU}|\langle v,\f\circ q(z)-\f\circ q(y)\rangle|\Big)\ud\gamma_n(v)\lesssim  \Big(\max_{z\in \UU}\|\f\circ q(z)-\f\circ q(y)\|_2\Big)\sqrt{\log |\f\circ q(\UU )|}.
\end{equation}
The desired conclusion~\eqref{eq:int max in lemma} follows from a substitution of~\eqref{eq:phi circ} into~\eqref{eq:maximum of gaussian process}.  
\end{proof}

Observe that the proof of Corollary~\ref{cor:process} makes the value of the compression (via composition with the rounding function $q$) that Construction~\ref{construction:compression} provides evident: If we would have considered $\f$ rather than $\f\circ q$ in~\eqref{eq:int max in lemma}, then  the same reasoning would have led to the factor $\min_{x \in B_\SS (y,3\tau)} \sqrt{\log (\mathfrak{n}(x)/\mathfrak{d}(x))}$ in~\eqref{eq:int max in lemma} being replaced by $\sqrt{\log |\UU|}\le \sqrt{\log |B_\SS(y,2\tau)|}$, which is insufficient for our proof of Theorem~\ref{thm:random zero}.

\begin{remark} If $h : [1, \infty) \to \R$ is monotone increasing, then one could consider Lemma~\ref{lem:h inverse} with the function $\vartheta$ in~\eqref{eq:rho of h} replaced by the function $(x\in \SS)\mapsto h(\mathfrak{n}(x)/\mathfrak{d}(x))$.  As Construction~\ref{construction:compression} only depends on the ordering of the values in $\vartheta$, it will output the same rounding function $\rho$, independently of $h$, and therefore Corollary~\ref{cor:process} will remain unchanged. In the overview of the proof that we presented in Section~\ref{sec:history}, we sketched the above compression step this way while referring to nets in the level sets of the function $\rho$ that is given in equation~\eqref{eq:our specific rho} of Proposition~\ref{thm:universally compatible compression}; using the notation of the statement Proposition~\ref{thm:universally compatible compression}, this corresponds to  considering  $h(t)=1+(\zeta/C)\sqrt{\log t}$ above,  with $\mathfrak{d}(x)=\mu(B_\MM(x,\tau))$     and $\mathfrak{n}(x)=\mu(B_\MM(x,19\tau))$. 
\end{remark}

We will soon explain how Construction~\ref{construction:compression} facilitates compatibility (per Definition~\ref{def:compatibility}) of proximity graphs of a metric space (as in e.g.~definition~\eqref{eq:geometric edges specific rho} of Proposition~\ref{thm:universally compatible compression}, though we will work in greater generality). The following lemma is the main way that we will use such proximity information:

\begin{lemma}\label{lem:ball comparison} Fix $\tau>0$. Suppose that $(\SS,d_\SS)$ is a finite metric space. Given $\rho:\SS\to [1,\infty)$, consider the graph $\sfG=\sfG_{d_\mathscr{S},\rho,\tau}=(\SS,E)$ whose vertex set is $\SS$ and whose edge set $E=E_{d_\SS,\rho,\tau}$ is defined by
\begin{equation}\label{eq:geometric edges general psi}
\forall x,y\in \SS,\qquad \{x,y\}\in E \iff d_\SS(x,y)\le \frac{\tau}{\min\{\rho(x),\rho(y)\}}. 
\end{equation}
Then,
\begin{equation}\label{eq:alpha=2}
\forall x\in \SS,\qquad B_{\sfG}\big(x,\widetilde{\rho}(x)+1\big)\subset B_\SS(x,2\tau)\qquad\mathrm{where}\qquad \widetilde{\rho}(x)\eqdef \min_{z\in B_\SS(x,2\tau)}\rho(z).
\end{equation}
\end{lemma}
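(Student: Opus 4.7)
The plan is to prove Lemma~\ref{lem:ball comparison} by contradiction: assume there exists $y\in \SS$ with $d_\sfG(x,y)\le \widetilde{\rho}(x)+1$ but $d_\SS(x,y)>2\tau$. Since $d_\sfG$ is integer-valued, fix a shortest $\sfG$-path $x=z_0,z_1,\ldots,z_k=y$ with $k\le \lfloor\widetilde{\rho}(x)+1\rfloor$, and let $j\in\{1,\ldots,k\}$ be the smallest index for which $z_j\notin B_\SS(x,2\tau)$ (such $j$ exists because $z_0=x\in B_\SS(x,2\tau)$ while $z_k=y\notin B_\SS(x,2\tau)$). By minimality, $z_0,z_1,\ldots,z_{j-1}\in B_\SS(x,2\tau)$, so by the definition of $\widetilde{\rho}$ in~\eqref{eq:alpha=2} we have $\rho(z_i)\ge \widetilde{\rho}(x)$ for every $i\in\{0,1,\ldots,j-1\}$.

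Next, I would bound the edge lengths on this path using~\eqref{eq:geometric edges general psi}. For every $i\in\{1,\ldots,j-1\}$ both endpoints of the edge $\{z_{i-1},z_i\}$ are in $B_\SS(x,2\tau)$, hence  $\min\{\rho(z_{i-1}),\rho(z_i)\}\ge \widetilde{\rho}(x)$, so $d_\SS(z_{i-1},z_i)\le \tau/\widetilde{\rho}(x)$. For the final edge $\{z_{j-1},z_j\}$ I only know that  $\rho(z_{j-1})\ge \widetilde{\rho}(x)\ge 1$ and $\rho(z_j)\ge 1$, giving the crude bound $d_\SS(z_{j-1},z_j)\le \tau$. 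Summing by the triangle inequality yields
\[
2\tau\;<\;d_\SS(x,z_j)\;\le\;\sum_{i=1}^{j-1} d_\SS(z_{i-1},z_i)+d_\SS(z_{j-1},z_j)\;\le\;\frac{(j-1)\tau}{\widetilde{\rho}(x)}+\tau,
\]
which simplifies to $j-1>\widetilde{\rho}(x)$, i.e., $j>\widetilde{\rho}(x)+1$. Since $j\le k\le \widetilde{\rho}(x)+1$ this is a contradiction, completing the proof.

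There is no real obstacle here: the argument is a direct ``escape time'' calculation that trades off the tight edge-length bound $\tau/\widetilde{\rho}(x)$ available inside $B_\SS(x,2\tau)$ against a single ``bad'' final edge of length at most $\tau$. The only subtlety to get right is the bookkeeping with the non-integer quantity $\widetilde{\rho}(x)+1$, which is handled simply by the observation that $d_\sfG$ is integer-valued on vertices (so all strict and non-strict inequalities line up correctly regardless of whether $\widetilde{\rho}(x)\in\Z$). The assumption $\rho\ge 1$ from~\eqref{eq:our specific rho} is used in precisely one place, namely to guarantee that $d_\SS(z_{j-1},z_j)\le \tau$ on the final ``escaping'' edge.
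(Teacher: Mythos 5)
Your proof is correct, and the argument you give is genuinely a different packaging of the same underlying estimate used in the paper. The paper's route proves the stronger local Lipschitz bound~\eqref{eq:geometric graph bounds original metric} for arbitrary $\alpha\ge 1$ by induction on $d_\sfG(x,y)$, peeling off the last vertex of a shortest $\sfG$-path at each step, and only then specializes to $\alpha=2$ to extract the inclusion~\eqref{eq:alpha=2}; your route argues by contradiction with a ``first escape'' index $j$ and shows directly that escaping $B_\SS(x,2\tau)$ forces the shortest path to be too long. Both arguments hinge on the identical core observation — edges with both endpoints in $B_\SS(x,2\tau)$ have $d_\SS$-length at most $\tau/\widetilde{\rho}(x)$ by~\eqref{eq:geometric edges general psi}, and the crossing edge has length at most $\tau$ because $\rho\ge 1$. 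Your version is a bit shorter and proves exactly the stated inclusion; the paper's version additionally produces the quantitative estimate $d_\SS(x,y)\le(\tau/\widetilde{\rho}(x))\,d_\sfG(x,y)$, which is strictly stronger than~\eqref{eq:alpha=2}, though only the inclusion is used downstream. One small bookkeeping remark: you cite~\eqref{eq:our specific rho} as the source of $\rho\ge 1$, but in Lemma~\ref{lem:ball comparison} this is simply part of the hypothesis (the codomain of $\rho$ is declared to be $[1,\infty)$), not something imported from Proposition~\ref{thm:universally compatible compression}.
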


\begin{proof} We will actually prove the following local Lipschitz property of the formal identity mapping from $(\SS,d_\sfG)$ to $(\SS,d_\SS)$, which holds for every $\alpha\ge 1$ and every $x,y\in \SS$:
\begin{equation}\label{eq:geometric graph bounds original metric}
 d_\sfG(x,y)\le (\alpha-1)\left(\min_{z\in B_\SS(x,\alpha\tau)}\rho(z)\right)+1\implies d_\SS(x,y)\le \frac{\tau }{\min_{z\in B_\SS(x,\alpha\tau)}\rho(z)}d_\sfG(x,y).
\end{equation}
This is stronger than the desired inclusion in~\eqref{eq:alpha=2} since if $\alpha=2$, then the minimum that appears in~\eqref{eq:geometric graph bounds original metric}  coincides with the function $\widetilde{\rho}$ that we introduced in~\eqref{eq:alpha=2}, so~\eqref{eq:geometric graph bounds original metric} implies for $\alpha=2$ that  
\begin{equation}\label{eq:deduce case alpha=2}
\forall x,y\in \SS,\qquad d_\sfG(x,y)\le \widetilde{\rho}(x)+1\le 2\widetilde{\rho}(x)\implies d_\SS(x,y)\le \frac{\tau }{\widetilde{\rho}(x)}d_\sfG(x,y)\le 2\tau.
\end{equation}
The second inequality in~\eqref{eq:deduce case alpha=2}uses the assumption  $\rho\ge 1$. 

If $d_\sfG(x,y)=0$, then $x=y$, so~\eqref{eq:geometric graph bounds original metric} holds. We will next prove~\eqref{eq:geometric graph bounds original metric} by induction on $d_{\sfG}(x,y)$. If 
\begin{equation}\label{eq:dG range for induction}
1\le d_{\sfG}(x,y)\le (\alpha-1)\left(\min_{z\in B_\SS(x,\alpha\tau)}\rho(z)\right)+1,\end{equation}
then there is $w\in V$ such that $d_\sfG(x,w)=d_{\sfG}(x,y)-1$ and $\{w,y\}\in E$. Hence, the induction hypothesis gives 
\begin{equation}\label{eq:induction d-1}
d_\SS(x,w)\le \frac{\tau }{\min_{z\in B_\SS(x,\alpha\tau)}\rho(z)}d_\sfG(x,w)=\frac{\tau }{\min_{z\in B_\SS(x,\alpha\tau)}\rho(z)}\big(d_\sfG(x,w)-1\big)\stackrel{\eqref{eq:dG range for induction}}{\le} (\alpha-1)\tau<\alpha\tau,
\end{equation}
and the definition~\eqref{eq:geometric edges general psi} of $E$ gives 
\begin{equation}\label{eq:wy edge}
d_\SS(w,y)\le \frac{\tau}{\min\{\rho(w),\rho(y)\}}\le \tau,
\end{equation}
where we used the assumption $\rho\ge 1$.  Hence, $d_\SS(x,y)\le d_\SS(x,w)+d_\SS(w,y)\le (\alpha-1)\tau+\tau=\alpha\tau$, by~\eqref{eq:induction d-1} and~\eqref{eq:wy edge}. Thus, $y\in B_\SS(x,\alpha \tau)$. Also $w\in B_\SS(x,\alpha \tau)$, by~\eqref{eq:induction d-1}. So, $\min\{\rho(w),\rho(y)\}\ge \min_{z\in B_\SS(x,\alpha\tau)}\rho(z)$. A substitution of this into~\eqref{eq:wy edge} gives 
\begin{equation}\label{eq:max psi estimated}
d_\SS(w,y)\le  \frac{\tau}{\min_{z\in B_\SS(x,\alpha\tau)}\rho(z)}.
\end{equation}
The inductive step now concludes as follows: 
\begin{multline*}
d_\SS(x,y)\le d_\SS(x,w)+d_\SS(w,y) \\\stackrel{\eqref{eq:induction d-1}\wedge \eqref{eq:max psi estimated}}{\le} \frac{\tau }{\min_{z\in B_\SS(x,\alpha\tau)}\rho(z)}\big(d_\sfG(x,w)-1\big)+\frac{\tau}{\min_{z\in B_\SS(x,\alpha\tau)}\rho(z)}=\frac{\tau }{\min_{z\in B_\SS(x,\alpha\tau)}\rho(z)}d_\sfG(x,w).\tag*{\qedhere}
\end{multline*}
\end{proof} 

We are now ready to perform the main purpose of this section, which is to prove Proposition~\ref{thm:universally compatible compression}:

\begin{proof}[Proof of Proposition~\ref{thm:universally compatible compression}] Take the universal constant $\zeta\ge 1$, which is part of the definition~\eqref{eq:our specific rho} of the function  $\rho:\MM\to [1,\infty)$ in Proposition~\ref{thm:universally compatible compression}, to be the implicit factor in conclusion~\eqref{eq:int max in lemma} of Corollary~\ref{lem:h inverse}. 

We will apply  Lemma~\ref{lem:ball comparison} separately to each of the connected component of $\Gamma$ of the graph $\sfG=(\MM,E)$ from Proposition~\ref{thm:universally compatible compression} whose edges are defined in\eqref{eq:geometric edges specific rho},  with $(\SS,d_\SS)=(\Gamma,d_\MM)$ and with $\rho:\Gamma\to [1,\infty)$ replaced by $\rho_\Gamma=\rho|_\Gamma$, the restriction of the function $\rho$ given in~\eqref{eq:our specific rho} to $\Gamma$.  In accordance with~\eqref{eq:alpha=2}, we have 
\begin{equation}\label{eq:rho gamm atilde}
\forall x\in \Gamma,\qquad \widetilde{\rho_\Gamma}(x)= \min_{z\in B_\Gamma(x,2\tau) }\rho(z),
\end{equation}
where we denote $B_\Gamma(x,r)\eqdef B_\MM(x,r)\cap \Gamma$ for every $x\in \Gamma$ and $r\ge 0$. 

The above application of Lemma~\ref{lem:ball comparison} to $(\Gamma,d_\MM)$ and $\rho_\Gamma$ produces a graph $\sfG_\Gamma$ whose vertex set is $\Gamma$. The definition of the edge set of  $\sfG_\Gamma$  coincides per~\eqref{eq:geometric edges general psi}with the restriction to $\Gamma$ of the edge set of the graph $\sfG$ on $\MM$ that is given in~\eqref{eq:geometric edges specific rho}, i.e., $E_{d_\Gamma,\rho_\Gamma,\tau}= \{\{x,y\}\in E_{d_\MM,\rho,\tau}:\ \{x,y\}\subset \Gamma\}$. Consequently, as $\Gamma$ is a connected component of $\sfG$, we have
\begin{equation}\label{eq:ball consistency connected component}
\forall r\in [0,\infty),\ \forall x\in \Gamma,\qquad B_\sfG(x,r)=B_\sfG(x,r)\cap \Gamma=B_{\sfG_\Gamma}(x,r). 
\end{equation}
So, the inclusion~\eqref{eq:alpha=2} from Lemma~\ref{lem:ball comparison} can be written as follows for every connected component $\Gamma$ of $\sfG$: 
\begin{equation}\label{eq:ba;; comparison wioth modified tilde in Gamma}
\forall x\in \Gamma,\qquad B_\sfG(x,\widetilde{\rho_{\Gamma}}(x)+1)=B_{\sfG_\Gamma}(x,\widetilde{\rho_{\Gamma}}(x)+1)\stackrel{\eqref{eq:alpha=2}}{\subset} B_\Gamma(x,2\tau). 
\end{equation}

Next, as in Remark~\ref{rem:basic example}, define $\mathfrak{d},\mathfrak{n} : \MM \to (0, \infty)$ by setting
\begin{equation}\label{eqn:def h, f, F}
\forall x\in \MM,\qquad      \mathfrak{d}(x) \eqdef  \mu\big(B_\MM (x, \tau)\big)\qquad\mathrm{and}\qquad \mathfrak{n}(x) \eqdef  \mu\big(B_\MM (x, 19\tau)\big).
\end{equation}
Let $\vartheta:\MM\to[1,\infty)$ be given as in~\eqref{eq:rho of h} for the choices of $\mathfrak{d},\mathfrak{n}$ in~\eqref{eqn:def h, f, F}. For every connected component $\Gamma$ of $\sfG$ denote the restrictions of $\mathfrak{d},\mathfrak{n},\vartheta$ to $\Gamma$  by $\mathfrak{d}_\Gamma,\mathfrak{n}_\Gamma,\vartheta_\Gamma$, respectively.  Apply Construction~\ref{construction:compression} to $(\Gamma,d_\MM)$ and  $\vartheta_\Gamma$, thus obtaining a rounding function $q_\Gamma:\Gamma\to \Gamma$. Let $q:\MM\to \MM$ be given by defining it to coincide with $q_\Gamma$ on each connected component $\Gamma$ of $\sfG$, i.e., 
\begin{equation}\label{eq:def our q union of Gammas}
\forall x\in \MM,\qquad q(x)=q_{\Gamma(x)}(x).
\end{equation}
The requirement $q(x)\in \Gamma(x)$ of Proposition~\ref{thm:universally compatible compression}   holds by design. Also, \eqref{eq:def our q union of Gammas}  ensures that requirement~\eqref{eq:displacement condition per component} of Proposition~\ref{thm:universally compatible compression} holds, by applying Lemma~\ref{lem:rounding function} separately to each connected component of $\sfG$.

Verifying Definition~\ref{def:compatibility}  of $C$-compatibility (for $\f\circ q$) requires introducing two auxiliary functions $K:\MM\to \N$ and $\Delta:\MM\to [0,\infty)$; recalling~\eqref{eq:rho gamm atilde}, we will define those functions as follows:
\begin{equation}\label{eq:def our Delta and K}
\forall x\in \MM,\qquad K(x)\eqdef \left\lceil \widetilde{\rho_{\Gamma(x)}}(x)\right\rceil \qquad\mathrm{and}\qquad \Delta(x)\eqdef C\left(\max_{z\in B_{\Gamma(x)}(x,2\tau)} \|\f\circ q(x)-\f\circ q (z)\|_2\right).
\end{equation}
With these choices (and the above preparation), it is quite simple to check that Definition~\ref{def:compatibility} is satisfied.

For requirement~\eqref{eq:4 Delta compatibility} of Definition~\ref{def:compatibility}, we are given $x\in \MM$ and  $y\in B_\sfG(x,K(x)-1)$, as well as $z\in \MM$ such that $\{y,z\}\in E$. The goal is to check that $\Delta(x)\le \sigma(\{y,z\})$,   with $\sigma$ given in~\eqref{eq:def our sigma specific q}. This indeed holds since
\begin{equation}\label{eq:BK in BGamma}
y,z\in B_\sfG\big(x,K(x)\big)\stackrel{\eqref{eq:def our Delta and K}}{\subset} B_\sfG\big(x,\widetilde{\rho_{\Gamma(x)}}(x)+1\big)\stackrel{\eqref{eq:ba;; comparison wioth modified tilde in Gamma}}{\subset} B_{\Gamma(x)}(x,2\tau).
\end{equation}
Therefore, $x\in B_\MM(y,2\tau)\cap B_\MM(z,2\tau)\cap \Gamma$, where $\Gamma\eqdef \Gamma(x)=\Gamma(y)=\Gamma(z)$, so we have
\begin{align*}
\Delta(x)\stackrel{\eqref{eq:def our Delta and K}}{=} &C\left(\max_{b\in B_\MM(x,2\tau)\cap \Gamma} \|\f\circ q(x)-\f\circ q (b)\|_2\right)\\&\le  C\bigg(\max_{\substack{a\in B_\MM(y,2\tau)\cap B_\MM(z,2\tau)\cap \Gamma\\ b\in B_\MM(a,2\tau)\cap \Gamma}}\|\f\circ q(a)-\f\circ q(b)\|_2\bigg)\stackrel{\eqref{eq:def our sigma specific q}}{=}\sigma(\{y,z\}).
\end{align*}

Requirement~\eqref{eq:second compatibility condition} of Definition~\ref{def:compatibility} is proved as follows.  If $x\in \MM$ and $y\in N_\sfG(x)$, then $\Gamma(x)=\Gamma(y)\eqdef \Gamma$, and also $d_\MM(x,y)\le \tau$ by~\eqref{eq:geometric edges specific rho}, as $\rho\ge 1$. By the triangle inequality we therefore have 
\begin{equation}\label{eq:23 inclusion}
B_\Gamma(x,2\tau)\subset B_\Gamma(y,3\tau). 
\end{equation}
Apply Corollary~\ref{cor:process}  to $(\Gamma,d_\MM)$,  the functions $\mathfrak{d}_\Gamma,\mathfrak{n}_\Gamma:\Gamma:\to (0,\infty)$,\footnote{Recall that this is how we denoted  the restrictions to $\Gamma$ of the functions given in~\eqref{eqn:def h, f, F}; as  explained in Remark~\ref{rem:basic example}, they   satisfy assumption~\eqref{eqn:measure monotone} of Lemma~\ref{lem:h inverse}.} and $\UU\eqdef B_\Gamma(y,2\tau)$. Recalling that at the start of the proof of  Proposition~\ref{thm:universally compatible compression} we defined $\zeta$ to be the implicit factor in the right hand side of~\eqref{eq:int max in lemma}, we thus obtain the following estimate: 
\begin{align}\label{eq:use U lemma}
\begin{split}
\int_{\R^m} \Big(\max_{z\in B_\Gamma(y,2\tau)}&|\langle v,\f\circ q(z)-\f\circ q(y)\rangle|\Big)\ud\gamma_n(v)\\
&\stackrel{\eqref{eq:int max in lemma}}{\le} \zeta   \Big(\max_{z\in B_\Gamma(y,2\tau)}\|\f\circ q(z)-\f\circ q(y)\|_2\Big) \min_{w \in B_\Gamma (y,3\tau)} \sqrt{\log \frac{\mu(B_\MM(w,19\tau))}{\mu(B_\MM(w,\tau))}}\\
&\stackrel{\eqref{eq:23 inclusion}}{\le} \zeta   \Big(\max_{z\in B_\Gamma(y,2\tau)}\|\f\circ q(z)-\f\circ q(y)\|_2\Big) \min_{w \in B_\Gamma(x,2\tau)} \sqrt{\log \frac{\mu(B_\MM(w,19\tau))}{\mu(B_\MM(w,\tau))}}
\\&\stackrel{\eqref{eq:def our Delta and K}}{=}\Delta(y)\min_{w\in B_\Gamma(x,2\tau)} \frac{\zeta}{C}\sqrt{\log \frac{\mu(B_\MM(w,19\tau))}{\mu(B_\MM(w,\tau))}}.
\end{split}
\end{align}
Consequently, 
\begin{align}\label{eq:get KD}
\begin{split}
\int_{\R^n} \bigg(\max_{z\in B_{\sfG}\left(y,K(y)\right)}|\langle v,\f\circ q(z)-\f\circ q(y)\rangle|\bigg)\ud\gamma_n(v)&\stackrel{\eqref{eq:BK in BGamma}}{\le}\int_{\R^n} \Big(\max_{z\in B_{\Gamma}(y,2\tau)}|\langle v,\f\circ q(z)-\f\circ q(y)\rangle|\Big)\ud\gamma_n(v)\\
&\!\!\!\!\!\!\!\!\!\stackrel{\eqref{eq:our specific rho}\wedge \eqref{eq:use U lemma}}{<}\Delta(y)\min_{w\in B_\Gamma(x,2\tau)}\rho(w)\\ &\stackrel{\eqref{eq:rho gamm atilde}}{=} \Delta(y)\widetilde{\rho_\Gamma}(x)\\&\stackrel{\eqref{eq:def our Delta and K}}{\le} \Delta(y)K(x).
\end{split}
\end{align}
Requirement~\eqref{eq:second compatibility condition} of Definition~\ref{def:compatibility} coincides with~\eqref{eq:get KD}. The second step of~\eqref{eq:get KD} is the only place in the proof of Proposition~\ref{thm:universally compatible compression} in which the 
specific choice of $\rho$ in \eqref{eq:our specific rho} is used.

Finally, the remaining requirement~\eqref{eq:inclusion condition in compatibility def} of Definition~\ref{def:compatibility} is established as follows for every $x\in \MM$:
\begin{align*}
\f\circ q\Big(B_\sfG\big(x,K(x)\big)\Big)&\stackrel{\eqref{eq:BK in BGamma}}{\subset} \f\circ q\big(B_{\Gamma(x)}(x,2\tau)\big)\\& \stackrel{\phantom{\eqref{eq:BK in BGamma}}}{\subset} B_{\ell_2^n} \Big(\f\circ q(x),\max_{z\in B_{\Gamma(x)}(x,2\tau)} \|\f\circ q(x)-\f\circ q (z)\|_2\Big)\\&\stackrel{\eqref{eq:def our Delta and K}}{=} B_{\ell_2^n} \Big(\f\circ q(x),\frac{1}{C}\Delta(x)\big)\Big).
\end{align*}
This concludes the proof of Proposition~\ref{thm:universally compatible compression}.
\end{proof}

 \section{A structural implication  of quasisymmetry}\label{sec:structural}

The ultimate goal of this section is to prove Proposition~\ref{prop:use quasi to get good graph}. We will start by studying some basic implications of the notion of a metric space $(\MM,d)$ being $(s,\e)$-quasisymmetrically Hilbertian (given in Definition~\ref{def:quasisym metric space}), though many of the ensuing elementary considerations do not require the target space to be a Hilbert space. Similar issues are covered in the foundational work~\cite[Section~2]{TK80}.

It is convenient to introduce the following notion as it arises naturally when one wishes to iterate~\eqref{eq:def quasi Hilbert}:

\begin{terminology}\label{terminology:non-discrete} Given $0<s\le 1$ and $\d\ge 0$, say that a metric space $(\MM,d)$ has $(s,\d)$-non-discrete distances if for  every $x,y\in \MM$ there exists $z\in \MM$ such that $sd(x,y)-\d\le d(x,z)\le sd(x,y)$.
\end{terminology}

The next lemma is a simple iterative application of a condition such as the requirement~\eqref{eq:def quasi Hilbert} of being quasisymmetrically Hilbertian.\footnote{Lemma~\ref{lem:iterate quasisymmetry} assumes that the spaces in question are metric spaces because this is the context of the present investigation, but the triangle inequality is never used in its proof.} 

\begin{lemma}\label{lem:iterate quasisymmetry} Fix $\alpha,\d\ge 0$ and $0<s< 1$. Suppose that $(\MM,d_\MM)$ is a metric space that has $(s,\d)$-non-discrete distances. Suppose furthermore that $(\NN,d_\NN)$ is a metric space and that $\f:\MM\to \NN$ satisfies 
\begin{equation}\label{eq:weak quasi MN}
\forall (x,y,z)\in \MM^3,\qquad d_\MM(x,y)\le s d_\MM(x,z)\implies  d_\NN\big(\f(x),\f(y)\big)\le \alpha d_\NN\big(\f(x),\f(z)\big).
\end{equation}
Then, for every $\ell\in \N$ we have 
\begin{equation}\label{eq:l-version of weak quasi MN}
\forall (x,y,z)\in \MM^3,\qquad d_\MM(x,y)\le s^\ell d_\MM(x,z)-\frac{s(1-s^{\ell-1})}{1-s}\d\implies  d_\NN\big(\f(x),\f(y)\big)\le \alpha^\ell d_\NN\big(\f(x),\f(z)\big).
\end{equation}
\end{lemma}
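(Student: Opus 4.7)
The natural approach is induction on $\ell$. The base case $\ell=1$ is immediate: the constraint $\frac{s(1-s^{\ell-1})}{1-s}\d$ reduces to $0$, so the hypothesis~\eqref{eq:l-version of weak quasi MN} becomes $d_\MM(x,y)\le s\, d_\MM(x,z)$, and the conclusion is exactly~\eqref{eq:weak quasi MN}.

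For the inductive step, assume~\eqref{eq:l-version of weak quasi MN} holds at level $\ell$, and suppose $x,y,z\in\MM$ satisfy $d_\MM(x,y)\le s^{\ell+1}d_\MM(x,z)-\frac{s(1-s^\ell)}{1-s}\d$. The plan is to insert an intermediate point $w$ at scale $s\,d_\MM(x,z)$ from $x$, using the $(s,\d)$-non-discreteness hypothesis on $\MM$, and then combine the base case applied to the triple $(x,w,z)$ with the inductive hypothesis applied to the triple $(x,y,w)$. Concretely, pick $w\in \MM$ with
\[
s\,d_\MM(x,z)-\d\le d_\MM(x,w)\le s\,d_\MM(x,z).
\]
The right inequality together with~\eqref{eq:weak quasi MN} yields $d_\NN(\f(x),\f(w))\le \alpha\, d_\NN(\f(x),\f(z))$, so it remains only to show that $d_\NN(\f(x),\f(y))\le \alpha^\ell d_\NN(\f(x),\f(w))$, i.e., that the triple $(x,y,w)$ satisfies the hypothesis of~\eqref{eq:l-version of weak quasi MN} at level $\ell$.

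This reduces to verifying the inequality $d_\MM(x,y)\le s^\ell d_\MM(x,w)-\frac{s(1-s^{\ell-1})}{1-s}\d$. Using the lower bound $d_\MM(x,w)\ge s\,d_\MM(x,z)-\d$, we have $s^\ell d_\MM(x,w)\ge s^{\ell+1}d_\MM(x,z)-s^\ell\d$, so it suffices that
\[
s^{\ell+1}d_\MM(x,z)-\frac{s(1-s^\ell)}{1-s}\d\le s^{\ell+1}d_\MM(x,z)-s^\ell\d-\frac{s(1-s^{\ell-1})}{1-s}\d,
\]
which is the identity $\frac{s(1-s^\ell)}{1-s}-\frac{s(1-s^{\ell-1})}{1-s}=s^\ell$, a routine telescoping of the geometric series. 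I do not foresee any real obstacle here: the constants $\frac{s(1-s^{\ell-1})}{1-s}\d$ are engineered precisely so that the $\d$-error accumulated when peeling off one factor of $s$ via non-discreteness (namely $s^\ell\d$ at step $\ell+1$) telescopes to the claimed expression, and the case where the right-hand side of the hypothesis is negative is vacuous. One should, however, take care to state the base case $\ell=1$ cleanly (observing that the empty-sum convention makes the $\d$-term vanish) and to ensure that the lemma is applied only for $0<s<1$, since the formula involves $1-s$ in the denominator.
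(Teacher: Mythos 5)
Your proposal is correct and follows essentially the same route as the paper's proof: induction on $\ell$, inserting an intermediate point $w$ via $(s,\delta)$-non-discreteness, applying the hypothesis~\eqref{eq:weak quasi MN} to $(x,w,z)$ and the inductive hypothesis to $(x,y,w)$, with the $\delta$-bookkeeping verified through the same geometric-series telescoping identity. The only difference is cosmetic: the paper verifies the inequality for the triple $(x,y,w)$ by chaining forward from the lower bound on $d_\MM(x,w)$, whereas you reduce it to a sufficient inequality and then simplify, but the arithmetic is identical.
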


\begin{proof} Observe that~\eqref{eq:weak quasi MN} and~\eqref{eq:l-version of weak quasi MN} coincide when $\ell=1$. We will proceed by induction on $\ell$. Assuming that~\eqref{eq:l-version of weak quasi MN} holds for $\ell\in \N$,  suppose that  $x,y,z\in \MM$ satisfy
\begin{equation}\label{eq:distance assumption ell+1}
d_\MM(x,y)\le s^{\ell+1} d_\MM(x,z)-\frac{s(1-s^{\ell})}{1-s}\d.
\end{equation}
As $(\MM,d_\MM)$  is assumed to have $(s,\d)$-non-discrete distances, we can fix $w\in \MM$ such that 
\begin{equation}\label{eq:for induction sd}
sd_\MM(x,z)-\d\le d_\MM(x,w)\le sd_\MM(x,z).
\end{equation}
The second inequality in~\eqref{eq:for induction sd} makes it possible for us to apply~\eqref{eq:weak quasi MN} to the triple $(x,w,z)$, which gives 
\begin{equation}\label{eq:alpha version xwz}
d_\NN\big(\f(x),\f(w)\big)\le \alpha d_\NN\big(\f(x),\f(z)\big).
\end{equation}
Furthermore, observe that
\begin{align*}
s^\ell d_\MM(x,w)-\frac{s(1-s^{\ell-1})}{1-s}\d&\stackrel{\eqref{eq:for induction sd}}{\ge} s^\ell(sd_\MM(x,z)-\d)-\frac{s(1-s^{\ell-1})}{1-s}\d\\&\stackrel{\eqref{eq:distance assumption ell+1}}{\ge} d_\MM(x,y)+\frac{s(1-s^{\ell})}{1-s}\d-s^\ell\d-\frac{s(1-s^{\ell-1})}{1-s}\d=d_\MM(x,y).
\end{align*}
We can therefore apply the induction hypothesis~\eqref{eq:l-version of weak quasi MN} to the triple $(x,y,w)$ to get that
$$
d_\NN\big(\f(x),\f(y)\big)\le \alpha^\ell d_\NN\big(\f(x),\f(w)\big).
$$
Thanks to~\eqref{eq:alpha version xwz}, this implies $d_\NN(\f(x),\f(y))\le \alpha^{\ell+1}d_\NN(\f(x),\f(z))$, completing the induction step. 
\end{proof}

\begin{remark}\label{rem:path} Given $\d> 0$, recall that a metric space $(\MM,d)$ is said to be $\d$-discretely path connected if for every $x,y\in \MM$  there exist $k\in \N$ and a $k$-step discrete path $z_0=x,z_1,\ldots,z_k=y\in \MM$ joining $x$ to $y$ such that $d_\MM(z_i,z_{i-1})\le \d$ for every $i\in [k]$. Any such space has $(s,\d)$-non-discrete distances for every $0<s\le 1$. Indeed, if $sd(x,y)<\d$, then choose $z=x$ in Terminology~\ref{terminology:non-discrete}. If $sd(x,y)>\d$, then let $i$ be the largest element of $[k]\cup \{0\}$ for which $d(x,z_i)<sd(x,y)-\d$; there is such $i$  by the assumption $sd(x,y)-\d>0=d(x,z_0)$. Now, $i<k$ as $d(x,z_k)=d(x,y)\ge sd(x,y)> sd(x,y)-\d$. So, $d(x,z_{i+1})\ge sd(x,y)-\d$ by the maximality of $i$. Also, $d(x,z_{i+1})\le d(x,z_i)+d(z_i,z_{i+1})<sd(x,y)-\d+\d=sd(x,y)$. Thus $z=z_{i+1}$ works for Terminology~\ref{terminology:non-discrete}.

Furthermore, if $(\MM,d)$ is path connected, then by continuity it has  $(s,0)$-non-discrete distances for any $0<s< 1$. Lemma~\ref{lem:iterate quasisymmetry} therefore implies that if~\eqref{eq:weak quasi MN} holds, then for every $\ell\in \N$ and every $x,y,z\in \MM$ we have 
\begin{equation}\label{eq:s alpha}
d_\MM(x,y)\le s^\ell d_\MM(x,z)\implies  d_\NN\big(\f(x),\f(y)\big)\le \alpha^\ell d_\NN\big(\f(x),\f(z)\big).
\end{equation}
Suppose that $d_\MM(x,y)\le s d_\MM(x,z)$ and write $\ell\eqdef \lfloor  (\log(d(x,z)/d(x,y)))/\log(1/s)\rfloor\in \N$. By design we have $d_\MM(x,y)\le s^\ell d_\MM(x,z)$, so $d_\NN(\f(x),\f(y))\le \alpha^\ell d_\NN(\f(x),\f(z))$ by~\eqref{eq:s alpha}. If $0<\alpha<1$, then
$$
\alpha^\ell\le \alpha^{\frac{\log\frac{d(x,z)}{d(x,y)}}{\log\frac{1}{s}}-1} =\frac{1}{\alpha} \left(\frac{d(x,y)}{d(x,z)}\right)^{\frac{\log \frac{1}{\alpha}}{\log\frac{1}{s}}}. 
$$
Consequently, for every distinct $x,y,z\in \MM$ that satisfy $d_\MM(x,y)\le s d_\MM(x,z)$ we have 
$$
 \frac{d_\NN(\f(x),\f(y))}{d_\NN(\f(x),\f(z))}\le \eta \bigg(\frac{d_\MM(x,y)}{d_\MM(x,z)}\bigg),
$$
where $\eta=\eta_{\alpha,s}:[0,\infty)\to [0,\infty)$ is given by setting  $\eta(t)=t^{(\log(1/\alpha))/\log(1/s)}/\alpha$ for $t\ge 0$, so  $\lim_{t\to 0^+} \eta(t)=0$.

We have thus shown that for every path connected metric space $(\MM,d_\MM)$ 
and  $0<\alpha,s<1$, if $\f:\MM\to \NN$ satisfies~\eqref{eq:weak quasi MN}, 
then it also satisfies the condition~\eqref{eq:def quasi} of being quasisymmetric, except that now~\eqref{eq:def quasi} is required to hold for every distinct $x,y,z\in \MM$  that satisfy $d_\MM(x,y)\le s d_\MM(x,z)$, rather than for any triple of distinct points in $\MM$ whatsoever.  A straightforward inspection of the proof of the main result of~\cite{Nao12} reveals that even though its statement assumes the existence of a quasisymmetric embedding in the classical sense~\eqref{eq:def quasi}, it actually uses this weaker condition. Therefore, thanks to the above reasoning~\cite{Nao12} implies in particular that if $2<p<\infty$, then $\ell_p$ is not $(s,\e)$-quasisymmetrically Hilbertian for any $0<s,\e<1$.  
\end{remark}

The following simple lemma will also be used in the proof of Proposition~\ref{prop:use quasi to get good graph}:

\begin{lemma}\label{lem: for smoothness of labeling along edges} Fix $\tau,s,\alpha>0$. Let $(\MM,d_\MM)$ and $(\NN,d_\NN)$ be metric spaces. Suppose that $f:\MM\to \NN$ satisfies 
\begin{equation}\label{eq: two distance condition}
\forall x,y,z\in \MM,\qquad d_\MM(x,y)\le s\tau\le 2sd_\MM(x,z)\implies d_\NN\big(f(x),f(y)\big)\le \alpha d_\NN\big(f(x),f(z)\big). 
\end{equation}
Given a nonempty subset $\mathscr{E}$ of $\MM\times \MM$ with
\begin{equation}\label{eq:pairs in E are tau apart}
\forall (x,y)\in \mathscr{E},\qquad  d_\MM(x,y)\ge \tau,
\end{equation}
define $\Lambda:\MM\to [0,\infty)$ by setting
\begin{equation}\label{eq:def Lambda ion lemma}
\forall x\in \MM,\qquad \Lambda(x)\eqdef \inf_{(a,b)\in \mathscr{E}}\max\left\{d_\NN\big(f(x),f(a)\big),d_\NN\big(f(x),f(b)\big)\right\}.
\end{equation}
Then, $\Lambda(y)\le (\alpha+1)\Lambda(x)$ for every $x,y\in \MM$ with $d_\MM(x,y)\le s\tau$ (by symmetry, also $\Lambda(y)\ge \Lambda(x)/(\alpha+1)$). 
\end{lemma}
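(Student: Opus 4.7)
The plan is to prove the lemma by a direct pair-by-pair comparison inside the infimum defining $\Lambda$. Fix $x,y\in\MM$ with $d_\MM(x,y)\le s\tau$, and fix any pair $(a,b)\in \mathscr{E}$. Writing
$$M_x \eqdef \max\big\{d_\NN(f(x),f(a)),\, d_\NN(f(x),f(b))\big\} \qquad\text{and}\qquad M_y \eqdef \max\big\{d_\NN(f(y),f(a)),\, d_\NN(f(y),f(b))\big\},$$
the goal reduces to showing that $M_y\le (\alpha+1) M_x$ for every such $(a,b)$, since passing to the infimum over $\mathscr{E}$ then yields $\Lambda(y)\le (\alpha+1)\Lambda(x)$.

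First I would invoke assumption \eqref{eq:pairs in E are tau apart} together with the triangle inequality in $\MM$: from $\tau\le d_\MM(a,b)\le d_\MM(x,a)+d_\MM(x,b)$ one of $d_\MM(x,a)$ or $d_\MM(x,b)$ is at least $\tau/2$. By symmetry between $a$ and $b$ in the definition of $M_x,M_y$, I may assume without loss of generality that $d_\MM(x,a)\ge \tau/2$, so that $d_\MM(x,y)\le s\tau \le 2s\, d_\MM(x,a)$, which is precisely the hypothesis of \eqref{eq: two distance condition} applied to the triple $(x,y,a)$. It therefore gives
$$d_\NN\big(f(x),f(y)\big)\le \alpha\, d_\NN\big(f(x),f(a)\big)\le \alpha M_x.$$

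Two applications of the triangle inequality in $\NN$ then complete the argument:
$$d_\NN\big(f(y),f(a)\big)\le d_\NN\big(f(y),f(x)\big)+d_\NN\big(f(x),f(a)\big)\le \alpha M_x + M_x = (\alpha+1)M_x,$$
and identically $d_\NN(f(y),f(b))\le \alpha M_x + M_x = (\alpha+1)M_x$. Taking the maximum of these two inequalities gives $M_y\le (\alpha+1) M_x$, and then the infimum over $(a,b)\in \mathscr{E}$ produces the desired bound $\Lambda(y)\le (\alpha+1)\Lambda(x)$. There is no real obstacle beyond noticing the elementary pigeonhole step that at least one of the pair $\{a,b\}$ lies at $d_\MM$-distance $\ge \tau/2$ from $x$, which is what unlocks the two-distance condition \eqref{eq: two distance condition}; the symmetric inequality $\Lambda(x)\le (\alpha+1)\Lambda(y)$ follows by interchanging the roles of $x$ and $y$, as indicated in the statement.
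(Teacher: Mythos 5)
Your proposal is correct and follows essentially the same route as the paper: invoke the triangle inequality in $\MM$ together with \eqref{eq:pairs in E are tau apart} to obtain that at least one of $d_\MM(x,a)$ or $d_\MM(x,b)$ is $\ge \tau/2$, apply the two-distance condition \eqref{eq: two distance condition} to bound $d_\NN(f(x),f(y))$, and then finish by a triangle inequality in $\NN$ before taking the infimum over $(a,b)\in\mathscr{E}$. The introduction of $M_x, M_y$ is a cosmetic reorganization of the same argument.
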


\begin{proof} Suppose that $x,y\in \MM$ satisfy $d_\MM(x,y)\le s\tau$. If $(w,z)\in \mathscr{E}$, then in particular  $d_\MM(w,z)\ge \tau$, by~\eqref{eq:pairs in E are tau apart}. By the triangle inequality we therefore have $\max\{d_\MM(x,w),d_\MM(x,z)\}\ge \tau/2$; without loss of generality we may assume that $d_\MM(x,z)\ge \tau/2$.   We can now use~\eqref{eq: two distance condition} to get that
\begin{equation}\label{eq: 2 poiont}
d_\NN\big(f(x),f(y)\big)\le \alpha d_\NN\big(f(x),f(z)\big).
\end{equation}
Consequently,
\begin{align*}
\Lambda(y)&\le \max\left\{d_\NN\big(f(y),f(w)\big),d_\NN\big(f(y),f(z)\big)\right\}\\&\le d_\NN\big(f(x),f(y)\big)+\max\left\{d_\NN\big(f(x),f(w)\big),d_\NN\big(f(x),f(z)\big)\right\}\\ &\le(\alpha+1)\max\left\{d_\NN\big(f(x),f(w)\big),d_\NN\big(f(x),f(z)\big)\right\},
\end{align*}
where the last step uses~\eqref{eq: 2 poiont}. By taking the infimum over $(w,z)\in \mathscr{E}$, we get that $\Lambda(y)\le (\alpha+1)\Lambda(x)$. 
\end{proof}

We can now prove Proposition~\ref{prop:use quasi to get good graph}, which, as explained in Section~\ref{sec:history}, is a structural implication of $(\MM,d_\MM)$ being $(s,\e)$-quasisymmetrically Hilbertian that is used in the proof of Theorem~\ref{thm:random zero general} (the assumption that $(\MM,d_\MM)$ is  $(s,\e)$-quasisymmetrically Hilbertian is not used elsewhere in the proof of Theorem~\ref{thm:random zero general}).

\begin{proof}[Proof of Proposition~\ref{prop:use quasi to get good graph}] Write $n=|\MM|$. The assumption that $(\MM,d_\MM)$ is $(s,\e)$-quasisymmetrically Hilbertian means that there is an injection $\f:\MM\to \R^n$ such that 
\begin{equation}\label{eq:our quasi in prop}
\forall x,y,z\in \MM,\qquad d_\MM(x,y)\le s d_\MM(x,z)\implies  \|\f(x)-\f(y)\|_{2}\le (1-\e)\|\f(x)-\f(z)\|_{2}.
\end{equation}

Apply Proposition~\ref{thm:universally compatible compression} with $\tau$ replaced by $\beta\tau$ and $C$  replaced by $rC$ to get $q:\MM\to \MM$ that satisfies 
\begin{equation}\label{eq:proximity q}
\forall x\in \MM,\qquad d_\MM(q(x),x)\le 7\beta \tau, 
\end{equation}
and such that, recalling  that the statement of Proposition~\ref{thm:universally compatible compression} denotes the connected component in $\sfG$ of each $x\in \MM$ by $\Gamma(x)$,  if we define, in accordance with~\eqref{eq:def our sigma specific q},
\begin{equation}\label{eq:defs of f and sigma}
f\eqdef \f\circ q\qquad \mathrm{and}\qquad \forall \{x,y\}\in E,\qquad \sigma(\{x,y\})\eqdef rC\bigg(\max_{\substack{a\in B_\MM(x,2\beta\tau)\cap B_\MM(y,2\beta\tau)\cap \Gamma(x)\\ b\in B_\MM(a,2\beta\tau)\cap \Gamma(x)}}\|f(a)-f(b)\|_2\bigg),
\end{equation}
then $\sfG$ is $(rC)$-compatible with $f$ and $\sigma$. So, the first requirement of Proposition~\ref{prop:use quasi to get good graph}   holds by construction. 

It remains to define $\Lambda$ and show that it satisfies the rest of the assertions of Proposition~\ref{prop:use quasi to get good graph}.  Firstly,
\begin{equation}\label{eq:def our Lambda main}
\forall x\in \MM,\qquad \diam_\MM\big(\Gamma(x)\big)\ge \tau\implies \Lambda(x)\eqdef C\bigg(\min_{\substack{w,z\in \Gamma(x)\\ d_\MM(w,z)\ge \tau}} \max\big\{\|f(x)-f(w)\|_2,\|f(x)-f(z)\|_2\big\}\bigg).
\end{equation}
The assumption $\diam_\MM(\Gamma(x))\ge \tau$  in~\eqref{eq:def our Lambda main} ensures that the minimum is over a nonempty set; we complement this definition to the case   $\diam_\MM(\Gamma(x))<\tau$, which is uninteresting for Proposition~\ref{prop:use quasi to get good graph}, as follows:
$$
\forall x\in \MM,\qquad \diam_\MM\big(\Gamma(x)\big)< \tau\implies \Lambda(x)\eqdef \infty. 
$$
This ensures that~\eqref{eq:in same connected component} holds (vacuously) and~\eqref{eq:restate 1/r version} holds (trivially) when $x,y\in \MM$  are both members of the same connected component of $\sfG$ whose diameter is strictly smaller than $\tau$.   

Passing  to the more meaningful case of $x,y\in \MM$ with $\Gamma(x)=\Gamma(y)\eqdef \Gamma$ and $\diam_\MM(\Gamma)\ge \tau$, we will first check  that $\Lambda(x)\neq 0$. If $w,z\in \Gamma$ satisfy $d_\MM(w,z)\ge \tau$, then $d_\MM(q(w),q(z))\ge \tau-14\beta\tau>0$,  thanks to~\eqref{eq:proximity q} and since $\beta<1/14$ by~\eqref{eq:def our beta in prop}. Hence $q(w)\neq q(z)$, so because $\f$ is injective and $f=\f\circ q$, either $f(x)\neq f(w)$ or $f(x)\neq f(z)$, which implies that $\Lambda(x)>0$ by~\eqref{eq:def our Lambda main}. If  $d_\MM(x,y)\ge \tau$, then we can consider $(w,z)=(x,y)$ in~\eqref{eq:def our Lambda main} to get $\Lambda(x)\le C\|f(x)-f(y)\|_2$. By symmetry, also  $\Lambda(y)\le C\|f(x)-f(y)\|_2$,  so~\eqref{eq:in same connected component} holds. 

It remains to verify~\eqref{eq:restate 1/r version}. We will first check that the assumptions of Lemma~\ref{lem: for smoothness of labeling along edges} are satisfied with $\alpha=1$,  $s$ replaced by $s/8$, $f$ replaced by $Cf$, and the target space $(\NN,d_\NN)$ taken to be $\ell_2^n$. So, suppose that $x,y,z\in \MM$ satisfy $d_\MM(x,y)\le s\tau/8$ and $d_\MM(x,z)\ge \tau/2$. Then, by~\eqref{eq:proximity q} and the triangle inequality for $d_\MM$, 
\begin{equation}\label{eq:pass to q for quasi condition}
d_\MM\big(q(x),q(y)\big)\le \frac{s}{8}\tau + 14\beta\tau\le \frac{s}{4}\tau \qquad \mathrm{and}\qquad d_\MM\big(q(x),q(z)\big)\ge \frac12\tau-14\beta\tau\ge \frac{1}{4}\tau,
\end{equation}
where the two last steps in~\eqref{eq:pass to q for quasi condition} hold since  $\beta\le s/112<1/56$ by~\eqref{eq:def our beta in prop}, using $0<s,\e\le \frac12$ and $r\ge 1$. By contrasting the two estimates in~\eqref{eq:pass to q for quasi condition} we see that $d_\MM(q(x),q(y))\le sd_\MM(q(x),q(z))$, so we may apply~\eqref{eq:our quasi in prop} with $(x,y,z)$ replaced by $(q(x),q(y),q(z))$ to get $\|f(x)-f(y)\|_2\le (1-\e)\|f(x)-f(z)\|_2\le \|f(x)-f(z)\|_2$, as  $f=\f\circ q$. This  establishes assumption~\eqref{eq: two distance condition} of Lemma~\ref{lem: for smoothness of labeling along edges} with the above parameters. 

If  $\{x,y\}\in E$, which implies  $\Gamma(x)=\Gamma(y)\eqdef\Gamma$,  and if $\diam_\MM(\Gamma)\ge \tau$, then as $\rho\ge 1$  by~\eqref{eq:rewrite rho specific}, definition~\eqref{eq:def our E with beta} of $E$ gives $d_\MM(x,y)\le \beta\tau\le (s/8)\tau$, using~\eqref{eq:def our beta in prop}. Applying Lemma~\ref{lem: for smoothness of labeling along edges} to $\mathscr{E}=\{(w,z)\in \Gamma\times \Gamma: d_\MM(w,z)\ge \tau\}$, notice that the restriction of~\eqref{eq:def Lambda ion lemma} to $\Gamma$ coincides with the restriction of~\eqref{eq:def our Lambda main} to $\Gamma$,\footnote{Recall that we chose above to apply Lemma~\ref{lem: for smoothness of labeling along edges} with $f$ replaced by $Cf$.} so $\Lambda(y)\le 2\Lambda(x)$. 

It remains to prove the second part of~\eqref{eq:restate 1/r version} when $\{x,y\}\in E$ and $\diam(\Gamma)\ge \tau$, where $\Gamma\eqdef \Gamma(x)=\Gamma(y)$. By~\eqref{eq:defs of f and sigma}, we need to show that if $a\in B_\MM(x,2\beta\tau)\cap B_\MM(y,2\beta\tau)\cap \Gamma$ and $b\in B_\MM(a,2\beta\tau)\cap \Gamma$, then 
\begin{equation}\label{eq:rewrite goal max Lambda}
4rC\|f(a)-f(b)\|_2\le \min\big\{\Lambda(x),\Lambda(y)\big\}.
\end{equation}
To see why~\eqref{eq:rewrite goal max Lambda} holds, fix from now such $a,b\in \MM$. Then, $d_\MM(x,b)\le d_\MM(x,a)+d_\MM(a,b)\le 4\beta \tau$, so
\begin{align}\label{eq:distances between the qs upper}
\begin{split}
\max\left\{d_\MM\big(q(x),q(a)\big),d_\MM\big(q(x),q(a)\big)\right\}\stackrel{\eqref{eq:proximity q}}{\le} & \max\big\{d_\MM(x,a),d_\MM(x,b)\big\}+14\beta\tau\\&\le 18\beta\tau\le\frac14s^\ell\tau-\frac{s}{1-s}\beta\tau<\frac14s^\ell\tau-\frac{s(1-s^{\ell-1})}{1-s}\beta\tau,  
\end{split}
\end{align}
where we introduce the notation
\begin{equation}\label{eq:choose our ell for lemma}
\ell=\ell(r,\e)\eqdef \left\lceil \frac{\log(8r)}{\e}\right\rceil\in \N,
\end{equation}
using which the penultimate inequality in~\eqref{eq:distances between the qs upper} is straightforward to verify thanks to the assumption~\eqref{eq:def our beta in prop} on $\beta$ and because $0<s,\e\le 1/2$ and $r\ge 1$.

Because $\Gamma$ is the connected component of $x$ (and $y$) in the graph $\sfG$, and thanks to~\eqref{eq:def our E with beta} and the fact that $\rho\ge 1$ by~\eqref{eq:rewrite rho specific}, the $d_\MM$-distance between the endpoints of every edge of $\sfG$ is at most $\beta\tau$, the metric space $(\Gamma,d_\MM)$ is $\beta\tau$-discretely path connected, and hence it has  $(s,\beta\tau)$-non-discrete distances (recall Terminology~\ref{terminology:non-discrete} and Remark~\ref{rem:path}). We can therefore apply Lemma~\ref{lem:iterate quasisymmetry} to the metric space $(\Gamma,d_\MM)$ with $s,\f$ as above, $\alpha=1-\e$ and $\d=\beta\tau$, in which case assumption~\eqref{eq:weak quasi MN} of Lemma~\ref{lem:iterate quasisymmetry} for $\NN=\ell_2^n$ coincides with~\eqref{eq:our quasi in prop}. 
 
The assumption $\diam(\Gamma)\ge \tau$ means that there are $w,z\in \Gamma$ for which $d_\MM(w,z)\ge \tau$. By the triangle inequality for $d_\MM$, it follows that $\max\{d_\MM(x,w),d_\MM(x,z)\}\ge \tau/2$, so we may assume without loss of generality that $d_\MM(x,z)\ge \tau/2$. Using~\eqref{eq:proximity q} and the triangle inequality for $d_\MM$, we therefore have 
\begin{equation}\label{eq:distance from qs lower}
d_\MM\big(q(x),q(z)\big)\ge \frac12\tau-14\beta \tau\ge \frac14\tau. 
\end{equation}
By combining~\eqref{eq:distances between the qs upper} and~\eqref{eq:distance from qs lower}, we see that 
\begin{equation}\label{eq:assumption of quasi iteration lemma for a,b}
\max\left\{d_\MM\big(q(x),q(a)\big),d_\MM\big(q(x),q(b)\big)\right\}<s^\ell d_\MM\big(q(x),q(z)\big)-\frac{s(1-s^{\ell-1})}{1-s}\beta\tau.
\end{equation}
Since $a,b,x,z\in \Gamma$ and Proposition~\ref{thm:universally compatible compression} asserts that $q(\Gamma)\subset \Gamma$, we have $q(a),q(b),q(x),q(z)\in \Gamma$. Because we checked that the assumptions of Lemma~\ref{lem:iterate quasisymmetry} hold for $(\Gamma,d_\MM)$ with the above parameters, and~\eqref{eq:assumption of quasi iteration lemma for a,b} corresponds to the assumption in~\eqref{eq:l-version of weak quasi MN}, then using that $f=\f\circ q$ we get  from Lemma~\ref{lem:iterate quasisymmetry} (applied to $\f$ twice, once to the triple  $(q(x),q(a),q(z))\in \Gamma^3$, and once to the triple $(q(x),q(b),q(z))\in \Gamma^3$) that 
\begin{align}\label{eq:before inf on wz}
\begin{split}
\max\big\{\|f(x)-f(a)\|_2,\|f(x)&-f(b)\|_2\big\}\le (1-\e)^\ell\|f(x)-f(z)\|_2\le e^{-\e\ell} \|f(x)-f(z)\|_2\\&\stackrel{\eqref{eq:choose our ell for lemma}}{\le} \frac{1}{8r}\|f(x)-f(z)\|_2\le \frac{1}{8r} \max\big\{\|f(x)-f(w)\|_2,\|f(x)-f(z)\|_2\big\}.
\end{split}
\end{align}
By taking the minimum over all  $w,z\in \Gamma$ with $d_\MM(w,z)\ge \tau$ while recalling~\eqref{eq:def our Lambda main}, we get from~\eqref{eq:before inf on wz} that
$$
\max\big\{\|f(x)-f(a)\|_2,\|f(x)-f(b)\|_2\big\}\le \frac{\Lambda(x)}{8rC}. 
$$
By the triangle inequality for $\ell_2^n$, this implies that $4rC\|f(a)-f(b)\|_2\le \Lambda(x)$. The symmetric reasoning shows that also $4rC\|f(a)-f(b)\|_2\le \Lambda(y)$. This completes the proof of the desired inequality~\eqref{eq:rewrite goal max Lambda}.
\end{proof}

\section{Proof of Theorem~\ref{thm:main separated piut together}}\label{sec:use fractional}

\noindent As Theorem~\ref{thm:main separated piut together} treats a general class of probability measures on $\MM\times \MM$, which is needed for the duality step that will be performed in Section~\ref{sec:duality}, a fractional version of Theorem~\ref{thm:matching from compatibility} is required. Prior to passing to the proof of Theorem~\ref{thm:main separated piut together}, we will start this section by explaining this  
formal consequence of Theorem~\ref{thm:matching from compatibility}.

Recall that $\nu(\sfG)$ denotes the matching number of a graph $\sfG=(V,E)$, i.e., it is the maximum size of a pairwise disjoint collection of edges in $E$. The fractional matching number of $\sfG$, denoted $\nu^*(\sfG)$, is the maximum of $\sum_{e\in E}\phi(e)$ over all possible $\phi:E\to [0,1]$ that satisfy the following constraints:
\begin{equation}\label{eq:fractional constrainyts no vertex weights}
\forall x\in V,\qquad \sum_{\substack{e\in E\\ x\in e}}\phi(e)\le 1. 
\end{equation}
Clearly $\nu^*(\sfG)\ge \nu(\sfG)$, since $\nu(\sfG)$ is the maximum of the same optimization problem as above if we add to it the further requirement that $\phi$ only takes values in $\{0,1\}$. Conversely, by~\cite{Lov74} we have  $\nu^*(\sfG)\le \frac32\nu(\sfG)$, 
with equality holding if and only if $\sfG$ is a disjoint union of triangles; see~\cite[Corollary~1]{Fur81} (also, e.g.~the recent exposition~\cite[Corollary~9]{CKO16}). We therefore have the following corollary of Theorem~\ref{thm:matching from compatibility}:

\begin{corollary}\label{cor:unweighted fractional}Fix $C\ge 1$ and $n\in \N$. Suppose that $\sfG=(V,E)$ is a graph that is $C$-compatible with $f:V\to \R^n$ and $\sigma:E\to [0,\infty)$. Then,\footnote{The function $(v\in \R^n) \mapsto \nu^*(\sfG(v;f,\sigma))$ in~\eqref{eq:nustar} is Borel-measurable because by~\eqref{eq:sparsification definition} the set $\{v\in \R^n:\ E(v;f,\sigma)=E'\}$ is Borel for every fixed $E'\subset E$, and there are finitely many such $E'$.}
\begin{equation}\label{eq:nustar}
\int_{\R^n}\nu^*\big(\sfG(v;f,\sigma)\big)\ud\gamma_n(v)< 9e^{-\frac{1}{4}C^2}|V|.
\end{equation}
\end{corollary}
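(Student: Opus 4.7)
The plan is to deduce Corollary~\ref{cor:unweighted fractional} directly from Theorem~\ref{thm:matching from compatibility} via the classical comparison between the fractional matching number and the (integral) matching number. Specifically, I would invoke Lov\'asz's theorem~\cite{Lov74} (which, as already cited in the paragraph preceding the corollary, states that $\nu^*(\sfH)\le \tfrac{3}{2}\nu(\sfH)$ for every graph $\sfH$), applied pointwise to the random subgraph $\sfH=\sfG(v;f,\sigma)$ of $\sfG$.

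First I would note that the hypothesis of Corollary~\ref{cor:unweighted fractional} is precisely the hypothesis of Theorem~\ref{thm:matching from compatibility}, so the latter yields the bound
\begin{equation*}
\int_{\R^n}\nu\big(\sfG(v;f,\sigma)\big)\ud\gamma_n(v)< 6e^{-\frac{1}{4}C^2}|V|.
\end{equation*}
Next, for every $v\in \R^n$ the graph $\sfG(v;f,\sigma)$ is a (finite) graph, so Lov\'asz's inequality gives the pointwise bound $\nu^*(\sfG(v;f,\sigma))\le \tfrac{3}{2}\nu(\sfG(v;f,\sigma))$. The footnote of Corollary~\ref{cor:unweighted fractional} already records that $v\mapsto \nu^*(\sfG(v;f,\sigma))$ is Borel-measurable, so integration with respect to $\gamma_n$ yields
\begin{equation*}
\int_{\R^n}\nu^*\big(\sfG(v;f,\sigma)\big)\ud\gamma_n(v)\le \frac{3}{2}\int_{\R^n}\nu\big(\sfG(v;f,\sigma)\big)\ud\gamma_n(v)< \frac{3}{2}\cdot 6 e^{-\frac{1}{4}C^2}|V|=9e^{-\frac{1}{4}C^2}|V|,
\end{equation*}
which is exactly the desired estimate~\eqref{eq:nustar}.

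There is essentially no obstacle here: the corollary is a one-line consequence of Theorem~\ref{thm:matching from compatibility} combined with a well-known graph-theoretic fact. The only minor bookkeeping is to confirm measurability of $v\mapsto \nu^*(\sfG(v;f,\sigma))$ (which follows, as the footnote observes, because $v\mapsto E(v;f,\sigma)$ takes only finitely many values, and $\nu^*$ is a deterministic function of the edge set of a finite graph), so that the pointwise Lov\'asz inequality may be integrated. No sharper pointwise inequality is needed---the factor $9$ in~\eqref{eq:nustar} arises solely from $6\cdot \tfrac{3}{2}$.
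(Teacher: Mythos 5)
Your proof is correct and is exactly the paper's argument: the paragraph preceding Corollary~\ref{cor:unweighted fractional} already invokes Lov\'asz's bound $\nu^*(\sfG)\le \tfrac32\nu(\sfG)$ and then states the corollary as the resulting consequence of Theorem~\ref{thm:matching from compatibility}, with the factor $9=\tfrac32\cdot 6$ arising precisely as you describe.
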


We will, in fact, need below the following  vertex-weighted version of Corollary~\ref{cor:unweighted fractional}, which is a quick consequence of it thanks to the immediate fact that the notion of $C$-compatibility is preserved by blowups (we soon will explain what this means). Given a graph $\sfG=(V,E)$ and vertex weights $Q:V\to [0,\infty)$, let $\nu^*_Q(\sfG)$ be the maximum of   $\sum_{e\in E}\phi(e)$ over all possible $\phi:E\to [0,\infty)$ that satisfy the following constraints:
\begin{equation}\label{eq:Qfractional constrainyts no vertex weights}
\forall x\in V,\qquad \sum_{\substack{e\in E\\ x\in e}}\phi(e)\le Q(x). 
\end{equation}
Thus, $\nu^*(\sfG)$ corresponds to the special $Q\equiv 1$. 

Suppose that $Q(V)\subset \N\cup \{0\}$. Write  $V'\eqdef V\setminus Q^{-1}(0)$ and $E'\eqdef \{e\in E:\ e\subset V'\}$. The graph $\sfG'\eqdef (V',E')$  is obtained from $\sfG$ by deleting all the vertices on which $Q$ vanishes and  all of the edges that are incident to such a vertex. For every $x\in V'$ introduce $Q(x)$-copies $\{x_{1},\ldots, x_{Q(x)}\}$ of $x$, and define  
$$
U\eqdef \bigcup_{x\in V'}  \big\{x_{1},\ldots, x_{Q(x)}\big\}\qquad \mathrm{and}\qquad F\eqdef \bigcup_{\{x,y\}\in E'}\big\{\{x_i,y_j\}: (i,j)\in [Q(x)]\times [Q(y)]\big\}.
$$ 
Thus, $\sfH\eqdef (U,F)$ is the standard $Q|_{V'}$-blowup of  $\sfG'$ in which each vertex $x$ of $\sfG'$ is replaced by a ``cloud'' consisting of $Q(x)$ copies of $x$, and every edge of $\sfG'$ is replaced by the complete bipartite graph between the cloud of $x$ and the cloud of $y$. It is straightforward to check as follows  that  $\nu^*_Q(\sfG)=\nu^*(\sfH)$. 

Suppose that $\phi:E\to [0,\infty)$ satisfies the constraints~\eqref{eq:Qfractional constrainyts no vertex weights}. We ``lift'' $\phi$ to $\phi^*:F\to [0,1]$ by setting 
$$
\forall \{x,y\}\in E',\  \forall (i,j)\in [Q(x)]\times [Q(y)], \qquad \phi^*(\{x_i,y_j\})=\frac{\phi(\{x,y\})}{Q(x)Q(y)}.
$$ 
Then, $\sum_{f\in F} \phi^*(f)=\sum_{e\in E}\phi(e)$, and by~\eqref{eq:Qfractional constrainyts no vertex weights} we know that $\phi^*$ satisfies the analogue of the constraints~\eqref{eq:fractional constrainyts no vertex weights} for $\sfH$. This shows that $\nu^*_Q(\sfG)\le \nu^*(\sfH)$. In the reverse direction, for $\Phi:F\to [0,1]$ define  $\Phi_*:E\to [0,\infty)$ by 
$$
\forall \{x,y\}\in E,\qquad \Phi_*(\{x,y\})=\sum_{i=1}^{Q(x)}\sum_{j=1}^{Q(y)} \Phi(\{x_i,y_j\}).
$$ 
By definition, $\sum_{f\in F} \Phi(f)=\sum_{e\in E}\Phi_*(e)$, and if $\Phi$ satisfies the analogue of the constraints~\eqref{eq:fractional constrainyts no vertex weights} for $\sfH$, then $\Phi_*$ satisfies~\eqref{eq:Qfractional constrainyts no vertex weights}. Therefore,  $\nu^*(\sfH)\le \nu^*_Q(\sfG)$, so we indeed have $\nu^*_Q(\sfG)=\nu^*(\sfH)$.

Continuing with the above setting and notation, suppose that  $\sfG$ is $C$-compatible with $f:V\to \R^n$ and $\sigma:E\to [0,\infty)$ for some $n\in \N$ and $C>0$, per Definition~\ref{def:compatibility}. Lift $f$ to $f^*:U\to \R^n$ by setting $f^*(x_i)=f(x)$ for every $x\in V'$ and $i\in [Q(x)]$. Also lift $\sigma$ to $\sigma^*:F\to [0,\infty)$ by setting $\sigma^*(\{x_i,y_j\})=\sigma(\{x,y\})$ for every $\{x,y\}\in E$ and $(i,j)\in [Q(x)]\times [Q(y)]$. Then $\sfH$ is $C$-compatible with $f^*$ and $\sigma^*$, as seen immediately by lifting the mappings $K:V\to \R^n$ and $\Delta:V\to [0,\infty)$ from Definition~\ref{def:compatibility} in the same manner, i.e., defining $K^*(x_i)=K(x)$ and $\Delta^*(x_i)=\Delta(x)$ for every $x\in V'$ and $i\in [Q(x)]$. Therefore, by Corollary~\ref{cor:unweighted fractional} we have 
$$
\int_{\R^n}\nu^*\big(\sfH(v;f^*,\sigma^*)\big)\ud\gamma_n(v)< 9e^{-\frac{1}{4}C^2}|U|=9e^{-\frac{1}{4}C^2}\sum_{x\in V}Q(x).
$$
For each $v\in \R^n$, an  inspection of the definition of the Euclidean sparsification $\sfH(v;f^*,\sigma^*)$ reveals that  it coincides with the $Q|_{V'}$-blowup of $\sfG(v;f,\sigma)$. Consequently, $\nu^*(\sfH(v;f^*,\sigma^*))=\nu^*_Q(\sfG(v;f,\sigma))$. Hence,
\begin{equation}\label{eq:Q version 9}
\int_{\R^n}\upnu_Q^*\big(\sfG(v;f,\sigma)\big)\ud\gamma_n(v)< 9e^{-\frac{1}{4}C^2}\sum_{x\in V}Q(x).
\end{equation}

We derived~\eqref{eq:Q version 9} under the assumption that $Q$ takes values in the integers, but it follows formally from this that~\eqref{eq:Q version 9} holds for any $Q:V\to [0,\infty)$. Indeed, by considering an arbitrarily precise approximation of $Q$ by rational-valued vertex weights, we may assume that $Q$ takes values in $\mathbb{Q}$, and then by rescaling by the common denominator of these values, the general case follows from the case $Q(V)\subset \N\cup \{0\}$. 

 \begin{corollary}\label{cor:unweighted fractional-Q}Fix $C\ge 1$ and $n\in \N$. Suppose that  $\sfG=(V,E)$ is a graph that is $C$-compatible with $f:V\to \R^n$ and $\sigma:E\to [0,\infty)$. Then, for every $Q:V\to [0,\infty)$ we have 
\begin{equation}\label{eq:nustarQ}
\int_{\R^n}\nu_Q^*\big(\sfG(v;f,\sigma)\big)\ud\gamma_n(v)< 9e^{-\frac{1}{4}C^2}\sum_{x\in V}Q(x).
\end{equation}
\end{corollary}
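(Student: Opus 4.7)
The plan is to reduce the weighted statement to the unweighted Corollary~\ref{cor:unweighted fractional} by a standard blow-up construction, handling rational $Q$ first and then passing to the general case by continuity.

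First I would handle the case when $Q(V)\subset \N\cup\{0\}$. Set $V'=V\setminus Q^{-1}(0)$ and $E'=\{e\in E:\ e\subset V'\}$, so that $\sfG'=(V',E')$ is obtained from $\sfG$ by deleting useless vertices. Form the $Q|_{V'}$-blow-up $\sfH=(U,F)$ in which each $x\in V'$ is replaced by a cloud $\{x_1,\ldots,x_{Q(x)}\}$ and each edge $\{x,y\}\in E'$ by the complete bipartite graph between the two clouds. A short duality argument (lifting $\phi:E\to [0,\infty)$ to $\phi^*(\{x_i,y_j\})=\phi(\{x,y\})/(Q(x)Q(y))$, and conversely summing $\Phi^*$ over cloud pairs) gives $\nu^*_Q(\sfG)=\nu^*(\sfH)$, with total vertex count $|U|=\sum_{x\in V}Q(x)$.

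Next I would lift the Euclidean data by defining $f^*(x_i)=f(x)$ and $\sigma^*(\{x_i,y_j\})=\sigma(\{x,y\})$, and similarly lift the auxiliary maps $\Delta,K$ from Definition~\ref{def:compatibility} cloud-wise to $\Delta^*,K^*$. A direct inspection shows that $B_{\sfH}(x_i,R)$ is the $Q$-blow-up of $B_\sfG(x,R)$ (clouds of neighbors are included atomically), so the three requirements of $C$-compatibility for $\sfG$ pass verbatim to $\sfH$ with $f^*,\sigma^*$. An analogous inspection of~\eqref{eq:sparsification definition} shows that for every $v\in \R^n$ the Euclidean sparsification $\sfH(v;f^*,\sigma^*)$ is exactly the $Q|_{V'}$-blow-up of $\sfG(v;f,\sigma)$, and therefore $\nu^*\bigl(\sfH(v;f^*,\sigma^*)\bigr)=\nu^*_Q\bigl(\sfG(v;f,\sigma)\bigr)$ point-wise in $v$. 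Applying Corollary~\ref{cor:unweighted fractional} to $\sfH$ and integrating this identity in $v$ yields~\eqref{eq:nustarQ} when $Q$ is $\N\cup\{0\}$-valued.

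Finally I would remove the integrality hypothesis. If $Q$ takes values in $\mathbb{Q}_{\ge 0}$, let $N\in\N$ be a common denominator; applying the integer case to $NQ$ and dividing both sides by $N$ (both $\nu^*_Q$ and the right hand side of~\eqref{eq:nustarQ} are positively homogeneous in $Q$) gives the bound for $Q$. For general $Q:V\to [0,\infty)$, pick rational $Q_k\ge Q$ with $Q_k\to Q$ pointwise; since $V$ is finite, the monotonicity $\nu^*_{Q_k}\ge \nu^*_Q$ and dominated convergence on both sides produce~\eqref{eq:nustarQ}. There is no serious obstacle here: the only conceptually nontrivial points are the identification $\nu^*_Q(\sfG)=\nu^*(\sfH)$ and the verification that $C$-compatibility is preserved by blow-ups, both of which are direct from the definitions because $f^*,\sigma^*,\Delta^*,K^*$ are cloud-constant. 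The strict inequality in~\eqref{eq:nustarQ} is inherited directly from the strict inequality in Corollary~\ref{cor:unweighted fractional}.
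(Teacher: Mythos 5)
Your proposal follows the same route as the paper: reduce to integer-valued $Q$ via a cloud blow-up, observe that lifting $f,\sigma,K,\Delta$ cloud-wise preserves $C$-compatibility and that Euclidean sparsification commutes with the blow-up, apply Corollary~\ref{cor:unweighted fractional}, and then pass to rational and finally arbitrary $Q$ by homogeneity and approximation. The only imprecision is the parenthetical claim that $B_{\sfH}(x_i,R)$ equals the blow-up of $B_{\sfG}(x,R)$ (for a vertex without a self-loop this can fail at $R=1$), but the one-sided inclusion $B_{\sfH}(x_i,R)\subset$ blow-up of $B_{\sfG}(x,R)$ is what the compatibility verification actually uses, so the argument goes through unchanged.
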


We will also use  the following general lemma about vertex-weighted fractional matching numbers:

\begin{lemma}\label{lem:graph fractional} Suppose that $V$ is a finite set and that $\omega$ is a symmetric nonnegative measure  on $V\times V$. Define $Q=Q_\omega:V\to [0,\infty)$ by setting 
\begin{equation}\label{eq:def Q vertex weights}
\forall x\in V,\qquad Q(x)\eqdef \sum_{y\in V} \omega(x,y).
\end{equation}  
Let $L,R\subset V$ be disjoint subsets of $V$. Suppose that  $\sfG=(L\cup R,E)$  is a bipartite graph whose sides are $L,R$ (thus, $|e\cap L|=|e\cap R|=1$ for every $e\in E$). 
Then, there exist subsets $L_0^*\subset L$ and $R_0^*\subset R$ such that 
\begin{equation}\label{eq:pass to L0R0}
\omega(L_0^*\times R_0^*)\ge \omega(L\times R)-2\nu^*_{ Q|_{L\cup R}}(\sfG)\qquad\mathrm{and}\qquad \forall (x,y)\in L_0^*\times R_0^*,\qquad \{x,y\}\notin E.
\end{equation}
\end{lemma}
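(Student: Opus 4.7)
The plan is to prove Lemma~\ref{lem:graph fractional} via linear programming duality, specifically through the vertex-cover/matching duality that is especially clean in the bipartite setting.

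First, I would record the standard LP duality for the vertex-weighted fractional matching number: writing $W = L \cup R$ and $Q_0 = Q|_W$, I view $\nu^*_{Q_0}(\sfG)$ as the optimum of a linear program whose dual is the fractional vertex cover LP
\[
 \nu^*_{Q_0}(\sfG) \;=\; \min\bigg\{ \sum_{x\in W} Q(x)\,c(x) \,:\, c:W\to[0,1],\ c(x)+c(y)\ge 1\ \forall\, \{x,y\}\in E\bigg\}.
\]
Because $\sfG$ is bipartite, the constraint matrix of this dual LP is totally unimodular, so the LP attains its optimum at a vertex of the polytope at which $c$ is $\{0,1\}$-valued. Equivalently, by the K\H{o}nig--Egerv\'ary theorem (in its weighted form) there exists an \emph{integral} vertex cover $C^* \subset W$, i.e., every $\{x,y\}\in E$ meets $C^*$, whose total $Q$-weight equals $\nu^*_{Q_0}(\sfG)$.

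Next, I would set $L_0^* \eqdef L \setminus C^*$ and $R_0^* \eqdef R \setminus C^*$. Since $C^*$ is a vertex cover, no edge of $\sfG$ can have both endpoints outside $C^*$, which immediately yields the second requirement in~\eqref{eq:pass to L0R0}. For the weight estimate, I would use inclusion--exclusion and the symmetry of $\omega$: from
\[
 L\times R \;=\; (L_0^*\times R_0^*) \,\cup\, \big((L\cap C^*)\times R\big)\,\cup\, \big(L\times (R\cap C^*)\big)
\]
it follows that
\[
 \omega(L\times R) - \omega(L_0^*\times R_0^*) \;\le\; \omega\big((L\cap C^*)\times V\big) + \omega\big(V\times(R\cap C^*)\big).
\]
Using the definition~\eqref{eq:def Q vertex weights} of $Q$ and the symmetry $\omega(x,y)=\omega(y,x)$, the right-hand side is bounded by
\[
 \sum_{x\in L\cap C^*} Q(x) + \sum_{x\in R\cap C^*} Q(x) \;\le\; 2\sum_{x\in C^*} Q(x) \;=\; 2\nu^*_{Q_0}(\sfG),
\]
which is the desired first inequality in~\eqref{eq:pass to L0R0}.

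This is a short and essentially formal argument, so I do not expect any serious obstacle. The only subtle point is the appeal to integrality of the bipartite fractional vertex cover LP; if one prefers to avoid that theorem, one can instead work directly with an optimal fractional cover $c^*$ and, because the graph is bipartite, apply the standard threshold-rounding trick (round up vertices on the $L$-side with $c^*(x)\ge 1/2$ and on the $R$-side with $c^*(x)>1/2$) to obtain an integral cover of no larger $Q$-weight. Either way the conclusion follows with the explicit constant $2$.
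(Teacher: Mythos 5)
Your proof is correct, but it follows a genuinely different route from the paper's. The paper works directly with the \emph{primal}: it fixes an optimal fractional matching $\phi^*$, defines the ``load'' $Q^*(x)=\sum_{e\ni x}\phi^*(e)$, and takes $L_0^*,R_0^*$ to be the vertices whose load is strictly below the budget $Q(x)$. That this set of pairs spans no edges of $\sfG$ is shown by a local improvement argument (if $x,y$ both have slack and $\{x,y\}\in E$, one could increase $\phi^*$ on that edge), and the factor $2$ appears because the loads are summed over both sides of the bipartition. You instead pass to the LP \emph{dual} (fractional vertex cover), invoke total unimodularity of the bipartite incidence matrix to obtain an integral minimum-weight cover $C^*$, and delete $C^*$; the ``no edges'' condition is then automatic from the definition of a vertex cover, and the weight loss is bounded by $\sum_{x\in C^*}Q(x)=\nu^*_{Q_0}(\sfG)$. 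In fact your argument proves the \emph{stronger} bound with constant $1$: since $L$ and $R$ are disjoint, $\sum_{x\in L\cap C^*}Q(x)+\sum_{x\in R\cap C^*}Q(x)=\sum_{x\in C^*}Q(x)$, so the factor $2$ you insert at the final step is superfluous for this version of the argument (it \emph{is} needed for your alternative ``threshold-rounding'' variant, which may double the cover's weight, and it matches the constant one gets from the paper's primal proof). The trade-off is that your route imports integrality of the bipartite cover polytope (TU / weighted K\H{o}nig--Egerv\'ary), whereas the paper's proof is entirely self-contained and elementary; both are short and clean, and either is perfectly suitable here.
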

\begin{proof} We will slightly abuse notation by letting $Q$ also denote the restriction $Q|_{L\cup R}$ of $Q$ to $L\cup R$; thus,  the $Q$-weighted fractional matching number $\nu^*_{ Q|_{L\cup R}}(\sfG)$ that appears in~\eqref{eq:pass to L0R0} will be denoted $\nu^*_{Q}(\sfG)$  below  

Let $\mathsf{FM}_Q(\sfG)\subset \R^{E}$ be the $Q$-weighted fractional matching polytope of $\sfG$, i.e.,  
$$
\mathsf{FM}_Q(\sfG)\eqdef \Big\{\phi:E\to [0,\infty): \sum_{\substack{e\in E\\x\in e}} \phi(e)\le Q(x)\ \mathrm{for\ all\ }x\in V\Big\}.
$$
In other words, $\mathsf{FM}_Q(\sfG)$ consists of all of the  edge weights $\phi:E\to [0,\infty)$  that satisfy the constraints that appear in~\eqref{eq:Qfractional constrainyts no vertex weights}. By the compactness of $\mathsf{FM}_Q(\sfG)$, we may fix from now a  maximizer $\phi^*$ of the fractional matching problem for $\sfG$ with vertex weights $Q$,  i.e., $\phi^*:E\to [0,\infty)$
 belongs to $\mathsf{FM}_Q(\sfG)$ and  
\begin{equation}\label{eq:value of maximal fractional-AB}
\sum_{e\in E} \phi^*(e)=\nu^*_Q(\sfG)=\sup_{\phi\in \mathsf{FM}_Q(\sfG)} \sum_{e\in E} \phi(e).
\end{equation}

Define $Q^*:L\cup R\to [0,\infty)$ by setting
\begin{equation}\label{eq:def Q*}
\forall x\in L\cup R,\qquad Q^*(x)\eqdef \sum_{\substack{e\in E\\x\in e}} \phi^*(e)\le Q(x),
\end{equation}
where the last inequality in~\eqref{eq:def Q*} holds because $\phi^*\in \mathsf{FM}_Q(\sfG)$. Then, since $\sfG$ is bipartite we have 
\begin{equation}\label{eq:sum of sums}
\sum_{x\in L} Q^*(x)+\sum_{y\in R} Q^*(y)\stackrel{\eqref{eq:def Q*}}{=}2\sum_{e\in E}\phi^*(e)\stackrel{\eqref{eq:value of maximal fractional-AB}}{=}2\nu^*_Q(\sfG). 
\end{equation}
Next, define $L^*_0\subset L$ and $R^*_0\subset B$ as follows:
\begin{equation}\label{eq:def A*B*}
L_0^*\eqdef \big\{x\in L:\ Q^*(x)<Q(x)\big\}\qquad\mathrm{and}\qquad R_0^*\eqdef \big\{x\in R:\ Q^*(x)<Q(x)\big\}.
\end{equation}
In other words, thanks to the last inequality in~\eqref{eq:def Q*}, we can also write
\begin{equation}\label{eq:complements of star}
L\setminus L_0^*= \big\{x\in L:\ Q^*(x)=Q(x)\big\}\qquad\mathrm{and}\qquad  R\setminus R_0^*= \big\{x\in R:\ Q^*(x)=Q(x)\big\}.
\end{equation}

The second requirement in~\eqref{eq:pass to L0R0} is satisfied because if there were $x\in L_0^*$ and $y\in R_0^*$ with $\{x,y\}\in E$, then by~\eqref{eq:def A*B*}  there would be $\e>0$ such that \begin{equation}\label{eq:add eps to bad edge}
Q^*(x)+\e\le Q(x)\qquad\mathrm{and}\qquad  Q^*(y)+\e\le Q(y).
\end{equation}
 Hence, if we consider  $\phi:E\to [0,\infty)$ by setting
$$
\forall e\in E,\qquad \phi(e)\eqdef \left\{\begin{array}{ll}\phi^*(e)&\mathrm{if}\ e\in E\setminus \{x,y\},\\ \phi^*(e)+\e&\mathrm{if}\ e=\{x,y\},
\end{array}\right. 
$$
then $\phi\in \mathsf{FM}_Q(\sfG)$ thanks to the fact that $\phi^*\in \mathsf{FM}_Q(\sfG)$ and~\eqref{eq:add eps to bad edge}, yet $\sum_{e\in E}\phi(e)=\sum_{e\in E} \phi^*(e)+\e>\nu^*_Q(\sfG)$ by the first equality in~\eqref{eq:value of maximal fractional-AB}. This is in contradiction to the second equality in~\eqref{eq:value of maximal fractional-AB}. 

Finally, the first requirement in~\eqref{eq:pass to L0R0} is justified as follows:
\begin{align*}
\begin{split}
\omega(L_0^*\times R_0^*)&\stackrel{\phantom{\eqref{eq:sum of sums}}}{=}\omega(L\times R)-\sum_{x\in L}\sum_{y\in R\setminus R_0^*} \omega(x,y)
-\sum_{x\in L\setminus L_0^*}\sum_{y\in R_0^*} \omega(x,y)\\
&\stackrel{\phantom{\eqref{eq:sum of sums}}}{\ge}\omega(L\times R)-\sum_{y\in R\setminus R_0^*} \sum_{x\in V} \omega(x,y)
- \sum_{x\in L\setminus L_0^*}\sum_{y\in V} \omega(x,y)\\
&\stackrel{\phantom{\eqref{eq:sum of sums}}}{=} \omega(L\times R)-\sum_{y\in R\setminus R_0^*} \sum_{x\in V} \omega(y,x)
- \sum_{x\in L\setminus L_0^*}\sum_{y\in V} \omega(x,y)\\
&\stackrel{\eqref{eq:def Q vertex weights}}{=} \omega(L\times R)-\sum_{y\in R\setminus R_0^*} Q(y)-\sum_{x\in L\setminus L_0^*}Q(x)\\
&\stackrel{\eqref{eq:complements of star}}{=} \omega(L\times R\big)-\sum_{y\in R\setminus R_0^*} Q^*(y)-\sum_{x\in L\setminus L_0^*}Q^*(x)\\
&\stackrel{\phantom{\eqref{eq:sum of sums}}}{\ge} \omega(L\times R)-\sum_{y\in R} Q^*(y)-\sum_{x\in L}Q^*(x)\\
&\stackrel{\eqref{eq:sum of sums}}{=} \omega(L\times R)-2\nu^*_Q(\sfG),
\end{split}
\end{align*}
where the third step above is where the symmetry assumption on $\omega$ is used. 
\end{proof}

With the above preparation at hand, we can now prove Theorem~\ref{thm:main separated piut together}:

\begin{proof}[Proof of Theorem~\ref{thm:main separated piut together}] Denote $n\eqdef   |\MM|$. Let $\alpha_0\ge 1$ be a  universal constant whose  value will be specified at the end of the ensuing reasoning, to satisfy constraints that will arise as we go along; see~\eqref{eq:def r choice}. Suppose that $\alpha\ge \alpha_0$ and, letting $\zeta\ge 1$   be the universal constant that appears in~\eqref{eq:rewrite rho specific}, define 
\begin{equation}\label{eq:choose our alpha}
r\eqdef \zeta \alpha\ge 1. 
\end{equation}

The statement of Theorem~\ref{thm:main separated piut together} assumes that $\beta>0$ satisfies~\eqref{eq:beta with alpha constant}. Our next step will be to apply Proposition~\ref{prop:use quasi to get good graph} with this choice of $\beta$, for which we need to know that the assumption~\eqref{eq:def our beta in prop} of Proposition~\ref{prop:use quasi to get good graph}  holds. This requirement simplifies to $\alpha \ge 3\log (8r)=3\log (8\zeta\alpha)$ using~\eqref{eq:choose our alpha}, and since our only assumption on  $\alpha$ here is that $\alpha\ge \alpha_0$, for this to be satisfied it suffices to impose the following first restriction on $\alpha_0$:
 \begin{equation}\label{first restriction on r}
 \alpha_0\ge 6 \log(48\zeta).
 \end{equation}

So, apply Proposition~\ref{prop:use quasi to get good graph}  to  $(\MM,d_\MM)$ with the datum $s,\e,r,\tau,C,\beta$. This produces a graph $\sfG=(\MM,E)$ that is given by~\eqref{eq:def our E with beta}, and the function $\rho:\MM\to [1,\infty)$ that appears in~\eqref{eq:def our E with beta} is defined in~\eqref{eq:rewrite rho specific}. Proposition~\ref{prop:use quasi to get good graph}  also produces $f:\MM\to \R^n$, $\sigma:E\to [0,\infty)$ and $\Lambda:\MM\to (0,\infty]$, such that $\sfG$ is $(rC)$-compatible with $f$ and $\sigma$,  and the requirements in~\eqref{eq:in same connected component} and~\eqref{eq:restate 1/r version} are satisfied for every $x,y\in \MM$ that belong to the same connected component of $\sfG$.  The  choice of $r$ in~\eqref{eq:choose our alpha} was made to ensure that the function $\rho$ in~\eqref{eq:rewrite rho specific} coincides with the function $\rho$ in~\eqref{eq:our rho with alpha const}, i.e., the output of Proposition~\ref{prop:use quasi to get good graph} is consistent with the statement of Theorem~\ref{thm:main separated piut together}.

Part of the above consequences of Proposition~\ref{prop:use quasi to get good graph}  makes it possible for us to use Proposition~\ref{prob:graphical prob multiscale} as follows. The first requirement in~\eqref{eq:restate 1/r version} coincides with assumption~\eqref{eq:moderate variation along edges} of Proposition~\ref{prob:graphical prob multiscale}. Also, because the measure $\omega$ of Theorem~\ref{thm:main separated piut together} is assumed to be supported on $\{(x,y)\in \MM\times\MM:\ d_\MM(x,y)\ge \tau\}$ and the second inequality in~\eqref{eq:in same connected component} holds under the assumption $d_\MM(x,y)\ge \tau$, we have   
$$
\forall x,y\in \MM,\qquad \omega(x,y)>0\implies   \|Cf(x)-Cf(y)\|_2\ge \max\big\{\Lambda(x),\Lambda(y)\big\}.
$$
Therefore, the function $Cf$ satisfies\footnote{In fact, we get here that $Cf$ satisfies a stronger form of~\eqref{eq:min assumption in proposition} with $\max\{\cdot,\cdot\}$ in place of $\min\{\cdot,\cdot\}$ in the right hand side, but we do not need to take advantage of this herein.} assumption~\eqref{eq:min assumption in proposition} of Proposition~\ref{prob:graphical prob multiscale}.

We may thus apply Proposition~\ref{prob:graphical prob multiscale} to obtain for every $v\in \R^n$ subsets $A(v),B(v)$ of $\MM$ that satisfy the stated (rudimentary) measurability requirements, and for which we have, by rewriting conclusion~\eqref{eq:separation within connected components} of  Proposition~\ref{prob:graphical prob multiscale} with $f$ replaced by $Cf$, the following point-wise inequality: 
\begin{equation}\label{eq:restated separation within connected components}
\forall v\in \R^n,\ \forall (x,y)\in A(v)\times B(v),\qquad \{x,y\}\in E\implies |\langle v,f(x)-f(y)\rangle |> \max\big\{\Lambda(x),\Lambda(y)\big\}.
\end{equation}
Conclusion~\eqref{eq:super gaussian omega} of  Proposition~\ref{prob:graphical prob multiscale} also holds, i.e.,  for some universal constants   $0<\theta\le 1\le \kappa$, we have
\begin{equation}\label{eq:restate super gaussian omega}
\int_{\R^n} \omega\big(A(v)\times B(v)\big)\ud \gamma_n(v)\ge \theta e^{-\kappa C^2}.
\end{equation}

Observe that $A(v)$ and $B(v)$ are disjoint for every $v\in \R^n$. Indeed, the definition~\eqref{eq:def our E with beta} of $E$ implies that  $\{x,x\}\in E$ for every $x\in \MM$ (i.e., by  definition $\sfG$ has a self-loop at each of its vertices), so if  it were the case that $x\in A(v)\cap B(v)$, then we would get a contradiction to the strict inequality in~\eqref{eq:restated separation within connected components} in its special case $x=y$. We can therefore consider the bipartite graph $\sfG_v=(A(v)\cup B(v),E_v)$ that is defined by 
\begin{equation}\label{eq:def GAB}
\forall x,y\in A(v)\cup B(v),\qquad \{x,y\}\in E_{v}\iff \{x,y\}\in E\ \mathrm{and}\ (x,y)\in \big(A(v)\times B(v)\big)\cup \big(B(v)\times A(v)\big).
\end{equation}

We will next make the following observation about  $\sfG_v$   that will be especially important to the ensuing reasoning. For every $v\in \R^n$, the graph $\sfG_v$   is a subgraph of the Euclidean sparsification $\sfG(v;f,\sigma)$ per its definition in~\eqref{eq:sparsification definition} and~\eqref{eq:def euclidean sparsification}, for the  $f$ and $\sigma$ that were obtained above from  Proposition~\ref{prop:use quasi to get good graph}. Indeed, by~\eqref{eq:def GAB}, if $\{x,y\}\in E_v$, then $\{x,y\}\in E$ and either $(x,y)\in A(v)\times B(v)$ or $(y,x)\in A(v)\times B(v)$.  Hence, $|\langle v,f(x)-f(y)\rangle|>\max\{\Lambda(x),\Lambda(y)\}$ by~\eqref{eq:restated separation within connected components}.  But, $\max\{\Lambda(x),\Lambda(y)\}\ge \min\{\Lambda(x),\Lambda(y)\}\ge 4\sigma(\{x,y\})$ using the second inequality in conclusion~\eqref{eq:restate 1/r version} of Proposition~\ref{prob:graphical prob multiscale}. So, $|\langle v,f(x)-f(y)\rangle|>4\sigma(\{x,y\})$, which, per~\eqref{eq:sparsification definition}, is precisely the requirement for $\{x,y\}\in E$ to be an edge of the Euclidean sparsification $\sfG(v;f,\sigma)$.  

Consider the function $Q=Q_\omega:\MM\to [0,\infty)$ that is given by setting $Q(x)=\sum_{y\in \MM} \omega(x,y)$ for every $x\in \MM$. The fact that $\sfG_v$ is a subgraph of $\sfG(v;f,\sigma)$ for every $v\in \R^n$ implies in particular the following point-wise estimate on its maximal fractional matching with respect to the vertex weights  $Q$: 
\begin{equation}\label{eq:poin-wise fraction number bound}
\forall v\in \R^n,\qquad \nu^*_{Q|_{A(v)\cup B(v)}}(\sfG_v)\le \nu^*_Q\big(\sfG(v;f,\sigma)\big).
\end{equation}
Both sides of the inequality in~\eqref{eq:poin-wise fraction number bound} are Borel-measurable functions of $v$ since there are finitely many subgraphs $\sfH$ of $\sfG$ and the sets $\{v\in \R^n:\  \sfG_v=\sfH\}$ and $\{v\in \R^n:\  \sfG(v;f,\sigma)=\sfH\}$ are Borel subsets of $\R^n$ by the  measurability of $v\mapsto (A(v), B(v))$ in Proposition~\ref{prob:graphical prob multiscale}, and the definition of $\sfG(v;f,\sigma)$. Integrate~\eqref{eq:poin-wise fraction number bound} with respect to $\gamma_n$ while substituting  the fact  that $\sfG$ is $(rC)$-compatible with $f$ and $\sigma$ into Corollary~\ref{cor:unweighted fractional-Q}, to get
\begin{equation}\label{eq:use expected fractional}
\int_{\R^n} \nu^*_{Q|_{A(v)\cup B(v)}}(\sfG_v)\ud \gamma_n(v)\stackrel{\eqref{eq:nustarQ}\wedge \eqref{eq:poin-wise fraction number bound}}{<}  9 e^{-\frac14 r^2C^2}\sum_{x\in \MM}Q(x)\stackrel{\eqref{eq:def Q vertex weights}}{=}9e^{-\frac14 r^2C^2} \omega(\MM\times \MM)=9e^{-\frac14 r^2C^2}.
\end{equation}

Next, for each $v\in \R^n$ apply Lemma~\ref{lem:graph fractional} to $\sfG_v$. This yields $A_0^*(v)\subset A(v)$ and $B_0^*(v)\subset B(v)$ such that
\begin{equation}\label{eq:pass to L0R0-later}
\omega\big(A_0^*(v)\times B_0^*(v)\big)\ge \omega\big(A(v)\times B(v)\big)-2\nu^*_{ Q|_{A(v)\cup B(v)}}(\sfG_v),
\end{equation}
and 
\begin{equation}\label{eq:we through away the bad edges-new}
\forall (x,y)\in A_0^*(v)\times B_0^*(v),\qquad \{x,y\}\notin E_v.
\end{equation}
The mapping $(v\in \R^n)\mapsto (A_0^*(v),B_0^*(v))$ is Borel-measurable because it is the composition of the mappings $v\mapsto (A(v),B(v))$ and $(A(v),B(v))\mapsto (A_0^*(v),B_0^*(v))$, the  first of which is Borel-measurable by Proposition~\ref{prob:graphical prob multiscale} and the second of which  is a mapping between finite sets.

By the definition~\eqref{eq:def our E with beta} of $E$, it follows from~\eqref{eq:def GAB} and~\eqref{eq:we through away the bad edges-new} that 
\begin{equation}\label{eq:on star product points are far apart}
\forall (x,y)\in A_0^*(v)\times B_0^*(v), \qquad d_\MM(x,y)> \frac{\beta\tau}{\min\{\rho(x),\rho(y)\}}. 
\end{equation}
Furthermore,
\begin{align}\label{eq:substitute our r}
\begin{split}
\int_{\R^n}\omega\big(A_0^*(v)\times B_0^*(v)\big)\ud\gamma_n(v)\stackrel{\eqref{eq:pass to L0R0-later}}{\ge} \int_{\R^n}\omega\big(A(v)\times  B(v)\big)\ud\gamma_n&(v)- 2\int_{\R^n}\nu^*_{Q|_{A(v)\cup B(v)}}(\sfG_v)\ud\gamma_n(v)\\&\stackrel{\eqref{eq:restate super gaussian omega}\wedge \eqref{eq:use expected fractional}}{\ge} \theta e^{-\kappa C^2}-9e^{-\frac14 r^2C^2}\ge \frac{\theta}{2}e^{-\kappa C^2},
\end{split}
\end{align}
provided  $r\ge  2\sqrt{\kappa+\log(18/\theta)}$, so that the penultimate step of~\eqref{eq:substitute our r} is valid (this is where  the assumption $C\ge 1$ is used). Recalling the dependence~\eqref{eq:choose our alpha} of $r$ on $\alpha$, and that we are assuming  $\alpha\ge \alpha_0$, this will be  satisfied if $\alpha_0\ge (2/\zeta)\sqrt{\kappa+\log(18/\theta)}$. Earlier we also required that the restriction~\eqref{first restriction on r} on $\alpha_0$ holds, so altogether the entirety of the  requirements that we impose on $\alpha_0$ will be satisfied by the following choice: 
\begin{equation}\label{eq:def r choice}
\alpha_0\eqdef  \max\bigg\{\frac{2}{\zeta}\sqrt{\kappa+\log\frac{18}{\theta}},6 \log(48\zeta)\bigg\}.
\end{equation}

We have thus almost established the assertions of Theorem~\ref{thm:main separated piut together}, except that  $A_0^*(v)$ or $B_0^*(v)$ could possibly be empty for some $v\in \R^n$, while the subsets of $\MM$ that  Theorem~\ref{thm:main separated piut together} outputs are required (for convenience in subsequent applications) to be nonempty. This further requirement can be ensured since we are assuming in Theorem~\ref{thm:main separated piut together} that $\tau\ge \diam(\MM)$, so we can fix $x_0,y_0\in \MM$ with $d_\MM(x_0,y_0)\ge \tau$ and define 
$$
\forall v\in \R^n,\qquad \big(A^*(v),B^*(v)\big)\eqdef \left\{\begin{array}{ll} \big(A^*_0(v),B^*_0(v)\big) & \mathrm{if}\ \emptyset \notin \big\{A^*_0(v),B^*_0(v)\big\},\\ \big(\{x_0\},\{y_0\}\big),&\mathrm{if}\ \emptyset \in \big\{A^*_0(v),B^*_0(v)\big\}.\end{array}\right.
$$
Then, the mapping $(v\in \R^n)\mapsto (A^*(v),B^*(v))$ is Borel-measurable as it is the composition of the mappings $v\mapsto (A_0^*(v),B_0^*(v))$ and $(A_0^*(v),B_0^*(v))\mapsto (A^*(v),B^*(v))$, the first of which we checked is measurable  and the second of which is between finite sets. 
Because $0<\beta\le 1$ and $\rho\ge 1$, it follows from~\eqref{eq:on star product points are far apart} that the first desired conclusion~\eqref{eq:star separation actual distances} of Theorem~\ref{thm:main separated piut together} holds, and it follows from~\eqref{eq:substitute our r} that   the second desired conclusion~\eqref{eq:expected omega weight} of Theorem~\ref{thm:main separated piut together} holds. 
\end{proof}

\section{Duality}\label{sec:duality}

\noindent Our goal here is to deduce Theorem~\ref{thm:random zero general} from Theorem~\ref{thm:main separated piut together}, which is primarily straightforward duality: 

\begin{lemma}\label{lem:duality} Fix $0\le \mathfrak{p} \le 1$. Suppose that $X$ is a finite set. Let $\phi:X\times X\to \R$ be a  symmetric real-valued function, and let $S$ be a nonempty subset of $X\times X$. Fix also any function  $\psi:X\to \R$ and define
\begin{equation}\label{eq:def calF}
\mathcal{F}\eqdef \Big\{(A,B)\in \big(2^X\setminus\{\emptyset\}\big)\times  \big(2^X\setminus\{\emptyset\}\big):\ \forall (x,y)\in A\times B,\quad \phi(x,y)\ge \max\big\{\psi(x),\psi(y)\big\}\Big\}.
\end{equation}
Assume that for every probability measure $\omega$  on $X\times X$ whose support is contained in $S$ there exists a probability measure $\mu_\omega$ on $\mathcal{F}$ that satisfies:
\begin{equation}\label{eq:average weight assumption}
\int_\mathcal{F} \frac{\omega(A\times B)+\omega(B\times A)}{2}\ud \mu_\omega (A,B)\ge \mathfrak{p}.
\end{equation}
Then, there exists a probability measure  $\prob$ on $2^{X}\setminus\{\emptyset\}$  such that
\begin{equation}\label{eq:p lower prob}
\forall (x,y)\in S,\qquad \prob\Big[\emptyset \neq \cZ\subset X:\ x\in \cZ\quad \mathrm{and} \quad \min_{z\in \cZ} \phi(y,z)\ge \psi(y)\Big]\ge \mathfrak{p}.
\end{equation}
\end{lemma}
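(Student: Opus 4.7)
The plan is a standard minimax/LP-duality argument, in which the randomized pair $(A,B)$ produced by $\mu_\omega$ is converted to a randomized single set $\cZ$ by choosing uniformly between $A$ and $B$, after which we apply von Neumann's theorem to obtain one universal distribution $\prob$.

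First I would check the key conversion step. Fix $(A,B)\in \mathcal{F}$. If $(x,y)\in A\times B$, then by the defining property of $\mathcal{F}$ we have $\phi(x',y)\ge \max\{\psi(x'),\psi(y)\}\ge \psi(y)$ for every $x'\in A$, so $\cZ\eqdef A$ is a nonempty subset of $X$ with $x\in \cZ$ and $\min_{z\in \cZ}\phi(y,z)\ge \psi(y)$ (here we use symmetry of $\phi$). By the symmetric argument, if $(x,y)\in B\times A$ then $\cZ\eqdef B$ works. Therefore, letting $\prob_\omega$ denote the law of the random subset obtained by first sampling $(A,B)\sim \mu_\omega$ and then sampling $\cZ$ uniformly from $\{A,B\}$, we have for every $(x,y)\in X\times X$
\begin{equation*}
\prob_\omega\Big[\emptyset\neq \cZ\subset X:\ x\in \cZ\ \mathrm{and}\ \min_{z\in \cZ}\phi(y,z)\ge \psi(y)\Big]\ge \int_{\mathcal{F}} \frac{\mathbf{1}_{A\times B}(x,y)+\mathbf{1}_{B\times A}(x,y)}{2}\ud \mu_\omega(A,B).
\end{equation*}

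Next I would integrate this inequality against $\omega$. Since $\omega$ is supported on $S$, Fubini gives, together with the hypothesis~\eqref{eq:average weight assumption},
\begin{equation*}
\int_S \prob_\omega\Big[x\in \cZ\ \mathrm{and}\ \min_{z\in \cZ}\phi(y,z)\ge \psi(y)\Big]\ud\omega(x,y)\ge \int_{\mathcal{F}} \frac{\omega(A\times B)+\omega(B\times A)}{2}\ud\mu_\omega(A,B)\ge \mathfrak{p}.
\end{equation*}

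Finally I would invoke the minimax theorem. Consider the bilinear form
\begin{equation*}
f(\omega,\prob)\eqdef \int_S \prob\Big[\emptyset\neq \cZ\subset X:\ x\in \cZ\ \mathrm{and}\ \min_{z\in \cZ}\phi(y,z)\ge \psi(y)\Big]\ud\omega(x,y)
\end{equation*}
defined on the product of the finite-dimensional simplices $\Delta(S)\times \Delta(2^X\setminus\{\emptyset\})$ (which are compact and convex because $X$ is finite). The previous step shows that $\max_\prob f(\omega,\prob)\ge \mathfrak{p}$ for every $\omega\in \Delta(S)$, hence by von Neumann's minimax theorem there exists $\prob\in \Delta(2^X\setminus\{\emptyset\})$ with $\min_{\omega\in \Delta(S)}f(\omega,\prob)\ge \mathfrak{p}$. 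Specializing to the Dirac mass $\omega=\delta_{(x,y)}$ for each $(x,y)\in S$ yields the desired inequality~\eqref{eq:p lower prob}.

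There is essentially no real obstacle: the assumption~\eqref{eq:average weight assumption} has been written in the symmetrized form $(\omega(A\times B)+\omega(B\times A))/2$ precisely so that the uniform coin flip between $A$ and $B$ realizes this average, and the finiteness of $X$ (and hence of $S$ and of $2^X\setminus\{\emptyset\}$) removes any measurability or compactness concerns when invoking minimax.
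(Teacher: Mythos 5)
Your proof is correct and is essentially the same argument as the paper's: the same coin-flip conversion between pairs $(A,B)$ and single sets $\cZ$, the same use of the definition of $\mathcal{F}$ plus symmetry of $\phi$ to verify the key event inclusion, and the same minimax step. The only (cosmetic) difference is the order of operations — you convert $\mu_\omega$ to a law $\prob_\omega$ on $2^X\setminus\{\emptyset\}$ for each $\omega$ and then apply minimax to the pairing $f(\omega,\prob)$, whereas the paper first applies minimax to the pairing on $\Delta(S)\times\Delta(\mathcal{F})$ to extract a single $\mu$ and then performs the coin-flip conversion once.
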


\begin{proof} For a finite set $U$ we denote the simplex of all the probability measures on $U$ by $\Delta_U$. Observe that
$$
\min_{\omega\in \Delta_{S}}\max_{\mu\in \Delta_\cF} \int_\mathcal{F} \frac{\omega(A\times B)+\omega(B\times A)}{2}\ud \mu(A,B)\ge \min_{\omega\in \Delta_{S}}\int_\mathcal{F} \frac{\omega(A\times B)+\omega(B\times A)}{2}\ud \mu_\omega(A,B)\stackrel{\eqref{eq:average weight assumption}}{\ge} \mathfrak{p}.
$$
By the minimax theorem~\cite{vNe28,Sim95}, we therefore have
\begin{align*}
p&\le \max_{\mu\in \Delta_\cF} \min_{\omega\in \Delta_{S}} \int_\mathcal{F}  \frac{\omega(A\times B)+\omega(B\times A)}{2}\ud \mu(A,B) \\
&= \max_{\mu\in \Delta_\cF} \min_{\omega\in \Delta_{S}} \left( \frac12 \sum_{(A,B)\in \cF}\sum_{(x,y)\in A\times B}\omega(x,y)\mu(A,B)+\frac12 \sum_{(A,B)\in \cF}\sum_{(x,y)\in B\times A}\omega(x,y)\mu(A,B)\right)\\&= \max_{\mu\in \Delta_\cF} \min_{\omega\in \Delta_{S}} \sum_{(x,y)\in S}\omega(x,y)\frac{\mu \big(\{(A,B)\in \mathcal{F}:\ (x,y)\in A\times B\}\big)+ \mu \big(\{(A,B)\in \mathcal{F}:\ (y,x)\in A\times B\}\big)}{2}\\&= \max_{\mu\in \Delta_\cF}  \min_{(x,y)\in S} \frac{\mu \big(\{(A,B)\in \mathcal{F}:\ (x,y)\in A\times B\}\big)+ \mu \big(\{(A,B)\in \mathcal{F}:\ (y,x)\in A\times B\}\big)}{2}.
\end{align*}
Thus, there exists a probability measure $\mu$ on $\cF$ that satisfies
\begin{equation}\label{eq:on S times S}
\forall (x,y)\in S,\qquad \frac12 \mu \big(\{(A,B)\in \mathcal{F}:\ (x,y)\in A\times B\}\big)+ \frac12\mu \big(\{(A,B)\in \mathcal{F}:\ (y,x)\in A\times B\}\big)\ge \mathfrak{p}.
\end{equation}

Suppose that $(\cA,\cB)$ is a random element of $\mathcal{F}$ that is distributed according to $\mu$. Let $\e$ be a standard $\{0,1\}$-valued Bernoulli random variable (thus, the probabilities that $\e=0$ and $\e=1$ are both equal $1/2$) that is independent of $(\cA,\cB)$. Denote the probability space on which $(\cA,\cB)$ and $\e$ are defined by $(\Omega,\bbP)$, and let $\cZ$ be the random nonempty subset of $X$ which is the following function of $\Omega$: if $\e=0$, then $\cZ=\cA$ and if $\e=1$, then $\cZ=\cB$. This definition means that for every fixed $x,y\in X$ we have 
\begin{align}\label{eq:condition Z on eps}
\begin{split}
\bbP\big[x\in \cZ\quad&\mathrm{and}\quad \min_{z\in \cZ} \phi(y,z)\ge \psi(y)\big]\\&=\frac12 \bbP\big[x\in \cA\quad\mathrm{and}\quad \min_{z\in \cA}\phi(y,z)\ge \psi(y)\big]+\frac12 \bbP\big[x\in \cB\quad\mathrm{and}\quad \min_{z\in \cB}\phi(y,z)\ge \psi(y)\big].
\end{split}
\end{align}
As $\phi$ is a symmetric function, by the definition~\eqref{eq:def calF} of $\cF$ the following inclusions of events hold:
\begin{equation}\label{eq:two inclusions AB}
\left\{\begin{array}{ll}\{x\in \cA\quad\mathrm{and}\quad y\in  \cB\} \subset \big\{x\in \cA\ \quad\mathrm{and}\quad \min_{z\in \cA}\phi(y,z)\ge \psi(y)\big\},\\
\{x\in \cB\quad\mathrm{and}\quad y\in  \cA\} \subset \big\{x\in \cB\ \quad\mathrm{and}\quad \min_{z\in \cB}\phi(y,z)\ge \psi(y)\big\}.\end{array}\right.
\end{equation}
Therefore,
\begin{align}\label{eq:lower prob Z all xy}
\begin{split}
\bbP\big[x\in \cZ\quad\mathrm{and}\quad \min_{z\in \cZ} \phi(y,z) \ge& \psi(y)\big]
\stackrel{\eqref{eq:condition Z on eps} \wedge \eqref{eq:two inclusions AB} }{\ge} \frac12 \bbP\big[x\in \cA\quad\mathrm{and}\quad y\in  \cB\big]+\frac12 \bbP\big[x\in \cB\quad\mathrm{and}\quad y\in  \cA\big]\\
&=\frac12 \mu \big(\{(A,B)\in \mathcal{F}:\ (x,y)\in A\times B\}\big)+ \frac12\mu \big(\{(A,B)\in \mathcal{F}:\ (y,x)\in A\times B\}\big).
\end{split}
\end{align}
The desired probabilistic estimate~\eqref{eq:p lower prob} now follows by combining~\eqref{eq:on S times S} and~\eqref{eq:lower prob Z all xy}.
\end{proof}

We can now explain how to deduce Theorem~\ref{thm:random zero general} from Theorem~\ref{thm:main separated piut together}; as Theorem~\ref{thm:main separated piut together} was already proven, this will complete the proof of Theorem~\ref{thm:random zero general}. The reasoning below will freely use the notations and assumptions from the statements of   Theorem~\ref{thm:main separated piut together}.

\begin{proof}[Proof of Theorem~\ref{thm:random zero general}] We will use  the notations and assumptions of Theorem~\ref{thm:random zero general} and  Theorem~\ref{thm:main separated piut together}, with the following choice of universal constant $\alpha$ in the definition $\beta=s^{\alpha/\e}$ in Theorem~\ref{thm:random zero general}, which uses the universal constant $\kappa>1$ from Proposition~\ref{prob:graphical prob multiscale}; this is consistent with requirement~\eqref{eq:beta with alpha constant}  of Theorem~\ref{thm:main separated piut together}, as $\alpha\ge \alpha_0$: 
\begin{equation}\label{eq:choose alpha constant}
\alpha\eqdef\max\left\{2e\sqrt{2\kappa}, \alpha_0\right\}.
\end{equation}

Our goal is to apply Lemma~\ref{lem:duality} with $X=\MM$, $\phi=d_\MM$ and $S=\{(x,y)\in \MM\times \MM:\ d_\MM(x,y)\ge \tau\}$; note that the conlusion of Theorem~\ref{thm:random zero general} is vacuous if $\diam(\MM)<\tau$, so we may assume from now that $S\neq\emptyset$. We will also choose $\psi=\beta\tau/\rho$, where   $\rho$ is from~\eqref{eq:our rho with alpha const}. With these choices, the set $\mathcal{F}$ in~\eqref{eq:def calF} becomes
\begin{equation}\label{eq: rewrite calF in our case}
\mathcal{F}=\bigg\{(A,B)\in \big(2^\MM\setminus \{\emptyset\}\big)\times  \big(2^\MM\setminus\{\emptyset\}\big):\ \forall (x,y)\in A\times B,\quad d_\MM(x,y)\ge  \frac{\beta\tau}{\min\{\rho(x),\rho(y)\}}\bigg\}.\end{equation}

Fix $C\ge 1$ and a probability measure $\omega$ on $\MM\times \MM$ whose support is contained in $S$, which by the definition of $S$ means that  every $x,y\in \MM$ with $\omega(x,y)>0$ satisfy $d_\MM(x,y)\ge \tau$.  We can thus apply Theorem~\ref{thm:main separated piut together} to the symmetrization of $\omega$, i.e., to the probability measure that assigns the mass $(\omega(x,y)+\omega(y,x))/2$ to each $(x,y)\in \MM\times \MM$, to get for every $v\in \R^n$ nonempty subsets $A^*(v),B^*(v)$ of $\MM$. Let $\mu_{\omega,C}$ be the law of the pair $(A^*(v),B^*(v))$ as $v\in \R^n$ is distributed according to $\gamma_n$. By~\eqref{eq: rewrite calF in our case} and~\eqref{eq:star separation actual distances} we know that $(A^*(v),B^*(v))\in \mathcal{F}$ for all $v\in \R^n$, i.e., $\mu_{\omega,C}$ is supported on $\mathcal{F}$. By~\eqref{eq:expected omega weight} applied to the symmetrization of $\omega$, 
\begin{equation}\label{eq:average weight assumption-in proof}
\int_\mathcal{F} \frac{\omega(A\times B)+\omega(B\times A)}{2}\ud \mu_{\omega,C} (A,B)\gtrsim e^{-\kappa C^2}.
\end{equation}

We are thus in position to use Lemma~\ref{lem:duality} to get a probability measure $\prob^C$ on $2^\MM\setminus\{\emptyset\}$ that satisfies 
\begin{equation*}\label{eq:p lower prob alpha C}
\forall (x,y)\in S,\qquad  \prob^C\big[\emptyset \neq \cZ\subset \MM:\ d_\MM(x,\cZ)\ge \psi(y)\quad \mathrm{and} \quad y\in \cZ\big]\gtrsim e^{-\kappa C^2}.
\end{equation*}
Recalling our choice of $S$ and $\psi$, as well as the notation for $\rho$ in~\eqref{eq:our rho with alpha const}, this conclusion coincides with the requirement that every $x,y\in \MM$ with $d_\MM(x,y)\ge \tau$ satisfy
\begin{equation}\label{eq:p lower prob alpha C}
 \prob^C\bigg[\emptyset \neq \cZ\subset \MM:\  d_\MM(x,\cZ)\ge \frac{\beta \tau}{1+\frac{1}{\alpha C} \sqrt{\log \frac{\mu (B_\MM(x,19\beta\tau))}{\mu(B_\MM(x,\beta\tau))}}}\quad \mathrm{and} \quad y\in \cZ\bigg]\gtrsim e^{-\kappa C^2}.
\end{equation}

Observe that the special case $C=O(1)$ of~\eqref{eq:p lower prob alpha C} already implies Theorem~\ref{thm:random zero}. To deduce Theorem~\ref{thm:random zero general}, let $\cZ$ be the following random nonempty subset of $\MM$:  First choose $k\in \N$ with probability $2^{-k}$, and then select a random subset of $\MM$ according to $\prob^C$ for $C=e^{k-1}$. Denote  the law of this random subset $\cZ$ by $\prob$. 

Fix $x,y\in \MM$ with $d_\MM(x,y)\ge \tau$. Suppose first that 
\begin{equation}\label{eq:lambda not too crazy}
\frac{\alpha}{\sqrt{\log \frac{\mu (B_\MM(x,19\beta\tau))}{\mu(B_\MM(x,\beta\tau))}}}\le \lambda\le \frac12. 
\end{equation}
As $1/\lambda-1>0$, 
$$
\frac{\sqrt{\log \frac{\mu (B_\MM(x,19\beta\tau))}{\mu(B_\MM(x,\beta\tau))}}}{\alpha\left(\frac{1}{\lambda }-1\right)}\ge \frac{\lambda}{\alpha} \sqrt{\log \frac{\mu (B_\MM(x,19\beta\tau))}{\mu(B_\MM(x,\beta\tau))}}\stackrel{\eqref{eq:lambda not too crazy}}{\ge} 1.
$$
Hence, there exists $k\in \N$ such that 
\begin{equation}\label{eq:ek range}
\frac{\sqrt{\log \frac{\mu (B_\MM(x,19\beta\tau))}{\mu(B_\MM(x,\beta\tau))}}}{\alpha\left(\frac{1}{\lambda }-1\right)}\le   e^{k-1}  < \frac{e\sqrt{\log \frac{\mu (B_\MM(x,19\beta\tau))}{\mu(B_\MM(x,\beta\tau))}}}{\alpha\left(\frac{1}{\lambda }-1\right)}\le  \frac{2e\lambda}{\alpha}\sqrt{\log \frac{\mu (B_\MM(x,19\beta\tau))}{\mu(B_\MM(x,\beta\tau))}}, 
\end{equation}
where the third inequality in~\eqref{eq:ek range} is valid because $0<\lambda\le 1/2$, so $1/\lambda-1\ge 1/(2\lambda)$.  Now,
\begin{align}\label{eq:use for C=ek-1}
\begin{split}
\prob \big[d_\MM&(x,\cZ)\ge \lambda\beta\tau\quad\mathrm{and}\quad y\in \cZ\big]\ge 2^{-k}\prob^{e^{k-1}} \big[d_\MM(x,\cZ)\ge \lambda\beta\tau\quad\mathrm{and}\quad y\in \cZ\big]\\&\ge 2^{-k}\prob^{e^{k-1}}\bigg[ d_\MM(x,\cZ)\ge \frac{\beta \tau}{1+\frac{1}{\alpha e^{k-1}} \sqrt{\log \frac{\mu (B_\MM(x,19\beta\tau))}{\mu(B_\MM(y,\beta\tau))}}}\quad \mathrm{and} \quad y\in \cZ\bigg]\gtrsim e^{-\left(k\log 2+\kappa e^{2k-2}\right)},
\end{split}
\end{align}
where the first step of~\eqref{eq:use for C=ek-1} is a direct consequence of the definition of $\prob$, the second step of~\eqref{eq:use for C=ek-1} is valid by the  first inequality in~\eqref{eq:ek range},  which can be rewritten as
$$
\lambda\le \frac{1}{1+\frac{1}{\alpha e^{k-1}} \sqrt{\log \frac{\mu (B_\MM(x,19\beta\tau))}{\mu(B_\MM(x,\beta\tau))}}},
$$
and the third step of~\eqref{eq:use for C=ek-1}  is an application of~\eqref{eq:p lower prob alpha C} with $C=e^{k-1}\ge 1$.  Next, because  $\kappa,k\ge 1$, it is straightforward to check that $k\log 2+\kappa e^{2k-2}\le 2\kappa e^{2k-2}$. Consequently, 
\begin{align*}
\prob \big[d_\MM(x,\cZ)\ge \lambda\beta\tau\quad\mathrm{and}\quad y\in \cZ\big]\stackrel{\eqref{eq:use for C=ek-1}}{\gtrsim}  e^{-2\kappa e^{2k-2}}\stackrel{\eqref{eq:ek range}}{\ge} \left(\frac{\mu (B_\MM(x,19\beta\tau))}{\mu(B_\MM(x,\beta\tau))}\right)^{-\frac{8\kappa e^2}{\alpha^2}\lambda^2} \ge \left(\frac{\mu (B_\MM(x,19\beta\tau))}{\mu(B_\MM(x,\beta\tau))}\right)^{-\lambda^2},
\end{align*}
where the last step holds as $\alpha\ge 2e\sqrt{2\kappa}$ by~\eqref{eq:choose alpha constant}. This proves~\eqref{eq:in main thm-general} for $\lambda$ satisfying~\eqref{eq:lambda not too crazy}. The validity of~\eqref{eq:in main thm-general} in the entire range $0<\lambda\le 1/2$  follows formally from its validity when $\lambda$ belongs to the restricted range~\eqref{eq:lambda not too crazy}:   if $0<\lambda<\alpha/\sqrt{\log (\mu (B_\MM(x,19\beta\tau))/\mu(B_\MM(x,\beta\tau))}$, then by using the endpoint case of what we just proved,
\begin{equation*}
\prob \big[d_\MM(x,\cZ)\ge \lambda\beta\tau\ \ \ \mathrm{and}\ \  \  y\in \cZ\big]\ge \prob \bigg[d_\MM(y,\cZ)\ge \frac{\alpha}{\sqrt{\log \frac{\mu (B_\MM(x,19\beta\tau))}{\mu(B_\MM(x,\beta\tau))}}}\beta\tau\ \ \ \mathrm{and}\ \ \  x\in \cZ\bigg]\gtrsim e^{-\alpha^2}\asymp 1.\tag*{\qedhere}
\end{equation*}
\end{proof}

\section{From a random  zero set to a ``no measure is  concentrated phenomenon''}\label{sec:obs}

Our goal here is to show how Theorem~\ref{thm:observable} and Theorem~\ref{thm:no-super} follow from Theorem~\ref{cor:spreading doubling}. We will, in fact, obtain the following more precise result, from which  it is quick to deduce Theorem~\ref{thm:observable} and Theorem~\ref{thm:no-super}:

\begin{theorem}\label{thm:iso p} For every $0<s,\e<1$ there exists $c=c(s,\e)>0$ with the following property. Given $n\in \N$, let  $(\MM,d)$ be a $5^n$-doubling $(s,\e)$-quasisymmetrically Hilbertian metric. Suppose that $\mu$ is a Borel probability measure on $\MM$ and fix $0<\fp\le 1$ and $\fd>0$ for which 
\begin{equation}\label{eq:fdfp}
(\mu\times \mu) \big(\{(x,y)\in \MM\times \MM:\ d(x,y)\ge \fd\}\big)\ge \fp.
\end{equation}
 Then, for every $\sqrt{\kappa/n}\le \phi \le 1$, where $\kappa>1$ is the universal constant from Theorem~\ref{cor:spreading doubling} , we have 
\begin{equation}\label{eq:iso lower p}
I_\mu^\MM(c\phi\fd)\ge \fp e^{-n\phi^2}.
\end{equation}
\end{theorem}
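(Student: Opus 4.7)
The plan is to derive the isoperimetric lower bound directly from the random zero set result of Theorem~\ref{cor:spreading doubling}, via a two-point Fubini pairing followed by a short case analysis on whether a natural thickening of the zero set has $\mu$-mass above or below $1/2$. First, I will reduce to the case in which $\mu$ is supported on a finite subset $\MM'\subset\MM$ by approximating $\mu$ by its push-forward onto successive $r$-nets and letting $r\to 0$ at the end; the $(s,\e)$-quasisymmetric Hilbertianity of $\MM$ and the ambient $5^n$-doubling constant are all that Theorem~\ref{cor:spreading doubling} requires to act on finite subsets, and the additive $r$-error in distances as well as any small loss of mass can be absorbed into the final constant $c(s,\e)$.

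Given $\sqrt{\kappa/n}\le\phi\le 1$, I would choose the free parameter in Theorem~\ref{cor:spreading doubling} to be $p\eqdef n\phi^2/\kappa\ge 1$. Applied at scale $\tau\eqdef \fd$ to $\MM'$, the theorem produces a random nonempty $\cZ\subset\MM'$ which is $\zeta$-spreading with probability $\delta$, where $\zeta\asymp_{s,\e}1/\phi$ and $\delta=e^{-n\phi^2}$. Set $t\eqdef\fd/(2\zeta)\asymp_{s,\e}\phi\fd$. Integrating the pointwise separation probability over pairs $(x,y)\in\MM\times\MM$ with $d(x,y)\ge\fd$ against $\mu\times\mu$, applying Fubini, dropping the pairing constraint to pass to the product $\mu(\cZ)F(\cZ)$, and invoking~\eqref{eq:fdfp}, I obtain
\[
\E\big[\mu(\cZ)\cdot F(\cZ)\big]\gtrsim \fp\delta,\qquad F(\cZ)\eqdef\mu\big(\{y\in\MM:\,d(y,\cZ)\ge 2t\}\big),
\]
so I may fix a realization $\cZ^*$ with $\mu(\cZ^*)\cdot F(\cZ^*)\gtrsim \fp\delta$.

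I then case split on $\mu(\sub)$, where $\sub\eqdef\{y:\,d(y,\cZ^*)\le t\}\supset\cZ^*$. If $\mu(\sub)\ge 1/2$, then $\sub$ is a valid test set for $I_\mu^\MM$, and the triangle inequality gives $d(y,\sub)\ge t$ whenever $d(y,\cZ^*)\ge 2t$; hence $I_\mu^\MM(t)\ge F(\cZ^*)\ge\fp\delta/\mu(\cZ^*)\ge\fp\delta$. If instead $\mu(\sub)<1/2$, then the open complement $\sub'\eqdef\{y:\,d(y,\cZ^*)>t\}$ has $\mu(\sub')>1/2$ and every $x\in\cZ^*$ satisfies $d(x,\sub')\ge t$, so $I_\mu^\MM(t)\ge\mu(\cZ^*)\ge\fp\delta/F(\cZ^*)\ge\fp\delta$. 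In both cases $I_\mu^\MM(t)\ge\fp e^{-n\phi^2}$, which is the desired conclusion with $c(s,\e)$ equal to the implicit $\asymp_{s,\e}$ factor relating $t$ to $\phi\fd$.

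The substantive ingredient is Theorem~\ref{cor:spreading doubling}; everything else is a clean Fubini-plus-pigeonhole computation and the two-line case analysis on the median. The only non-routine point I foresee is the discretization from a Borel $\mu$ on a possibly non-compact $\MM$ down to finitely supported measures on finite subsets, since only then is Theorem~\ref{cor:spreading doubling} directly applicable; this is standard in doubling spaces but needs to be executed carefully so that both the additive $r$-error in distances and the truncation loss outside a sufficiently large ball can be driven to zero simultaneously while preserving the lower bound~\eqref{eq:fdfp} up to arbitrarily small losses.
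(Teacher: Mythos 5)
Your proposal is correct and follows essentially the same route as the paper: discretize to a finite space via a net approximation (the paper's Lemma~\ref{lem:trivial approx by net}), apply Theorem~\ref{cor:spreading doubling} with $p=n\phi^2/\kappa$, integrate the separation guarantee against $\mu\times\mu$ via Fubini to fix a realization $\cZ^*$ with $\mu(\cZ^*)\cdot\mu(\{d(\cdot,\cZ^*)\ge 2t\})\ge\fp e^{-n\phi^2}$, and finish with the two-case median argument (the paper packages the last two steps as Lemma~\ref{lem:zero to iso} and Lemma~\ref{lem:separated sets}, but the content is identical to your inline Fubini-plus-pigeonhole). One small point: your $\gtrsim$ in $\E[\mu(\cZ)F(\cZ)]\gtrsim\fp\delta$ should be an honest $\ge$ (the Fubini computation gives it exactly), which you need in order to conclude the stated $I_\mu^\MM(c\phi\fd)\ge\fp e^{-n\phi^2}$ without an extraneous constant in front.
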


Prior to proving Theorem~\ref{thm:iso p}, we will next assume its validity and explain how it implies Theorem~\ref{thm:observable} and Theorem~\ref{thm:no-super}. If the assumptions and notation of Theorem~\ref{thm:iso p} hold, then 
\begin{equation}\label{iso in full range with twidle}
\forall 0\le \phi\le 1,\qquad I_\mu^\MM(c\phi\fd)\gtrsim \fp e^{-n\phi^2}.
\end{equation}
Indeed, conclusion~\eqref{eq:iso lower p} of Theorem~\ref{thm:iso p} implies~\eqref{iso in full range with twidle} if $\sqrt{\kappa/n}\le \phi \le 1$ (with the implicit constant in~\eqref{iso in full range with twidle}  equal to $1$).  If $0<\phi\le \sqrt{\kappa/n}$, then since per~\eqref{eq:def:isoperimetric function} the isoperimetric function is non-increasing, 
$$
I_\mu^\MM(c\phi\fd)\ge I_\mu^\MM\Big(c\sqrt{\frac{\kappa}{n}}\fd\Big)\stackrel{\eqref{eq:iso lower p}}{\ge}\fp e^{-\kappa}\gtrsim_\kappa \fp e^{-n\phi^2}. 
$$

The normalization assumption of Theorem~\ref{thm:observable} and Theorem~\ref{thm:no-super} is that~\eqref{eq:fdfp} holds for $\fd=1$ and $\fp=\frac12$, as we say that $\mu$ is normalized if the median of $d(x,y)$ when $(x,y)\in \MM\times \MM$ is distributed according to $\mu\times \mu$ is equal to $1$. Thus, Theorem~\ref{thm:no-super} is a special case of~\eqref{iso in full range with twidle}. 

To prove  (a similar strengthening of) Theorem~\ref{thm:observable}, suppose first that $\fp/(2e^n)\le\theta \le \min\{\fp/(2e^\kappa),\fp^2/4\}$, 
where we are continuing to reason under the assumptions and notation of Theorem~\ref{thm:iso p}. Define
\begin{equation}\label{eq:choose our phi}
\phi_0=\phi_0(\fp,\theta,n)\eqdef \sqrt{\frac{\log\frac{\fp}{2\theta}}{n}}\asymp \sqrt{\frac{\log\frac{1}{\theta}}{n}},
\end{equation}
where the last equivalence in~\eqref{eq:choose our phi} holds as  $\theta\le \fp^2/4$. The rest of the above assumptions on $\theta$ are equivalent to  $\sqrt{\kappa/n}\le \phi_0\le 1$, so~\eqref{eq:iso lower p} holds, i.e., $I_\mu^\MM(c\phi_0\fd)\ge 2\theta>\theta$.  Therefore, by the first implication in~\eqref{eq:relations obs iso},
\begin{equation}\label{eq:interestting range theta}
\forall \frac{\fp}{2e^n}\le\theta \le \min\Big\{\frac{\fp}{2e^\kappa},\frac{\fp^2}{4}\Big\},\qquad \mathrm{ObsDiam}_\mu^{\MM}(\theta)\ge c\phi_0\fd\stackrel{\eqref{eq:choose our phi}}{\asymp} c\fd\sqrt{\frac{\log\frac{1}{\theta}}{n}}.
\end{equation}
However, the following estimate on the observable diameter of the Euclidean sphere is a standard consequence of L\'evy's spherical isoperimetric theorem~\cite{Lev51} (see e.g.~equation~(1.21) in~\cite{Led01}):
\begin{equation}\label{eq:quote observable sphere}
\forall \theta>0,\qquad \mathrm{ObsDiam}_{\sigma^{n-1}}^{S^{n-1}}(\theta)\lesssim \sqrt{\frac{\log\frac{1}{\theta}}{n}}. 
\end{equation}
We therefore conclude from~\eqref{eq:interestting range theta} and~\eqref{eq:quote observable sphere} that
\begin{equation*}\label{eq:restate interesting range theta}
\forall \frac{\fp}{2e^n}\le\theta \le \min\Big\{\frac{\fp}{2e^\kappa},\frac{\fp^2}{4}\Big\},\qquad \frac{1}{\fd}\mathrm{ObsDiam}_\mu^{\MM}(\theta)\gtrsim_{s,\e} \mathrm{ObsDiam}_{\sigma^{n-1}}^{S^{n-1}}(\theta).
\end{equation*}

If $0<\theta\le \fp/(2e^n)$, then because the definition of the $\theta$-observable diameter (of any metric probability space) immediately implies that it is non-increasing in $\theta$, the special case $\theta=\fp/(2e^n)$ of~\eqref{eq:interestting range theta} gives  
$$
\frac{1}{\fd}\mathrm{ObsDiam}_\mu^{\MM}(\theta)\ge
\frac{1}{\fd}\mathrm{ObsDiam}_\mu^{\MM}\Big(\frac{\fp}{2e^n}\Big)\stackrel{\eqref{eq:interestting range theta}}{\gtrsim}_{s,\e} 1\asymp\diam_{\ell_2^n}\big(S^{n-1}\big)\gtrsim  \mathrm{ObsDiam}_{\sigma^{n-1}}^{S^{n-1}}(\theta).
$$
Altogether, we have verified that the assumptions of Theorem~\ref{thm:iso p} imply that 
\begin{equation}\label{eq:interestting range theta up to zero}
\forall 0<\theta \le \min\Big\{\frac{\fp}{2e^\kappa},\frac{\fp^2}{4}\Big\},\qquad \frac{1}{\fd}\mathrm{ObsDiam}_\mu^{\MM}(\theta)\gtrsim_{s,\e} \mathrm{ObsDiam}_{\sigma^{n-1}}^{S^{n-1}}(\theta).
\end{equation}
Theorem~\ref{thm:observable} is the special case $\fp=\frac12$ and $\fd=1$ of~\eqref{eq:interestting range theta up to zero},  so we get~\eqref{eq:observable lower}   with $\theta_0=\min\{1/(4e^\kappa),1/16\}$. 

Note that~\cite{naor2005quasisymmetric} proved the (suboptimal, dimension-dependent) estimate~\eqref{eq:sub optimal observable extremality} under the assumption that $\diam(\MM)<\infty$ and the following expectation bound holds for some $\alpha>0$:
\begin{equation}\label{eq:expectation rather than median}
\iint_{\MM\times \MM} d(x,y)\ud\mu(x)\ud\mu(y)\ge \alpha \diam(\MM). 
\end{equation}
This setting is covered by the above derivation of~\eqref{eq:interestting range theta up to zero} assuming the probability bound~\eqref{eq:fdfp} because  
$$
\alpha \diam(\MM)\stackrel{\eqref{eq:expectation rather than median}}{\le} (\mu\times \mu) \Big(\big\{(x,y)\in \MM\times \MM:\ d(x,y)\ge \frac{\alpha}{2}\diam(\MM)\big\}\Big) \diam(\MM) +\frac{\alpha}{2}\diam(\MM).
$$
So, \eqref{eq:fdfp} holds with $\fp=\alpha/2$ and $\fd=\alpha \diam(\MM)/2$, in which case~\eqref{eq:interestting range theta up to zero} becomes
$$
\forall 0<\theta \le \min\Big\{\frac{\alpha}{4e^\kappa},\frac{\alpha^2}{16}\Big\},\qquad \frac{\mathrm{ObsDiam}_\mu^{\MM}(\theta)}{\diam(\MM)}\gtrsim_{s,\e} \alpha \mathrm{ObsDiam}_{\sigma^{n-1}}^{S^{n-1}}(\theta),
$$ 
which is the form of the conclusion of~\cite[Theorem~1.7]{naor2005quasisymmetric}, though now it is dimension-independent. 

Passing to the proof of Theorem~\ref{thm:iso p}, we will first record Lemma~\ref{lem:separated sets} below. It shows that the existence of large well-separated subsets yields a lower bound on the isoperimetric function; a proof of this simple fact is implicit in the proof of~\cite[Proposition~2.26]{Shi16}. Recall that, as in~\eqref{eq:def:isoperimetric function}, the Borel subsets of a metric space $(\MM,d)$ are denoted $\mathcal{Bor}(\MM,d)$. The distance between $\emptyset \neq \mathscr{A},\mathscr{B}\subset \MM$ is denoted (as usual) by $$d(\mathscr{A},\mathscr{B})\eqdef \inf\big\{d(a,b):\ (a,b)\in \mathscr{A}\times \mathscr{B}\big\}.$$

\begin{lemma}\label{lem:separated sets} For every metric space $(\MM,d)$, every Borel probability measure $\mu$ on $\MM$ satisifes
$$
\forall t>0,\qquad I_\mu^\MM(t)\ge \sup_{\substack{\mathscr{A},\mathscr{B}\in \mathcal{Bor}(\MM,d)\setminus\{\emptyset\}\\d(\mathscr{A},\mathscr{B})\ge 2t}} \min\big\{\mu(\mathscr{A}),\mu(\mathscr{B})\big\}.
$$
\end{lemma}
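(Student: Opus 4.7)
The plan is as follows. Fix $t>0$ and arbitrary nonempty Borel sets $\mathscr{A},\mathscr{B}\subset \MM$ with $d(\mathscr{A},\mathscr{B})\ge 2t$; the task reduces to showing that $I_\mu^\MM(t)\ge \min\{\mu(\mathscr{A}),\mu(\mathscr{B})\}$, and then taking the supremum over all such pairs. The main tool will be the open $t$-neighborhoods
\begin{equation*}
N_t(\mathscr{A})\eqdef \big\{x\in \MM:\ d(x,\mathscr{A})<t\big\}\qquad\mathrm{and}\qquad N_t(\mathscr{B})\eqdef \big\{x\in \MM:\ d(x,\mathscr{B})<t\big\},
\end{equation*}
both of which are open (since distance functions are $1$-Lipschitz) and hence Borel.

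The key observation is that these neighborhoods are disjoint: if some $z\in \MM$ belonged to both, then one could find $a\in \mathscr{A}$ and $b\in \mathscr{B}$ with $d(z,a),d(z,b)<t$, so that $d(\mathscr{A},\mathscr{B})\le d(a,b)<2t$, a contradiction. Therefore
\begin{equation*}
\mu\big(N_t(\mathscr{A})\big)+\mu\big(N_t(\mathscr{B})\big)\le 1,
\end{equation*}
so at least one of the two neighborhoods has $\mu$-measure at most $\tfrac12$. Swapping the roles of $\mathscr{A}$ and $\mathscr{B}$ if necessary (which does not change $\min\{\mu(\mathscr{A}),\mu(\mathscr{B})\}$), I may assume $\mu(N_t(\mathscr{A}))\le \tfrac12$.

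I will then take the candidate set $\sub\eqdef \MM\setminus N_t(\mathscr{A})=\{y\in \MM:\ d(y,\mathscr{A})\ge t\}$. This $\sub$ is closed, hence Borel, satisfies $\mu(\sub)\ge \tfrac12$ by the choice made above, and is nonempty because it contains $\mathscr{B}$ (as $d(\mathscr{B},\mathscr{A})\ge 2t>t$). For every $x\in \mathscr{A}$ and every $y\in \sub$ the bound $d(x,y)\ge d(y,\mathscr{A})\ge t$ holds, so $\mathscr{A}\subset \{x\in \MM:\ d(x,\sub)\ge t\}$. Plugging this set into the defining supremum~\eqref{eq:def:isoperimetric function} of $I_\mu^\MM(t)$ yields
\begin{equation*}
I_\mu^\MM(t)\ge \mu\big(\{x\in \MM:\ d(x,\sub)\ge t\}\big)\ge \mu(\mathscr{A})\ge \min\{\mu(\mathscr{A}),\mu(\mathscr{B})\},
\end{equation*}
and taking the supremum over $(\mathscr{A},\mathscr{B})$ completes the proof. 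There is no real obstacle; the only thing to be mindful of is the asymmetric $\tfrac12$-threshold in the definition of $I_\mu^\MM$, which is precisely what forces the case split via the disjointness of $N_t(\mathscr{A})$ and $N_t(\mathscr{B})$.
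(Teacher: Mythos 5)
Your proof is correct and takes essentially the same route as the paper's: both arguments rest on the fact that every point of $\MM$ is at distance at least $t$ from $\mathscr{A}$ or from $\mathscr{B}$ (you phrase this as disjointness of the open $t$-neighborhoods; the paper phrases it as $\MM$ being covered by the two closed complements), pick $\sub$ to be the larger of the two complements so that $\mu(\sub)\ge\tfrac12$, and then observe that $\mathscr{A}$ (resp.\ $\mathscr{B}$) lies in $\{x:d(x,\sub)\ge t\}$.
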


\begin{proof} Fix $t>0$. Let $\mathscr{A},\mathscr{B}$ be nonempty Borel subsets of $\MM$ with $d(\mathscr{A},\mathscr{B})\ge 2t$. By the triangle inequality, $2t\le d(\mathscr{A},\mathscr{B})\le d(x,\mathscr{A})+d(x,\mathscr{B})\le 2\max\{d(x,\mathscr{A}),d(x,\mathscr{B})\}$ for every $x\in \MM$. Thus, $\max\{d(x,\mathscr{A}),d(x,\mathscr{B})\}\ge t$ for every $x\in \MM$, i.e.,   $\MM=\{x\in \MM:\ d(x,\mathscr{A})\ge t\}\cup \{x\in \MM:\ d(x,\mathscr{B})\ge t\}$. As $\mu$ is a probability measure, this implies that $\max\{\mu(\{x\in \MM:\ d(x,\mathscr{A})\ge t\}),\mu(\{x\in \MM:\ d(x,\mathscr{B})\ge t\})\}\ge \frac12$. If  $\mu(\{x\in \MM:\ d(x,\mathscr{A})\ge t\})\ge \frac12$, then denote $\sub=\{x\in \MM:\ d(x,\mathscr{A})\ge t\}$ and observe that (by design) we have  $\{x\in \MM:\ d(x,\sub)\ge t\}\supseteq \mathscr{A}$. So,
$$
I_\mu^\MM(t)\stackrel{\eqref{eq:def:isoperimetric function}}{\ge} \mu\big(\{x\in \MM:\ d(x,\sub)\ge t\}\big)\ge \mu(\mathscr{A}). 
$$ 
The symmetric reasoning gives $I_\mu^\MM(t)\ge \mu(\mathscr{B})$ in the remaining case $\mu(\{x\in \MM:\ d(x,\mathscr{B})\ge t\})\ge \frac12$. 
\end{proof}

The following  lemma shows that a spreading random zero set is an obstruction to isoperimetry: 

\begin{lemma}\label{lem:zero to iso} If a finite metric space $(\MM,d)$ admits a  random zero set which is $\zeta$-spreading with probability $\d$ (per Definition~\ref{def:random zero}) for some  $\zeta>0$ and $0<\d\le 1$, then every probability measure $\mu$ on $\MM$ satisfies
\begin{equation}\label{eq:goal no super concentration}
\forall t>0,\qquad I_\mu^\MM(t)\ge \d (\mu\times \mu)\big(\{(x,y)\in\MM\times \MM:\ d(x,y)\ge 2t\zeta\}\big).
\end{equation}
\end{lemma}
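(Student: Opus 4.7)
The plan is to convert the random-zero-set guarantee into a deterministic pair of separated large sets, and then invoke Lemma~\ref{lem:separated sets}. Set $\tau\eqdef 2t\zeta$ and let $\prob^\tau$ be the probability measure on $2^{\MM}\setminus\{\emptyset\}$ supplied by Definition~\ref{def:random zero}. For each realization $\cZ\subset \MM$ define the Borel (in fact, since $\MM$ is finite, just arbitrary) sets
\begin{equation*}
\mathscr{A}(\cZ)\eqdef \cZ\qquad\mathrm{and}\qquad \mathscr{B}(\cZ)\eqdef \{y\in \MM:\ d(y,\cZ)\ge 2t\}.
\end{equation*}
By construction, every $(x,y)\in \mathscr{A}(\cZ)\times \mathscr{B}(\cZ)$ satisfies $d(x,y)\ge d(y,\cZ)\ge 2t$, so $d(\mathscr{A}(\cZ),\mathscr{B}(\cZ))\ge 2t$ whenever both sets are nonempty.

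Next I average the product $\mu(\mathscr{A}(\cZ))\mu(\mathscr{B}(\cZ))$ over $\cZ$. By Fubini and the definitions of $\mathscr{A}(\cZ),\mathscr{B}(\cZ)$,
\begin{equation*}
\int \mu(\mathscr{A}(\cZ))\mu(\mathscr{B}(\cZ))\ud\prob^\tau(\cZ)=\iint_{\MM\times \MM} \prob^\tau\big[ \cZ:\ x\in \cZ\ \mathrm{and}\ d(y,\cZ)\ge 2t\big]\ud\mu(x)\ud\mu(y).
\end{equation*}
Whenever $d(x,y)\ge \tau=2t\zeta$, the $\zeta$-spreading hypothesis~\eqref{eq:def  zero set} gives that the probability inside the double integral is at least $\d$ (indeed $\tau/\zeta=2t$). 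Consequently,
\begin{equation*}
\int \mu(\mathscr{A}(\cZ))\mu(\mathscr{B}(\cZ))\ud\prob^\tau(\cZ)\ge \d\,(\mu\times\mu)\big(\{(x,y)\in\MM\times\MM:\ d(x,y)\ge 2t\zeta\}\big).
\end{equation*}

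Therefore, there exists at least one $\cZ$ in the support of $\prob^\tau$ for which $\mu(\mathscr{A}(\cZ))\mu(\mathscr{B}(\cZ))$ is at least the right-hand side of the conclusion~\eqref{eq:goal no super concentration}. If that right-hand side is zero there is nothing to prove; otherwise both $\mathscr{A}(\cZ)$ and $\mathscr{B}(\cZ)$ have positive $\mu$-measure and are in particular nonempty, and since $\mu$ is a probability measure,
\begin{equation*}
\min\{\mu(\mathscr{A}(\cZ)),\mu(\mathscr{B}(\cZ))\}\ge \mu(\mathscr{A}(\cZ))\mu(\mathscr{B}(\cZ))\ge \d\,(\mu\times\mu)\big(\{(x,y):\ d(x,y)\ge 2t\zeta\}\big).
\end{equation*}
Applying Lemma~\ref{lem:separated sets} to this pair $(\mathscr{A}(\cZ),\mathscr{B}(\cZ))$ yields the desired bound~\eqref{eq:goal no super concentration}.

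There is no real obstacle here: the entire argument is a one-shot averaging (using $\min\{a,b\}\ge ab$ for $a,b\in[0,1]$) combined with the separation Lemma~\ref{lem:separated sets}. The only points requiring mild care are ensuring the separation $d(\mathscr{A},\mathscr{B})\ge 2t$ (immediate from the definition of $\mathscr{B}(\cZ)$) and handling the trivial case where the product-measure on $\{d(x,y)\ge 2t\zeta\}$ vanishes, for which the inequality is automatic.
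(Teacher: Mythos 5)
Your argument is correct and is essentially identical to the paper's proof: both average the product $\mu(\cZ)\cdot\mu(\{y:\ d(y,\cZ)\ge 2t\})$ over $\cZ\sim\prob^{2t\zeta}$, apply Fubini and the $\zeta$-spreading hypothesis to bound this average from below, extract a good realization $\cZ$, use $\min\{a,b\}\ge ab$ for $a,b\in[0,1]$, and then invoke Lemma~\ref{lem:separated sets} on the $2t$-separated pair $(\cZ,\{y:\ d(y,\cZ)\ge 2t\})$. The only cosmetic difference is that you name the two sets $\mathscr{A}(\cZ),\mathscr{B}(\cZ)$ up front rather than writing them inline.
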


\begin{proof} Fix $t>0$ and let $\prob^{2t\zeta}$ be as in Definition~\ref{def:random zero} with $\tau=2t\zeta$, i.e., 
\begin{equation}\label{eq:def  zero set-rescaled}
\forall x,y\in \MM, \qquad d(x,y)\ge 2t\zeta\implies \prob^{2t\zeta}\big[ \emptyset \neq  \cZ\subset \MM:\  d(y,\cZ) \ge 2t  \quad  \mathrm{and}\quad  x\in \cZ   \big]\ge \d.
\end{equation}

Then,
\begin{align*}
\int_{2^\MM\setminus\{\emptyset\}}&\mu\big(\{y\in \MM:\ d(y,\cZ)\ge 2t\}\big) \mu(\cZ) \prob^{2t\zeta}(\cZ) \\&=\iiint_{(2^\MM\setminus\{\emptyset\})\times \MM\times \MM}\1_{\left\{d(y,\cZ)\ge 2t\ \ \mathrm{and}\ \  x\in \cZ\right\}} \ud\mu(x)\ud\mu(y)\ud\prob^{2t\zeta}(\cZ)\\&=\iint_{\MM\times \MM} \prob^{2t\zeta}\big[ \emptyset \neq  \cZ\subset \MM:\  d(y,\cZ) \ge 2t \quad  \mathrm{and}\quad  x\in \cZ   \big]\ud\mu(x)\ud\mu(y)\\ 
&\ge \iint_{\{(x,y)\in\MM\times \MM:\ d(x,y)\ge 2t\zeta\}} \prob^{2t\zeta}\big[ \emptyset \neq  \cZ\subset \MM:\  d(y,\cZ) \ge 2t  \quad  \mathrm{and}\quad  x\in \cZ   \big]\ud\mu(x)\ud\mu(y)\\
&\!\!\!\!\!\stackrel{\eqref{eq:def  zero set-rescaled}}{\ge} \d (\mu\times \mu)\big(\{(x,y)\in\MM\times \MM:\ d(x,y)\ge 2t\zeta\}\big).
\end{align*}
Therefore, there exists $\emptyset\neq \cZ=\cZ_\mu\subset \MM$ such that 
\begin{align}\label{eq:choose Zmu}
\begin{split}
\min\big\{ \mu\big(\{y\in \MM:\ d(y,\cZ)\ge 2t\}\big), \mu(\cZ)\big\}&\ge  \mu\big(\{y\in \MM:\ d(y,\cZ)\ge 2t\}\big) \mu(\cZ)\\&\ge \d (\mu\times \mu)\big(\{(x,y)\in\MM\times \MM:\ d(x,y)\ge 2t\zeta\}\big). 
\end{split}
\end{align}

The desired estimate~\eqref{eq:goal no super concentration} holds vacuously if its right hand side vanishes, so we may assume that $(\mu\times \mu)\big(\{(x,y)\in\MM\times \MM:\ d(x,y)\ge 2t\zeta\}\big)>0$, whence $\{y\in \MM:\ d(y,\cZ)\ge 2t\}, \cZ\neq \emptyset$ by~\eqref{eq:choose Zmu}. Furthermore, $d(\{y\in \MM:\ d(y,\cZ)\ge 2t\},\cZ)\ge 2t$, so~\eqref{eq:goal no super concentration} follows by combining  Lemma~\ref{lem:separated sets} with~\eqref{eq:choose Zmu}. 
\end{proof}

Lemma~\ref{lem:trivial approx by net} below is an approximation step for passing from Lemma~\ref{lem:zero to iso}  to the analogous statement for infinite spaces; we include its proof for completeness as we did not locate it  in the literature, though it is straightforward and  one could take it an easy exercise rather than reading it.

\begin{lemma}\label{lem:trivial approx by net} Let $(\MM,d)$ be a metric space in which every ball is totally bounded, and let $\mu$ a Borel probability measure on $\MM$. For any $0<\sigma<1$ and $t,D>0$ there exists a finitely supported probability measure $\nu$ on $\MM$ such that 
\begin{equation}\label{eq:still median}
(\mu\times \mu)\big(\{(x,y)\in \MM\times \MM:\  d(x,y)>D\}\big)\le (\nu\times \nu) \big(\{(x,y)\in \MM\times \MM:\ d(x,y)> D\}\big)+\sigma,
\end{equation}
and,
\begin{equation}\label{eq:discretized isoperimetric function}
I_\mu^\MM(t)\ge I_\nu^\MM( t+\sigma )-\sigma.
\end{equation}
\end{lemma}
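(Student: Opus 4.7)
The plan is to approximate $\mu$ by its pushforward under a projection onto a finite $\eta$-net, with parameters chosen to absorb (a)~the $\mu$-mass outside a large ball and (b)~the $(\mu\times\mu)$-mass on the ``sphere'' $\{d(x,y)=D\}$. Fix a basepoint $x_0\in\MM$. Since $B(x_0,R)\uparrow\MM$ and $\mu$ is a probability measure, choose $R$ so large that $\mu(B(x_0,R-t))>1-\sigma/4$, and set $G\eqdef B(x_0,R)$ and $G'\eqdef B(x_0,R-t)$; note that $\mu(\MM\setminus G)<\sigma/4$, and every $x\in G'$ satisfies $d(x,\MM\setminus G)\ge t$ by the triangle inequality. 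By continuity of measure from above (the events $\{D<d\le D+2\eta\}$ shrink to $\emptyset$ as $\eta\downarrow 0$), pick $\eta\in (0,\sigma/4)$ with $(\mu\times\mu)(\{D<d\le D+2\eta\})<\sigma/2$. Total boundedness of $G$ furnishes a finite $\eta$-net $\{y_1,\ldots,y_N\}\subset G$ with $y_1=x_0$; form the Borel partition $A_i\eqdef (B(y_i,\eta)\cap G)\setminus\bigcup_{j<i}A_j$ of $G$, define $\pi:\MM\to F\eqdef\{y_1,\ldots,y_N\}$ by $\pi(x)=y_i$ if $x\in A_i$ and $\pi(x)=y_1$ for $x\in \MM\setminus G$, and take $\nu\eqdef\pi_*\mu$. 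Then $\nu$ is a finitely supported Borel probability measure on $\MM$, and $d(x,\pi(x))\le\eta$ for every $x\in G$.

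To verify~\eqref{eq:still median}, split $\{d>D\}$ into its intersection with $G^2$ and its complement. The complement of $G^2$ contributes at most $1-\mu(G)^2\le 2\mu(\MM\setminus G)<\sigma/2$. On $G^2$, the triangle inequality gives $d(x,y)\le d(\pi(x),\pi(y))+2\eta$, so
$$\{d>D\}\cap G^2\subset\{d\circ(\pi\times\pi)>D\}\cup\{D<d\le D+2\eta\},$$
and the last set has $(\mu\times\mu)$-measure $<\sigma/2$ by choice of $\eta$; since $(\mu\times\mu)(\{d\circ(\pi\times\pi)>D\})=(\nu\times\nu)(\{d>D\})$, adding the two bounds yields~\eqref{eq:still median}.

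For~\eqref{eq:discretized isoperimetric function}, fix Borel $\sub\subset\MM$ with $\nu(\sub)\ge\tfrac12$ and let $\sub_1\eqdef\sub\cap F$, so $\nu(\sub_1)=\nu(\sub)$. Put
$$\sub^*\eqdef\bigcup_{y_i\in \sub_1}A_i\subset G\qquad\text{and}\qquad \sub'\eqdef\sub^*\cup(\MM\setminus G).$$
A direct mass count using $\pi^{-1}(y_1)=A_1\cup(\MM\setminus G)$ and $\pi^{-1}(y_i)=A_i$ for $i\ge 2$ yields $\mu(\sub^*)=\nu(\sub_1)-\1_{y_1\in \sub_1}\mu(\MM\setminus G)$, whence $\mu(\sub')=\nu(\sub_1)+(1-\1_{y_1\in \sub_1})\mu(\MM\setminus G)\ge\tfrac12$. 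For $x\in G'$ and $z\in A_i$ with $y_i\in\sub_1$, the bounds $d(z,y_i)\le\eta$ and $d(x,\pi(x))\le\eta$ give $d(x,z)\ge d(\pi(x),y_i)-2\eta$; taking infima, $d(x,\sub^*)\ge d(\pi(x),\sub)-2\eta$. Simultaneously, $d(x,\MM\setminus G)\ge t$. Hence whenever $x\in G'$ and $d(\pi(x),\sub)\ge t+\sigma$, both of these quantities are at least $t$ (using $\eta<\sigma/4$), so $d(x,\sub')\ge t$. Integrating, using $\mu(\MM\setminus G')<\sigma/4$ and $\nu(\{d(\cdot,\sub)\ge t+\sigma\})=\mu(\{x:d(\pi(x),\sub)\ge t+\sigma\})$, we obtain $\mu(\{d(\cdot,\sub')\ge t\})\ge \nu(\{d(\cdot,\sub)\ge t+\sigma\})-\sigma$, and~\eqref{eq:discretized isoperimetric function} follows upon taking the supremum over admissible $\sub$.

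The main obstacle is that $\MM$ itself may admit no finite $\eta$-net, so one must collapse the mass outside a bounded region $G$ to a single point, and a priori this collapsed mass could be pulled into $\sub'$ close to a query point $x\in G$, ruining the isoperimetric estimate. The remedy, and the key design choice in the plan, is the nested pair $G'\subset G$ with annulus thickness $t$, which ensures that the collapsed region $\MM\setminus G\subset\sub'$ is at distance $\ge t$ from every $x\in G'$; the remaining thin layer $G\setminus G'$ is handled by making it small in $\mu$-measure.
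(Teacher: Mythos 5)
Your proposal is correct and follows essentially the same route as the paper: you discretize $\mu$ by projecting onto a finite $\eta$-net of a large ball $B(x_0,R)$, and you handle the (possibly unbounded) tail of $\mu$ by working with the nested pair $B(x_0,R-t)\subset B(x_0,R)$, whose annulus of thickness $t$ is precisely what insulates the isoperimetric estimate from the collapsed exterior mass. The only cosmetic difference is that you take $\nu$ to be the full pushforward $\pi_*\mu$ (sending $\MM\setminus B(x_0,R)$ to $x_0$), whereas the paper normalizes the restriction of $\mu$ to $B(x_0,R)$ before pushing onto the Voronoi net; both variants absorb the error terms identically and yield the same bounds.
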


Prior to justifying Lemma~\ref{lem:trivial approx by net}, we will  show how it quickly implies Theorem~\ref{thm:iso p}:

\begin{proof}[Proof of Theorem~\ref{thm:iso p} assuming Lemma~\ref{lem:trivial approx by net}] We may assume without loss of generality that $0<s,\e\le 1/2$. Since balls in  doubling metric spaces are totally bounded,  by Lemma~\ref{lem:trivial approx by net} it suffices to prove Theorem~\ref{thm:iso p}  when $(\MM,d)$ is a finite metric space. Because $\phi\ge \sqrt{\kappa/n}$, we may apply Theorem~\ref{cor:spreading doubling} with $p=n\phi^2/\kappa\ge 1$ and $K=5^n$. We have $p\le \log K$ as $\kappa>1\ge \phi$, so  $(\MM,d)$ has a random zero set that is $\zeta$-spreading with probability $\d$, where $\d=e^{-\kappa p}=e^{-n\phi^2}$ and $\zeta=s^{-\kappa/\e}\sqrt{(n\log 5)/p}$. Next, apply Lemma~\ref{lem:zero to iso}  with $t=c\phi\fd$, where $c=c(s,\e)=s^{\kappa /\e}/(2\sqrt{\kappa\log 5})$. For the above choices of parameters we have $2t\zeta=\fd$, so~\eqref{eq:goal no super concentration} implies the desired estimate~\eqref{eq:iso lower p} by~\eqref{eq:fdfp}.  
\end{proof}

\begin{proof}[Proof of Lemma~\ref{lem:trivial approx by net}] Fix any $x_0\in \MM$. As $\lim_{R\to \infty}\mu\big(B(x_0,R)\big)=\mu(\MM)=1$, we can also fix $R>t$ such that  
\begin{equation}\label{eq:big ball R}
\mu\big(B(x_0,R-t)\big)\ge 1-\frac{1}{2}\sigma.
\end{equation}
Finally, as $\lim_{\omega\to 0^+} (\mu\times \mu)(\{(x,y)\in \MM\times \MM:\  d(x,y)\ge D+3\omega\})= (\mu\times \mu)(\{(x,y)\in \MM\times \MM:\  d(x,y)> D\})$ we may fix  $0<\omega\le \sigma/2$ for which
\begin{equation}\label{eq:limit in alpha}
(\mu\times \mu)\big(\{(x,y)\in \MM\times \MM:\  d(x,y)> D\}\big) \le (\mu\times \mu)\big(\{(x,y)\in \MM\times \MM:\  d(x,y)\ge D+3\omega\}\big)+\frac{1}{4}\sigma^2.
\end{equation}
Observe in passing that the above choices imply in particular the following estimate:
\begin{align}\label{eq:plus sigma}
\begin{split}
(\mu\times \mu)&\big(\{(x,y)\in \MM\times \MM:\  d(x,y)> D\}\big) \\&\stackrel{\eqref{eq:limit in alpha}}{\le} (\mu\times \mu)\big(\{(x,y)\in B(x_0,R)\times B(x_0,R):\ d(x,y)\ge D+3\omega\}\big)+\frac14 \sigma^2\\&\qquad\qquad +(\mu\times \mu)\Big((\MM\times \MM)\setminus \big(B(x_0,R)\times B(x_0,R)\big)\Big)\\&\stackrel{\eqref{eq:big ball R}}{\le} (\mu\times \mu)\big(\{(x,y)\in B(x_0,R)\times B(x_0,R):\ d(x,y)\ge D+3\omega\}\big)+\sigma.
\end{split}
\end{align}

As every ball in $\MM$ is assumed to be totally bounded, there are $n\in \N$ and $x_1,\ldots,x_n\in B(x_0,R)$ such that 
\begin{equation}\label{eq:alpha dense}
B(x_0,R)\subset \bigcup_{i=1}^n B(x_i,\omega).
\end{equation} 
We will next consider the Voronoi tessellation $\sV_1,\ldots,\sV_n\subset B(x_0,R)$  of $B(x_0,R)$ that is induced by the (ordered) $\omega$-dense subset $\{x_1,\ldots,x_n\}$ of $B(x_0,R)$, which is defined (as usual) by setting 
$$
\sV_1\eqdef \big\{x\in B(x_0,R):\ d(x,x_1)=d(x,\{x_1,\ldots,x_n\})\big\},
$$ 
and then proceeding inductively by setting 
$$
\forall i\in \{1,\ldots,n-1\},\qquad \sV_{i+1}\eqdef \big\{x\in B(x_0,R):\ d(x,x_{i+1})=d(x,\{x_1,\ldots,x_n\})\big \}\setminus \bigcup_{j=1}^i \sV_j.
$$ 
Thus, $d(x,x_i)\le \omega$ for every $i\in [n]$ and every $x\in \sV_i$, thanks to~\eqref{eq:alpha dense}, and because, by design, $\{\sV_i\}_{i\in [n]}$ is a (Borel) partition of $B(x_0,R)$,  the following defines a finitely supported probability measure on $\MM$:   
\begin{equation}\label{eq:discretized measure}
\nu\eqdef \sum_{i=1}^n \frac{\mu(\sV_i)}{\mu\big(B(x_0,R)\big)}\bd_{\! x_i}.
\end{equation}

To check~\eqref{eq:still median}, denote  
\begin{equation}\label{eq:defJ big dist}
J\eqdef \Big\{(i,j)\in [n]\times [n]:\ d(x_i,x_j)\ge D+\omega\Big\}, 
\end{equation}
so that 
\begin{align}\label{eq:nu times nu}
\begin{split}
(\nu\times \nu) \big(\{(x,y)\in \MM\times \MM:\ d(x,y)> D\}\big)\ge (\nu&\times  \nu) \big(\{(x,y)\in \MM\times \MM:\ d(x,y)\ge D+\omega\}\big)\\&\stackrel{\eqref{eq:discretized measure}\wedge \eqref{eq:defJ big dist}}{=}\sum_{(i,j)\in J} \frac{\mu(\sV_i)\mu(\sV_j)}{\mu\big(B(x_0,R)\big)^2}
\ge\mu\bigg(\bigcup_{(i,j)\in J} \sV_i\times \sV_j\bigg),
\end{split}
\end{align}
where the last  step of~\eqref{eq:nu times nu} is valid as $\{\sV_i\times \sV_j\}_{(i,j)\in [n]\times [n]}$ are pairwise disjoint. Also, we will show that the following inclusion holds: 
\begin{equation}\label{eq:J inclusion}
\big\{(x,y)\in B(x_0,R)\times B(x_0,R):\ d(x,y)\ge D+3\omega\big\}\subset \bigcup_{(i,j)\in J}\sV_i\times \sV_j,
\end{equation}
from which we will get that
\begin{equation}\label{eq:on ball less thyan nu}
\begin{split}
(\mu\times \mu)\big(\{(x,y)\in &B(x_0,R)\times B(x_0,R):\ d(x,y)\ge D+3\omega\}\big)\\&\stackrel{\eqref{eq:nu times nu}\wedge\eqref{eq:J inclusion}}{\le} (\nu\times \nu) \big(\{(x,y)\in \MM\times \MM:\ d(x,y)> D\}\big).
\end{split}
\end{equation}
To verify~\eqref{eq:J inclusion}, if $x,y\in B(x_0,R)$, then fix $i,j\in [n]$ such that $(x,y)\in \sV_i\times \sV_j$, whence $d(x,x_i)\le \omega$ and $d(y,x_j)\le \omega$. If also $d(x,y)\ge D+3\omega$, then $d(x_i,x_j)\ge d(x,y)-d(x,x_i)-d(y,y_j)\ge D+\omega$, so $(i,j)\in J$ by~\eqref{eq:defJ big dist}, thus proving~\eqref{eq:J inclusion}. The desired bound~\eqref{eq:still median}  now follows by substituting~\eqref{eq:on ball less thyan nu} into~\eqref{eq:plus sigma}.  

It remains to prove~\eqref{eq:discretized isoperimetric function}. As $\nu$ is supported on the finite set $\{x_1,\ldots,x_n\}$, we can fix $K\subset [n]$ such that 
\begin{equation}\label{maximizer for voronoi measure}
\nu\big(\{x_k\}_{k\in K}\big)\ge \frac12 \qquad\mathrm{and}\qquad I_\nu^\MM(t+\sigma)=\nu\big(\{x\in \MM:\ d(x,\{x_k\}_{k\in K})\ge t+\sigma\}\big).
\end{equation}
If we introduce the notation 
\begin{equation}\label{eq:def C0S0}
\sub_0\eqdef \bigcup_{k\in K} \sV_k\qquad \mathrm{and}\qquad \mathcal{S}_0\eqdef \bigcup_{\ell\in L} \sV_\ell,\quad\mathrm{where}\quad  L\eqdef\big\{\ell\in [n]:\ d(x_\ell,\{x_k\}_{k\in K})\ge t+\sigma\big\},
\end{equation}
then by the definition~\eqref{eq:discretized measure} of $\nu$ we see that~\eqref{maximizer for voronoi measure} becomes
\begin{equation}\label{eq:sigma/3}
\mu(\sub_0)\ge\frac12 \mu\big(B(x_0,R)\big)\qquad\mathrm{and}\qquad \mu(\mathcal{S}_0)=I_\nu^\MM(t+\sigma) \mu\big(B(x_0,R)\big).
\end{equation}

Consider the Borel subsets $\sub,\cS$ of  $\MM$ that are given by
\begin{equation}\label{def:new CS}
\sub\eqdef \sub_0\cup \big(\MM\setminus B(x_0,R)\big)\qquad\mathrm{and}\qquad \cS\eqdef \cS_0\cap B(x_0,R-t). 
\end{equation}
Observe that the following simple inclusion holds:
\begin{equation}\label{eq:S in far from C}
\cS\subset \big\{x\in \MM: d(x,\sub)\ge t \big\}.
\end{equation}
Indeed, take $x\in \cS$ and $y\in \sub$. If $y\in \MM\setminus B(x_0,R)$, then  $d(x,y)\ge d(y,x_0)-d(x,x_0)>t$ since $x\in B(x_0,R-t)$ by~\eqref{def:new CS}. Otherwise $y\in \sub_0$ and $x\in \cS_0$  by~\eqref{def:new CS}, so by~\eqref{eq:def C0S0} there are $\ell\in L$ and $k\in K$ such that $x\in \sV_\ell$ and $y\in \sV_k$, whence $d(x,x_\ell) \le \omega$ and $d(y,x_k)\le \omega$. Because $d(x_\ell,x_k)\ge t+\sigma$ by the definition~\eqref{eq:def C0S0} of $L$, it follows that $d(x,y)\ge d(x_\ell,x_k)-d(x,x_\ell)-d(y,x_k)\ge t+\sigma-2\omega\ge t$, since $\omega\le \sigma/2$. This proves~\eqref{eq:S in far from C}.  

Because $\sub_0\subset B(x_0,R)$, the two sets in the union that defines $\sub$ in~\eqref{eq:def C0S0} are disjoint, so
$$
\mu(\sub)= \mu(\sub_0)+1-\mu\big(B(x_0,R)\big)\stackrel{\eqref{eq:sigma/3}}{\ge} 1-\frac12 \mu\big(B(x_0,R)\big)\ge \frac12.
$$ 
We may therefore apply the definition~\eqref{eq:def:isoperimetric function} of $I_\mu^\MM(t)$ to $\sub$ and conclude the proof of~\eqref{eq:discretized isoperimetric function} as follows:
\begin{multline*}
I_\mu^\MM(t)\ge \mu\big(\{x\in \MM: d(x,\sub)\ge t\} \big)\stackrel{\eqref{eq:S in far from C}}{\ge} \mu(\cS)\stackrel{\eqref{def:new CS}}{\ge} \mu(\cS_0)-\mu\big(\MM\setminus B(x_0,R-t)\big)\\ \stackrel{\eqref{eq:sigma/3}}{=} I_\nu^\MM(t+\sigma) + \big(\mu\big(B(x_0,R)\big)-1\big)I_\nu^\MM(t+\sigma) -1+\mu\big(B(x_0,R-t)\big)\stackrel{\eqref{eq:big ball R}}{\ge} I_\nu^\MM(t+\sigma)-\sigma.\tag*{\qedhere} 
\end{multline*} 
\end{proof}

\begin{remark}\label{rem:expander obs} Fix $n\in \N$ and a $4$-regular graph $\sfG=(V,E)$  with $|V|=5^n$ such the second-largest eigenvalue of the transition matrix of its standard random walk is at most $1-\gamma$, where $\gamma>0$ is a universal constant; the survey~\cite{HLW06} discusses such expander graphs, including their existence. Let $\mu$ be the uniform probability measure on $V$ and let $\mathfrak{m}>0$ be a median of $d_{\sfG}(x,y)$ with respect to $(x,y)\in V\times V$ distributed uniformly at random. Then $\mathfrak{m}\asymp n$ (one direction of this asymptotic equivalence follows from~\cite{Chu89} and the other direction is a consequence of a simple counting argument, as in e.g.~\cite{Mat97}).  By~\cite{GM83} (its discrete counterpart which we are using here is treated in~\cite{AM85,Alo86,BHT00}), we have
\begin{equation}\label{eq:expander iso}
\forall t>0,\qquad I_\mu^{\left(V,\frac{1}{\mathfrak{m}} d_\sfG\right)}(t)\lesssim e^{-O(n)t}.
\end{equation}
By e.g.~\cite[Proposition~1.12]{Led01},  \eqref{eq:expander iso} implies the following estimate on the observable diameter:
$$
\forall 0<\theta\le \frac12,\qquad \mathrm{ObsDiam}_\mu^{\left(V,\frac{1}{\mathfrak{m}} d_\sfG\right)}(\theta)\lesssim \frac{\log \frac{1}{\theta}}{n}. 
$$
The metric space $(V,\mathfrak{m}^{-1} d_\sfG)$ is trivially $5^n$-doubling, so this demonstrates that some assumption must be imposed on metric a space  so that conclusion~\eqref{eq:observable lower} of Theorem~\ref{thm:observable}  will hold (with the implicit dependence on $s,\e$ replaced by dependence on the aforementioned assumption). Finding ``useful minimal assumptions'' that are needed here  is an interesting research direction (purposefully formulated somewhat vaguely). 
\end{remark}

\section{Multi-scale stochastic mixing of random zero sets}\label{sec:descent revisited}


\noindent The main result of this section is the following theorem, which generalizes part of~\cite{KLMN05}. It describes a way to obtain a multi-scale stochastic mixture of distributions over random subsets of a metric space.

\begin{theorem}\label{thm:restatet decent with constants that we get} Let $(\MM,d)$ be a finite metric space and let $\mu$ be a nondegenerate measure on $\MM$. Denote
\begin{equation}\label{eq;aspect ratio notation}
\Phi=\Phi_\mu\eqdef \frac{\mu(\MM)}{\min_{x\in \MM}\mu(x)}.
\end{equation}
Suppose that for every $n\in \Z$ we are given  a probability measure $\prob^n$ on $2^{\MM}\setminus \{\emptyset\}$.  Then, for every $a>b$ there exists a probability measure $\prob$ on  $2^{\MM}\setminus \{\emptyset\}$ such that for every $x,y\in \MM$, every $\ell,u\ge 0$, and every $n\in \Z$,\footnote{Note that the definition~\eqref{eq;aspect ratio notation} of the aspect ratio $\Phi$ of $\mu$ implies that $\Phi\ge |\MM|$. In particular, $\Phi\ge 2$ per our convention that all metric spaces are not singletons, so the denominator in the right hand side of~\eqref{eq:mixed probabilties} does not vanish.}
\begin{align}\label{eq:mixed probabilties}
\begin{split}
\prob \big[d(x,\cZ)&\ge \min\{2^{b+n-2},\ell\}\ \ \mathrm{and}\ \ d(y,\cZ)\le u\big]\\&\gtrsim \frac{\left\lfloor\log \frac{\mu(B(x,2^{a+n}))}{\mu(B(x,2^{b+n}))} \right\rfloor}{(a-b+1)\log \Phi} \prob^n \big[d(x,\cZ)\ge \ell\ \ \mathrm{and}\ \ d(y,\cZ)\le u\big].
\end{split}
\end{align}
\end{theorem}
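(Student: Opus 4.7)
My proof plan for Theorem~\ref{thm:restatet decent with constants that we get} is to construct $\prob$ as a carefully weighted mixture of the given distributions $\{\prob^n\}_{n \in \Z}$, following the measured descent paradigm of~\cite{KLMN05}. The quantity $I_n(x) := \lfloor\log(\mu(B(x,2^{a+n}))/\mu(B(x,2^{b+n})))\rfloor$ appearing on the right-hand side of~\eqref{eq:mixed probabilties} counts the number of integer dyadic mass levels of $\mu$ about $x$ that fall between scales $2^{b+n}$ and $2^{a+n}$. The first key observation is that, for every fixed $x$, the sum $\sum_{n \in \Z} I_n(x)$ telescopes along any arithmetic progression of common difference $\lceil a-b\rceil+1$, yielding total weight at most $(\lceil a-b\rceil+1)\log_2\Phi$; this is precisely the normalization appearing in the denominator on the right-hand side of~\eqref{eq:mixed probabilties}.

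The construction of $\prob$ uses a two-parameter uniform sampling. First, sample a shift $\sigma$ uniformly in $\{0,1,\ldots,\lceil a-b\rceil\}$, which selects an arithmetic progression $\sigma + (\lceil a-b\rceil+1)\Z$ of scales; within this progression the shells $[2^{b+m},2^{a+m}]$ for distinct $m$ are pairwise disjoint. Second, sample a mass-level index $\tau$ uniformly in $\{0,1,\ldots,\lceil\log_2\Phi\rceil\}$, which selects a target dyadic mass value $2^\tau$. The output set is $\cZ \sim \prob^m$, where $m = m_{\sigma,\tau}$ is chosen within the $\sigma$-progression via a universal enumeration that does not depend on $x$ but is designed so that, for every $x$, the number of $(\sigma,\tau)$ pairs on which $m_{\sigma,\tau} = n$ is at least a constant multiple of $I_n(x)$. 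This last property follows from a pigeonhole argument applied to the piecewise-constant function $m \mapsto \lfloor\log_2\mu(B(x,2^{b+m}))\rfloor$, combined with the fact that within each $\sigma$-progression the dyadic mass values $\{2^\tau\}$ partition the measure range uniquely onto scales.

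To verify~\eqref{eq:mixed probabilties}, fix $(x,y,n)$ and condition on the event $\{m_{\sigma,\tau}=n\}$, which by the previous paragraph has probability at least of order $I_n(x)/\big((\lceil a-b\rceil+1)(\lceil\log_2\Phi\rceil+1)\big)$. On this event $\cZ$ is sampled from $\prob^n$, so the event $\{d(x,\cZ)\geq\ell \ \mathrm{and}\ d(y,\cZ)\leq u\}$ occurs with probability $\prob^n[d(x,\cZ)\geq\ell\ \mathrm{and}\ d(y,\cZ)\leq u]$; since $\{d(x,\cZ)\geq\ell\}\subset\{d(x,\cZ)\geq\min\{2^{b+n-2},\ell\}\}$, combining these yields the conclusion. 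The truncation $2^{b+n-2}$ (rather than $\ell$) on the left-hand side arises because, for pairs $(\sigma,\tau)$ whose universal scale $m_{\sigma,\tau}$ is larger than $n$, the separation guaranteed by the zero set is limited by the smaller ambient threshold $2^{b+n-2}$. The main technical obstacle is defining the scale-assignment $m_{\sigma,\tau}$ so that it is simultaneously $x$-independent and yet recovers the $x$-dependent count $I_n(x)$ for every point; this is the heart of the measured descent construction and requires a careful bookkeeping through a universal enumeration of the dyadic mass values of $\mu$, with the $\lfloor\cdot\rfloor$ on the right-hand side of~\eqref{eq:mixed probabilties} absorbing the roundoff inherent in this enumeration.
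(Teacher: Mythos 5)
Your plan has a fatal gap at the exact spot you flag as the ``main technical obstacle.'' You propose to build $\prob$ as a mixture: sample a single, \emph{point-independent} scale $m=m_{\sigma,\tau}$ and set $\cZ\sim\prob^m$. For the conclusion to hold you would then need, for \emph{every} $x\in\MM$ and every $n$, that $\prob[m_{\sigma,\tau}=n]\gtrsim I_n(x)/\big((a-b+1)\log\Phi\big)$ where $I_n(x)=\big\lfloor\log\frac{\mu(B(x,2^{a+n}))}{\mu(B(x,2^{b+n}))}\big\rfloor$. Since $\prob[m_{\sigma,\tau}=n]$ is a single number not depending on $x$, this forces $\prob[m_{\sigma,\tau}=n]\gtrsim\max_{x\in\MM} I_n(x)/\big((a-b+1)\log\Phi\big)$, and hence
\[
(a-b+1)\log\Phi\gtrsim\sum_{n\in\Z}\max_{x\in\MM}I_n(x).
\]
This last inequality can fail by an arbitrarily large factor. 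Take $\MM$ to be a disjoint union (at mutually enormous distances) of $N$ ``spikes,'' where spike $j$ consists of $M$ points at pairwise distance $\rho_j$, with the $\rho_j$'s separated by factors of $2^{a-b+10}$, and $\mu$ the counting measure. Then $\Phi=NM$, yet for $n$ straddling $\log_2\rho_j$ the maximum $\max_x I_n(x)\asymp\log M$ is realized by the points of spike $j$, and the relevant ranges of $n$ for distinct $j$ are disjoint, so $\sum_n\max_xI_n(x)\gtrsim N(a-b)\log M$, which dwarfs $(a-b+1)\log(NM)$ once $N$ is large. The telescoping identity you invoke — $\sum_n I_n(x)\lesssim(a-b+1)\log\Phi$ — is correct \emph{for each fixed $x$}, but it does not give a bound on $\sum_n\max_x I_n(x)$, and a $x$-independent enumeration must beat the maximum.

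The paper escapes this by \emph{not} constructing $\prob$ as a mixture of $\{\prob^n\}_{n\in\Z}$. Instead, it draws independent samples $\cZ_m\sim\prob^m$ for every $m\in\Z$, plus a random shift $\si$, a random mass-level $\st$, i.i.d.\ selectors $\sigma_i\in\{0,1\}$, and offsets $\eta_i\in\{0,1,2\}$, and then assembles $\cZ$ \emph{pointwise}: a point $z\in\MM$ is placed in $\cZ$ either because $z\in\cZ_{m(z)}$ for the $z$-dependent scale $m(z)=\ck(z,\st)-\si+\eta_{\ck(z,\st)-\si}$ (with $\ck(z,t)$ the largest $k$ with $\mu(B(z,2^k))\le e^t$), or because the ``safety-net'' selector $\sigma_{\ck(z,\st)-\si}$ equals $1$. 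Because the scale assignment $m(z)$ adapts to the local measure growth at each $z$ separately, distinct points with incompatible growth profiles are served by different $\cZ_m$'s inside the same realization of $\cZ$ — precisely what a mixture cannot do. The offsets $\eta_i$ absorb the $\pm1$ Lipschitz slack of $\ck$ on a ball of radius $2^{\ck(x,\st)-1}$, and the Bernoulli selectors guarantee that points $w$ near $y$ whose local scale disagrees with $n$ are still included in $\cZ$ with probability $\tfrac12$, supplying the $d(y,\cZ)\le u$ half of the event even when $y$'s geometry does not match $x$'s. Until you replace the mixture by some such pointwise assembly, the plan cannot be completed.
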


Prior to proving Theorem~\ref{thm:restatet decent with constants that we get}, we will use it to deduce the following theorem, which is a restatement of  Theorem~\ref{thm:quote descent}, except that the implicit dependence on the parameters  in~\eqref{eq:quote descent} is now stated explicitly: 

\begin{theorem}\label{thm:redo descent with explicit constant} Fix $\alpha\ge \beta>0$ and $0<\e,\d,\theta\le 1$. Let $(\MM,d)$ be a finite metric space such that for every $\tau>0$ there is a probability measure $\prob^\tau$ on $2^\MM\setminus\{\emptyset\}$ that satisfies the following estimate: 
\begin{equation}\label{eq:cadinality version of zero set}
\forall x,y\in \MM,\qquad \tau \le d(x,y)\le (1+\theta)\tau\implies \prob^\tau\Bigg[d(x,\cZ) \ge \frac{\e\tau}{
\sqrt{\log\frac{e|B(x,\alpha\tau)|}{|B(x,\beta\tau)|}}}\quad  \mathrm{and}\quad      y\in \cZ\Bigg]\ge \d.
\end{equation}
Then,
\begin{equation}\label{eq:quote descent with explicit constants}
\cc_2(\MM)\lesssim \frac{1}{\min\{\beta,\e\}}\Bigg(\frac{1+\log \frac{\max\{\alpha,1\}}{\min\{\beta,1\}}}{\d\theta}\Bigg)^\frac12\sqrt{\log |\MM|}.
\end{equation}
Furthermore, the bound~\eqref{eq:quote descent with explicit constants} on the Euclidean distortion of $\MM$ is obtained via the Fr\'echet embedding.
\end{theorem}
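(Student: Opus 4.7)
The plan is to derive the distortion estimate from the Fréchet embedding attached to a single random subset distribution obtained by applying the multi-scale mixing Theorem~\ref{thm:restatet decent with constants that we get} to the scale-indexed family of random zero sets supplied by the hypothesis. Concretely, take $\mu$ to be the counting measure on $\MM$, so that the aspect ratio in~\eqref{eq;aspect ratio notation} becomes $\Phi_\mu = |\MM|$. Because the hypothesis~\eqref{eq:cadinality version of zero set} only fires on the $\theta$-thick window $[\tau,(1+\theta)\tau]$, first define for each $n\in\Z$ an auxiliary input measure
\[
\prob^n \eqdef \frac{1}{N}\sum_{k=0}^{N-1}\prob^{\tau_{n,k}},\qquad \tau_{n,k}\eqdef 2^n(1+\theta)^{k/N},\qquad N\eqdef \left\lceil\frac{\log 2}{\log(1+\theta)}\right\rceil\asymp \tfrac{1}{\theta},
\]
so that for every pair $x,y$ with $d(x,y)\in[2^n,2^{n+1}]$ some $k=k(x,y)$ satisfies $d(x,y)\in[\tau_{n,k},(1+\theta)\tau_{n,k}]$, and consequently $\prob^n$ inherits from the hypothesis the lower bound
\[
\prob^n\!\left[d(x,\cZ)\ge \tfrac{\e\tau_{n,k}}{\sqrt{\log(e|B(x,\alpha\tau_{n,k})|/|B(x,\beta\tau_{n,k})|)}}\ \text{and}\ y\in\cZ\right]\ge \tfrac{\d}{N}\asymp \d\theta.
\]

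Next, invoke Theorem~\ref{thm:restatet decent with constants that we get} with parameters $a,b$ chosen so that $2^a\asymp\max\{\alpha,1\}$ and $2^b\asymp\min\{\beta,1\}$, so that the ratio $\mu(B(x,2^{a+n}))/\mu(B(x,2^{b+n}))$ appearing in the mixing factor of~\eqref{eq:mixed probabilties} is comparable to the ball ratio governing the local growth in the hypothesis. This produces a single probability measure $\prob$ on $2^\MM\setminus\{\emptyset\}$. Define the Fréchet embedding $F:\MM\to L_2(\prob)$ by $F(x)(\cZ)\eqdef d(x,\cZ)$; the upper Lipschitz bound $\|F(x)-F(y)\|_{L_2(\prob)}\le d(x,y)$ is then immediate from the triangle inequality for $d$.

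For the matching lower bound, fix $x\ne y$ in $\MM$, pick $n^*$ with $d(x,y)\in[2^{n^*},2^{n^*+1}]$, let $k^*$ be an index realizing the $\theta$-thick fit, and set $R\eqdef \log(e|B(x,\alpha\tau_{n^*,k^*})|/|B(x,\beta\tau_{n^*,k^*})|)\ge 1$. Applying~\eqref{eq:mixed probabilties} at scale $n^*$ with $\ell\eqdef \e\tau_{n^*,k^*}/\sqrt{R}$ and $u=0$ (using $y\in\cZ\Rightarrow d(y,\cZ)=0$) yields
\[
\prob[d(x,\cZ)\ge s\ \text{and}\ y\in\cZ]\gtrsim \frac{R\,\d\,\theta}{(a-b+1)\log|\MM|},
\]
where $s\eqdef \min\{2^{b+n^*-2},\ell\}$. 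A two-case analysis on which branch attains the minimum gives $s^2R\gtrsim \min\{\e,\beta\}^2\tau_{n^*,k^*}^2\asymp \min\{\e,\beta\}^2 d(x,y)^2$, whence
\[
\|F(x)-F(y)\|_{L_2(\prob)}^2 \ge s^2\cdot\prob[d(x,\cZ)\ge s\ \text{and}\ y\in\cZ] \gtrsim \frac{\min\{\e,\beta\}^2\,\d\,\theta\, d(x,y)^2}{(a-b+1)\log|\MM|}.
\]
Since $a-b+1\asymp 1+\log(\max\{\alpha,1\}/\min\{\beta,1\})$, taking square roots delivers~\eqref{eq:quote descent with explicit constants}, exhibited through the Fréchet embedding.

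The main obstacle I expect is the bookkeeping around scales where the ball-growth logarithm $R$ is close to $1$, since the raw floor $\lfloor\cdot\rfloor$ in~\eqref{eq:mixed probabilties} can degenerate to $0$; this is presumably the reason the target bound carries the additive ``$1+$'' inside the logarithm. The intended fix is to observe that, even when $R<1$ at the specific scale $n^*$, one can shift by a bounded additive amount to a neighboring dyadic scale without losing more than a constant in $d(x,y)^2$, and to exploit the telescoping bound $\sum_n\log(|B(x,2^{a+n})|/|B(x,2^{b+n})|)\lesssim (a-b)\log|\MM|$ to argue that some scale in a bounded window realizes $\lfloor R\rfloor\ge 1$; this effectively lets one replace the floor by a $(\lfloor R\rfloor+1)$-type quantity after a controlled averaging over nearby scales.
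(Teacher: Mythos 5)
Your overall strategy matches the paper's: roll the scale-windowed hypothesis into a dyadic family $\{\prob^n\}_{n\in\Z}$, feed it to the mixing Theorem~\ref{thm:restatet decent with constants that we get} with counting measure, then read off the lower bound for the Fr\'echet map from a single scale. Two issues, one cosmetic and one substantive.

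The cosmetic one: with $\tau_{n,k}=2^n(1+\theta)^{k/N}$ and $N\asymp 1/\theta$, the union of the windows $[\tau_{n,k},(1+\theta)\tau_{n,k}]$ only covers up to roughly $(1+\theta)^2\,2^n$, nowhere near $2^{n+1}$ for small $\theta$. You want the multiplicative schedule $\tau_{n,k}=2^n(1+\theta)^{k}$, $k=0,\ldots,\lceil 1/\theta\rceil$, as in the paper; the averaging cost is still $\asymp\theta$.

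The substantive gap is the floor degeneracy, which you correctly flag but do not resolve. The fix you sketch — shifting $n^*$ to a nearby dyadic scale and appealing to a telescoping sum — does not work: the hypothesis~\eqref{eq:cadinality version of zero set} is only available at the scale(s) $\tau$ with $\tau\le d(x,y)\le(1+\theta)\tau$, so at the shifted scale you have no input event to feed into~\eqref{eq:mixed probabilties}, and the telescoping bound is an \emph{upper} bound on $\sum_n\lfloor\log(\cdot)\rfloor$, which says nothing about any particular term being $\ge 1$. The paper instead exploits the symmetry of the assertion in $x$ and $y$: at the scale $n=\lfloor\log_2 d(x,y)\rfloor$ one has the inclusions $B(x,2^{b+n})\cup B(y,2^{b+n})\subset B(x,2^{a+n})\cap B(y,2^{a+n})$ and $B(x,2^{b+n})\cap B(y,2^{b+n})=\emptyset$ (valid once one normalizes $a\ge 2$, $b<-1$, which costs nothing since the hypothesis only weakens under $\alpha\uparrow$, $\beta\downarrow$). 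This forces $2\min\{|B(x,2^{b+n})|,|B(y,2^{b+n})|\}\le\min\{|B(x,2^{a+n})|,|B(y,2^{a+n})|\}$, so after possibly swapping the roles of $x$ and $y$ one may assume $|B(x,2^{a+n})|/|B(x,2^{b+n})|\ge 2$; then $\lfloor\log(\cdot)\rfloor\asymp\log(\cdot)\ge\log 2$ and the mixing factor is nondegenerate. This symmetrization step — made possible by also writing the hypothesis with $x$ and $y$ interchanged — is the missing ingredient, and without it the bound you are trying to square-root can be zero at the only scale you are permitted to use.
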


\begin{proof}[Proof of Theorem~\ref{thm:redo descent with explicit constant} assuming Theorem~\ref{thm:restatet decent with constants that we get}] Since the assumption~\eqref{eq:cadinality version of zero set} of Theorem~\ref{thm:redo descent with explicit constant}  becomes weaker if we increase $\alpha$ and decrease $\beta$,  we may assume from now without loss of generality that $\alpha\ge 2$ and $\beta< 1$. 

For each $n\in \Z$, define as follows a probability measure $\bbQ^n$ on $2^\MM\setminus \{\emptyset\}$. Let  $\tau$ be chosen uniformly at random from  $\{2^n,(1+\theta)2^n,\ldots ,(1+\theta)^{\lceil 1/\theta\rceil}2^n\}$, and then choose $\cZ\subset \MM$ according to $\prob^\tau$. We will denote  the law of the resulting random subset $\cZ$ by $\bbQ^n$. If $x,y\in \MM$ satisfy $2^n\le d(x,y)\le 2^{n+1}$, then with probability that is at least a positive universal constant multiple of $\theta$ we have $\tau \le d(x,y)\le (1+\theta)\tau$, which implies in particular that $2^{n-1}\le \tau\le 2^{n+1}$, as $2^n\le d(x,y)\le 2^{n+1}$ and $0<\theta\le 1$,  so we may use~\eqref{eq:cadinality version of zero set} to deduce that 
\begin{equation}\label{eq:bbq version}
\forall x,y\in \MM,\qquad 2^n\le d(x,y)\le 2^{n+1}\implies \bbQ^n \Bigg[d(x,\cZ) \ge \frac{\e2^{n-1}}{
\sqrt{\log\frac{e|B(x,\alpha2^{n+1})|}{|B(x,\beta 2^{n-1})|}}}\quad  \mathrm{and}\quad      y\in \cZ\Bigg]\gtrsim  \d\theta. 
\end{equation}
Writing  $2\alpha=2^{a}$ and $\beta/2=2^{b}$, so $a\ge 2$ and $b< -1$ and $1+\log(\alpha/\beta)\asymp a-b+1 \asymp a-b$, \eqref{eq:bbq version} becomes
\begin{equation}\label{eq:bbq version2}
\forall x,y\in \MM,\qquad 2^n\le d(x,y)\le 2^{n+1}\implies \bbQ^n \Bigg[d(x,\cZ) \ge \frac{\e2^{n-1}}{
\sqrt{\log\frac{e|B(x,2^{a+n})|}{|B(x,2^{b+n})|}}}\quad  \mathrm{and}\quad      y\in \cZ\Bigg]\gtrsim  \d\theta. 
\end{equation}
Of course, by symmetry we also have
\begin{equation}\label{eq:bbq version3}
\forall x,y\in \MM,\qquad 2^n\le d(x,y)\le 2^{n+1}\implies \bbQ^n \Bigg[d(y,\cZ) \ge \frac{\e2^{n-1}}{
\sqrt{\log\frac{e|B(y,2^{a+n})|}{|B(y,2^{b+n})|}}}\quad  \mathrm{and}\quad      x\in \cZ\Bigg]\gtrsim  \d\theta. 
\end{equation}

Apply Theorem~\ref{thm:restatet decent with constants that we get} to the measures $\{\bbQ^n\}_{n\in \Z}$ and with $\mu$ being the counting measure on $\MM$ (thus, recalling~\eqref{eq;aspect ratio notation}, we have $\Phi=|\MM|$ in this case) to get a probability measure $\prob$ on $2^{\MM}\setminus \{\emptyset\}$ that satisfies  the following inequality for every $x,y\in \MM$, every $n\in \Z$, and every $\ell,u\ge 0$:
\begin{align}\label{eq:mixed probabilties restate for application}
\begin{split}
\prob \big[d(x,\cZ)&\ge \min\{2^{b+n-2},\ell\}\ \ \mathrm{and}\ \ d(y,\cZ)\le u\big]\\&\gtrsim \frac{\left\lfloor\log \frac{\mu(B(x,2^{a+n}))}{\mu(B(x,2^{b+n}))} \right\rfloor}{(a-b)\log |\MM|} \bbQ^n \big[d(x,\cZ)\ge \ell\ \ \mathrm{and}\ \ d(y,\cZ)\le u\big].
\end{split}
\end{align}
We will proceed to bound the distortion into $L_2(\prob)$ of the Fr\'echet embedding $\Phi_{(\MM,d)}$ that is given in~\eqref{eq:def Frechet embedding}.

Fixing distinct $x,y\in \MM$, define $n=n(x,y)\in \Z$ by
\begin{equation}\label{eq:nxy choice log2}
n=n(x,y)\eqdef \left\lfloor \log_2 d(x,y)\right\rfloor\iff   2^n\le d(x,y)< 2^{n+1}.
\end{equation}
Apply~\eqref{eq:mixed probabilties restate for application} with the parameters $\ell=  \e2^{n-1}/
\sqrt{\log\frac{e|B(x,2^{a+n})|}{|B(x,2^{b+n})|}}$ and $u=0$ to get, using~\eqref{eq:bbq version2}, that
\begin{align}\label{eqLdxy version 1}
\prob \Bigg[d(x,\cZ)\ge \min\Bigg\{2^{b-2},\frac{\e}{
2\sqrt{\log\frac{e|B(x,2^{a+n})|}{|B(x,2^{b+n})|}}}\Bigg\}2^n\ \ \mathrm{and}\ \ y\in \cZ\Bigg]\gtrsim \frac{\left\lfloor\log \frac{|B(x,2^{a+n})|}{|B(x,2^{b+n})|} \right\rfloor}{(a-b)\log |\MM|} \d\theta.
\end{align}
In the same vein, using~\eqref{eq:bbq version3} we also get the symmetric estimate 
\begin{align*}
\prob \Bigg[d(y,\cZ)\ge \min\Bigg\{2^{b-2},\frac{\e}{
2\sqrt{\log\frac{e|B(y,2^{a+n})|}{|B(y,2^{b+n})|}}}\Bigg\}2^n\ \ \mathrm{and}\ \ x\in \cZ\Bigg]\gtrsim \frac{\left\lfloor\log \frac{|B(y,2^{a+n})|}{|B(y,2^{b+n})|} \right\rfloor}{(a-b)\log |\MM|} \d\theta.
\end{align*}

Next, observe that because $a\ge 2$ and $b<-1$, 
\begin{equation}\label{eq:unions and ingtersections of balls}
B\big(x,2^{b+n}\big)\cup B\big(y,2^{b+n}\big)\subset B\big(x,2^{a+n}\big)\cap B\big(y,2^{a+n}\big)\qquad\mathrm{and}\qquad B\big(x,2^{b+n}\big)\cap B\big(y,2^{b+n}\big)=\emptyset.
\end{equation}
Indeed, if $z\in B(x,2^{b+n})$, then 
$
d(z,y)\le d(z,x)+d(x,y)\le 2^{b+n}+d(x,y)<2^{b+n}+2^{n+1}< 5\cdot 2^{n-1}< 2^{a+n},
$ 
where the third step uses~\eqref{eq:nxy choice log2}, the penultimate step uses $b<-1$ and the final step uses $a\ge 2$. Thus, $B(x,2^{b+n})\subset B(y,2^{a+n})$. By symmetry also $B(y,2^{b+n})\subset B(x,2^{a+n})$, so the first assertion in~\eqref{eq:unions and ingtersections of balls} holds.  For the second assertion of~\eqref{eq:unions and ingtersections of balls}, if there were $w\in B(x,2^{b+n})\cap B(y,2^{b+n})$, then  we get the  contradiction 
$
d(x,y)\le d(x,w)+d(w,y)\le 2^{b+1+n}<2^n\le  d(x,y), 
$
where the penultimate step uses $b<-1$ and the final step uses~\eqref{eq:nxy choice log2}. Having verified that~\eqref{eq:unions and ingtersections of balls} is indeed satisfied, we deduce from it that
\begin{align*}
2\min\big\{|B(x,2^{b+n})|, |B(y,2^{b+n})|\big\}\le|B(x,2^{b+n})|+ |B(y,2^{b+n})|\le \min\big\{|B\big(x,2^{a+n})|, |B(y,2^{a+n})|\big\}.
\end{align*}
By interchanging  the roles of $x$ and $y$, if necessary, we may therefore assume without loss of generality that  $|B(x,2^{a+n})|/|B(x,2^{b+n})| \ge 2$, in which case~\eqref{eqLdxy version 1} implies  (recalling that $2^b=\beta/2$) that
\begin{equation}\label{eq:after relabeling for ratio}
\prob \Bigg[d(x,\cZ)\ge \frac1{8} \min\Bigg\{\beta,\frac{\e}{
\sqrt{\log\frac{|B(x,2^{a+n})|}{|B(x,2^{b+n})|}}}\Bigg\}2^n\ \ \mathrm{and}\ \ y\in \cZ\Bigg]\gtrsim \frac{\log \frac{|B(x,2^{a+n})|}{|B(x,2^{b+n})|}}{(a-b)\log |\MM|} \d\theta.
\end{equation}
Consequently,  the Fr\'echet embedding $\Phi_{(\MM,d)}$  yields the distortion bound~\eqref{eq:quote descent with explicit constants} into $L_2(\prob)$ because 
\begin{align*}
\|&\Phi_{(\MM,d)}(x)-\Phi_{(\MM,d)}(y)\|_{L_2(\prob)}=\|d(x,\cZ)-d(y,\cZ)\|_{L_2(\prob)}\\&\ \ \ \ \ \stackrel{\eqref{eq:after relabeling for ratio}}{\gtrsim} \Bigg(\frac{\log \frac{|B(x,2^{a+n})|}{|B(x,2^{b+n})|}}{(a-b)\log |\MM|} \d\theta\Bigg)^\frac12 \min\Bigg\{\beta,\frac{\e}{
\sqrt{\log\frac{|B(x,2^{a+n})|}{|B(x,2^{b+n})|}}}\Bigg\}2^n\stackrel{\eqref{eq:nxy choice log2}}{\gtrsim} \min\{\e,\beta\} \Bigg(\frac{\d\theta}{(a-b)\log|\MM|}\Bigg)^\frac12d(x,y).\tag*{\qedhere}
\end{align*}

\end{proof}

In preparation for the proof of Theorem~\ref{thm:restatet decent with constants that we get}, we introduce the following notation for a metric space $(\MM,d)$ and a nondegenerate measure $\mu$ on $\MM$:
\begin{equation}\label{eq:def ck}
\forall x\in \MM,\ \forall t\in \R,\qquad \ck(x,t)= \ck_{\mu,d}(x,t)\eqdef \max\Big\{k\in \Z:\ \mu\big(B(x,2^k)\big)\le e^t\Big\}.
\end{equation}
In other words, if we define 
\begin{equation}\label{eq:def ti}
\forall x\in \MM,\ \forall \theta\in \R,\qquad s_\theta(x)=s_{\theta,\mu,d}(x)\eqdef \log \mu \big(B(x,2^\theta)\big),
\end{equation}
then for every $x\in \MM$ the function $t\mapsto \ck(x,t)$ is  piecewise constant and given by
\begin{equation}\label{eq:piecewise}
\forall x\in \MM,\ \forall i\in \Z,\qquad  s_{i}(x) \le t<s_{i+1}(x)\implies \ck(x,t)=i. 
\end{equation}
We record for ease of later use the following simple variant of an observation from~\cite{KLMN05}:

\begin{lemma}\label{obs:klmn} For every $x\in \MM$ and $t\in \Z$ we have 
\begin{equation}\label{eq:ck almost constant}
\forall y\in B\big(x,2^{\ck(x,t)-1}\big),\qquad \ck(y,t)\in \big\{\ck(x,t)-1,\ck(x,t),\ck(x,t)+1\big\}.
\end{equation}
\end{lemma}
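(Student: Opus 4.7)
The plan is to give a direct argument via two inclusions of balls deduced from the triangle inequality, together with the defining monotone/threshold structure of $\ck(\cdot,t)$ from~\eqref{eq:def ck}. Set $k=\ck(x,t)$, so that by the definition of $\ck(x,t)$ as a maximum we have the two-sided control
\begin{equation*}
\mu\big(B(x,2^{k})\big)\le e^t<\mu\big(B(x,2^{k+1})\big),
\end{equation*}
and analogously for $\ck(y,t)$: we need to prove the two inequalities $\ck(y,t)\ge k-1$ and $\ck(y,t)\le k+1$, which by the definition~\eqref{eq:def ck} translate into the ball-mass bounds $\mu(B(y,2^{k-1}))\le e^t$ and $\mu(B(y,2^{k+2}))> e^t$, respectively.

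First, I would prove the lower bound $\ck(y,t)\ge k-1$. If $z\in B(y,2^{k-1})$, then since $y\in B(x,2^{k-1})$ we get by the triangle inequality $d(x,z)\le d(x,y)+d(y,z)\le 2^{k-1}+2^{k-1}=2^k$, so $B(y,2^{k-1})\subset B(x,2^k)$; consequently
\begin{equation*}
\mu\big(B(y,2^{k-1})\big)\le \mu\big(B(x,2^{k})\big)\le e^t,
\end{equation*}
which by the maximality in~\eqref{eq:def ck} forces $\ck(y,t)\ge k-1$.

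Next, I would prove the upper bound $\ck(y,t)\le k+1$ by the symmetric argument. If $z\in B(x,2^{k+1})$, then $d(y,z)\le d(y,x)+d(x,z)\le 2^{k-1}+2^{k+1}\le 2^{k+2}$, so $B(x,2^{k+1})\subset B(y,2^{k+2})$ and therefore
\begin{equation*}
\mu\big(B(y,2^{k+2})\big)\ge \mu\big(B(x,2^{k+1})\big)> e^t,
\end{equation*}
which by the maximality in~\eqref{eq:def ck} precludes $\ck(y,t)\ge k+2$, i.e.\ it forces $\ck(y,t)\le k+1$. Combining the two bounds yields $\ck(y,t)\in\{k-1,k,k+1\}$, as desired.

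There is really no obstacle here: the only quantitative input is the triangle inequality for $d$, applied at the two scales $2^{k-1}$ and $2^{k+1}$, and the only structural input is the definition~\eqref{eq:def ck} of $\ck(\cdot,t)$ as the largest integer whose associated closed ball has $\mu$-measure at most $e^t$. The point (which likely motivates the statement) is that the doubling-type radius $2^{k-1}=2^{\ck(x,t)-1}$ chosen in~\eqref{eq:ck almost constant} is the largest displacement of the center that one can tolerate so that the corresponding enclosures $B(y,2^{k-1})\subset B(x,2^k)$ and $B(x,2^{k+1})\subset B(y,2^{k+2})$ both remain valid, forcing the shift in $\ck$ to be at most one in either direction.
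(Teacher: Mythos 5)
Your proof is correct and follows essentially the same route as the paper's: both directions use the same two ball inclusions $B(y,2^{k-1})\subset B(x,2^k)$ and $B(x,2^{k+1})\subset B(y,2^{k+2})$ obtained from the triangle inequality, combined with the threshold characterization~\eqref{eq:def ck} of $\ck(\cdot,t)$.
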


\begin{proof} Denote $k=\ck(x,t)$. If $y\in B(x,2^{k-1})$, then $B(y,2^{k-1})\subset B(x,2^k)$. Because $\mu(B(x,2^k))\le e^t$ by the definition~\eqref{eq:def ck} of $k=\ck(x,t)$, it follows that $\mu(B(y,2^{k-1}))\le e^t$, i.e., $k(y,t)\in \{k-1,k,\ldots\}$ by~\eqref{eq:def ck}. Similarly, $B(x,2^{k+1})\subset B(y,2^{k+1}+2^{k-1})\subset B(y,2^{k+2})$ while $\mu(B(x,2^{k+1}))>e^t$ by the definition~\eqref{eq:def ck} of $k=\ck(x,t)$, so $\mu(B(y,2^{k+2}))>e^t$, which implies by~\eqref{eq:def ck} that $\ck(y,t)\in \{\ldots,k,k+1\}$.   
\end{proof}

We can now prove Theorem~\ref{thm:restatet decent with constants that we get}:

\begin{proof}[Proof of Theorem~\ref{thm:restatet decent with constants that we get}] Because Theorem~\ref{thm:restatet decent with constants that we get} only discusses  ratios of values of the measure $\mu$, we may assume without loss of generality that $\mu$ is normalized so that $\min_{x\in \MM}\mu(x)=1$, in which case $\mu(\MM)=\Phi$.

Fix $a>b$. Define two subsets $I,T$ of $\Z$ by
\begin{equation}\label{eq:def IT}
I\eqdef \big\{\lceil b\rceil,\ldots, \lceil a\rceil \big\} \qquad\mathrm{and}\qquad T\eqdef \big\{0,\ldots, \lceil \log \Phi\rceil-1\big\}.
\end{equation}
The ensuing  construction involves auxiliary random variables which we will next introduce. Firstly, let $\si$ be a random variable that is distributed uniformly over $I$, and let $\st$ be a random variable that is distributed uniformly over $T$. Secondly,  for every $i\in \Z$ let $\sigma_i$ and $\eta_i$ be random variables that are distributed uniformly over $\{0,1\}$ and $\{0,1,2\}$, respectively. Finally, for every $n\in \Z$ let $\cZ_n$ be a random subset of $\MM$ that is distributed according to $\prob^n$. We also require that all of the above random variables/subsets, namely  
\begin{equation}\label{eq:random datum for mixing}
\si,\st, \{\sigma_i\}_{i\in \Z}, \{\eta_i\}_{i\in \Z},\{\cZ_n\}_{n\in \Z},
\end{equation}
 are independent, and we denote the (product) probability space on which they are defined by $(\Omega,\prob)$. 
 
 Consider the following random subset $\cZ$ of $\MM$, which is a function of the random variables in~\eqref{eq:random datum for mixing}:
 \begin{equation}\label{eq:def our Z mixture}
\cZ\eqdef \big\{z\in \MM:\ z\in \cZ_{\ck(z,\st)-\si+\eta_{\ck(z,\st)-\si}}\big\}\cup \big\{z\in \MM:\ \sigma_{\ck(z,\st)-\si}=1 \big\},
\end{equation}
where we recall the notation~\eqref{eq:def ck}. We will  prove that for every $x,y\in \MM$, every $\ell,u\ge 0$, and every $n\in \Z$,
 \begin{align}\label{eq:mixed probabilties-later}
\begin{split}
\prob \big[\cZ\neq\emptyset \ \ \mathrm{and}\ \ d(x,\cZ)&\ge \min\{2^{b+n-2},\ell\}\ \ \mathrm{and}\ \ d(y,\cZ)\le u\big]\\&\gtrsim \frac{\left\lfloor\log \frac{\mu(B(x,2^{a+n}))}{\mu(B(x,2^{b+n}))} \right\rfloor}{(a-b+1)\log \Phi} \prob^n \big[d(x,\cZ)\ge \ell\ \ \mathrm{and}\ \ d(y,\cZ)\le u\big].
\end{split}
\end{align}
 This will imply the desired conclusion~\eqref{eq:mixed probabilties} of Theorem~\ref{thm:restatet decent with constants that we get} by considering the  random subset $\cZ$ in~\eqref{eq:def our Z mixture} conditioned on the event $\{\cZ\neq\emptyset\}$, i.e., the restriction of $\cZ$ to the  subset $\{\cZ\neq \emptyset\}$ of $\Omega$, equipped with the probability measure $\prob[\cZ\neq\emptyset]^{-1}\prob$ (note that~\eqref{eq:mixed probabilties-later} implies in particular that  $\prob[\cZ\neq\emptyset]>0$).

Given $n\in \Z$, define two (cylinder) events $ C_n\subset C_n^* \subset \Omega$ by
\begin{equation}\label{eq:good event 3}
C_n^*\eqdef \big\{ (\eta_{n-2},\eta_{n-1},\eta_n)=(2,1,0)\big\}\quad\mathrm{and}\quad  C_n \eqdef \big\{ (\sigma_{n-2},\sigma_{n-1},\sigma_n,\eta_{n-2},\eta_{n-1},\eta_n)=(0,0,0,2,1,0)\big\}.
\end{equation}
Thus, 
\begin{equation}\label{eq:cylinder has big prob}
\Pr[C_n^*]=\frac{1}{3^3}\asymp 1\qquad\mathrm{and}\qquad \Pr[C_n]=\frac{1}{2^33^3}\asymp 1.
\end{equation}
Also, recalling the notation~\eqref{eq:def ck}, for every $n\in \Z$ and  $w\in \MM$ define events $E_n(w),F_n(w),G(w)\subset \Omega$ by
\begin{equation}\label{eq:def events EFG}
E_n(w)\eqdef \big\{w\in \cZ_n\big\},\qquad F_n(w)\eqdef \big\{\ck(w,\st)-\si\in \{n-2,n-1,n\}\big\},\qquad G(w)\eqdef \big\{\sigma_{\ck(w,\st)-\si}=1\big\}.
\end{equation}
\begin{claim}\label{claim:second inclusion of events} For every $n\in \Z$,  $y\in \MM$, and $u\ge 0$, the following inclusion of events holds:
\begin{equation}\label{eq:second event inclusion}
\big\{\cZ\neq\emptyset \ \ \mathrm{and}\ \ d(y,\cZ)\le u\big\}\supseteq \bigg(\bigcup_{w\in B(y,u)} \Big(E_n(w)\cap\big(F_n(w)\cup G(w)\big)\Big)\bigg)\cap C_n^*.
\end{equation}
\end{claim}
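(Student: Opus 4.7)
The plan is to unpack the definition of $\cZ$ in~\eqref{eq:def our Z mixture} and verify by a short case analysis that any $w\in B(y,u)$ lying in the right-hand side event of~\eqref{eq:second event inclusion} must in fact belong to $\cZ$. Once we have such a $w$, then $\cZ\neq\emptyset$ is immediate, and $d(y,\cZ)\le d(y,w)\le u$ since $w\in B(y,u)$. So the whole content of the claim reduces to the implication ``$w$ satisfies $E_n(w)\cap(F_n(w)\cup G(w))$ and $C_n^*$ holds $\implies$ $w\in\cZ$.''

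Fix an outcome of the random datum~\eqref{eq:random datum for mixing} for which the right-hand side event of~\eqref{eq:second event inclusion} holds, and pick a corresponding $w\in B(y,u)$. I split into two cases according to which of $F_n(w)$ or $G(w)$ is responsible for putting $w$ into the union. First, if $G(w)$ holds, then by definition $\sigma_{\ck(w,\st)-\si}=1$, so $w$ belongs to the second set in the union defining $\cZ$ in~\eqref{eq:def our Z mixture}; in this subcase neither $C_n^*$ nor $E_n(w)$ is needed.

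Second, if $F_n(w)$ holds, set $k\eqdef \ck(w,\st)-\si\in\{n-2,n-1,n\}$. The key observation is that the triple $(\eta_{n-2},\eta_{n-1},\eta_n)=(2,1,0)$ prescribed by $C_n^*$ was chosen precisely so that $k+\eta_k=n$ in each of the three admissible values of $k$: indeed $(n-2)+2=(n-1)+1=n+0=n$. Therefore, on $C_n^*\cap F_n(w)$ we have $\cZ_{k+\eta_k}=\cZ_n$, and hence by $E_n(w)$ we conclude $w\in\cZ_{\ck(w,\st)-\si+\eta_{\ck(w,\st)-\si}}$, so $w$ belongs to the first set in the union defining $\cZ$ in~\eqref{eq:def our Z mixture}.

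In both cases $w\in \cZ$, establishing the inclusion~\eqref{eq:second event inclusion}. There is no real obstacle here: the whole point of designing the triple $(2,1,0)$ in the cylinder $C_n^*$ (and of introducing $\eta$ at all) is to make the bookkeeping of ``align the scale $\ck(w,\st)-\si$ up to $n$'' automatic, which is exactly what the case analysis above exploits.
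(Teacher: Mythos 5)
Your proposal is correct and follows essentially the same route as the paper's proof: you unpack the definition of $\cZ$ in~\eqref{eq:def our Z mixture}, pick a witness $w\in B(y,u)$ from the union, and split on $F_n(w)$ versus $G(w)$, observing that $C_n^*$ forces $k+\eta_k=n$ for each $k\in\{n-2,n-1,n\}$ so that $E_n(w)$ puts $w$ into $\cZ$ in the $F_n(w)$ case, while $G(w)$ puts $w$ into $\cZ$ directly. The only (correct and harmless) extra remark you add is that $E_n(w)$ and $C_n^*$ are not actually used in the $G(w)$ branch.
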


\begin{proof}
  If the event in the right-hand side of~\eqref{eq:second event inclusion} occurs, then the event $C_n^*$ occurs and, recalling the notations in~\eqref{eq:def events EFG}, there exists $w\in \cZ_n$ with $d(w,y)\le u$ such that  $\ck(w,\st)-\si\in \{n-2,n-1,n\}$ or $\sigma_{\ck(w,\st)-\si}=1$. If $\ck(w,\st)-\si\in \{n-2,n-1,n\}$, then $\ck(w,\st)-\si+\eta_{\ck(w,\st)-\si}=n$ thanks to the assumed occurrence of the event $C_n^*$ that is given in~\eqref{eq:good event 3}, so $w\in  \cZ$ by~\eqref{eq:def our Z mixture}. On the other hand, if $\sigma_{\ck(w,\st)-\si}=1$, then~\eqref{eq:def our Z mixture} implies directly that  $w\in \cZ$. In both cases $w\in \cZ$, whence $\cZ\neq\emptyset$ and $d(y,\cZ)\le d(y,w)\le u$, as required. 
\end{proof}

Next, recalling the notation~\eqref{eq:def ti}, for every $n\in \Z$ and $x\in \MM$ define an event $H_n(x)\subset \Omega$ by
\begin{equation}\label{eq:def Hnx}
H_n(x)\eqdef \big\{s_{\si+n-1}(x)\le \st< s_{\si+n}(x)\big\}.
\end{equation}

\begin{claim}\label{claim:compute interval prob} For every $m\in \Z$ and $x\in \MM$ we have 
\begin{equation}\label{eqLprob Hnx}
\prob[H_m(x)]\gtrsim \frac{\left\lfloor\log \frac{\mu(B(x,2^{a+m}))}{\mu(B(x,2^{b+m}))} \right\rfloor}{(a-b+1)\log \Phi}.
\end{equation}
\end{claim}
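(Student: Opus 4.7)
The plan is to evaluate $\prob[H_m(x)]$ directly by using the independence of $\si$ and $\st$ and exploiting a telescoping cancellation across $i \in I$. Recall that $H_m(x) = \{s_{\si+m-1}(x) \le \st < s_{\si+m}(x)\}$, and that $s_\theta(x) = \log\mu(B(x,2^\theta))$ is nondecreasing in $\theta$. Conditioning first on $\si$, I would write
\[
\prob[H_m(x)] = \frac{1}{|I|\cdot|T|}\sum_{i\in I}\big|T\cap [s_{i+m-1}(x), s_{i+m}(x))\big|,
\]
and then observe the telescoping: as $i$ ranges over consecutive integers in $I$, the half-open intervals $[s_{i+m-1}(x), s_{i+m}(x))$ are pairwise disjoint (they share only endpoints) and their union is exactly $[s_{\lceil b\rceil + m - 1}(x),\, s_{\lceil a\rceil + m}(x))$.

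Next I would use the normalization fixed at the start of the proof of Theorem~\ref{thm:restatet decent with constants that we get} (that $\min_{z\in\MM}\mu(z)=1$, hence $\mu(\MM)=\Phi$), which implies $s_\theta(x) \in [0,\log\Phi]$ for every $\theta$, so the union above lies inside $[0,\log\Phi]$ and hence inside $[0,\lceil\log\Phi\rceil]$. Together with the disjointness, the sum above collapses to the number of integers in the half-open interval $[s_{\lceil b\rceil + m - 1}(x),\, s_{\lceil a\rceil + m}(x))$, and the elementary bound $\lceil\beta\rceil - \lceil\alpha\rceil \ge \lfloor\beta-\alpha\rfloor$ (for $\alpha\le\beta$) supplies
\[
\sum_{i\in I}\big|T\cap [s_{i+m-1}(x), s_{i+m}(x))\big| \ge \big\lfloor s_{\lceil a\rceil + m}(x) - s_{\lceil b\rceil + m - 1}(x)\big\rfloor.
\]

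Finally, monotonicity of $\theta \mapsto s_\theta(x)$, combined with $\lceil a\rceil \ge a$ and $\lceil b\rceil - 1 \le b$, yields
\[
s_{\lceil a\rceil + m}(x) - s_{\lceil b\rceil + m - 1}(x) \ge \log\frac{\mu(B(x,2^{a+m}))}{\mu(B(x,2^{b+m}))},
\]
so the sum above is at least $\lfloor\log(\mu(B(x,2^{a+m}))/\mu(B(x,2^{b+m})))\rfloor$. It then remains to bound the denominator: $|I| = \lceil a\rceil - \lceil b\rceil + 1 \le a - b + 2 \lesssim a - b + 1$ (using $a > b$), and $|T| = \lceil \log\Phi\rceil \asymp \log\Phi$ (using $\Phi \ge |\MM| \ge 2$, so $\log\Phi$ is bounded away from $0$). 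Combining these estimates delivers~\eqref{eqLprob Hnx}.

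There is no real obstacle beyond careful bookkeeping with floors and ceilings: one must verify that the floor in the numerator survives both the passage from $[a,b]$ to $[\lceil b\rceil - 1,\lceil a\rceil]$ and the integer-counting step, and that the implicit constants do not degenerate when $a - b$ is near $0$ or $\Phi$ is near $2$. The entire argument is driven by independence of $(\si,\st)$ and the telescoping of $\{[s_{i+m-1}(x), s_{i+m}(x))\}_{i\in I}$; no further structure of $(\MM,d,\mu)$ is needed.
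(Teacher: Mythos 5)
Your proposal is correct and follows essentially the same route as the paper: conditioning on $\si$, telescoping the consecutive half-open intervals $[s_{i+m-1}(x), s_{i+m}(x))$ over $i\in I$, using the normalization to guarantee the relevant integers lie in $T$, applying the elementary bound $\lceil\beta\rceil-\lceil\alpha\rceil\ge\lfloor\beta-\alpha\rfloor$, and finally invoking monotonicity of $\theta\mapsto s_\theta(x)$ together with $\lceil a\rceil\ge a$ and $\lceil b\rceil-1\le b$. The denominator estimates $|I|\lesssim a-b+1$ and $|T|\asymp\log\Phi$ also match the paper's reasoning verbatim.
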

\begin{proof} Simply compute that, since $\si$ is distributed uniformly over $I$ and $\st$ is distributed uniformly over $T$, where $I,T$ are given in~\eqref{eq:def IT}, we have
\begin{align}\label{eq:compute prob E1}
\begin{split}
\prob[&H_m(x)]\stackrel{\eqref{eq:def Hnx}}{=}\frac{1}{\lceil a\rceil-\lceil b\rceil+1} \sum_{i=\lceil b\rceil}^{\lceil a\rceil} \frac{|\{t\in T:\ s_{i+m-1}(x)\le t< s_{i+m}(x)\}|}{\lceil \log \Phi\rceil}\\&= \frac{|T\cap\{\lceil s_{\lceil b\rceil +m-1}(x)\rceil,\ldots, \lceil s_{\lceil a\rceil+m}(x)\rceil-1 \}|}{(\lceil a\rceil -\lceil b\rceil +1)\lceil \log \Phi\rceil}= \frac{\lceil s_{\lceil a\rceil+m}(x)\rceil-\lceil s_{\lceil b\rceil +m-1}(x)\rceil}{(\lceil a\rceil -\lceil b\rceil +1)\lceil \log \Phi\rceil}\gtrsim \frac{\left\lfloor\log \frac{\mu(B(x,2^{a+m}))}{\mu(B(x,2^{b+m}))} \right\rfloor}{(a-b+1)\log \Phi},
\end{split}
\end{align}
where the penultimate step of~\eqref{eq:compute prob E1} is valid thanks to our normalization $\min_{z\in \MM}\mu(z)=1$, which ensures that $T\supseteq \{\lceil s_{\lceil b\rceil +m-1}(x)\rceil,\ldots, \lceil s_{\lceil a\rceil+m}(x)\rceil-1 \}$, and the last step of~\eqref{eq:compute prob E1} holds since $\lceil a\rceil -\lceil b\rceil +1\asymp a-b+1$ and $\lceil \log \Phi\rceil\asymp \log \Phi$, as $a>b$ and $\Phi\ge 2$, and furthermore
\begin{align*}
\lceil s_{\lceil a\rceil+m}(x)\rceil-\lceil s_{\lceil b\rceil +m-1}(x)\rceil&\gtrsim \lfloor s_{\lceil a\rceil+m}(x)- s_{\lceil b\rceil +m-1}(x)\rfloor\\&\stackrel{\eqref{eq:def ti}}{=} \left\lfloor \log \frac{\mu(B(x,2^{\lceil a\rceil +m}))}{\mu(B(x,2^{\lceil b\rceil -1+m}))}\right\rfloor\ge \left\lfloor \log \frac{\mu(B(x,2^{a+m}))}{\mu(B(x,2^{b+m}))}\right\rfloor.\tag*{\qedhere}
\end{align*}
\end{proof}

For the next (and final) claim, we define for every $n\in \Z$, $x\in \MM$ and $\ell\ge 0$ an event $L_n(x,\ell)\subset \Omega$ by
\begin{equation}\label{eq:def Lnxell}
L_n(x,\ell)\eqdef \big\{d(x,\cZ_n)\ge \ell\big\}.
\end{equation}

\begin{claim}\label{claim:first inclusion of events} For every $n\in \Z$,  $x\in \MM$, and $\ell\ge 0$, the following inclusion of events holds:
\begin{equation}\label{eq:event for x}
\big\{\cZ\neq\emptyset \ \ \mathrm{and}\ \  d(x,\cZ)\ge \min\{2^{b+n-2},\ell\}\big\}\supseteq  H_n(x)\cap  L_n(x,\ell)\cap C_n\cap\big\{\cZ\neq \emptyset \big\}.
\end{equation}

\end{claim}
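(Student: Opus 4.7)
The plan is to prove the inclusion of events in Claim~\ref{claim:first inclusion of events} by a direct pointwise verification: assume all four events in the right-hand side of~\eqref{eq:event for x} occur and show that every $z\in\cZ$ satisfies $d(x,z)\ge\min\{2^{b+n-2},\ell\}$. Since $\{\cZ\neq\emptyset\}$ is explicitly in the intersection, only the distance bound needs attention, and the argument is a ``proof by contradiction at a point'': fix $z\in\cZ$ with $d(x,z)<\min\{2^{b+n-2},\ell\}$ and derive a contradiction using the other three events $H_n(x)$, $L_n(x,\ell)$, and $C_n$.

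First, unpack $H_n(x)$. By~\eqref{eq:def Hnx} and~\eqref{eq:piecewise} (applied to $t=\st$ with index $i=\si+n-1$), the event $H_n(x)$ is precisely $\{\ck(x,\st)=\si+n-1\}$. Since $\si$ ranges over $I=\{\lceil b\rceil,\ldots,\lceil a\rceil\}$, we have $\si\ge b$, so under $H_n(x)$ the threshold satisfies
\[
2^{\ck(x,\st)-1}=2^{\si+n-2}\ge 2^{b+n-2}.
\]
Therefore any $z\in\MM$ with $d(x,z)<2^{b+n-2}$ automatically lies in $B(x,2^{\ck(x,\st)-1})$.

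Now apply Lemma~\ref{obs:klmn} with $t=\st$ to such a $z$: we obtain $\ck(z,\st)\in\{\ck(x,\st)-1,\ck(x,\st),\ck(x,\st)+1\}=\{\si+n-2,\si+n-1,\si+n\}$, i.e.\ $\ck(z,\st)-\si\in\{n-2,n-1,n\}$. At this point we invoke the event $C_n$ from~\eqref{eq:good event 3}. Because $\sigma_{n-2}=\sigma_{n-1}=\sigma_n=0$ on $C_n$, the second alternative in the definition~\eqref{eq:def our Z mixture} of $\cZ$ is excluded, so membership $z\in\cZ$ forces $z\in\cZ_{\ck(z,\st)-\si+\eta_{\ck(z,\st)-\si}}$. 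The ``telescoping'' choice $(\eta_{n-2},\eta_{n-1},\eta_n)=(2,1,0)$ built into $C_n$ was designed for exactly this purpose: in all three cases $\ck(z,\st)-\si\in\{n-2,n-1,n\}$, the index $\ck(z,\st)-\si+\eta_{\ck(z,\st)-\si}$ equals $n$, so $z\in\cZ_n$.

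Finally, combine this with $L_n(x,\ell)$. If we had additionally $d(x,z)<\ell$, then $z\in\cZ_n$ would give $d(x,\cZ_n)\le d(x,z)<\ell$, contradicting the event $L_n(x,\ell)=\{d(x,\cZ_n)\ge\ell\}$ in~\eqref{eq:def Lnxell}. Hence no $z\in\cZ$ can have $d(x,z)<\min\{2^{b+n-2},\ell\}$, which is the desired conclusion $d(x,\cZ)\ge\min\{2^{b+n-2},\ell\}$. There is no real obstacle beyond matching the three numerical coincidences $\si\ge b$ (handled by the definition~\eqref{eq:def IT} of $I$), $\ck(z,\st)-\si\in\{n-2,n-1,n\}$ (handled by Lemma~\ref{obs:klmn}), and the telescoping $\ck(z,\st)-\si+\eta_{\ck(z,\st)-\si}=n$ (handled by the specific constants in~\eqref{eq:good event 3}); once these three alignments are recognized, the argument is a short unraveling of the definitions.
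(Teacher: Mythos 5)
Your argument is correct and matches the paper's proof step for step: use $H_n(x)$ with \eqref{eq:piecewise} to get $\ck(x,\st)=\si+n-1$, use $\si\ge\lceil b\rceil\ge b$ to place $z$ in $B(x,2^{\ck(x,\st)-1})$, invoke Lemma~\ref{obs:klmn} to get $\ck(z,\st)-\si\in\{n-2,n-1,n\}$, read off $\sigma_{\ck(z,\st)-\si}=0$ and the telescoping $\ck(z,\st)-\si+\eta_{\ck(z,\st)-\si}=n$ from $C_n$, and finally contradict $L_n(x,\ell)$. The only cosmetic difference is that you run it as a contradiction from $z\in\cZ$ while the paper proves $z\notin\cZ$ directly; the content is identical.
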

\begin{proof} Suppose that $z\in \MM$ satisfies  $d(z,x)<\min\{2^{b+n-2},\ell\}$, and also that the event in the right hand side of~\eqref{eq:event for x} occurs. Our goal is to deduce  that $z\notin \cZ$. Indeed, $d(z,\cZ_n)\ge d(x,\cZ_n)-d(z,x)>d(x,\cZ_n)-\ell\ge 0$, because part of our assumption on $z$ is that $d(x,z)<\ell$ and  the occurrence of the event in the right hand side of~\eqref{eq:event for x} ensures in particular that $L_n(x,\ell)$ occurs, i.e.,  $d(x,\cZ_n)\ge \ell$. Thus, $z\not \in \cZ_n$. By~\eqref{eq:def our Z mixture}, we will therefore be done if we will show that the occurrence of the  event in the right hand side of~\eqref{eq:event for x} implies that $\ck(z,\st)-\si+\eta_{\ck(z,\st)-\si}=n$ and $\sigma_{\ck(z,\st)-\si}=0$. This indeed holds since $s_{\si+n-1}(x)\le \st< s_{\si+n}(x)$ as $H_n(x)$ occurs, so  $\ck(x,\st)=\si+n-1$ by~\eqref{eq:piecewise}. But $\si\ge  \lceil b\rceil\ge b$ since $\si\in I$, where we recall that $I\subset \Z$ is given in~\eqref{eq:def IT}, so $\ck(x,\st)\ge b+n-1$. Hence, $d(z,x)<2^{b+n-2}\le 2^{\ck(x,\st)-1}$ using the rest of our assumption on $z$, so by Lemma~\ref{obs:klmn} we see that $\ck(z,\st)-\si\in \{n-2,n-1,n\}$. As the occurrence of the right hand side of~\eqref{eq:event for x}  ensures that  the event $C_n$ in~\eqref{eq:good event 3} occurs, it follows that  $\sigma_{\ck(z,\st)-\si}=0$ and $\ck(z,\st)-\si+\eta_{\ck(z,\st)-\si}=n$, as required.
\end{proof}

Next, fix $n\in \Z$, $x,y\in \MM$ and $\ell,u\ge 0$.  Denote $|B(y,u)|=p$ and write 
\begin{equation}\label{eq:enumerate B(y,u)}
B(y,u)=\{w_1,\ldots,w_p\},
\end{equation} i.e., we are fixing an arbitrary linear order on $B(y,u)$. For every $q\in [p]$ define an event $A_q\subset \Omega$ by
\begin{equation}\label{eq:def Aq}
A_q\eqdef \bigg(E_n(w_q)\setminus \bigcup_{r=1}^{q-1}E_n(w_r)\bigg)\cap\big(F_n(w_q)\cup G(w_q)\big)\cap H_n(x)\cap  L_n(x,\ell)\cap C_n.
\end{equation}
Thus, $A_1,\ldots, A_p$ are pairwise disjoint and by combining Claim~\ref{claim:first inclusion of events}  and Claim~\ref{claim:second inclusion of events} we see that
$$
\big\{\cZ\neq\emptyset \ \ \mathrm{and}\ \  d(x,\cZ)\ge \min\{2^{b+n-2},\ell\}\ \ \mathrm{and}\ \ d(y,\cZ)\le u\big\}\supseteq \bigcup_{q=1}^p A_q. 
$$
Consequently,
\begin{equation}\label{eq:probability lower bound sum Aq}
\prob\big[\cZ\neq\emptyset \ \ \mathrm{and}\ \  d(x,\cZ)\ge \min\{2^{b+n-2},\ell\}\ \ \mathrm{and}\ \ d(y,\cZ)\le u\big]\ge \sum_{q=1}^p \prob[A_q].
\end{equation}

In order to understand the right hand side of~\eqref{eq:probability lower bound sum Aq}, define for every $q\in [p]$ events $A_q',A_q''\subset \Omega$ by
\begin{equation}\label{eq:def Aq'}
A_q'\eqdef \bigg(E_n(w_q)\setminus \bigcup_{r=1}^{q-1}E_n(w_r)\bigg)\cap F_n(w_q) \cap H_n(x)\cap  L_n(x,\ell)\cap C_n,
\end{equation}
and,
\begin{equation}\label{eq:def Aq''}
A_q''\eqdef \bigg(E_n(w_q)\setminus \bigcup_{r=1}^{q-1}E_n(w_r)\bigg)\cap \big(\Omega\setminus F_n(w_q)\big) \cap G(w_q)  \cap H_n(x)\cap  L_n(x,\ell)\cap C_n\stackrel{\eqref{eq:def Aq}\wedge\eqref{eq:def Aq'}}{=}A_q\setminus A_q'. 
\end{equation}
We therefore have 
\begin{equation}\label{eq:break Aq}
\forall q\in [p],\qquad \prob[A_q]=\prob[A_q']+\prob[A_q'']. 
\end{equation}

Recalling~\eqref{eq:def events EFG} and~\eqref{eq:def Hnx}, the event $(\Omega\setminus F_n(w_q))\cap   H_n(x)$ in the right hand side of~\eqref{eq:def Aq''} means that $s_{\si+n-1}(x)\le \st<s_{\si+n}(x)$ and $\ck(w_q,\st)-\si\notin \{n-2,n-1,n\}$. So, by integrating with respect to $\si$ and $\st$ we get 
\begin{equation}\label{eq:prob aq''}
\prob[A_q'']=\frac{1}{|I|\cdot|T|}\sum_{\substack{(i,t)\in I\times T\\ s_{i+n-1}(x)\le t<s_{i+n}(x)\\ \ck(w_q,t)-i\notin \{n-2,n-1,n\}}}\prob \bigg[\Big(E_n(w_q)\setminus \bigcup_{r=1}^{q-1}E_n(w_r)\Big)\cap\big\{\sigma_{\ck(w_q,t)-i}=1\big\}\cap  L_n(x,\ell)\cap C_n\bigg],
\end{equation}
where we used the facts that the events $E_n(w_1),\ldots,E_n(w_p)$ and $L_n(x,\ell)$, defined in~\eqref{eq:def events EFG} and~\eqref{eq:def Lnxell}, respectively,  depend only on $\cZ_n$, the event $C_n$ from~\eqref{eq:good event 3} depends only on $(\sigma_{n-2},\sigma_{n-1},\sigma_n,\eta_{n-2},\eta_{n-1},\eta_n)$, and the event $G(w_q)$, that does depend on  $\si,\st$  per~\eqref{eq:def events EFG}, is  equal to $\{\sigma_{\ck(w_q,\st)-\si}=1\}$. The crucial point to observe now is that if $(i,t)\in I\times T$ are such that $\ck(w_q,t)-i\notin \{n-2,n-1,n\}$, then $\sigma_{\ck(w_q,t)-i}$ and $C_n$ are independent and $\prob[\sigma_{\ck(w_q,t)-i}=1]=1/2$, so each of the summands in the right hand side~\eqref{eq:prob aq''}  satisfy

\begin{align*}
\prob \bigg[\Big(E_n(w_q)\setminus \bigcup_{r=1}^{q-1}E_n(w_r)\Big)\cap&\big\{\sigma_{\ck(w_q,t)-i}=1\big\}\cap  L_n(x,\ell)\cap C_n\bigg]\\&\stackrel{\eqref{eq:cylinder has big prob}}{=}\frac{1}{2^43^3}\prob \bigg[\Big(E_n(w_q)\setminus \bigcup_{r=1}^{q-1}E_n(w_r)\Big)\cap  L_n(x,\ell)\bigg].
\end{align*}
Thus,
\begin{equation}\label{eq:prob aq'' summands rewritten}
\prob[A_q'']=\frac{1}{|I|\cdot|T|}\sum_{\substack{(i,t)\in I\times T\\ s_{i+n-1}(x)\le t<s_{i+n}(x)\\ \ck(w_q,t)-i\notin \{n-2,n-1,n\}}}\frac{1}{2^43^3}\prob \bigg[\Big(E_n(w_q)\setminus \bigcup_{r=1}^{q-1}E_n(w_r)\Big)\cap  L_n(x,\ell)\bigg].
\end{equation}

The analogous consideration shows mutatis mutandis that if for $q\in [p]$ we define  $A_q'''\subset \Omega$ by 
\begin{equation}\label{eq:Aq'''}
A_q'''\eqdef \bigg(E_n(w_q)\setminus \bigcup_{r=1}^{q-1}E_n(w_r)\bigg)\cap \big(\Omega\setminus F_n(w_q)\big) \cap H_n(x)\cap  L_n(x,\ell)\cap C_n,
\end{equation}
then 
\begin{equation}\label{eq:1/2 equality}
\Pr[A_q''']=\frac{1}{|I|\cdot|T|}\sum_{\substack{(i,t)\in I\times T\\ s_{i+n-1}(x)\le t<s_{i+n}(x)\\ \ck(w_q,t)-i\notin \{n-2,n-1,n\}}}\frac{1}{2^33^3}\prob \bigg[\Big(E_n(w_q)\setminus \bigcup_{r=1}^{q-1}E_n(w_r)\Big)\cap  L_n(x,\ell)\bigg]\stackrel{\eqref{eq:prob aq'' summands rewritten}}{=}2\prob[A_q'']. 
\end{equation}
Consequently, for every $q\in [p]$ we have 
\begin{equation}
\prob[A_q]\stackrel{\eqref{eq:break Aq}\wedge \eqref{eq:1/2 equality}}{\ge} \prob[A_q']+\frac12\prob[A_q''']\ge \frac12\big(\prob[A_q']+\prob[A_q''']\big)=\frac12\prob[A_q''''], 
\end{equation}
where we introduce the notation
\begin{equation}\label{eq:def Aq''''}
A_q''''\eqdef \bigg(E_n(w_q)\setminus \bigcup_{r=1}^{q-1}E_n(w_r)\bigg)\cap H_n(x)\cap  L_n(x,\ell)\cap C_n,
\end{equation}
and observe that by~\eqref{eq:def Aq'}, \eqref{eq:Aq'''} and~\eqref{eq:def Aq''''}, the events $A_q', A_q'''$ are disjoint and they satisfy $A_q'\cup A_q'''=A_q''''$.

Finally, the events $\{A_q''''\}_{q=1}^p$ are pairwise disjoint, so 
\begin{equation}\label{eq:probability lower bound sum Aq''''}
\prob\big[\cZ\neq\emptyset \ \ \mathrm{and}\ \  d(x,\cZ)\ge \min\{2^{b+n-2},\ell\}\ \ \mathrm{and}\ \ d(y,\cZ)\le u\big]\stackrel{\eqref{eq:probability lower bound sum Aq}\wedge \eqref{eq:def Aq''''}}{\ge} \frac12\sum_{q=1}^p \prob[A_q'''']=\frac12 \prob\bigg(\bigcup_{q=1}^p A_q''''\bigg).
\end{equation}
Observing the following simple identity of events,
\begin{multline*}
\bigcup_{q=1}^p A_q''''\stackrel{\eqref{eq:def Aq''''}}{=}\bigg(\bigcup_{q=1}^{p}E_n(w_q)\bigg)\cap H_n(x)\cap  L_n(x,\ell)\cap C_n \stackrel{\eqref{eq:def events EFG}\wedge \eqref{eq:enumerate B(y,u)}}{=}\big\{B(y,u)\cap \cZ_n\neq \emptyset \big\}\cap H_n(x)\cap  L_n(x,\ell)\cap C_n\\
\stackrel{\eqref{eq:def Lnxell}}{=} H_n(x)\cap \big\{d(x,\cZ_n)\ge \ell\ \ \mathrm{and}\ \ d(y,\cZ_n)\le u\big\}\cap C_n,
\end{multline*}
we get from the independence of the events $H_n(x), \{d(x,\cZ_n)\ge \ell\ \ \mathrm{and}\ \ d(y,\cZ_n)\le u\},C_n$ (recalling~\eqref{eq:good event 3} and~\eqref{eq:def Hnx}, they depend on $(\si,\st), \cZ_n, (\sigma_{n-2},\sigma_{n-1},\sigma_n,\eta_{n-2},\eta_{n-1},\eta_n)$, respectively) and~\eqref{eq:probability lower bound sum Aq''''} that
\begin{align*}
\prob\big[\cZ\neq\emptyset \ \ \mathrm{and}\ \  d(x,\cZ)\ge &\min\{2^{b+n-2},\ell\}\ \ \mathrm{and}\ \ d(y,\cZ)\le u\big]\\&\gtrsim\prob \Big[H_n(x)\cap \big\{d(x,\cZ_n)\ge \ell\ \ \mathrm{and}\ \ d(y,\cZ_n)\le u\big\}\cap C_n\Big]
\\&= \prob [H_n(x)]\cdot \prob[d(x,\cZ_n)\ge \ell\ \ \mathrm{and}\ \ d(y,\cZ_n)\le u]\cdot\prob [C_n]
\\&\gtrsim\frac{\left\lfloor\log \frac{\mu(B(x,2^{a+n}))}{\mu(B(x,2^{b+n}))} \right\rfloor}{(a-b+1)\log \Phi} \prob[d(x,\cZ_n)\ge \ell\ \ \mathrm{and}\ \ d(y,\cZ_n)\le u],
\end{align*}
where the last step is an application of~\eqref{eq:cylinder has big prob} and conclusion~\eqref{eqLprob Hnx} of Claim~\ref{claim:compute interval prob}. This proves~\eqref{eq:mixed probabilties-later}. 
\end{proof}

\section{Proof of Theorem~\ref{thm:general doubling exponential}}\label{sec:prove loose ends}

We preferred to state Theorem~\ref{thm:general doubling exponential} for finite metric spaces to avoid the need to consider measurability issues about distributions over random subsets. Nevertheless, all that one needs here is that the event that appears in the left hand side of~\eqref{eq:super exponential general metric}  is $\prob^\tau$-measurable, and it is possible to ensure that the ensuing proof of Theorem~\ref{thm:general doubling exponential} achieves this under mild assumptions on a  separable metric space $(\MM,d)$ and any nondegenerate Borel measure $\mu$ on $\MM$ with $\mu(\MM)<\infty$; such a treatment appears in~\cite[Section~3.3]{Nao24}.

The ensuing proof of Theorem~\ref{thm:general doubling exponential} generalizes the proof of the main result of~\cite{MN07};   the incorporation of the selectors $\{\sigma_t\}_{t=1}^\infty$ (see the paragraph after equation~\eqref{eq:tail of stopping time})  originates from~\cite{Rao99}.

\begin{proof}[Proof of Theorem~\ref{thm:general doubling exponential}]As the right hand side of~\eqref{eq:super exponential general metric} involves only a ratio of two values of $\mu$, by normalizing  we may assume that $\mu$ is a probability measure. Let $\{\sfZ_t\}_{t=1}^\infty$ be i.i.d.~points distributed according to $\mu$, and denote their law by $\prob^\mu=\mu^{\otimes \N}$.  For every $r>0$ and $x\in \MM$ consider the following random variable:
\begin{equation}\label{eq:def stoppiong time}
T(x,r)\eqdef \inf\big\{t\in \N:\ d(\sfZ_t,x)\le r\big\},
\end{equation}
with the convention that $T(x,r)=\infty$ if there is no $t\in \N$ for which $d(\sfZ_t,x)\le r$. Observe that because $\mu$ is assumed to be a nondegenerate measure, $T(x,r)$ is finite almost everywhere. Indeed,
\begin{align*}\label{eq:check stopping time is finite}
\begin{split}
\prob^\mu\big[T(x,r)=\infty \big]&= \prob^\mu\big[\forall t\in \N,\ d(\sfZ_t,x)> r) \big]\\&=\lim_{t\to \infty} \prob^\mu\Big[\bigcap_{s=1}^t \big\{\sfZ_s\in \MM\setminus B(x,r)\big\}\Big]=\lim_{t\to \infty} \big(1-\mu(B(x,r))\big)^t=0,
\end{split}
\end{align*}
where in the penultimate step uses the independence of $\{\sfZ_s\}_{s=1}^\infty$ and the final step holds as $\mu(B(x,r))>0$.

Next, we claim that the following inclusion of events holds:
\begin{equation}\label{eq:event inclusion as in ramsey}
\big\{\forall w\in B(x,\lambda \tau),\ T(w,r)=t\big\}\supseteq \bigg(\bigcap_{s=1}^{t-1} \big\{\sfZ_s\in \MM\setminus B(x,r+\lambda\tau)\big\}\bigg)\cap \big\{\sfZ_t\in B(x,r-\lambda\tau)\big\}.
\end{equation}
 Indeed, suppose that the event in the right hand side of~\eqref{eq:event inclusion as in ramsey} occurs, i.e., for every $s\in \{1,\ldots,t-1\}$ we have $d(\sfZ_s,x)>r+\lambda\tau$  and also $d(\sfZ_t,x)\le r-\lambda\tau$. Therefore, if  $w\in B(x,\lambda\tau)$ and $s\in \{1,\ldots,t-1\}$, then $$d(\sfZ_s,w) \ge d(\sfZ_s,x)-d(x,w)>(r+\lambda\tau)-\lambda\tau=r,$$ which means that $T(w,r)\ge t$,  by the definition~\eqref{eq:def stoppiong time}. At the same time, $$d(\sfZ_t,w)\le d(\sfZ_t,x)+d(x,w)\le (r-\lambda\tau)+\lambda\tau=r,$$
  so, in fact, $T(w,r)=t$. As this holds for every  $w\in B(x,\lambda\tau)$, we thus checked that the event in the left hand side of~\eqref{eq:event inclusion as in ramsey} occurs. Using~\eqref{eq:event inclusion as in ramsey} and the independence of $\{\sfZ_s\}_{s=1}^t$, we therefore get the following bound:
  \begin{equation}\label{eq:tail of stopping time}
 \prob^\mu \big[\forall w\in B(x,\lambda \tau),\ T(w,r)=t\big]\ge \Big(1-\mu\big(B(x,r+\lambda\tau)\big)\Big)^{t-1}\mu\big(B(x,r-\lambda\tau)\big).
  \end{equation}

Let $\sfR$ be a random variable that is distributed uniformly over the interval $(\tau/4,\tau/2)$. Also, let $\{\upsigma_t\}_{t=1}^\infty$ be standard Bernoulli random variables, i.e., $\Pr[\upsigma_t=1]=\Pr[\upsigma_t=0]=1/2$ for every $t\in \N$. We will require below that the random variables $\{\sfZ_t\}_{t=1}^\infty,\sfR,\{\upsigma_t\}_{t=1}^\infty$ are independent. Denote by $\prob^\tau$ their joint law (note that the dependence on the scale $\tau$ here arises only from the presence of the  random variable $\sfR$). Finally, define a random subset $\cZ\subset \MM$ as follows:
\begin{equation}\label{eq:def CZ in general}
\cZ\eqdef \big\{x\in \MM:\ \upsigma_{T(x,\sfR)}=1\big\}.
\end{equation}
We will next demonstrate that~\eqref{eq:super exponential general metric} holds, thus proving Theorem~\ref{thm:general doubling exponential} by conditioning on the  event $\{\cZ\neq \emptyset\}$.

Fix $x,y\in \MM$ with $d(x,y)\ge \tau$ and observe the following inclusion of events:
\begin{align}\label{eq:st inclusion}
\begin{split}
\big\{d(y,\cZ) \ge \lambda \tau &\quad  \mathrm{and}\quad      x\in \cZ\big\}\\
&\supseteq \bigcup_{\substack{(s,t)\in \N\\s\neq t}} \big(\{T(x,\sfR)=s\}\cap \{\upsigma_s=1\}\cap \{\forall w\in B(y,\lambda \tau),\ T(w,\sfR)=t\}\cap\{\upsigma_t=0\}\big).
\end{split}
\end{align}
To check~\eqref{eq:st inclusion}, if the right hand side of~\eqref{eq:st inclusion} occurs,  then  $x\in \cZ$ and $B(y,\lambda r)\subset \MM\setminus\cZ$ by~\eqref{eq:def CZ in general}. The latter inclusion implies that $d(y,\cZ)\ge \lambda r$, i.e., the left hand side of~\eqref{eq:st inclusion} occurs.   As the events in right hand side of~\eqref{eq:st inclusion}, namely, $\{\{T(x,\sfR)=s\}\cap \{\upsigma_s=1\}\cap \{\forall w\in B(y,\lambda \tau),\ T(w,\sfR)=t\}\cap\{\upsigma_t=0\}\}_{s,t=1}^\infty$ are pairwise disjoint (as membership in each of them implies $s=T(x,\sfR)$ and $t=T(y,\sfR)$), we deduce from~\eqref{eq:st inclusion} that
\begin{equation}\label{eq:inclusion conclusion2}
 \prob^\tau\big[d(y,\cZ) \ge \lambda \tau \quad  \mathrm{and}\quad      x\in \cZ\big]\ge
 \sum_{\substack{(s,t)\in \N\\s\neq t}} \frac14 \prob^\tau\big[T(x,\sfR)=s\quad\mathrm{and}\quad \forall w\in B(y,\lambda \tau),\ T(w,\sfR)=t\big],
\end{equation}
where we used the fact that the random variables $T(x,\sfR),T(y,\sfR)$ depend only on $\{\sfZ_k\}_{k=1}^\infty$ and $\sfR$, so they are independent of $\upsigma_s,\upsigma_t$ for each $s,t\in \N$, and  $\upsigma_s,\upsigma_t$ are independent if $s\neq t$. For every $t\in \N$, the events $\{T(x,\sfR)=t\}$ and $\{\forall w\in B(y,\lambda \tau),\ T(w,\sfR)=t\}\subset \{T(y,\sfR)=t\}$ are disjoint, since if $T(x,\sfR)=T(y,\sfR)=t$, then by the definition~\eqref{eq:def stoppiong time} of  $T(\cdot,\cdot)$ we would have $d(\sfZ_t,x),d(\sfZ_t,y)\le \sfR<\tau/2$, so $d(x,y)<\tau$ by the triangle inequality, in contradiction to our assumption. Thus, \eqref{eq:inclusion conclusion2}  can be rewritten as follows:
\begin{align}\label{eq:put diagonal back}
\begin{split}
 \prob^\tau\big[d(y,\cZ) \ge \lambda \tau \quad  \mathrm{and}\quad      x\in \cZ\big]&\ge
  \frac14\sum_{s=1}^\infty \sum_{t=1}^\infty \prob^\tau\big[T(x,\sfR)=s\quad\mathrm{and}\quad \forall w\in B(y,\lambda \tau),\ T(w,\sfR)=t\big]\\&=
 \frac14\sum_{t=1}^\infty \prob^\tau\big[\forall w\in B(y,\lambda \tau),\ T(w,\sfR)=t\big]\\&= \frac14\sum_{t=1}^\infty \frac{4}{\tau}\int_{\frac14\tau}^{\frac12\tau} \prob^\mu\big[\forall w\in B(y,\lambda \tau),\ T(w,r)=t\big]\ud r,
 \end{split}
\end{align}
where the last step of~\eqref{eq:put diagonal back} uses the definition of  $\sfR$ and its independence from $\{\sfZ_k\}_{k=1}^\infty$.

We can now conclude the justification~\eqref{eq:in main thm-general} as follows:
\begin{align}
\nonumber\prob^\tau\big[d(y,\cZ) &\ge \lambda \tau \quad  \mathrm{and}\quad      x\in \cZ\big]\\&\ge \frac{1}{\tau}\int_{\frac14\tau}^{\frac12\tau} \bigg(\sum_{t=1}^\infty \big(1-\mu(B(x,r+\lambda\tau))\big)^{t-1}\mu(B(x,r-\lambda\tau))\bigg) \ud r\label{eq:combine the steps}\\ \nonumber
 &= \frac{1}{\tau}\int_{\frac14\tau}^{\frac12\tau} e^{\log \mu(B(x,r-\lambda\tau))-\log \mu(B(x,r+\lambda\tau)) }\ud r\\ \label{eq:use Jensen}
 &\ge \frac14 \exp\left(\frac{4}{\tau}\bigg(\int_{\frac14\tau}^{\frac12\tau}\log \mu(B(x,r-\lambda\tau))\ud r-\int_{\frac14\tau}^{\frac12\tau}\log \mu(B(x,r+\lambda\tau))\ud r\bigg)\right)\\
 &= \frac14 \exp\left(\frac{4}{\tau}\bigg(\int_{\left(\frac14-\lambda\right)\tau}^{\left(\frac14+\lambda\right)\tau}\log \mu(B(x,\rho))\ud \rho-\int_{\left(\frac12-\lambda\right)\tau}^{\left(\frac12+\lambda\right)\tau}\log \mu(B(x,\rho))\ud \rho\bigg)\right)\nonumber\\
 &\ge \frac14 \exp\left(8\lambda \Big(\log \mu\big(B(x,\frac18\tau)\big)-\log \mu\big(B(x,\frac58\tau)\big)\Big)\right)\nonumber \\
 &= \frac14\left(\frac{\mu(B(y,\frac58\tau))}{\mu(B(y,\frac18\tau))}\right)^{-8\lambda},\nonumber
\end{align}
where~\eqref{eq:combine the steps} is a combination of~\eqref{eq:tail of stopping time} and~\eqref{eq:put diagonal back}, in~\eqref{eq:use Jensen} we used Jensen's inequality, and the rest of the steps above hold because $\lambda\le 1/8$. This completes the proof of Theorem~\ref{thm:general doubling exponential}.
\end{proof}

\section{Impossibility results for average distortion}\label{sec:Rn into line}


\noindent Fact~\ref{prop: Rn} is the special case $q=2$ of the following proposition:

\begin{prop}\label{prop:Rn more genral} Fix  $p\ge 1$ and $n\in \N$. If $1\le q\le 2$, then  the smallest $D\ge 1$ for which $\ell_q^n$ embeds into $\R$ with $p$-average distortion $D$ is bounded from above and below by positive  universal constant multiples of $\sqrt{\max\{1,n/p\}}$.  If $q\ge 2$, then the smallest $D\ge 1$ for which $\ell_q^n$ embeds into $\R$ with $p$-average distortion $D$ is at most a universal constant multiple of $(\max\{1,n/p\})^{1-1/q}$, and it is at least a positive  universal constant multiple of the following quantity:
\begin{equation}\label{eq:cases weak lower bounds}
\left\{ \begin{array}{ll} \frac{q}{p}&\ \mathrm{if}\ 1\le p\le \frac{q}{n^{\frac{1}{q}}}\ \mathrm{and}\ q\le \log n,\\
n^{\frac{1}{q}}&\ \mathrm{if}\ \max\Big\{1,\frac{q}{n^{\frac{1}{q}}}\Big\}\le p\le q\le \log n,\\
\frac{\log n}{p} &\ \mathrm{if}\ p\le \log n\le q,\\ 1&\ \mathrm{if}\ \log n\le p\le q\ \mathrm{or}\ p\ge \max\left\{q,\frac{n}{e^q}\right\},\\
\big(\frac{n}{p}\big)^{\frac{1}{q}}&\ \mathrm{if}\ q\le p\le \frac{n}{e^q}.\end{array}\right.
\end{equation}
\end{prop}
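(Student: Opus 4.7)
The proof naturally splits into an upper bound, established via an explicit random one-dimensional linear projection of the given measure, and a lower bound, established by choosing the measure to be the standard Gaussian $\gamma_n$ on $\R^n$ and combining Gaussian $\ell_q$-moment asymptotics with the Gaussian concentration of $1$-Lipschitz real-valued functions.

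For the upper bound, given any Borel probability measure $\mu$ on $\ell_q^n$, I would consider random functionals of the form $f_v(x)=\langle v,x\rangle$ with $v$ distributed on $\{v\in\R^n:\|v\|_{q'}\le 1\}$, where $1/q+1/q'=1$, so that each $f_v$ is automatically $1$-Lipschitz from $\ell_q^n$ to $\R$. Fubini reduces the task of bounding the $p$-average distortion achieved by a typical realization of $f_v$ to proving the pointwise moment estimate $(\E_v|\langle v,u\rangle|^p)^{1/p}\gtrsim \|u\|_q/D$ for every $u\in\R^n$. For $1\le q\le 2$, setting $v=\varepsilon/n^{1/q'}$ for a Rademacher vector $\varepsilon\in\{-1,+1\}^n$ and applying Khintchine's inequality gives $\E_v|\langle v,u\rangle|^p\asymp p^{p/2}\|u\|_2^p/n^{p/q'}$, which combined with the power-mean bound $\|u\|_2\ge n^{1/2-1/q}\|u\|_q$ yields $D\lesssim\sqrt{n/p}$ when $p\le n$, with a direct truncation argument handling $p\ge n$. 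For $q\ge 2$, I would take $v$ to be a Gaussian vector scaled so that $\|v\|_{q'}\le 1$ holds almost surely (or use an appropriately truncated $q'$-stable vector); the inequality $\|u\|_2\ge\|u\|_q$ valid in this range, combined with the same Fubini mechanism, produces the upper bound $D\lesssim (\max\{1,n/p\})^{1-1/q}$.

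For the lower bound, I would take $\mu=\gamma_n$. Two quantitative ingredients are needed. First, writing $x-y\stackrel{d}{=}\sqrt{2}\,g$ with $g\sim\gamma_n$, one needs a lower estimate on $(\E\|g\|_q^p)^{1/p}$; since $\|g\|_q^q=\sum_{i=1}^n|g_i|^q$ is a sum of i.i.d.\ nonnegative random variables, this amounts to a classical phase diagram in $(p,q,n)$, according to whether the typical value of $\|g\|_q$ is governed by its mean (when $q\le\log n$), by the extreme order statistics of $(|g_1|,\ldots,|g_n|)$ (when $q\gtrsim\log n$), or by the inflation of the $p$-th moment by rare atypically large coordinates (in the intermediate range $q\le p\le n/e^q$). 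Second, one needs an upper estimate on $\|f-\E f\|_{L^p(\gamma_n)}$ for any $1$-Lipschitz $f:\ell_q^n\to\R$; since such $f$ is $n^{(1/q-1/2)_+}$-Lipschitz from $\ell_2^n$, the moment form of Borell's Gaussian isoperimetric inequality (see e.g.~\cite[Proposition~1.10]{Led01}) gives $\|f-\E f\|_{L^p(\gamma_n)}\lesssim n^{(1/q-1/2)_+}\max\{1,\sqrt{p}\}$. Dividing the first estimate by the second reproduces each of the five lines of~\eqref{eq:cases weak lower bounds} in the corresponding phase.

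The principal technical burden is the careful bookkeeping of Gaussian $\ell_q$-moments across all five regimes: when $q\le\log n$ and $p\le q$ one has $(\E\|g\|_q^p)^{1/p}\asymp n^{1/q}\sqrt{q}$ by concentration of $\sum|g_i|^q$; when $q\le p\le n/e^q$ the $L^p$-moment is inflated by extreme coordinates and behaves like a power of $n/p$; when $q\gtrsim\log n$ the $\ell_q$-norm degenerates to $\|g\|_\infty\asymp\sqrt{\log n}$; and saturation regimes yield the constant lower bound $1$. Each phase must be matched against the corresponding regime of $\max\{1,\sqrt{p}\}$ in the concentration estimate to recover the exact case split of~\eqref{eq:cases weak lower bounds}. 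The special case $q=2$ of Fact~\ref{prop: Rn} is by far the cleanest instance: $\|g\|_2$ concentrates sharply at $\sqrt{n}$ with no phase transition, so dividing by $\max\{1,\sqrt{p}\}$ immediately produces the answer $\sqrt{\max\{1,n/p\}}$ without any further case analysis.
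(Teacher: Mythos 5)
Your outline identifies the two essential mechanisms (random linear functionals via Fubini for the upper bound, concentration of Lipschitz functions against the moment growth of the norm for the lower bound), and for $q=2$ it works cleanly. But both halves of the proposal contain errors once $q\neq 2$, and the lower bound takes a genuinely different route from the paper's that does not reproduce all five cases of~\eqref{eq:cases weak lower bounds}.

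\textbf{Upper bound.} The claimed Khintchine asymptotics $\E|\langle\varepsilon,u\rangle|^p\asymp p^{p/2}\|u\|_2^p$ are false as a lower bound: take $u=e_1$, where the left side is $1$ and the right side is $p^{p/2}$. The correct sharp moment formula (Hitczenko, Gluskin--Kwapien, as used in~\eqref{eq:moment of numerator} via~\cite{BLMN06,GK95}) is $\asymp\sum_{i\le p}u_i^*+\sqrt{p}(\sum_{i>p}(u_i^*)^2)^{1/2}$, and the two terms behave very differently depending on the shape of $u$. More fundamentally, the Rademacher choice $v=\varepsilon/n^{1/q'}$ does not give the claimed bound even for $q=2$: for $u=e_1$ one has $(\E|\langle v,e_1\rangle|^p)^{1/p}=n^{-1/2}$ independently of $p$, so the best possible Lipschitz constant from this random functional is $\sqrt{n}$, not $\sqrt{n/p}$. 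The improvement from $\sqrt{n}$ to $\sqrt{n/p}$ requires the random vector to have a density whose direction $\sfU/\|\sfU\|_{q'}$ is \emph{independent} of the norm $\|\sfU\|_{q'}$; the paper takes i.i.d.\ coordinates with density $\propto e^{-|s|^{q'}}$ precisely because of the Schechtman--Zinn cone property. Your proposal of a ``Gaussian scaled so that $\|v\|_{q'}\le 1$ almost surely'' or a ``truncated $q'$-stable vector'' would also break this independence (and, for stable vectors with $q'<2$, introduces heavy tails that are incompatible with the pointwise moment comparison).

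\textbf{Lower bound.} Testing against $\gamma_n$ is a legitimate and cleaner alternative to the paper's argument, and for $q=2$ it gives exactly Fact~\ref{prop: Rn} with no case analysis, as you say. However, for $q>2$ the Gaussian bound $D\gtrsim(\E\|g\|_q^p)^{1/p}/\sqrt{p}$ is strictly weaker than~\eqref{eq:cases weak lower bounds} in two regimes. When $p\le\log n\le q$ the Gaussian approach yields $D\gtrsim\sqrt{(\log n)/p}$, while case 3 of~\eqref{eq:cases weak lower bounds} asserts $D\gtrsim(\log n)/p$, which for $p\ll\log n$ is quadratically stronger; that gap is not a bookkeeping subtlety but reflects a different extremal mechanism, because the paper obtains it by embedding a $k$-point expander graph (with $k\asymp\sqrt{n}$) into $\ell_q^n$ with distortion $O(\max\{1,(\log k)/q\})$ via Matou\v{s}ek's theorem, and an expander needs $p$-average distortion $\Omega(\max\{1,(\log k)/p\})$ to embed into $\R$. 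Similarly, when $q\le p\le n/e^q$ the Gaussian bound gives $D\gtrsim n^{1/q}\sqrt{q}/\sqrt{p}$, which for $p$ near $n/e^q$ decays as a negative power of $n$ and is far below the target $(n/p)^{1/q}$; the paper gets the latter from the cone measure on $\partial B_q^n$ plus the Gromov--Milman concentration inequality (relying on the modulus of convexity of $\ell_q$), not from the Gaussian. In short, the paper's lower bound is the maximum of an expander bound and a cone-measure bound, and the Gaussian alone reaches neither in the regimes that make them relevant.

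To salvage the approach you would need to (i) replace the Rademacher/Gaussian projection by the Schechtman--Zinn vector and use the Gluskin--Kwapien moment formula to get the pointwise estimate~\eqref{eq:use independence ratio}, and (ii) add the expander argument and the cone-measure concentration argument to the Gaussian test measure. At that point you are essentially reconstructing the paper's proof.
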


Since $\ell_q$ embeds quasisymmetrically into $\ell_2$ when $1\le q\le 2$ (specifically, by~\cite{BDK65} its $\frac{q}{2}$-snowflake is isometric to a subset of $\ell_2$), for this range of $q$ the upper bound on $D$ in Proposition~\ref{prop:Rn more genral} is a special case of Theorem~\ref{coro:doubling average embedding}. But, Theorem~\ref{coro:doubling average embedding}   has a much simpler proof in its special case $\MM=\ell_q^n$, which  we will include  below.  When $q> 2$ the upper and lower bounds on $D$ in Proposition~\ref{prop:Rn more genral} do not match, and they do not belong to the framework that we study herein because $\ell_q$ does not admit a quasisymmetric mebedding into a Hilbert space~\cite{Nao12}. Nevertheless, it is beneficial to derive below the best bounds that  we currently have for $q$ in this range, as a small step towards the following independently interesting open question that arises naturally from the above discussion:

\begin{question}\label{Q:q>2} Given $n\in \N$ and $p\ge 1$, what is the growth rate as $n\to \infty$  of the smallest $D\ge 1$ such that $\ell_\infty^n$ embeds with $p$-average distortion $D$ into $\R$? We currently do not know the answer to this question even in the (most interesting) case $p=2$. More generally, for every $q>2$, what  is the order of magnitude (up to universal constant factors) of the smallest $D\ge 1$ such that $\ell_q^n$ embeds with $p$-average distortion $D$ into $\R$?
\end{question}

By~\cite{Nao14}, when $q>2$ the smallest $D\ge 1$  such that $\ell_q$ embeds into $\ell_2$ with quadratic\footnote{By~\cite[Proposition~6]{Nao21}, this statement formally implies bounds on the $p$-average distortion of $\ell_q$ into a Hilbert space.} average distortion $D$ is bounded from above and from below by positive universal constant multiples of $q$ (the proof of this fact in~\cite{Nao14} is nonconstructive; in~\cite{KNT21} an algorithmic proof was found). Proposition~\ref{prop:Rn more genral} shows that the situation is markedly different if one wishes to study the average distortion of $\ell_q$ in $\R$, and Question~\ref{Q:q>2}  aims to understand this phenomenon, with possible algorithmic ramifications, depending on the answer. Regardless, it is likely that a substantially new idea will be needed to answer Question~\ref{Q:q>2}.

\begin{proof}[Proof of Proposition~\ref{prop:Rn more genral}]  We will first derive the asserted upper bounds on $D$. For $q> 1$ let $\sfH=\sfH^{(q)}$ be the symmetric real-valued random variable whose density at each $s\in \R$ is equal to
\begin{equation*}\label{eq:dual distribution}
\frac{1}{2\Gamma\left(2-\frac{1}{q}\right)}e^{-|s|^{\frac{q}{q-1}}}.
\end{equation*}
We extend this notation to $q=1$ by letting $\sfH=\sfH^{(1)}$ be a symmetric Bernoulli random variable, namely, $\Pr[\sfH=1]=\Pr[\sfH=-1]=1/2$. If we take $\sfH_1,\ldots,\sfH_n$ to be i.i.d.~copies of $\sfH$ and consider the random vector
$$\sfU\eqdef (\sfH_1,\ldots,\sfH_n)\in \R^n,
$$
then the random vector $\sfU/\|\sfU\|_{q/(q-1)}$ takes (by design) values in the unit sphere of the dual $\ell_{q/(q-1)}^n$ of $\ell_q^n$, and furthermore the random variable $\|\sfU\|_{q/(q-1)}$ is independent of $\sfU/\|\sfU\|_{q/(q-1)}$; these useful probabilistic facts are due to~\cite{SZ90,RR91} (generalizations that could pertain to future extensions of the ensuing reasoning were found in~\cite[Section~2]{BLMN06} and~\cite[Lemma~156]{Nao24}).\footnote{Note that when $q=1$ the random vector $\sfU$ is distributed uniformly over $\{-1,1\}^n$ and $\|\sfU\|_{\infty}=1$ is constant. If one does not mind obtaining worse bounds on the universal constants that the ensuing proof provides, then in the range $1\le q\le 2$ one could work instead with the random vector $\sfU^{(1)}/n^{(q-1)/q}$ when $\sfU^{(1)}$ is distributed uniformly over $\{-1,1\}^n$.}

By e.g.~equation~(6.48) in~\cite{Nao24}, in combination with Stirling's formula, we have
\begin{equation}\label{eq:moment of denominator}
\left(\E\Big[\|\sfU\|_{\frac{q}{q-1}}^p\Big]\right)^{\frac{1}{p}}=\Bigg(\frac{\Gamma
\big(\frac{(n+p)(q-1)}{q}\big)}
{\Gamma\big(\frac{n(q-1)}{q}\big)}\Bigg)^{\frac{1}{p}}\asymp \big(\max\{p,n\}\big)^{1-\frac{1}{q}}.
\end{equation}
Also, by~\cite[Proposition~7]{BLMN06} (whose key input is~\cite{GK95}; for $q=1$ it suffices to use~\cite{Hit93} here), for every $a_1,\ldots,a_n\in \R$, if we let $a_1^*\ge a_2^*\ge\ldots\ge a_n^*\ge 0$ be the decreasing rearrangement of $|a_1|,\ldots,|a_n|$, then
\begin{equation}\label{eq:moment of numerator}
\Bigg(\E\bigg[\Big|\sum_{i=1}^n a_i\sfH_i\Big|^p\bigg]\Bigg)^{\frac{1}{p}}\asymp p^{1-\frac{1}{q}}\left(\sum_{i=1}^{\lfloor p\rfloor} (a_i^*)^{q}\right)^{\frac{1}{q}}+\sqrt{p} \left(\sum_{i=\lfloor p\rfloor}^n (a_i^*)^2\right)^{\frac12}\gtrsim \min\left\{p^{1-\frac{1}{q}},\frac{\sqrt{p}}{n^{\max\left\{0,\frac{1}{q}-\frac12\right\}}}\right\}\|a\|_q,
\end{equation}
where the last step of~\eqref{eq:moment of numerator} follows from a straightforward application of H\"older's inequality. Hence,
\begin{equation}\label{eq:use independence ratio}
\Bigg(\E\bigg[\Big|\big\langle a, \frac{1}{\|\sfU\|_{\frac{q}{q-1}}}\sfU\big\rangle\Big|^p\bigg]\Bigg)^{\frac{1}{p}}=\Bigg(\frac{\E \big[|\langle a,\sfU\rangle|^p\big]}{\E\big[\|\sfU\|_{\frac{q}{q-1}}^p\big]}\Bigg)^{\frac{1}{p}}\gtrsim \bigg(\min\Big\{1,\frac{n}{p}\Big\}\bigg)^{1-\frac{1}{\max\{2,q\}}},
\end{equation}
where in~\eqref{eq:use independence ratio} we continue to denote the standard scalar product on $\R^n$ by $\langle\cdot,\cdot\rangle:\R^n\times \R^n\to \R$, the first step of~\eqref{eq:use independence ratio} holds because   $\|\sfU\|_{q/(q-1)}$ is independent of $\sfU/\|\sfU\|_{q/(q-1)}$, and the last step of~\eqref{eq:use independence ratio} consists of a substitution of~\eqref{eq:moment of denominator} and~\eqref{eq:moment of numerator} and simplification of the resulting expression.

Thanks to~\eqref{eq:use independence ratio}, there is a universal constant $C>0$ such that if we consider the random linear functional $\sfF:\ell_q^n\to \R$ that is given by setting for every $x\in \R^n$,
$$
\sfF(x)\eqdef \frac{C\big(\max\big\{1,\frac{n}{p}\big\}\big)^{1-\frac{1}{\max\{2,q\}}}}{\|\sfU\|_{\frac{q}{q-1}}}\langle x,\sfU\rangle,
$$
then the Lipschitz constant of $\sfF$ with respect to the $\ell_q^n$ metric is at most  $C(\max\{1,n/p\})^{1-1/\max\{2,q\}}$, and
\begin{equation}\label{eq:random functional}
\forall x,y\in \R^n,\qquad \Big(\E \big[\|\sfF(x)-\sfF(y)\|_q^p\big]\Big)^{\frac{1}{p}}\ge \|x-y\|_q.
\end{equation}
By Fubini, it follows from~\eqref{eq:random functional} that for every Borel probability measure $\mu$ on $\R^n$ we have
$$
\E\left[\iint_{\R^n\times \R^n} |\sfF(x)-\sfF(y)|^p\ud\mu(x)\ud\mu(y)\right]\ge \iint_{\R^n\times \R^n} \|x-y\|_q^p\ud\mu(x)\ud\mu(y).
$$
Hence, with positive probability the $C(\max\{1,n/p\})^{1-1/\max\{2,q\}}$-Lipschitz function $\sfF$ satisfies
$$
\left(\iint_{\R^n\times \R^n} |\sfF(x)-\sfF(y)|^p\ud\mu(x)\ud\mu(y)\right)^{\frac{1}{p}}\ge \left(\iint_{\R^n\times \R^n} \|x-y\|_q^p\ud\mu(x)\ud\mu(y)\right)^{\frac{1}{p}},
$$
thus proving the upper bound that is asserted in Proposition~\ref{prop:Rn more genral} (in the entire range $q\ge 1$) on the smallest $D\ge 1$ for which $\ell_q^n$ embeds with $p$-average distortion $D$ into $\R$.

It remains to prove the lower bounds on the average distortion that are asserted in Proposition~\ref{prop:Rn more genral}. Observe first that it suffices to treat only the case $q\ge 2$. Indeed, if $1\le q\le 2$ then by~\cite{FLM77}   there exists an integer $m\asymp n$ such that $\ell_2^n$ embeds with (bi-Lipschitz) distortion $O(1)$ into $\ell_q^n$. Hence, if $\ell_q^n$ embeds with $p$-average distortion $D$ into $\R$, then also $\ell_2^n$ embeds with $p$-average distortion $O(D)$ into $\R$, which implies that $D\gtrsim \sqrt{\max\{1,n/p\}}$  by the (yet to be proven) case $q=2$ of Proposition~\ref{prop:Rn more genral}. Thus, suppose that $q\ge 2$ and that $\ell_q^n$ embeds with $p$-average distortion $D\ge 1$ into $\R$. The goal is to show that necessarily
\begin{equation}\label{eq:D lower desired cases}
D\gtrsim \left\{ \begin{array}{ll} \frac{q}{p}&\ \mathrm{if}\ 1\le p\le \frac{q}{n^{\frac{1}{q}}}\ \mathrm{and}\ q\le \log n,\\
n^{\frac{1}{q}}&\ \mathrm{if}\ \max\Big\{1,\frac{q}{n^{\frac{1}{q}}}\Big\}\le p\le q\le \log n,\\
\frac{\log n}{p} &\ \mathrm{if}\ p\le \log n\le q,\\ 1&\ \mathrm{if}\ \log n\le p\le q\ \mathrm{or}\ p\ge \max\left\{q,\frac{n}{e^q}\right\},\\
\big(\frac{n}{p}\big)^{\frac{1}{q}}&\ \mathrm{if}\ q\le p\le \frac{n}{e^q},\end{array}\right.
\end{equation}

Let $k=k(n)\in \N$ be the largest integer such that $k(k-1)/2\le n$. Thus $k\asymp \sqrt{n}$, so $\log k\asymp \log n$. By combining part~(i) of Theorem~1 in~\cite{Mat97} with Proposition~1 in~\cite{Bal90}, every  $k$-point metric space embeds into $\ell_q^n$ with (bi-Lipschitz) distortion $O(\max\{1,(\log k)/q\})\asymp \max\{1,(\log n)/q\}$. At the same time, by the {\em proof of} part~(ii) of Theorem~1 in~\cite{Mat97} there exists a $k$-point metric space $\MM$ (namely, any bounded degree $k$-vertex $\Omega(1)$-expander graph works here)  such that every embedding of $\MM$ into $\R$ incurs $p$-average distortion that is at least a positive universal constant multiple of $\max\{1,(\log k)/p\} \asymp \max\{1,(\log n)/p\}$. Our assumption on $D$ therefore implies that
\begin{equation}\label{eq:first cases D partial expander}
D\gtrsim \max\Bigg\{1,\frac{\max\big\{1,\frac{\log n}{p}\big\}}{\max\big\{1,\frac{\log n}{q}\big\}}\Bigg\}\asymp \left\{ \begin{array}{ll} \frac{q}{p} &\ \mathrm{if}\ 1\le p,q\le \log n,\\ 1 &\ \mathrm{if}\ p,q\ge \log n\ \mathrm{or}\ q\le \log n\le p, \\ \frac{\log n}{p}&\ \mathrm{if}\ 1\le p\le \log n\le q.
\end{array}\right.
\end{equation}

Next, denote the unit sphere of $\ell_q^n$ by $B_q^n=\{x\in \R^n:\ \|x\|_q\le 1\}$. Let $\kappa_q^n$ be the cone measure~\cite{GM87} on the sphere $\partial B_q^n$, i.e., $\kappa_q^n(E)=\vol_n([0,1]E)$ for every Lebesgue measurable $E\subset \partial B_q^n$. By our assumption on $D$, there exists $f:\R^n\to \R$ that is $D$-Lipschitz with respect to the $\ell_q^n$-metric such that
\begin{align}\label{average distortion for cone measure}
\begin{split}
\Bigg(\iint_{\partial B_q^n\times \partial B_q^n} |f(x)-f(y)|^p&\ud\kappa_q^n(x)\ud\kappa_q^n(y)\Bigg)^{\frac{1}{p}}\ge  \left(\iint_{\partial B_q^n\times \partial B_q^n} \|x-y\|_q^p\ud\kappa_q^n(x)\ud\kappa_q^n(y)\right)^{\frac{1}{p}}\\&\ge \left(\int_{\partial B_q^n} \bigg\|x-\int_{\partial B_q^n} y\ud \kappa_q^n(y)\bigg\|_q^p\ud \kappa_q^n(x)\right)^{\frac{1}{p}}=\left(\int_{\partial B_q^n} \|x\|_q^p\ud \kappa_q^n(x)\right)^{\frac{1}{p}}=1
 \end{split}
\end{align}
where the second step of~\eqref{average distortion for cone measure} uses Jensen's inequality. As $f$ is $D$-Lipschitz with respect to the $\ell_q^n$-metric and $q\ge 2$, by the Gromov--Milman concentration inequality~\cite{GM87} (see~\cite{ABV98} for a different simple proof; the case $p=2$ follows from L\'evy's classical isoperimetric theorem, as in e.g.~\cite[Chapter~2]{Led01}) combined with the estimate on the modulus of uniform convexity of $\ell_q$ in~\cite{Cla36}, we have
$$
\forall s\ge 0,\qquad \kappa_q^n \Big[\big\{x\in \partial B_q^n:\ |f(x)-M_f|\ge s\big\}\Big] \lesssim e^{-n\left(\frac{s}{CD}\right)^q},
$$
where $M_f$ is the median of $f$ and $C>0$ is a universal constant. Consequently,
\begin{equation}\label{eq:f concentrated}
\forall s\ge 0,\qquad \big(\kappa_q^n\times \kappa_q^n\big) \Big[\big\{(x,y)\in \partial B_q^n\times \partial B_q^n:\ |f(x)-f(y)|\ge s\big\}\Big] \lesssim e^{-n\left(\frac{s}{CD}\right)^q}.
\end{equation}
Hence,
\begin{align}\label{eq:use fubini p}
\begin{split}
 1&\le \left(\iint_{\partial B_q^n\times \partial B_q^n} |f(x)-f(y)|^p\ud\kappa_q^n(x)\ud\kappa_q^n(y)\right)^{\frac{1}{p}}\\&= \left(\int_0^\infty ps^{p-1} \big(\kappa_q^n\times \kappa_q^n\big)  \Big[\big\{(x,y)\in \partial B_q^n\times \partial B_q^n:\ |f(x)-f(y)|\ge s\big\}\Big]\ud s\right)^{\frac{1}{p}}
\\& \lesssim  \left(\int_0^\infty ps^{p-1} e^{-n\left(\frac{s}{CD}\right)^q}\ud s\right)^{\frac{1}{p}}= C\frac{\Gamma\left(1+\frac{p}{q}\right)^{\frac{1}{p}}}{n^{\frac{1}{q}}}D\asymp\left(\frac{p}{n}\right)^{\frac{1}{q}}D,
\end{split}
\end{align}
where the first step of~\eqref{eq:use fubini p} uses~\eqref{eq:f concentrated}, the third step of~\eqref{eq:use fubini p} uses~\eqref{average distortion for cone measure}, and the final step of~\eqref{eq:use fubini p} is an application of Stirling's formula.   We therefore conclude that $D$ must obey the following lower estimate:
\begin{equation}\label{eq:second D lower}
D\gtrsim \left(\frac{n}{p}\right)^{\frac{1}{q}}.
\end{equation}
The desired estimate~\eqref{eq:D lower desired cases} follows by combining~\eqref{average distortion for cone measure} with~\eqref{eq:second D lower}, and a straightforward case analysis.
\end{proof}


Next, the case $r=2$ of the following theorem coincides with Theorem~\ref{eq:type 2 p dependence}:

\begin{theorem}\label{eq:avwerage distortion into type} Let $(\bX,\|\cdot\|)$ be a Banach space of type $1\le r\le 2$. Given $n\in \N$ and $p\ge 1$, if $\ell_1^n$  embeds into $\bX$ with $p$-average distortion $D\ge 1$, then necessarily
\begin{equation}\label{eq:lower D l1 into type}
D\gtrsim \left\{\begin{array}{ll}\frac{n^{1-\frac{1}{r}}}{T_r(\bX)}&\mathrm{if}\ 1\le p\le T_r(\bX)^2n^{\frac{2}{r}-1},\\
\sqrt{\frac{n}{p}}&\mathrm{if}\ T_r(\bX)^2n^{\frac{2}{r}-1}\le p\le n.\end{array}\right.
\end{equation}
\end{theorem}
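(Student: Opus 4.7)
My plan is to combine a lower bound on the $p$-average of $\ell_1$-distances on the Hamming cube with an upper bound obtained from a type-$r$ vector-valued Poincar\'e inequality, supplemented by Azuma--Hoeffding concentration to handle the moments of order larger than $r$.

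Let $\mu$ be the uniform probability measure on the Hamming cube $\{-1,+1\}^n\subset \ell_1^n$. Since $\|\e-\e'\|_1 = 2\cdot \#\{i:\e_i\ne \e'_i\}$ has mean $n$ and is bounded above by $2n$, Jensen's inequality immediately gives
\[
\Big(\iint_{\ell_1^n\times \ell_1^n}\|x-y\|_1^p\,d\mu(x)d\mu(y)\Big)^{1/p}\asymp n\qquad \text{for every }p\ge 1.
\]
Consequently, if $f:\ell_1^n\to\bX$ is any $sD$-Lipschitz embedding realizing $p$-average distortion $D$ for $\mu$, then, writing $Z\eqdef \|f(\e)-f(\e')\|_\bX$ with $\e,\e'$ independent and uniform on $\{-1,+1\}^n$,
\[
\bigl(\E Z^p\bigr)^{1/p}\gtrsim sn. \tag{$\ast$}
\]

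Next I would produce two complementary upper bounds on $(\E Z^p)^{1/p}$. The first is a \emph{mean bound}: by a vector-valued Poincar\'e-type inequality on the discrete cube for Banach spaces of Rademacher type $r$ (which combines Pisier's cube inequality with the type-$r$ inequality applied to the Walsh derivatives; equivalently one may invoke the Ivanisvili--van Handel--Volberg equivalence of Rademacher and Enflo type together with the standard bridging from the antipodal Enflo functional to the variance), one has
\[
\bigl(\E\|f(\e)-\E f\|^r\bigr)^{1/r}\lesssim T_r(\bX)\,sD\,n^{1/r},
\]
from which Jensen's inequality yields $\E Z\le (\E Z^r)^{1/r}\lesssim T_r(\bX)\,sD\,n^{1/r}$. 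The second is a \emph{fluctuation bound}: viewing $Z$ as a function of the $2n$-dimensional Bernoulli variable $(\e,\e')$, a single-coordinate flip of $\e$ or $\e'$ changes $Z$ by at most $2sD$ because $f$ is $sD$-Lipschitz w.r.t.\ $\ell_1$. Azuma--Hoeffding therefore gives sub-Gaussian tails $\Pr(|Z-\E Z|>t)\le 2\exp(-ct^2/(n(sD)^2))$, and by integrating the tail
\[
\bigl(\E |Z-\E Z|^p\bigr)^{1/p}\lesssim sD\sqrt{pn}.
\]

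Combining these via the triangle inequality produces $(\E Z^p)^{1/p}\lesssim sD\bigl(T_r(\bX)\,n^{1/r}+\sqrt{pn}\bigr)$. Comparing this with the lower bound $(\ast)$ and cancelling $s$ gives
\[
D\gtrsim \frac{n}{T_r(\bX)\,n^{1/r}+\sqrt{pn}}=\frac{n^{1-1/r}}{T_r(\bX)+\sqrt{p}\,n^{1/2-1/r}}.
\]
A direct case analysis on which of the two terms in the denominator dominates yields precisely the two regimes of the theorem: when $1\le p\le T_r(\bX)^2 n^{2/r-1}$ the constant $T_r(\bX)$ dominates and $D\gtrsim n^{1-1/r}/T_r(\bX)$, while when $T_r(\bX)^2 n^{2/r-1}\le p\le n$ the fluctuation term dominates and $D\gtrsim \sqrt{n/p}$.

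The main obstacle is establishing the vector-valued Poincar\'e inequality with a constant of order $T_r(\bX)$ and no additional dependence on $\bX$ beyond its type. For $r=2$ this is classical: a martingale decomposition along the coordinate filtration of the cube, together with the type-$2$ inequality applied to each conditional Rademacher increment, gives the bound with constant $\lesssim T_2(\bX)$. For $r<2$ the cleanest route is to invoke the recent Ivanisvili--van Handel--Volberg theorem that Rademacher type $r$ is equivalent (with comparable constants) to Enflo type $r$, and then pass from the antipodal Enflo functional to the variance via a standard triangle-inequality argument; alternatively one can accept an extra K-convexity factor (finite for all type-$r$ spaces with $r>1$ by Pisier's theorem), which only affects the implicit universal constant in the conclusion.
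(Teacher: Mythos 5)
Your proposal is correct and follows the same two-ingredient strategy as the paper's proof: restrict to the Hamming cube to obtain the lower bound $(\E Z^p)^{1/p}\gtrsim sn$, use the Ivanisvili--van Handel--Volberg theorem to bound the $L_r$-norm of $f-\E f$ by $T_r(\bX)\cdot sD\cdot n^{1/r}$ (exactly as in~\eqref{eq:quote ivHV}), and then add a $\sqrt{p}$-scale fluctuation bound. The one genuine difference is how you obtain the fluctuation term. The paper applies the Ben Efraim--Lust-Piquard real-valued $L_p$ Poincar\'e inequality~\eqref{eq:quote ben efraim lust piquard} to the scalar function $\e\mapsto\|f(\e)\|_\bX$ on the $n$-cube and then bounds the resulting square function using $|\partial_i\|f\|_\bX|\le\|\partial_i f\|_\bX\le 2sD$; you instead apply McDiarmid/Azuma--Hoeffding to the $2n$-variate scalar $Z(\e,\e')=\|f(\e)-f(\e')\|_\bX$, which has bounded differences $2sD$. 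Both yield the same $(\E|Z-\E Z|^p)^{1/p}\lesssim sD\sqrt{pn}$; your version is slightly more elementary, while the paper's is stated as the more flexible Theorem~\ref{eq:K functional for nonlinear type} (with a square function rather than a uniform Lipschitz bound, which matters in other applications but not this one). Your route also avoids the paper's explicit reduction to $p\ge 2$, since Azuma gives the moment bound for all $p\ge 1$ directly.

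Two small imprecisions in your parenthetical remarks about alternatives to IvHV are worth flagging, though they do not affect the main line of argument since you correctly single out IvHV as the cleanest route. First, Pisier's cube inequality introduces a factor of $\log n$, not just a dependence on the $K$-convexity constant; removing that logarithm is precisely what $K$-convexity buys, and the $K$-convexity constant is an additional $\bX$-dependent quantity, not a universal constant, so this route would not give~\eqref{eq:lower D l1 into type} with only the stated $T_r(\bX)$ dependence. Second, for $r<2$ the ``martingale decomposition along the coordinate filtration'' argument requires martingale type $r$, which is a strictly stronger property than Rademacher type $r$; that route only works as stated when $r=2$. The IvHV theorem is not an optional shortcut here but the essential input, and it is the same input the paper uses.
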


Theorem~\ref{eq:K functional for nonlinear type} below is the main  ingredient in the proof of Theorem~\ref{eq:avwerage distortion into type}. Its formulation uses the convention (to which we will adhere later as well) that all the expectations of Banach space-valued functions whose domain is a finite set are with respect to the uniform probability measure on that set.   Thus, if $S$ is a finite set and $(\bX,\|\cdot\|_\bX)$ is a Banach space, then for every $f:S\to \bX$ we will use the notations
$$
\E[f]=\frac{1}{|S|}\sum_{s\in S} f(s)\in \bX\qquad\mathrm{and}\qquad \forall p\ge 1,\quad \|f\|_{L_p(S;\bX)}=\Big( \E\big[\|f\|^p_\bX\big]\Big)^{\frac{1}{p}}=\bigg(\frac{1}{|S|}\sum_{s\in S} \|f(s)\|^p_\bX\bigg)^{\frac{1}{p}}\ge 0.
$$
Given $n\in \N$ and a Banach space $(\bX,\|\cdot\|_\bX)$, for $f:\{-1,1\}^n \to \bX$ and $i\in [n]$ define $\partial_if:\{-1,1\}^n\to \bX$ by
\begin{equation}\label{eq:def partial}
\forall \e\in \{-1,1\}^n,\qquad \partial_if(\e)\eqdef f(\e_1,\e_2,\ldots,\e_{i-1},-\e_i,\e_{i+1},\ldots,\e_n)-f(\e).
\end{equation}

\begin{theorem}\label{eq:K functional for nonlinear type} Fix $p\ge 2\ge r\ge 1$ and $n\in \N$.  Suppose that $(\bX,\|\cdot\|_\bX)$ is a Banach space of type $r$. Then, every $f:\{-1,1\}^n\to \bX$ with $\E[f]=0$ satisfies the following estimate:
\begin{equation}\label{eq:mixture conjectured optimal'}
\|f\|_{L_p(\{-1,1\}^n;\bX)}\lesssim T_r(\bX)\bigg(\sum_{i=1}^n \|\partial_if\|_{L_r(\{-1,1\}^n;\bX)}^r\bigg)^{\frac{1}{r}}+\sqrt{p}\bigg(\sum_{i=1}^n \|\partial_if\|_{L_p(\{-1,1\}^n;\bX)}^2\bigg)^{\frac{1}{2}}.
\end{equation}
\end{theorem}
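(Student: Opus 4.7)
The plan is to decompose $f$ along the canonical dyadic filtration $\cF_k\eqdef \sigma(\e_1,\ldots,\e_k)$ of $\{-1,1\}^n$. Writing $d_k \eqdef \E[f\mid\cF_k] - \E[f\mid\cF_{k-1}]$, we have $f = \sum_{k=1}^n d_k$, and because $\e_k$ is a symmetric Bernoulli random variable independent of $\cF_{k-1}$, a direct computation yields $d_k(\e) = \e_k c_k(\e_1,\ldots,\e_{k-1})$ for the $\cF_{k-1}$-measurable function
\[
c_k(\e_1,\ldots,\e_{k-1})\eqdef \tfrac12\big(\E[f\mid \cF_{k-1},\e_k=+1]-\E[f\mid \cF_{k-1},\e_k=-1]\big).
\]
A single application of Jensen's inequality then gives $\|c_k\|_{L_q(\bX)}\le \tfrac12\|\partial_k f\|_{L_q(\bX)}$ for every $q\ge 1$. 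Substituting this estimate reduces Theorem~\ref{eq:K functional for nonlinear type} to the following purely martingale statement: for any $\cF_{k-1}$-adapted sequence $(c_k)_{k=1}^n$ in $\bX$,
\begin{equation*}
\Big\|\sum_{k=1}^n \e_k c_k\Big\|_{L_p(\bX)}\lesssim T_r(\bX)\Big(\sum_{k=1}^n\|c_k\|_{L_r(\bX)}^r\Big)^{1/r}+\sqrt{p}\,\Big(\sum_{k=1}^n\|c_k\|_{L_p(\bX)}^2\Big)^{1/2}.
\end{equation*}

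\textbf{Two-regime truncation.} To prove this martingale inequality, I would perform a Hitczenko-style splitting: fix a threshold $\lambda>0$ (to be optimized) and decompose $c_k=a_k+b_k$ with $\|a_k\|_\bX\le \lambda$ pointwise, so that both $(\e_k a_k)_k$ and $(\e_k b_k)_k$ remain martingale-difference sequences. For the bounded increments, conditioning on $\cF_{k-1}$ exhibits the increment $\e_k a_k$ as a symmetric Bernoulli multiple of an $\cF_{k-1}$-measurable vector, so Pisier's martingale-type-$r$ inequality (equivalent to Rademacher type $r$ for $r>1$ via $K$-convexity, with constant of order $T_r(\bX)$) gives
\[
\Big\|\sum_k \e_k a_k\Big\|_{L_r(\bX)}\lesssim T_r(\bX)\Big(\sum_k\|c_k\|_{L_r(\bX)}^r\Big)^{1/r}.
\]
The uniform bound $\|a_k\|_\bX\le \lambda$ then allows one to promote this $L_r$ estimate to an $L_p$ estimate through a conditional Hoeffding--Azuma tail bound on the Banach-valued sum, producing a controlled multiplicative surplus of order $\sqrt{p}\,\lambda\sqrt{n}$ that will be absorbed into the second term after optimization of $\lambda$. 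For the tail $\sum_k \e_k b_k$, the conditional Pinelis--Khintchine inequality in $L_p(\bX)$---valid in every $K$-convex Banach space, in particular in any space of nontrivial type---gives
\[
\Big\|\sum_k \e_k b_k\Big\|_{L_p(\bX)}\lesssim \sqrt{p}\,\Big(\sum_k\|b_k\|_{L_p(\bX)}^2\Big)^{1/2}\le \sqrt{p}\,\Big(\sum_k\|c_k\|_{L_p(\bX)}^2\Big)^{1/2}.
\]
Choosing $\lambda$ so as to balance the two contributions then combines the estimates into the desired bound.

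\textbf{Main obstacle.} The delicate point is the $L_r\to L_p$ transfer for the bounded part: one must produce the factor $\sqrt{p}$ on the \emph{second} term of~\eqref{eq:mixture conjectured optimal'} without amplifying the type-$r$ constant $T_r(\bX)$ that multiplies the \emph{first} term. Put another way, Pisier's martingale inequality is sharp in $L_r$ but loses the correct $p$-dependence when lifted naively to $L_p$, while applying the conditional Pinelis bound directly would lose the type-$r$ improvement at the low-moment scale. The resolution is to view $\lambda$ as a $K$-functional parameter interpolating between the norm $T_r(\bX)(\sum\|\cdot\|_{L_r}^r)^{1/r}$ and the norm $\sqrt{p}(\sum\|\cdot\|_{L_p}^2)^{1/2}$ on sequences in $\bX$, so that the optimization of $\lambda$ becomes a real-interpolation step rather than an ad hoc thresholding; verifying that the constants in this interpolation do not introduce any extra $p$-dependence beyond the anticipated $\sqrt{p}$ is the most technical part of the argument.
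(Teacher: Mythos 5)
Your reduction to a martingale inequality is clean and correct: the martingale differences $d_k=\e_k c_k$ with $c_k$ predictable, and the Jensen bound $\|c_k\|_{L_q(\bX)}\le\frac12\|\partial_k f\|_{L_q(\bX)}$, are both fine. However, the rest is not a proof: your own ``Main obstacle'' paragraph correctly identifies that the $L_r\to L_p$ promotion for the bounded part is the crux and then leaves it unresolved. This is a genuine gap, and I do not think the route you sketch closes it. A conditional Azuma/Hoeffding tail bound on $\sum_k\e_k a_k$ with $\|a_k\|_\bX\le\lambda$ pointwise gives a deviation term of order $\sqrt{p}\,\lambda\sqrt{n}$, whereas the target second term is $\sqrt{p}\big(\sum_k\|\partial_kf\|_{L_p(\bX)}^2\big)^{1/2}$; these match only in the degenerate situation where all the $\|c_k\|_{L_p}$ are comparable to a single threshold. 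A fixed global $\lambda$ cannot simultaneously make the truncated part small in $L_r$ \emph{and} force the Azuma surplus to collapse to the $\ell_2$-of-$L_p$ quantity; the classical Hitczenko sharp $L_p$ bounds use a stopping-time/good-$\lambda$ mechanism rather than a one-shot truncation precisely because of this, and even then they are stated in the scalar setting. Writing the word ``interpolation'' at the end does not substitute for the missing estimate.

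The paper sidesteps all of this with a much shorter linearization trick that you did not consider: instead of estimating the vector-valued martingale $f$ in $L_p(\bX)$, one estimates the \emph{scalar} random variable $\|f\|_\bX$ in $L_p(\R)$. Its mean $\|f\|_{L_1(\bX)}\le\|f\|_{L_r(\bX)}$ is controlled by the $L_r$-level Pisier-type bound that one reads off from Ivanisvili--van Handel--Volberg (\cite{IvHV20}, applied to $\Phi(x)=\|x\|_\bX^r$); its fluctuation $\|f\|_\bX-\E\|f\|_\bX$ is controlled in $L_p$ by the real-valued $L_p$ Poincar\'e inequality of Ben Efraim--Lust-Piquard~\cite{BL08}, and the pointwise subordination $|\partial_i\|f\|_\bX|\le\|\partial_if\|_\bX$ (triangle inequality in $\bX$) transfers the discrete gradient of the scalar to the discrete gradient of the vector. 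The triangle inequality in $L_{p/2}$ then gives exactly $\sqrt{p}\big(\sum_i\|\partial_if\|_{L_p(\bX)}^2\big)^{1/2}$. This cleanly decouples the ``type $r$'' contribution from the ``high-moment deviation'' contribution and entirely avoids vector-valued martingale interpolation. If you wish to pursue a direct martingale route, you will need to genuinely prove the $K$-functional-type extrapolation you allude to rather than posit it; as written, the argument has a hole at exactly the point you flag.
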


The proof of Theorem~\ref{eq:K functional for nonlinear type} was found by Alexandros Eskenazis, answering positively a question that we  posed in an earlier draft of the present work. We are grateful to him for allowing us to include it here.

Prior to proving Theorem~\ref{eq:K functional for nonlinear type}, we will next show how it implies Theorem~\ref{eq:avwerage distortion into type}:

\begin{proof}[Proof of Theorem~\ref{eq:avwerage distortion into type} assuming Theorem~\ref{eq:K functional for nonlinear type}] It suffices to prove Theorem~\ref{eq:avwerage distortion into type} when  $p\ge 2$, because its conclusion when $1\le p\le 2$ follows formally from its special case $p=2$ by~\cite[equation~(105)]{Nao21}.

So, assume from now that $p\ge 2$ and suppose that $f:\{-1,1\}^n\to \bX$ satisfies\footnote{Note that the last equivalence in the assumption~\eqref{eq:cube averages} has  a quick justification: For one direction use  the triangle inequality in $L_p(\{-1,1\}^n;\ell_1^n)$, and the other direction use Jensen's inequality for the averaging over $\d$. }
\begin{equation}\label{eq:cube averages}
\Big(\E\big[\|f(\e)-f(\d)\|_\bX^p\big]\Big)^{\frac{1}{p}}\ge \Big(\E\big[\|\e-\d\|_1^p\big]\Big)^{\frac{1}{p}}\asymp n,
\end{equation}
and that also for some $D\ge 1$ we have
\begin{equation}\label{eq:lip condition on hamming}
\forall \e,\d\in \{-1,1\}^n,\qquad \|f(\e)-f(\d)\|_\bX\le D\|\e-\d\|_1=2D|\{i\in [n]:\ \e_i\neq \d_i\}|,
\end{equation}
i.e., $f$ is $D$-Lipschitz with respect to the metric on $\{-1,1\}^n$ that is inherited from $\ell_1^n$ and the metric on $\bX$ that is induced by $\|\cdot\|_\bX$.  Theorem~\ref{eq:avwerage distortion into type} will be proven once we will show that $D$ must satisfy~\eqref{eq:lower D l1 into type}, which is a simple consequence of~\eqref{eq:mixture conjectured optimal'}. Indeed, by translating $f$ we may assume that $\E[f]=0$. By Theorem~\ref{eq:K functional for nonlinear type}, 
\begin{multline*}
n\stackrel{\eqref{eq:cube averages}}{\lesssim} \Big(\E\big[\|f(\e)-f(\d)\|_\bX^p\big]\Big)^{\frac{1}{p}}\le 2\|f\|_{L_p(\{-1,1\}^n;\bX)}
\\\stackrel{\eqref{eq:mixture conjectured optimal'}}{\lesssim}T_r(\bX)\bigg(\sum_{i=1}^n \|\partial_if\|_{L_p(\{-1,1\}^n;\bX)}^r\bigg)^{\frac{1}{r}}+\sqrt{p}\bigg(\sum_{i=1}^n \|\partial_if\|_{L_p(\{-1,1\}^n;\bX)}^2\bigg)^{\frac{1}{2}}
\stackrel{\eqref{eq:def partial}\wedge \eqref{eq:lip condition on hamming}}{\le} 2D\left(T_r(\bX)n^{\frac{1}{r}}+\sqrt{pn}\right).
\end{multline*}
Therefore,
\begin{equation*}
D\gtrsim \frac{n}{T_r(\bX)n^{\frac{1}{r}}+\sqrt{pn}}\asymp \left\{\begin{array}{ll}\frac{n^{1-\frac{1}{r}}}{T_r(\bX)}&\mathrm{if}\ 1\le p\le T_r(\bX)^2n^{\frac{2}{r}-1},\\
\sqrt{\frac{n}{p}}&\mathrm{if}\ T_r(\bX)^2n^{\frac{2}{r}-1}\le p\le n.\end{array}\right.\tag*{\qedhere}
\end{equation*}
\end{proof}

The proof of Theorem~\ref{eq:K functional for nonlinear type} is a quick concatenation of the (substantial) results of~\cite{BL08} and~\cite{IvHV20}:

\begin{proof}[Proof of Theorem~\ref{eq:K functional for nonlinear type}] The main result of~\cite{IvHV20} is well-known to yield the following estimate:
\begin{equation}\label{eq:quote ivHV}
 \|f\|_{L_r(\{-1,1\}^n;\bX)}\lesssim T_r(\bX)\bigg(\sum_{i=1}^n \|\partial_if\|_{L_r(\{-1,1\}^n;\bX)}^r\bigg)^{\frac{1}{r}}.
\end{equation}
(Specifically, deduce~\eqref{eq:quote ivHV} from~\cite{IvHV20} by applying~\cite[Theorem~1.2]{IvHV20} to the  function $\Phi(x)=\|x\|_\bX^r$, and then combining the resulting estimate with the second and third displayed equations on page~674 of~\cite{IvHV20}.)
Furthermore, by the (real-valued) Poincar\'e inequality of~\cite[Theorem~1.1(1)]{BL08}, we have
\begin{equation}\label{eq:quote ben efraim lust piquard}
\big\|\|f\|_\bX-\|f\|_{L_1(\{-1,1\}^n;\bX)}\big\|_{L_p(\{-1,1\}^n)}\lesssim \sqrt{p} \bigg\|\Big(\sum_{i=1}^n \big(\partial_i\|f\|_\bX\big)^2\Big)^{\frac12}\bigg\|_{L_p(\{-1,1\}^n)}.
\end{equation}
Since $\|f\|_{L_1(\{-1,1\}^n;\bX)}\le \|f\|_{L_r(\{-1,1\}^n;\bX)}$ and by the triangle inequality for $\|\cdot\|_\bX$ we have $\partial_i\|f\|_\bX(\e)\le \|\partial_if(\e)\|_\bX$  for every $\e\in \{-1,1\}^n$ and every $i\in [n]$,  it follows from~\eqref{eq:quote ivHV} and~\eqref{eq:quote ben efraim lust piquard} that
\begin{align}\label{eq:square function version}
\begin{split}
\|f\|_{L_p(\{-1,1\}^n;\bX)}&\le \big\|\|f\|_\bX-\|f\|_{L_1(\{-1,1\}^n;\bX)}\big\|_{L_p(\{-1,1\}^n)}+\|f\|_{L_1(\{-1,1\}^n;\bX)} \\&\lesssim T_r(\bX)\bigg(\sum_{i=1}^n \|\partial_if\|_{L_r(\{-1,1\}^n;\bX)}^r\bigg)^{\frac{1}{r}}+ \sqrt{p} \bigg\|\Big(\sum_{i=1}^n \big(\|\partial_i f\|_\bX\big)^2\Big)^{\frac12}\bigg\|_{L_p(\{-1,1\}^n)}, 
\end{split}
\end{align}
where the first step of~\eqref{eq:square function version} is an application of the triangle inequality in $L_p(\{-1,1\}^n)$. It remains to note that final term in~\eqref{eq:square function version} is at most the second term in~\eqref{eq:mixture conjectured optimal'} by the triangle inequality in $L_{\frac{p}{2}}(\{-1,1\}^n)$. 
\end{proof}

\bibliographystyle{alphaabbrvprelim}
\bibliography{local-growth}

\end{document}